\theoremstyle{definition}
\newtheorem{theorem}{Theorem}[section]
\newtheorem{maintheorem}{Theorem}
\newtheorem{maincor}[maintheorem]{Corollary}
\newtheorem{definition}[theorem]{Definition}
\newtheorem{lemma}[theorem]{Lemma}
\newtheorem{corollary}[theorem]{Corollary}
\newtheorem{proposition}[theorem]{Proposition}
\newtheorem{remark}[theorem]{Remark}
\newtheorem{claim}[theorem]{Claim}
\newtheorem{construction}[theorem]{Construction}
\newtheorem{example}[theorem]{Example}
\newtheorem*{heurthm}{Heuristic Theorem}
\DeclareRobustCommand{\cev}[1]{%
  {\mathpalette\do@cev{#1}}%
}
\newcommand{\do@cev}[2]{%
  \vbox{\offinterlineskip
    \sbox\z@{$\m@th#1 x$}%
    \ialign{##\cr
      \hidewidth\reflectbox{$\m@th#1\vec{}\mkern4mu$}\hidewidth\cr
      \noalign{\kern-\ht\z@}
      $\m@th#1#2$\cr
    }%
  }%
}
\newcommand{\RR}{\mathbb{R}}
\newcommand{\HH}{\mathbb{H}}
\DeclareMathOperator{\Mod}{Mod}
\DeclareMathOperator{\sech}{sech}
\DeclareMathOperator{\Lip}{Lip}
\newcommand{\T}{\mathcal{T}}
\DeclareMathOperator{\Sp}{\mathsf{Sp}}
\newcommand{\cO}{\mathcal{O}}
\DeclareMathOperator{\Ol}{\mathcal{O}_\lambda}
\DeclareMathOperator{\rad}{rad}
\newcommand{\arc}{\underline{\alpha}}
\newcommand{\arcwt}{\underline{A}}
\newcommand{\arcb}{\underline{\smash{\beta}}}
\newcommand{\arcwtb}{\underline{B}}
\newcommand{\Base}{\mathscr{B}(S \setminus \lambda)}
\DeclareMathOperator{\res}{res}
\DeclareMathOperator{\reg}{reg}
\newcommand{\cT}{\mathcal T}
\DeclareMathOperator{\Out}{Out}
\DeclareMathOperator{\In}{In}
\newcommand{\rg}{\mathsf{Y}}
\newcommand{\GL}{\mathcal{GL}}
\DeclareMathOperator{\GLcr}{\mathcal{GL}^{cr}}
\newcommand{\TRG}{\mathcal{TRG}}
\DeclareMathOperator{\sys}{sys}
\newcommand{\dfl}{\mathcal D}
\begin{document}

\title[Deflations and optimal Lipschitz maps]{Deflating hyperbolic surfaces and the shapes of optimal Lipschitz maps}

 \author[Calderon]{Aaron Calderon}
  \address{\parbox{.95\linewidth}{Aaron Calderon, Department of Mathematics, University of Chicago
  \vspace{1mm}}}
\email{aaroncalderon@uchicago.edu}

 \author[Tao]{Jing Tao}
  \address{\parbox{.95\linewidth}{Jing Tao, Department of Mathematics, University of Oklahoma
  \vspace{1mm}}}
\email{jing@ou.edu}

\setcounter{tocdepth}{1}

\begin{abstract}
Given two hyperbolic surfaces and a homotopy class of maps between them, Thurston proved that there always exists a representative minimizing the Lipschitz constant.
While not unique, these minimizers are rigid along a geodesic lamination.
In this paper, we investigate what happens in the complement of that lamination.
To do this, we introduce {\em deflations}, certain optimal maps to trees which can be used to obstruct optimal maps between surfaces.
Using a smooth version of the orthogeodesic foliation of the first author and Farre, we also construct many new families of optimal maps, showing that the obstructions coming from deflations are essentially the only ones.
\end{abstract}

\maketitle
\thispagestyle{empty}

\vspace{-5ex}

\section{Introduction}\label{sec:intro}
Understanding deformations of geometric structures on a Riemann surface $S$ is central to low-dimensional geometry and topology.
Deformations are organized by the Teichm{\"u}ller space $\T(S)$, the parameter space of marked complex/conformal/hyperbolic structures on $S$.
Given two points $X, X' \in \T(S)$, the change-of-marking map determines a homotopy class of maps $X \to X',$ and it is often important to find the ``best'' representative of this class.

In the complex setting, Teichm{\"u}ller proved there is a unique quasiconformal map between two marked Riemann surfaces minimizing the dilatation
\cite{Teich}.
In the Riemannian setting, work of Eells--Sampson and Hartman \cite{ESharmonic, Hartman} yields a unique harmonic representative (which minimizes Dirichlet energy).
In the hyperbolic setting, Thurston proved there always exists a map $X \to X'$ with minimal Lipschitz constant, but these representatives are not unique \cite{Th_stretch}. Any map realizing the minimal Lipschitz constant is called {\bf tight}.

Despite this flexibility, Thurston showed that tight maps all share a common structure (see also \cite{GK}).
The intersection of the stretch loci over all tight maps $X \to X'$ forms a chain-recurrent geodesic lamination $\lambda=\lambda(X,X')$ called the {\bf tension lamination}.

The present work develops Thurston’s picture further by analyzing tight maps on the complement of their tension laminations.
Given a tight map $X \to X'$ with tension lamination $\lambda$, cutting along $\lambda$ produces a map
$X \setminus \lambda \to X' \setminus \lambda$
that stretches each boundary component affinely by the Lipschitz constant.
We call such maps \textbf{boundary-tight}.
This notion isolates the part of the map where flexibility remains: the stretch along $\lambda$ is prescribed, while the complementary pieces may admit many families of shapes compatible with the same boundary behavior.

Our main results give a structural description of all boundary-tight maps.
The following captures the core phenomenon and motivates the technical statements below. 
\begin{heurthm}
For any $L$-Lipschitz boundary-tight map $X \setminus \lambda \to X' \setminus \lambda,$
\[X \setminus \lambda \approx (1/L) \cdot (X' \setminus \lambda).\]
Moreover, this is essentially a complete description: there are boundary-tight maps where the domain and target can have any possible shapes subject to this constraint.
\end{heurthm}

Precise statements appear as Theorems \ref{mainthm:deflate} (actually, Corollary \ref{cor:rescale}) and \ref{mainthm:itin}, and even more detailed results can be found in the body of the text.

\subsection{Regular polygons}
Let us first consider the special case of boundary-tight maps between ideal polygons. In this situation, our results can be phrased very sharply.

The prototypical example of a boundary-tight map goes back to Thurston in \cite{Th_stretch}, who constructed self-maps of an ideal triangle $\Delta$ with arbitrary Lipschitz constant $L \ge 1$.
Choose the unique triple of horocycles based at the ideal vertices of $\Delta$ that are pairwise tangent; the union forms a central horocyclic triangle $H$ invariant under the full dihedral symmetry group $D_6$ of $\Delta$.
The complement of $H$ is a union of three spikes, each equipped with an orthogonal foliation by horocycles and geodesic rays based at the vertices of $\Delta$.
Given $L\ge 1$, one can stretch the geodesic rays in each spike uniformly by factor $L$. This preserves the horocyclic foliation, sending the horocycle at distance $d$ from $\partial H$ to the horocycle at distance $Ld$ by an affine map.
Gluing by the identity map across $H$ yields a self-map of $\Delta$ that restricts to an $L$-homothety on $\partial \Delta$.
Since the lengths of horocycles do not grow under this map (in fact, they decay exponentially quickly), one verifies that this map expands maximally in the orthogonal direction to the horocyclic foliation, hence it is boundary-tight with Lipschitz constant $L$.

This construction clearly generalizes to other regular\footnote{By a {\bf regular} $n$-gon we mean one with the full dihedral group $D_{2n}$ of symmetries.} ideal $n$-gons (see \cite[\S15]{shshI} and \cite{HT:stretch}).
The central horocyclic triangle is now replaced by a regular horocyclic $n$-gon, and one obtains a boundary-tight map by fixing this central polygon and stretching in the spikes as before. 
More generally, for an ideal polygon $P$, we define an {\bf (immersed) horogon} to be a choice of $n$ horocycles, one based at each ideal vertex of $P$, such that for each geodesic side of $P$, the two horocycles based at its endpoints are tangent.
We observe that if $n$ is odd, every ideal $n$-gon contains a unique immersed horogon, while if $n$ is even, then an ideal $n$-gon contains either no immersed horogon or a 1-parameter family of them (Lemma \ref{lem:whenhorogon}).

A horogon is {\bf embedded} if these horocycles are pairwise disjoint except for the points of consecutive tangency, while it is {\bf self-tangent} if it is not embedded but there are no transverse intersections between the horocycles.
Note that an immersed horogon need not lie entirely inside $P$, while embedded or self-tangent ones must.
See Figure \ref{fig:horogons}.

\begin{figure}[ht]
    \centering
    \includegraphics[width=0.8\linewidth]{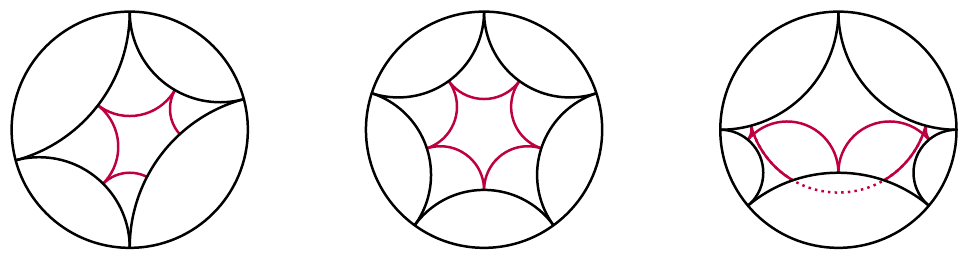}
    \caption{Left: a quadrilateral that admits no immersed horogon. Middle: an embedded (and regular) horogon in the regular ideal $5$-gon. Right: an immersed horogon in a nonregular ideal $5$--gon, with one side not entirely contained in the polygon.}
    \label{fig:horogons}
\end{figure}

The following is a concrete instance of the asymmetry inherent in optimal Lipschitz maps.

\begin{maintheorem}\label{mainthm:polygon}
Let $P$ be any ideal $n$-gon and let $P_{\text{reg}}$ denote the regular ideal $n$-gon.
\begin{enumerate}
    \item There is a boundary-tight $P \to P_{\text{reg}}$ if and only if $P$ has an embedded horogon.
    \item There is a boundary-tight $P_{\text{reg}} \to P$ if and only if $P$ has an immersed horogon.
\end{enumerate}
\end{maintheorem}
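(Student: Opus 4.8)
The plan is to prove each ``if'' by an explicit construction and each ``only if'' by extracting a horogon from a hypothetical boundary-tight map; the difference between the two parts --- embedded versus merely immersed --- will reflect whether $P$ appears as the domain or as the target. The basic local object is Thurston's standard stretch \cite{Th_stretch}. Write a spike as $[0,\ell]\times[1,\infty)$ in upper half-plane coordinates, with the ideal vertex at $\infty$ and the two geodesic walls vertical; for $c>0$ and $L\ge 1$ the map $\phi_{c,L}\colon(x,y)\mapsto(cx,\,y^{L})$ has, in orthonormal frames for the hyperbolic metric, diagonal derivative with horocyclic entry $c\,y^{1-L}$ and geodesic entry $L$, hence pointwise operator norm $\max(L,\,c\,y^{1-L})\le\max(L,c)$ for $y\ge 1$; and on each wall $\phi_{c,L}$ is the $L$-homothety $\log y\mapsto L\log y$ in arclength. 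Both directions of both parts are built from these maps.

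\textbf{The ``if'' directions.} For (1), suppose $P$ has an embedded horogon $\{h_{v_i}\}$. Then $\bigcup_i h_{v_i}$ cuts $P$ into a compact central horocyclic $n$-gon $H$ and $n$ spikes $S_i$ of horocyclic widths $\ell_i$, with $\partial P$ contained in $\bigcup_i\overline{S_i}$ and disjoint from the interior of $H$ (each side of $P$ splits into two spike walls at a single tangency of consecutive horocycles). Writing $\ell_{\text{reg}}$ for the common spike width of $P_{\text{reg}}$, choose $L$ larger than $\ell_{\text{reg}}/\min_i\ell_i$ and larger than the Lipschitz constant of the cone map below (a finite number independent of $L$). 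On $S_i$ use $\phi_{\ell_{\text{reg}}/\ell_i,\,L}$, an $L$-Lipschitz homeomorphism onto the $i$-th spike of $P_{\text{reg}}$ that is an $L$-homothety along $\partial P$; on $H$ use the cone, from a fixed interior basepoint, over the prescribed affine (and $L$-independent) identification of $\partial H$ with $\partial H_{\text{reg}}$. These agree along $\bigcup_i(\overline{S_i}\cap\overline H)$, so they glue to a continuous, piecewise $L$-Lipschitz --- hence $L$-Lipschitz --- map $P\to P_{\text{reg}}$ that is an $L$-homothety on $\partial P$, i.e.\ a boundary-tight map. Part (2) is the mirror construction: decompose $P_{\text{reg}}$ by its regular (embedded) horogon, send its spikes by the maps $\phi_{\ell_i/\ell_{\text{reg}},\,L}$ to abstract spikes of widths $\ell_i$ immersed into $P$ along the given immersed horogon, and cone over the compact central polygon of $P_{\text{reg}}$; only the widths $\ell_i$ of the target enter, so the horogon of $P$ need only be immersed --- this is the source of the asymmetry, since the domain must decompose honestly but the target need not.

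\textbf{The ``only if'' directions.} Let $f$ be a boundary-tight map, with domain and target the polygons in question. The structural input --- a cusped analogue of Thurston's rigidity for stretch maps, which I expect to deduce from the general description of boundary-tight maps (Theorem~\ref{mainthm:deflate}) --- is that near each ideal vertex $v$ of the domain there is a horocycle $h_v$ such that $f$ restricted to the cusp neighborhood bounded by $h_v$ equals, in suitable coordinates, a standard stretch $\phi_{c_v,L}$, mapping that neighborhood onto a cusp neighborhood of the target and carrying $h_v$ affinely onto a horocycle $h_v'$ of the target, with consecutive neighborhoods abutting at a single point of their shared side. Since a horocycle meets each incident side perpendicularly, two consecutive $h_v$ (resp.\ $h_v'$) are tangent at that abutment point, so $\{h_v\}$ is an immersed horogon of the domain and $\{h_v'\}$ one of the target. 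For (2) the target is $P$ and $\{h_v'\}$ is the required immersed horogon --- no embeddedness is claimed or needed. For (1) the domain is $P$; here one must further show $\{h_v\}$ is embedded, using that $f$ is a degree-one map sending the cusp neighborhoods in $P$ injectively onto those of $P_{\text{reg}}$ --- a transverse overlap of the images would force $f$ to wrap twice, contradicting degree one (granting, as should follow from boundary-tightness, that $f$ does not fold) --- together with the exclusion of self-tangent horogons discussed next.

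\textbf{The main obstacle.} Two points concentrate the difficulty. First, the cusped rigidity statement itself: that a boundary-tight map is \emph{exactly} a standard stretch on a maximal cusp neighborhood at each ideal vertex of the domain, with consecutive such neighborhoods meeting in a point. I plan to obtain this as a local consequence of the structure theorem for boundary-tight maps (Theorem~\ref{mainthm:deflate}): the relevant deflation is the collapse of the domain onto a star tree (collapsing each horocyclic arc of a cusp neighborhood to a point), and boundary-tightness should force $f$ to factor through such a deflation near the cusps. Second, for part (1), ruling out \emph{self-tangent} horogons: the disjointness argument only shows $\{h_v\}$ is embedded-or-self-tangent, and one must show that a self-tangency genuinely obstructs a boundary-tight $P\to P_{\text{reg}}$. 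This is precisely the kind of obstruction deflations are designed to detect: I anticipate extracting from a self-tangent horogon of $P$ an explicit deflation of $P$ whose image tree is incompatible with every deflation of $P_{\text{reg}}$, so that Theorem~\ref{mainthm:deflate} forbids the map. By comparison, the ``if'' constructions and the accompanying Lipschitz estimates --- including the claim that the cone extension over the central horocyclic $n$-gon is Lipschitz up to its corners with an $L$-independent constant, which follows from compactness of that polygon --- are routine.
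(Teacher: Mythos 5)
Your two ``if'' directions rest on a cone extension over the central horocyclic polygon, and you dismiss the Lipschitz estimate for that cone as ``routine'' via compactness. This is exactly where the construction collapses, and the paper devotes Example~\ref{example:nonLip} to explaining why. If the embedded horogon of $P$ has arcs of unequal widths $\ell_i\neq\ell_{i+1}$ (the generic case once $P\neq P_{\text{reg}}$), then your prescribed boundary identification stretches two mutually tangent horocyclic arcs of $\partial H$ by the different rates $c_i=\ell_{\text{reg}}/\ell_i$ and $c_{i+1}$. For points $p_1,p_2$ at arc-length $\varepsilon$ from the tangency, $d(p_1,p_2)=O(\varepsilon^2)$ while $d(f(p_1),f(p_2))\gtrsim |c_i-c_{i+1}|\,\varepsilon$, so no extension of this boundary map to $H$ can be Lipschitz --- compactness of $H$ is irrelevant. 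Enlarging $L$ does not help, since $c_i$ is $L$-independent. This is not an incidental bump; it is the technical obstacle that forces the paper's entire Section~\ref{sec:construct horogon}. There, one decomposes $P$ along its orthogeodesic spine into inscribable h-gons, rectangles, and spikes (Lemma~\ref{Lem:HgonDecomp}), uses the ``radial parametrization'' to build standard maps that are \emph{uniformly contracting in a full neighborhood of each tangency} (Lemmas~\ref{lem:radcontract},~\ref{lem:HLip}), extends these to Lipschitz maps between h-gons by the hyperbolic Kirszbraun--Valentine theorem (Proposition~\ref{Prop:HLip}), and then grafts on rectangles and spikes. Your single-horogon decomposition cannot do this because the non-consecutive sides of an embedded horogon may be arbitrarily close, so the constants in Proposition~\ref{Prop:HLip} would not be uniform. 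Part (2) has the same defect and an additional one: when the horogon of $P$ is merely immersed, the ``abstract spikes immersed into $P$'' do not assemble into a well-defined map $P_{\text{reg}}\to P$; the paper avoids this entirely by routing through an intermediate nearly-regular polygon $P_{1/L}$ and composing Corollary~\ref{cor:Horo to Horo} with Corollary~\ref{cor:scalespinedown}.

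The ``only if'' directions are built on a ``cusped rigidity'' claim --- that a boundary-tight map equals a standard stretch $\phi_{c,L}$ on a maximal cusp neighborhood --- which is not true and does not follow from Theorem~\ref{mainthm:deflate}. The whole point of the paper is that boundary-tight maps retain substantial flexibility off the boundary, and the actual rigidity near cusps is much weaker: Lemma~\ref{lem:preserve horocycle endpts} shows only that endpoints of horocyclic arcs are carried to endpoints of horocyclic arcs, and Lemma~\ref{lem:cusps into cusps} shows that spikes map into spikes; neither pins $f$ down to a standard form in a neighborhood. The paper's proof of necessity instead works at the level of deflations: the boundary-tight $P\to P_{\text{reg}}$ composes (Lemma~\ref{lem:composite_deflation}) with the deflation $P_{\text{reg}}\to\ast_n$ to give a deflation $P\to\ast_n$, and Proposition~\ref{prop:deflate to star} (via Corollary~\ref{cor:2 leaves 2 spikes}) shows this forces a horogon in $P$ that is at worst self-tangent. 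Self-tangency is then excluded by Lemma~\ref{lem:cusps into cusps} directly: a self-tangency point $p$ lies in two non-adjacent spikes of $P$, so $f(p)$ would need to lie in two disjoint spikes of $P_{\text{reg}}$. This is a cleaner mechanism than the ``deflation incompatible with every deflation of $P_{\text{reg}}$'' you anticipate. Finally, for the necessity in (2) you should not overlook the even-$n$, nonzero-residue case, where $P$ has no immersed horogon at all; the clean route is Proposition~\ref{prop:residues} ($L\cdot\res_{P_{\text{reg}}}=\res_P$ forces $\res_P=0$) combined with Lemma~\ref{lem:whenhorogon}, rather than any local analysis at cusps.
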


We do not compute the exact Lipschitz constants of these maps. However, by pre- or post-composing with boundary-tight self-maps of $P_{\text{reg}}$
(arising from stretching away from the embedded regular horogon as described above), 
one obtains a threshold  constant $L_0(P)$ such that for every $L \ge L_0$, there exists a boundary-tight map as in the Theorem with Lipschitz constant exactly $L$. Our constructions are very explicit, so $L_0(P)$ can in principle be estimated by tracking the parameters in the proofs.

\subsection{Spines and deflations}
For arbitrary surfaces $\Sigma$ with (possibly crowned) boundary, our results are more coarse, and must necessarily be so.
To state them precisely, we first need to clarify what we mean by rescaling the target surface.

Given any hyperbolic surface $Y$ with (possibly crowned) boundary, one can build a ribbon-graph spine $\Sp(Y)$ by taking the set of points in $Y$ which have non-unique closest point projections to $\partial Y$.
Equipping each edge of $\Sp(Y)$ with the length of (either of) its projection(s) to $\partial Y$ further specifies a metric on this ribbon graph.
This assignment induces a $\Mod(\Sigma)$-equivariant homeomorphism
\[\Sp: \cT(\Sigma) \to \TRG(\Sigma)\]
where $\TRG(\Sigma)$ is the space of marked metric ribbon graph structures on $\Sigma$ (see Section \ref{subsec:spinebasics} and Section \ref{subsec:RGs and arcs}, Theorem \ref{thm:spinecoords} in particular).

The closest-point projection map to $\partial Y$ determines a map $Y \to \Sp(Y)$ which
\begin{enumerate}
    \item is a 1-Lipschitz homotopy equivalence and
    \item restricts to an isometry on each geodesic of $\partial Y$.
\end{enumerate}
If $Y \in \cT(\Sigma)$ and $T \in \TRG(\Sigma)$, we call any map $Y \to T$ satisfying conditions (1) and (2) a {\bf deflation}.
One of the main new ideas of our work is to study these maps, and we believe the perspective we develop here will be useful in future work.
These share a similar flavor to the ``slack'' computations of \cite{FLM2}.

As already observed in Theorem \ref{mainthm:polygon}, the existence of a deflation $Y \to T$ does not imply that $\Sp(Y) = T$, or that they even have the same topological type.
Indeed, if $P$ contains an embedded or self-tangent horogon, then collapsing that horogon to a point and contracting the horocyclic foliation in each complementary spike yields a deflation $P \to \ast_n$, where $\ast_n$ is the ``$n$-pronged star'' ribbon graph resulting from gluing together $n$ copies of $\RR_{\ge 0}$ along $0$.
However, $\Sp(P) \neq \ast_n$ unless $P = P_{\text{reg}}$.

We therefore introduce a coarser notion of equivalence. Every metric ribbon graph $T$ is dual to a filling arc system $\arc(T)$ on $\Sigma$, and we can further assign to each arc $\alpha$ a weight $c_\alpha(T)$ recording the length of its dual edge in $T$.
Given $T_1, T_2 \in \TRG(\Sigma)$, say
\[T_1 = T_2 + O(1)\]
if there exists some constant $C$ such that:
\begin{itemize}
    \item For $\alpha \in \arc(T_1) \cap \arc(T_2),$ we have
$|c_\alpha(T_1) - c_\alpha(T_2)| \le C$.
    \item For $\alpha \in \arc(T_1) \Delta \arc(T_2),$ we have
    $c_\alpha(T_i) \le C$
($\Delta$ denotes the symmetric difference).
\end{itemize}
Equivalently, $T_1$ and $T_2$ are close in an appropriate bordification of $\TRG(\Sigma)$ (Section \ref{subsec:RGs and arcs}).

\begin{maintheorem}\label{mainthm:deflate}
Suppose that $Y \in \cT(\Sigma)$, $T \in \TRG(\Sigma)$, and $Y \to T$ is a deflation. Then,
\[\Sp(Y) = T + O(1)\]
where the implicit constant depends only on the systole and Euler characteristic of $Y$.
\end{maintheorem}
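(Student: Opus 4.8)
The plan is to compare the given deflation $f \colon Y \to T$ against the canonical deflation $\pi \colon Y \to \Sp(Y)$ arising from closest-point projection to $\partial Y$, and extract from the comparison a uniform bound on how the edge-weights of $T$ differ from those of $\Sp(Y)$. First I would set up the dictionary between metric ribbon graphs and weighted filling arc systems: a point $T \in \TRG(\Sigma)$ is determined by the arc system $\arc(T)$ together with the weights $c_\alpha(T)$, and each arc $\alpha$ dual to an edge $e$ of $T$ can be represented in $Y$ by the geodesic arc orthogonal to $\partial Y$ at both endpoints — its length is comparable (up to an additive constant depending on the systole, via the collar lemma) to the width of the corresponding part of $Y$, while its dual weight $c_\alpha(T)$ is (up to an additive error from the graph structure) the length of $\partial Y$ spanned by the projection "fan" of $e$. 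The two facts I would exploit are that $f$ is $1$-Lipschitz and that $f$ is an isometry on each boundary geodesic of $Y$; the second is what lets us transfer length data from $\partial Y$ to $T$ without loss.

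The core argument has two halves, one for each bullet in the definition of $T_1 = T_2 + O(1)$. For the \emph{lower bound} on weights (equivalently, to show no weight of $\Sp(Y)$ is much larger than the corresponding weight of $T$): given an arc $\alpha \in \arc(\Sp(Y))$ with large weight $c_\alpha(\Sp(Y))$, there is a long arc $A_\alpha \subset \partial Y$ whose two endpoints are identified in $\Sp(Y)$; pushing $A_\alpha$ forward by $f$, and using that $f$ restricts to an isometry on $\partial Y$ and is a homotopy equivalence, I would argue that $f(A_\alpha)$ must "wrap" around a corresponding feature of $T$ — either $\alpha \in \arc(T)$ with a comparably large weight, or $A_\alpha$ is forced to backtrack, which (combined with $1$-Lipschitzness and a count of the total boundary length / Euler characteristic) can only happen boundedly often and by a bounded amount. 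For the \emph{upper bound} (no weight of $T$ much exceeds that of $\Sp(Y)$, and arcs in the symmetric difference are short): here I would run the symmetric argument using that $\pi \colon Y \to \Sp(Y)$ is $1$-Lipschitz and the closest-point structure is extremal — any edge of $T$ of weight $W$ pulls back under $f$ (after homotoping $f$ to agree with $\pi$ away from a bounded region, using that both are deflations) to a subsurface of $Y$ of width at least roughly $W$, which then forces a dual arc of $\Sp(Y)$ of comparable weight by the spine coordinates (Theorem \ref{thm:spinecoords}); the number and size of exceptional arcs is again controlled by a combinatorial bound in terms of $\chi(\Sigma)$.

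The main obstacle I expect is the bookkeeping around arcs in the symmetric difference $\arc(\Sp(Y)) \, \Delta \, \arc(T)$: a priori $f$ could be homotoped so that it collapses some thin part of $Y$ in a way that destroys an arc of $\Sp(Y)$ while creating an unrelated short arc of $T$, and one must rule out that this can happen with \emph{large} weight. The key estimate controlling this is that the total weight that can "disappear" is bounded by the total length of the parts of $\partial Y$ lying in thin collars, which by the collar lemma is bounded in terms of the systole and the number of collars, hence in terms of $\chi(Y)$; making this precise — i.e., showing that a deflation cannot trade a long edge of $\Sp(Y)$ for a long edge of $T$ of a genuinely different homotopy type without violating either the Lipschitz bound or the isometry-on-$\partial Y$ condition — is the heart of the proof. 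A secondary technical point is choosing a good representative of $f$ within its homotopy class so that the preimages of edges of $T$ are controlled; I would address this by first straightening $f$ relative to $\partial Y$ and using the product structure of the spikes of $Y$ (the orthogeodesic foliation) to organize the preimages.
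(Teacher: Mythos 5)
The comparison framework you propose (matching arc weights of $\Sp(Y)$ and $T$, one direction at a time) is indeed the paper's strategy, and the difficulty you flag — controlling arcs in the symmetric difference — is the crux. But two of your proposed moves don't work. The homotopy "moving $f$ to agree with $\pi$ away from a bounded region" is not available: $T$ and $\Sp(Y)$ can have entirely different combinatorial types, so there is no common target near which the two maps can be compared; every estimate in the paper is made directly on $f$ using only its two defining properties ($1$-Lipschitz, isometric on $\partial Y$). And the appeal to the collar lemma is a red herring: the additive error comes from $\rad(Y) := \sup_y d(y,\partial Y)$, half the length of the longest singular leaf of $\cO_{\partial Y}(Y)$, which Lemma \ref{lem:rad unif bdd} bounds in terms of systole and $\chi(\Sigma)$ — but this is not a collar quantity, and the boundary of $Y$ need not be in a collar at all. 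The "easy" direction then falls out of Lemmas \ref{lem:long edges come from bands} and \ref{lem:lengthcomparison T<Y}: if $\alpha\notin\arc(Y)$, there is a singular orthogeodesic leaf of length $\le 2\rad(Y)$ crossing $\alpha$, whose endpoints lie on boundary geodesics separated by the edge of $T$ dual to $\alpha$, and $1$-Lipschitzness immediately caps $c_\alpha(T)$ — no pullback subsurface or spine coordinates needed.

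The genuine gap is the hard direction: why a long edge of $\Sp(Y)$ forces a comparably long edge of $T$. Your "wrap around / bounded backtracking" heuristic is not a substitute for the geometric input this requires, and a combinatorial Euler-characteristic count of backtracks does not bound anything here. The paper's argument (Lemma \ref{lem:thickthin_deflation} plus Proposition \ref{prop:short leaves deflate to same edge}) is a precise hyperbolic-trigonometric estimate: for the two boundary geodesics bounding the band dual to $\alpha$, the hyperbolic distance across a Saccheri quadrilateral is $2\sinh^{-1}\bigl(\cosh(Q)\sinh(\varepsilon/2)\bigr)$, logarithmic in $Q$, while the tree distance must grow linearly if the images diverge; $1$-Lipschitzness therefore forces the images to share an edge once $\varepsilon$ is below a universal $\varepsilon_0$, and Busemann cocycles then estimate how much of the band lands in that edge. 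There is a more elementary route closer in spirit to your picture: in the universal cover, the boundary of the band dual to $\alpha$ is a loop made of two arcs of $\partial\widetilde Y$ of length $c_\alpha(\Sp(Y))$ and two singular leaves of length $\le 2\rad(Y)$; its image in the tree $\widetilde T$ is a loop, hence degenerate, so the two long arcs overlap over a segment of length $\ge c_\alpha(\Sp(Y)) - 4\rad(Y)$; and then the ribbon structure (as in Lemma \ref{lem:deflation_doubleback}) forces that overlap to lie within a single edge, since two distinct boundary geodesics of the ribbon thickening cannot make the same turn at a vertex. Even this alternative needs the ribbon-structure step you do not mention — without it, the overlap could pass through a branch vertex and $T$ would have no single long edge dual to $\alpha$ — so either way a genuine geometric lemma is doing the work your sketch attributes to bookkeeping.
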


Deflations play well with boundary-tight maps (Lemma \ref{lem:composite_deflation}), so we conclude:

\begin{maincor}\label{cor:rescale}
Suppose that $Y \to Y'$ is an $L$-Lipschitz, boundary-tight map. Then,
\[\Sp(Y) = (1/L) \cdot \Sp(Y') + O(1).\]
\end{maincor}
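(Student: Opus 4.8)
The goal is to derive the statement about $\Sp(Y)$ and $\Sp(Y')$ from Theorem \ref{mainthm:deflate}, which concerns a single surface together with a deflation to a ribbon graph. The plan is to produce, from the boundary-tight map $f\colon Y \to Y'$, a deflation $Y \to (1/L)\cdot\Sp(Y')$ and then invoke Theorem \ref{mainthm:deflate}.

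First I would recall the canonical deflation $\pi'\colon Y' \to \Sp(Y')$ given by closest-point projection to $\partial Y'$; by the discussion preceding Corollary \ref{cor:rescale} it is $1$-Lipschitz, a homotopy equivalence, and an isometry on each boundary geodesic. The composite $\pi' \circ f\colon Y \to \Sp(Y')$ is then $L$-Lipschitz and a homotopy equivalence. The key point is to understand its behavior on $\partial Y$: since $f$ is boundary-tight, it stretches each boundary geodesic of $Y$ affinely by exactly $L$, and $\pi'$ is an isometry on $\partial Y'$, so $\pi'\circ f$ stretches each boundary geodesic of $Y$ affinely by exactly $L$. Rescaling the metric on the target ribbon graph by $1/L$ — i.e.\ passing to $(1/L)\cdot \Sp(Y')$, which is again an element of $\TRG(\Sigma)$ — turns $\pi'\circ f$ into a map $Y \to (1/L)\cdot\Sp(Y')$ that is $1$-Lipschitz, a homotopy equivalence, and restricts to an isometry on each boundary geodesic. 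That is precisely a deflation; this is the content I would package as Lemma \ref{lem:composite_deflation}.

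Now apply Theorem \ref{mainthm:deflate} to this deflation $Y \to (1/L)\cdot\Sp(Y')$: it yields
\[
\Sp(Y) = (1/L)\cdot \Sp(Y') + O(1),
\]
with implicit constant depending only on the systole and Euler characteristic of $Y$, which is exactly the claimed estimate. (Note the Euler characteristic of $Y'$ equals that of $Y$, so no separate dependence enters.)

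The only genuine step requiring care is verifying that $\pi'\circ f$, after rescaling, satisfies condition (2) in the definition of deflation — that it is an \emph{isometry}, not merely $1$-Lipschitz, on each boundary geodesic. This is where boundary-tightness is used in an essential way: an $L$-Lipschitz map that is not boundary-tight could fold or fail to be affine on the boundary, and then $\pi'\circ f$ would only be $1$-Lipschitz there after rescaling, not an isometry, so Theorem \ref{mainthm:deflate} would not apply. One must also check that $\pi'$ genuinely restricts to an isometry on boundary geodesics even in the crowned case, and that the homotopy-equivalence property is preserved under composition and rescaling; these are routine but should be stated. I expect the bulk of the work to live in Lemma \ref{lem:composite_deflation}, after which the corollary is immediate.
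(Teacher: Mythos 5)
Your argument is exactly the paper's: the composition of the boundary-tight map $f\colon Y\to Y'$ with the canonical deflation $Y'\to\Sp(Y')$, after rescaling by $1/L$, is a deflation $Y\to(1/L)\cdot\Sp(Y')$ (this is Lemma~\ref{lem:composite_deflation}), and then Theorem~\ref{mainthm:deflate} gives the claim. Your observation that boundary-tightness is what makes the rescaled composite an isometry on boundary geodesics (rather than merely $1$-Lipschitz) correctly identifies where the hypothesis is used.
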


Negative curvature implies that long edges of $\Sp(Y)$ correspond to bands of $Y$ where $\partial Y$ is very close to itself. With this viewpoint, Corollary \ref{cor:rescale} is stating that any boundary-tight map roughly stretches each one of these bands by its Lipschitz constant.

\begin{remark}\label{rmk:isolate}
If the set of arcs dual to long edges of $T$ does not fill $\Sigma$, then there are $T'$ with $T' = T + O(1)$ whose topological type varies wildly from that of $T$
(for example, apply a partial pseudo-Anosov supported on the subsurface of $\Sigma$ filled by short edges).
An analogous phenomenon also appears in the setting of boundary-tight maps: if $Y$ contains short enough essential curves, then the collars about those curves can be used to insulate a subsurface $U \subset Y$ from the affine stretch on $\partial Y$. One can then deform the geometry on $U$ at will so long as it is done at a slower rate than $\partial Y$ stretches.

Complementarily, if $Y$ does not contain short curves, then the arcs dual to long edges of $T$ must fill $\Sigma$ (Lemmas \ref{lem:big sys then fill} and \ref{lem:lengthcomparison T<Y}) and so the topological type of $T$ can vary only in a much more constrained range.
\end{remark}

Conversely, we prove that up to the constraint in Corollary \ref{cor:rescale}, there are boundary-tight maps (and tight maps) with essentially any prescribed behavior.

When $X$ is closed surface and $\lambda$ a lamination, it is not always the case that every topological type of ribbon graph appears as the spine of $X \setminus \lambda$; this is due to a generalization of the fact that if two simple closed curves are glued together they must have the same boundary length (see Theorem \ref{thm:imageofcut}).
Say that a marked ribbon graph $\Gamma$ is 
{\bf $(S \setminus \lambda)$-compatible} if there exists some $X \in \cT(S)$ such that $\Sp(X \setminus \lambda)$ is a metric structure on $\Gamma$.

\begin{maintheorem}\label{mainthm:itin}
Suppose that $\Gamma_0, \ldots, \Gamma_n$ are topological types of ribbon graph spines for $\Sigma$ and that $\Gamma_i$ is obtained from $\Gamma_{i+1}$ by collapsing edges.
Then there exist metrics $T_i \in \TRG(\Sigma)$ on $\Gamma_i$ with all long edges, hyperbolic structures $Y_i \in \cT(\Sigma)$ with $\Sp(Y_i) = T_i + O(1)$, and boundary-tight homeomorphisms
\[Y_0 \to Y_1 \to \cdots \to Y_n.\]
Moreover, if $\lambda$ is a measurable lamination on a closed surface $S$ and all $\Gamma_i$ are $(S \setminus \lambda)$-compatible,
then $T_i$ can be taken such that
there exist $X_i \in \cT(S)$ with $\Sp(X_i \setminus \lambda) = T_i + O(1)$ and tight homeomorphisms
\[X_0 \to X_1 \to \cdots \to X_n\]
whose tension laminations all contain $\lambda$.
\end{maintheorem}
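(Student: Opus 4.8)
The plan is to build the chain $Y_0 \to \cdots \to Y_n$ inductively from the top down, using the smooth orthogeodesic foliation to convert combinatorial data (a sequence of edge collapses) into explicit boundary-tight maps. First I would choose the metric structures $T_i$ on the given ribbon graphs $\Gamma_i$ to be ``almost flat'' in the sense that every edge is extremely long, with the length of an edge of $\Gamma_i$ chosen to dominate (say by a factor much larger than the implicit $O(1)$ constants and larger than the number of remaining collapses) the length of the corresponding edge of $\Gamma_{i+1}$. Concretely, if $\Gamma_i$ is obtained from $\Gamma_{i+1}$ by collapsing a set of edges $E_i$, then on the surviving edges $T_i$ should approximately agree with $T_{i+1}$ scaled up by a large factor $L_{i+1}$, and the collapsed edges of $E_i$ simply disappear from $\arc(\Gamma_i)$; this is exactly the relation $T_i = L_{i+1}\cdot T_{i+1} + O(1)$ up to the symmetric-difference clause in the definition of $= \cdot + O(1)$. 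Since every $\Gamma_i$ is a ribbon graph spine for $\Sigma$, the spine homeomorphism $\Sp \colon \cT(\Sigma) \to \TRG(\Sigma)$ of Theorem \ref{thm:spinecoords} lets me pick $Y_i \in \cT(\Sigma)$ with $\Sp(Y_i) = T_i$ on the nose, and then certainly $\Sp(Y_i) = T_i + O(1)$.

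The core construction is the boundary-tight map $Y_{i} \to Y_{i+1}$ realizing a single collapse (or a batch of collapses). Here I would use the smooth orthogeodesic foliation of the first author and Farre: the surface $Y_{i}$ carries a foliation transverse to $\partial Y_i$ whose leaves project orthogonally to the boundary, and whose space of leaves records exactly the ribbon-graph / arc data. Stretching along the orthogeodesic direction by $L_{i+1}$ while contracting in the leaf direction produces a map that is affine by $L_{i+1}$ on $\partial Y_i$ and does not expand leaf-lengths (which decay like horocycles), hence is $L_{i+1}$-Lipschitz and boundary-tight, exactly as in the Thurston prototype recalled in the introduction. To additionally \emph{collapse} the arcs in $E_i$, I would degenerate the corresponding bands: because those edges are short in $T_i$ (they have been collapsed, so they are not present — but the dual bands in $Y_i$ are thin), one can further compose with a map that contracts these thin bands to arcs without increasing the Lipschitz constant, landing in a surface whose spine is $\Gamma_{i+1}$-shaped. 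The verification that the composite is still globally $L_{i+1}$-Lipschitz and boundary-tight is where I expect to lean most heavily on the technical estimates of the paper (the orthogeodesic-foliation machinery and Lemmas \ref{lem:big sys then fill}, \ref{lem:lengthcomparison T<Y}, \ref{lem:composite_deflation}). Composing the $Y_i \to Y_{i+1}$ across $i$ and using Lemma \ref{lem:composite_deflation} (deflations compose with boundary-tight maps) gives the full chain, and Corollary \ref{cor:rescale} is consistent with — indeed forces — the scaling relation we built in.

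For the second, closed-surface half of the statement, the new ingredient is the compatibility hypothesis: each $\Gamma_i$ is $(S\setminus\lambda)$-compatible, so by definition there is \emph{some} $X \in \cT(S)$ with $\Sp(X\setminus\lambda)$ supported on $\Gamma_i$. The issue, flagged before Theorem \ref{thm:imageofcut}, is that not every metric on $\Gamma_i$ arises this way — gluing $S\setminus\lambda$ back up along $\lambda$ imposes matching conditions on boundary lengths (a generalization of ``the two sides of a curve have equal length''). So I would first identify the closed subset $\mathcal{C}_i \subset \TRG(\Sigma)$ of metric ribbon-graph structures on $\Gamma_i$ that are realized as $\Sp(X_i\setminus\lambda)$ for some $X_i \in \cT(S)$ — by Theorem \ref{thm:imageofcut} this is cut out by the gluing constraints and is nonempty by the compatibility hypothesis — and then I would redo the choice of $T_i$ from the first paragraph \emph{within} $\mathcal{C}_i$, still arranging all edges to be very long and the scaling relation $T_i \approx L_{i+1} T_{i+1}$ to hold up to $O(1)$. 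The point is that the gluing constraints are (roughly) linear and scale-invariant, so the ``all long edges, approximately rescaled'' regime is a large open cone inside $\mathcal{C}_i$ and one can choose the $T_i$ compatibly down the chain. Having done so, cutting the surfaces $X_i$ along $\lambda$ gives $X_i \setminus \lambda \in \cT(\Sigma)$ with the prescribed spines, the first part produces boundary-tight maps $X_i\setminus\lambda \to X_{i+1}\setminus\lambda$ that stretch $\partial(X_i\setminus\lambda)$ — hence $\lambda$ — affinely by $L_{i+1}$, and one extends these across $\lambda$ by the affine stretch on $\lambda$ itself (the standard Thurston-stretch gluing) to get tight homeomorphisms $X_i \to X_{i+1}$ with tension lamination containing $\lambda$. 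The main obstacle throughout is the second paragraph: showing that the explicit orthogeodesic-stretch-then-collapse map is genuinely optimal (boundary-tight) and not merely boundary-affine, i.e.\ controlling the Lipschitz constant of the collapsing step on the thin bands; the closed-surface refinement is then mostly a matter of staying inside the compatibility locus, which is qualitatively easier because that locus is scale-invariant and cut out by the clean gluing description of Theorem \ref{thm:imageofcut}.
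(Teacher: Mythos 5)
Your high-level plan — choose metrics $T_i$ with long edges, build one boundary-tight map per collapse, and for the closed case stay inside the gluing-compatibility locus and then glue across $\lambda$ — is the right shape. But the central construction step is not merely incomplete, it is wrong, and the paper's actual proof is structurally different in a way your outline cannot be patched to reach.

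The core problem is your claim that ``stretching along the orthogeodesic direction by $L_{i+1}$ while contracting in the leaf direction\dots is $L_{i+1}$-Lipschitz.'' This is false, and the paper points this out explicitly (Example~\ref{example:nonLip}): near a vertex of the spine, two leaves of the orthogeodesic foliation are tangent, and the map that affinely stretches tangent h-cycles at different rates has Lipschitz constant blowing up as one approaches the vertex. So the na\"ive orthogeodesic stretch fails to be Lipschitz at all, let alone boundary-tight. Fixing this is exactly the content of Section~\ref{sec:construct horogon}: the polygon is cut into inscribable h-gons, rectangles and spikes (Lemma~\ref{Lem:HgonDecomp}); one replaces the affine stretch near vertices with a radially-parametrized ``standard map'' (Lemma~\ref{lem:HLip}), extends across each h-gon via the hyperbolic Kirszbraun--Valentine theorem (Theorem~\ref{Thm:KV}, Proposition~\ref{Prop:HLip}), carefully matches these across rectangles and spikes with a globally compatible system of truncation radii (Lemma~\ref{lem:exists_truncation}), and then proves a uniform Lipschitz bound (Theorem~\ref{thm:scalespineup}). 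Your proposal invokes none of this and instead refers to ``the technical estimates'' via Lemmas~\ref{lem:big sys then fill}, \ref{lem:lengthcomparison T<Y}, \ref{lem:composite_deflation}, which are obstruction-side results and do not construct anything.

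A second, independent gap: you fix $Y_i$ in advance with $\Sp(Y_i) = T_i$ exactly and then try to build a map $Y_i \to Y_{i+1}$. But a hand-built boundary-tight map out of $Y_i$ will land on whatever surface it lands on; you have no control ensuring the target is the preselected $Y_{i+1}$, and your ``collapse thin bands'' step has no construction behind it. The paper avoids this by constructing surfaces and maps simultaneously from the other end: it glues (nearly-)regular ideal polygons along spun arcs with large shears (Construction~\ref{constr:gluereg}, Lemma~\ref{lem:bigshears_closegeos}), uses Corollary~\ref{cor:scalespinedown} to build boundary-tight maps between the polygon pieces, glues those maps, and only \emph{afterwards} verifies $\Sp(Y_i) = \underline{C}_i + O(1)$ using the deflation machinery (Propositions~\ref{prop:glue_reg_deflate}, \ref{prop:short leaves deflate to same edge}, Corollary~\ref{cor:thin bands long edges}). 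The Remark closing Section~\ref{subsec:itin} explains why one cannot even apply Construction~\ref{constr:gluereg} to each $\underline{C}_i$ directly — the intermediate polygons acquire nonzero residues — which is precisely the sort of obstruction your scheme of prescribing $Y_i$ up front would run into. Finally, a smaller slip: you have the scaling direction backward. A boundary-tight $Y_i \to Y_{i+1}$ expands the boundary, so the surviving edges of $T_{i+1}$ should dominate those of $T_i$ (as in $\underline{C}_i = \underline{C}_{i+1}/L_i + O(1)$ in the paper), not the reverse as you wrote. Your closed-surface paragraph, by contrast, is essentially in line with what the paper does via $\Base$, Theorem~\ref{thm:imageofcut}, Proposition~\ref{prop:reg_glue_res}, and Theorem~\ref{thm:glueoptimal}.
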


Theorem \ref{mainthm:itin} is proved by gluing together (nearly-)regular ideal polygons and leveraging the maps from Theorem \ref{mainthm:polygon}.
While certain details are subtle, our construction is overall quite flexible and we actually prove a stronger statement which allows us to specify the metrics $T_i$ much more precisely. See Theorem \ref{thm:itineraries} and its proof.

\subsection{Walls, chambers, and envelopes}\label{subsec:envelopes}
The space of metric ribbon graphs $\TRG(\Sigma)$ carries a natural polyhedral structure. Each combinatorial type of spine $\Gamma$ for $\Sigma$ determines an orthant, with linear coordinates given by lengths of the edges.
If $\Gamma$ can be obtained from $\Gamma'$ by collapsing edges, then the orthant corresponding to $\Gamma$ appears in the boundary of the one corresponding to $\Gamma'$ by sending the lengths of the edges of $\Gamma' \setminus \Gamma$ to $0$.
When $\Sigma$ is the complement of a lamination $\lambda$ on $S$, the possible spines which occur must satisfy a gluing condition, which cuts out a subspace in each orthant of $\TRG(\Sigma)$.
When one orthant (or subspace) lies in the boundary of another, we refer to the boundary as a {\bf wall}, and to the interior as a {\bf chamber.}

For $X \in \T(S)$, the {\bf out-} (respectively {\bf in}-){\bf envelope} at $X$ with tension lamination $\lambda$ is 
\[ \Out(X,\lambda) = \{ X' \in \T(S) : \lambda = \lambda(X,X') \},
\qquad
\In(X,\lambda) = \{ X' \in \T(S) : \lambda = \lambda(X',X) \}. \] 
Equivalently, this is the union of all geodesics for the Lipschitz metric on $\cT(S)$ beginning (respectively, ending) at $X$ with tension lamination $\lambda$ (see Section \ref{subsec:Thmet_basics}).

Our theorems can also be interpreted as a description of the image of envelopes under
$\Sp_\lambda: \T(S) \to \TRG(S \setminus \lambda),$
the map that cuts along $\lambda$ and then takes the spine of the resulting surface.
Theorem \ref{mainthm:deflate} implies that any Thurston geodesic $X_t$ with tension lamination $\lambda$ must coarsely respect the polyhedral structure of $\cT(S \setminus \lambda)$.
That is, if $\Sp_\lambda(X_t)$ lies close to a wall, then it can wander within a uniform neighborhood of that wall.
However, as soon as $\Sp_\lambda(X_t)$ travels sufficiently far into the interior of an adjoining chamber, it must move deeper into that chamber as $t \to \infty$, never to return to the wall. 
See Theorem \ref{thm:lifecycle} for a more thorough discussion of this phenomenon.
Conversely, Theorem \ref{mainthm:itin} says that one can always move from a wall into an adjoining chamber.

In particular, if $X$ is sufficiently deep in a wall, then $\Sp_\lambda(\Out(X,\lambda))$ is (coarsely) contained in the union of the adjoining chambers, and (coarsely) contains an open cone in each. Similarly, if $X$ is sufficiently deep in a chamber, then $\Sp_\lambda(\In(X,\lambda))$ is (coarsely) contained in the closure of that chamber, and meets each of the adjoining walls.
See Figure \ref{fig:envelopes}.

\begin{figure}[ht!]
    \centering
    \includegraphics[width=0.6\linewidth]{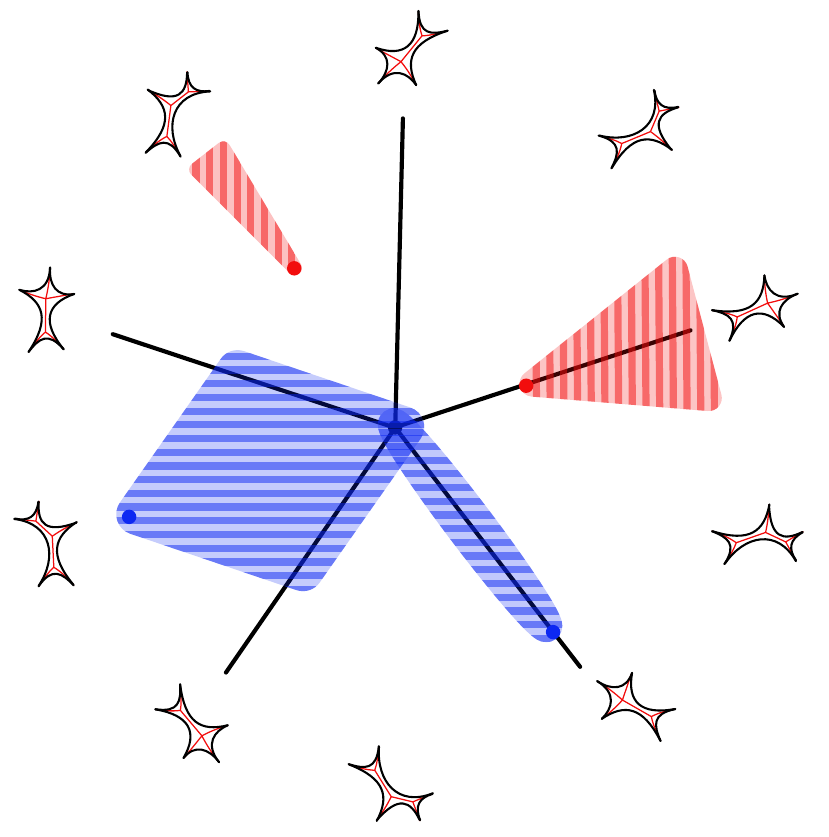}
    \caption{Envelopes in the Teichm{\"u}ller space of ideal pentagons. The unbounded, red, vertically-striped cones are the set of $P$ admitting a boundary-tight map from the tip of the cone (i.e., out-envelopes).
    The bounded, blue, horizontally-striped regions are those $P$ which map to the indicated points (i.e., in-envelopes).
    The center point is the regular pentagon; its out-envelope is the entire Teichm{\"u}ller space, while its in-envelope (not pictured) is a small open neighborhood of the center.
    }
    \label{fig:envelopes}
\end{figure}

\subsection{Context}
In a widely-circulated unpublished 1986 preprint \cite{Th_stretch}, Thurston proved that for any $X, X' \in \T(S)$, the Lipschitz constant of any tight map between them is the supremum that any curve is stretched:
\[ \inf_{f:X \to X'} \text{Lip}(f) = \sup_{\alpha} \frac {\ell_{X'}(\alpha)}{\ell_X(\alpha)}\]
where the supremum is taken over all closed curves and $\ell_X(\alpha)$ is the hyperbolic length of the geodesic representative of $\alpha$ on $X$.
By taking the log of these expressions, he defined an asymmetric metric on $\T(S)$ now called the {\bf Thurston metric}:
\[d(X,X') = \inf_{f:X \to X'} \log \text{Lip}(f).\]
The infinitesimal form of this description also yields an (asymmetric) Finsler structure on $\T(S)$ inducing the metric. The metric is geodesic, though typically not uniquely so, reflecting the non-uniqueness of tight maps.

After Thurston’s work, the geometry of the Thurston metric remained largely unexplored for over a decade, and much of the attention has focused on constructing new examples of tight maps \cite{PTmaps, Pap_Yam, HPtori} and analogies with the Teichm{\"u}ller metric.
Its asymptotic geometry was first analyzed by \cite{Pap:extension}, then later by \cite{walsh:HB}, who also identified the isometry group of $(\T(S),d)$ for most $S$. 
In \cite{Li:LengthSpectrums2003,ChoiRafi07}, it was shown that the Teichm{\"u}ller and Thurston metrics roughly agree on the thick part of $\T(S)$, but can diverge sharply in the thin part, and in \cite{Telp_Masur} it was shown that Masur's criterion (linking recurrence properties and unique ergodicity) fails for the Thurston metric.
The large-scale behavior of Thurston geodesics has been studied in detail in \cite{LRT12, LRT15, DLRT20, LMRT24}, and the Finsler structure has been elucidated in recent work \cite{Pan20,HOP21, BNOP25}.
We direct the reader to the surveys \cite{PapadopTheret:Teich_Thurston, PSsurvey}.


There has been a recent surge of interest in the theory, driven by a number of newly-discovered connections with other fields.
The structure of tight maps to circles is related to the horocycle flow on abelian covers of hyperbolic surfaces \cite{FLM2}, while maps which shrink or grow the lengths of all geodesics can be used to study Margulis spacetimes \cite{DGK} and self-joinings of hyperbolic manifolds \cite{KMO1}.
Thurston geodesics renormalize the earthquake flow on the bundle of measured laminations over moduli space, so are intricately linked to counting simple closed curves, and are often mapped to Teichm{\"u}ller geodesics by the bridge linking the two worlds \cite{MirzEQ, shshI, shshII}.
By looking at $p$-harmonic maps as $p \to \infty$, Daskalopoulos and Uhlenbeck laid an analytic framework for the theory of the Thurston metric, recovering many of Thurston's theorems and exhibiting a duality between tight maps and earthquakes \cite{DU1, DU2, DU3}.

We mention in particular recent progress of Pan and Wolf,
who use limits of harmonic maps with growing energy to build canonical ``harmonic stretch rays'' between any two $X, X' \in \T(S)$ \cite{PW_rays}.
As an application of this technique, they analyzed the topological structure of envelopes of Thurston geodesics and proved a number of regularity properties \cite{PW_envelopes}.
The geometry of harmonic stretch rays is governed by that of optimal Lipschitz maps to trees (\cite[Section 13]{PW_rays} and \cite[Section 6]{PW_envelopes}), so the current paper can be seen as complementary to their approach, bringing in new hyperbolic-geometric arguments to bear on these problems.

\subsection{Outline of paper}
In the introductory Section \ref{sec:background}, we review a number of basic results about hyperbolic geometry and geodesic laminations, and also serves to set notation.
This section also contains a discussion of some of the main objects of the paper, including the construction of the spine of a surface and its dual arc system.
Section \ref{sec:tightmaps} collects basic facts about tight maps between hyperbolic surfaces and establishes some elementary properties of boundary-tight maps.

With these preliminaries taken care of, in Section \ref{sec:obstructpoly} we introduce deflations and use them to prove the necessity part of Theorem \ref{mainthm:polygon}.
The purpose of doing this first is that our proof will contain many of the main ideas of our general estimates on deflations, but they are stronger for ideal polygons and the key features are more apparent.
We prove Theorem \ref{mainthm:deflate} in Section \ref{sec:generalobstruct} by coarsening these arguments and tracking error terms. This Section also contains Theorem \ref{thm:lifecycle}, which elaborates on our discussion from Section \ref{subsec:envelopes}.

Having established our results obstructing boundary-tight maps, we turn in Section \ref{sec:construct horogon} to constructing them.
Using a smooth realization of the orthogeodesic foliation by horocycles and hypercycles, we build the maps guaranteed in Theorem \ref{mainthm:polygon} by specifying them on certain geometric pieces coming from $\Sp(Y)$.
This is the most technical part of the paper, because we are explicitly constructing maps and proving uniform bounds on their Lipschitz constants.
Our arguments are quite flexible and we derive a number of other results which we believe are of independent interest (see in particular Theorem \ref{thm:scalespineup}).
In Section \ref{sec:itin}, we bootstrap up to arbitrary surfaces by gluing together (nearly-)regular polygons with large shears. The resulting surfaces look almost like a union of (nearly-)regular polygons, so we can glue the maps from Theorem \ref{mainthm:polygon} together to exhibit any desired behavior. This proves Theorem \ref{mainthm:itin}.

\subsection*{Acknowledgments}
The first author would like to express his gratitude to James Farre and Alex Nolte for a number of useful conversations about tight maps. 
We thank Alex Nolte and Huiping Pan for helpful comments on a draft of this paper.
AC was supported by NSF grants
DMS-2202703 and DMS-2506934. JT was supported by NSF grant DMS-2304920.

\tableofcontents

\section{Hyperbolic surfaces and geodesic laminations}\label{sec:background}

After recalling a few facts from basic hyperbolic geometry, in this section we discuss geodesic laminations and their complements (Section \ref{subsec:crownbasics}), orthogeodesic foliations and spines (Section \ref{subsec:spinebasics}), and the duality between ribbon graphs and arc systems (Section \ref{subsec:RGs and arcs}).

\subsection{Basic hyperbolic geometry}
Consider the Poincar\'e disk model of the hyperbolic plane $\mathbb{H}^2$. Let $C$ be a Euclidean circle that meets $\mathbb{H}^2$. The intersection $C \cap \mathbb{H}^2$ comes in four flavors. If $C \subset \mathbb{H}^2$, then $C$ is also a hyperbolic circle with appropriate center and radius. If $C$ is tangent to $\partial \mathbb{H}^2$, then $C$ is a horocycle. If $C$ meets $\partial \mathbb{H}^2$ in two points orthogonally, then $C \cap \mathbb{H}^2$ is a geodesic. If $C$ meets $\partial \mathbb{H}^2$ in two points but not orthogonally, then $C \cap \mathbb{H}^2$ is a \textbf{hypercycle}.
A hypercycle may equivalently be defined as the boundary of a half-regular neighborhood of the geodesic passing through the same endpoints.
If a hypercycle lies distance $d$ from the geodesic, then it has constant curvature $\tanh d \in (0,1)$. A horocycle has constant curvature 1, and a circle of hyperbolic radius $r$ has constant curvature $1 /\tanh r \in (1,\infty)$.

\subsection{Laminations, crowns, residues}\label{subsec:crownbasics}
Let $X \in \cT(S)$ be a marked, closed, hyperbolic surface of genus $g$. A {\bf geodesic lamination} $\lambda$ on $X$ is a closed union of pairwise disjoint complete geodesics.
We let $\GL(X)$ denote the space of all geodesic laminations on $X$, equipped with the Hausdorff topology on compact subsets. A lamination is said to be {\bf chain-recurrent} if it can be approximated by a sequence of multicurves; the (closed) subspace consisting of chain-recurrent laminations is denoted by $\GLcr(X)$.

Given any other $X' \in \cT(S)$, the marking maps yield a $(\pi_1(X), \pi_1(X'))$-equivariant identification of the ideal boundaries of $\widetilde{X}$ and $\widetilde{X}'$, and hence a natural identification of $\GL(X)$ and $\GL(X')$.
We will henceforth drop the dependence on the base surface and just write $\GL$ and $\GLcr$, leaving the topological type of the surface implicit.

If $X \in \cT(S)$ and $\lambda \in \GL$, the complement of $\lambda$ in $X$ is a (possibly disconnected) open hyperbolic surface.
We use $X \setminus \lambda$ to denote its metric completion.
Each component of $X \setminus \lambda$ is a finite-volume hyperbolic surface with (possibly) {\bf crowned boundary}; that is, its boundary is geodesic, but may consist of multiple asymptotic geodesics.
An ideal hyperbolic polygon is also considered a surface with a single crowned boundary.
We will reserve $S$ for the topological type of a closed surface and use $\Sigma$ to denote the topological type of a surface with possibly crowned geodesic boundary.

If $\Sigma$ is a crowned surface, then we say that an {\bf $n$-crown} $C$ is a chain of boundary geodesics $g_1, \ldots, g_n$ such that $g_i$ is asymptotic to $g_{i+1}$, with indices taken mod $n$.
When $n$ is even, an $n$-crown $C$ can be oriented (in one of two ways) by picking an arbitrary orientation on $g_1$, then propagating that orientation to the other $g_i$ such that asymptotic geodesics always point the same way.
The following definition will play an important role later in the paper (see Proposition \ref{prop:residues}).

\begin{definition}[Definition 2.9 of \cite{Gupta_wild}]
Let $S$ be a surface with crowned boundary, $C$ an $n$-crown of $S$ with $n$ even, and pick an orientation $\vec{C}$ of $C$.
Given $Y \in \cT(S)$, we define its {\bf metric residue} as follows.
Pick small horocycles based at each of the vertices of $C$ and let $\hat g_i$ denote the subsegment of $g_i$ running between the truncating horocycles. Then set 
\[\res_Y(\vec C) = \sum_{i=1}^n \varepsilon_i \ell(\hat g_i),\]
where $\varepsilon_i$ is $+1$ if $S$ lies on the left of $g_i$ and $-1$ otherwise.
\end{definition}

Denote the complementary components of $S \setminus \lambda$ (considered now as topological surfaces) by $\{\Sigma_j\}_{j=1}^m$.
Cutting $X\in \cT(S)$ along $\lambda$ induces a map 
\[cut_{\lambda}: \T(S) \to \prod_{j=1}^m \T(\Sigma_j)\]
and we let $\cT(S \setminus \lambda)$ denote the image of this map.\footnote{
In \cite{shshI}, the subspace $\cT(S \setminus \lambda)$ was denoted by $\Base$, and the product of Teichm{\"u}ller spaces of subsurfaces was denoted by $\cT(S \setminus \lambda)$. Our choice of notation in this paper aligns with \cite{spine}, in that $\cT(S \setminus \lambda)$ remembers that structures on the complementary components must be able to be glued together, and we have reserved $\Base$ for the corresponding subspace of the arc complex of $X \setminus \lambda$.}

When $\lambda$ supports a measure, we can give an exact description of $\cT(S \setminus \lambda)$ in terms of metric residues.
For each orientable component $\mu$ of $\lambda$, let $C_1, \ldots, C_{K(\mu)}$ denote the crowns of $S \setminus \lambda$ adjacent to $\mu$. Pick an arbitrary orientation on $\mu$ and use that to induce orientations on each $C_k$. Theorem 12.1 of \cite{shshI} then states the following:

\begin{theorem}\label{thm:imageofcut}
Suppose that $\lambda$ admits a measure of full support. Then the image of $cut_\lambda$ in $\prod_{j=1}^m \T(\Sigma_j)$ is the intersection of the vanishing loci
\[\sum_{k = 1}^{K(\mu)} \res_{Y_j}(\vec{ C_k}) = 0\]
as $\mu$ ranges over all orientable components $\mu$ of $\lambda$.
\end{theorem}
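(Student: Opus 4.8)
The plan is to prove both inclusions between $\mathrm{im}(cut_\lambda)$ and the intersection of the vanishing loci. Write $\mathcal R(\mu):=\sum_{k=1}^{K(\mu)}\res_{Y_j}(\vec C_k)$ for the total residue around $\mu$, each summand computed in whichever piece $Y_j$ carries $C_k$; reversing the chosen orientation of $\mu$ flips every $\varepsilon_i$ and hence flips $\mathcal R(\mu)$, so its vanishing is independent of all choices. Since $\lambda$ is the support of a transverse measure it has no isolated leaves, so $\lambda=\mu_1\sqcup\cdots\sqcup\mu_p$ is a finite disjoint union of minimal sublaminations, and the ``components'' of the statement are these $\mu_l$. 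I will treat a closed-leaf $\mu_l$ as a degenerate crown on each side, so that $\mathcal R(\mu_l)$ is the difference of the two boundary lengths of $\mu_l$ and the equation just asserts they agree; and a non-orientable minimal $\mu_l$ as imposing nothing, since there is no coherent orientation of its adjacent crowns (heuristically, going around an orientation-reversing loop would force the residue to equal its own negative). Finally, recall from the remark after the definition that $\varepsilon_i=-\varepsilon_{i+1}$ along a crown, so sliding the truncating horocycle at a shared vertex leaves $\res(\vec C)$ unchanged; thus we may compute $\mathcal R(\mu)$ with any convenient family of truncating horocycles, and compare such families across the regluing.

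\textbf{Necessity} ($\mathrm{im}(cut_\lambda)\subseteq$ vanishing locus). Fix $X$, cut along $\lambda$, and fix an oriented orientable minimal component $\vec\mu$ (the closed case being the trivial length comparison above). The boundary leaves occurring among the crowns adjacent to $\mu$ are finitely many leaves of $\mu$, each bounding a complementary region on one side, with the ``other side'' filled in by $\mu$ itself once we reassemble $X$. The idea is to realize $\mathcal R(\mu)$ as a \emph{period}: choose a horocyclic truncation near every vertex of every crown adjacent to $\mu$ by transporting it along $\mu$ using the orientation and the transverse measure $\nu$; then the summands of $\mathcal R(\mu)$ organize into the pairing $\langle\theta,\nu\rangle$ of a ``shear-discrepancy'' cocycle $\theta$ — encoding how $X$ looks in a neighborhood of $\mu$ — against $\nu$. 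Orientability is exactly what allows the truncation to be read off from a single globally defined function near $\mu$, so $\theta$ is a coboundary; since any transverse measure annihilates coboundaries, $\mathcal R(\mu)=0$. I would make this rigorous by covering $\mu$ with finitely many flow boxes for $\nu$, expanding $\mathcal R(\mu)$ box by box using the horocycle freedom noted above, and checking term-by-term cancellation from holonomy invariance of $\nu$ together with the switch conditions of a train track carrying $\mu$.

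\textbf{Sufficiency} (vanishing locus $\subseteq\mathrm{im}(cut_\lambda)$). Given $(Y_j)$ satisfying every residue equation, I would reconstruct some $X\in\cT(S)$ with $cut_\lambda(X)=(Y_j)$ by regluing the $Y_j$ along $\lambda$. Along a non-orientable minimal $\mu_l$ one may shear by an arbitrary positive-enough transverse cocycle with no constraint from the crown shapes; along a closed $\mu_l$ the hypothesis matches the two boundary lengths, so the sides glue with a free Fenchel--Nielsen twist; along an orientable minimal $\mu_l$ the vanishing of $\mathcal R(\mu_l)$ is precisely the solvability condition for the shear-discrepancy cocycle of the previous paragraph, so one can choose a transverse cocycle realizing the prescribed truncated crown data. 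To assemble these local gluings into an honest hyperbolic surface, I would approximate each $\mu_l$ by multicurves $\mu_l^{(n)}\to\mu_l$ (possible since $\lambda$, being the support of a measure, is chain-recurrent), glue the $Y_j$ along $\lambda^{(n)}=\bigsqcup_l\mu_l^{(n)}$ with twists chosen to realize the target crown data up to $o(1)$ error — the finitely many twist choices being mutually consistent exactly because of the residue equations — and extract a subsequential limit $X\in\cT(S)$, using completeness of $\cT(S)$ and the fact that the residue constraints keep the geometry off the boundary (no pinching). Then $cut_\lambda(X)=(Y_j)$ by construction, and since the loci in question are closed and affine in the natural length coordinates on $\prod_j\cT(\Sigma_j)$, this identifies $\mathrm{im}(cut_\lambda)$ with their intersection.

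\textbf{Main obstacle.} The crux is the passage in the Necessity step from the transparent closed-curve case to genuine minimal laminations: making ``telescoping around $\mu$'' precise amounts to identifying $\mathcal R(\mu)$ with an actual period of a transverse (H\"older) cocycle and showing that it vanishes — equivalently, that the relevant discrepancy cocycle near $\mu$ is a coboundary exactly when $\mu$ is orientable. Controlling the limit in the Sufficiency step — ensuring the reconstructed $X$ is non-degenerate and cuts back to $(Y_j)$ itself, rather than merely to something residue-compatible with it — is the secondary difficulty.
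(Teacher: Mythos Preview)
The paper does not prove this theorem: it is quoted as Theorem~12.1 of \cite{shshI}, so there is no in-paper argument to compare your proposal against. I can only assess your sketch on its own terms and against the coordinate framework that the cited source uses.

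Your necessity outline is in the right spirit but the mechanism is slightly misidentified. The total residue $\mathcal R(\mu)$ is a signed sum of hyperbolic \emph{lengths} of truncated boundary segments, not of transverse measures, so ``$\mathcal R(\mu)=\langle\theta,\nu\rangle$ for the transverse measure $\nu$'' is not literally what happens. In the shear (or shear--shape) coordinate picture used in \cite{shshI}, one completes $\lambda$ to a maximal $\hat\lambda$ and writes each crown residue as an explicit linear functional in the shear coordinates; summing over the crowns adjacent to an orientable $\mu$, orientability forces the coefficients to cancel pairwise across each leaf of $\mu$ (this is where the alternating signs $\varepsilon_i$ and the coherent orientation interact), so the identity $\mathcal R(\mu)=0$ drops out algebraically. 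Your ``telescoping around $\mu$ via flow boxes'' could be made to say the same thing, but as written it conflates the tangential (length) and transverse (measure) directions.

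Your sufficiency argument has a genuine gap. Approximating $\lambda$ by multicurves $\lambda^{(n)}$ does not let you ``glue the $Y_j$ along $\lambda^{(n)}$'': the pieces $Y_j$ are surfaces with \emph{crowned} boundary modeled on $S\setminus\lambda$, whereas the complementary subsurfaces of $\lambda^{(n)}$ have compact geodesic boundary and in general a different topological type, so there is nothing to glue. Even reinterpreted as ``build $X^{(n)}$ whose cut along $\lambda$ approximates $(Y_j)$'', you have supplied no construction of such $X^{(n)}$, which is exactly the content you need. The approach in \cite{shshI} is direct rather than by approximation: in shear(--shape) coordinates the map $cut_\lambda$ is affine, and one exhibits enough deformations of $X$ (shears along the leaves of $\hat\lambda\setminus\lambda$ together with adjustments of the transverse structure on $\lambda$) to hit every tuple $(Y_j)$ satisfying the residue equations. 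No compactness or limiting step is required, and none of the ``no pinching'' worries you raise arise.
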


Note that vanishing does not depend on sign, hence our choice of orientation on $\mu$ was unimportant. The condition in the theorem is a generalization to arbitrary laminations of the fact that if one component of $\lambda$ is a simple closed curve then the corresponding boundaries of $X \setminus \lambda$ must have the same length.

\begin{remark}
Theorem \ref{thm:imageofcut} is false for general geodesic laminations.
The issue is that when $\lambda$ contains spiraling leaves, the proof of Theorem \ref{thm:imageofcut} gives a linear condition on the residues of the pieces of $S \setminus \lambda$ together with shears along spiraling leaves. See Figure \ref{fig:badex}, and compare also with the discussion of residues in Section \ref{subsec:glue_reg}.
\end{remark}

\begin{figure}[ht]
    \centering
    \includegraphics[width=0.5\linewidth]{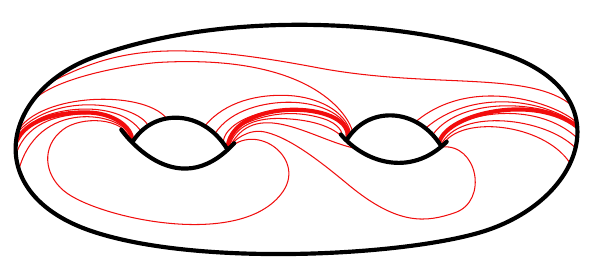}
    \caption{A lamination containing 3 simple closed curves and 4 spiraling leaves. Its complement is a union of two ideal 4-gons. There is no hyperbolic structure $X$ such that $X \setminus \lambda$ is a union of regular 4-gons, despite this satisfying the residue condition of Theorem \ref{thm:imageofcut}.    
    If one were to force the complementary components to be regular, the rightmost curve would have to have length 0 (compare Proposition \ref{prop:reg_glue_res}).}
    \label{fig:badex}
\end{figure}

\subsection{The spine of a surface}\label{subsec:spinebasics}
Let $\Sigma$ be a hyperbolic surface with boundary. 
To describe a hyperbolic structure on $\Sigma$, it is useful to have a system of coordinates that reflects the geometry of that structure.

Let $Y \in \cT(\Sigma)$. The {\bf orthogeodesic foliation} $\cO_{\partial Y}(Y)$ of $Y$ with respect to its boundary is the piecewise-smooth foliation whose leaves are the fibers of the closest-point projection map to $\partial Y$. It is equipped with an invariant transverse measure given by the pullback of the length element from $\partial Y$. 

The set of points which are closest to at least 2 points of $\partial Y$ form a piecewise-geodesic {\bf spine} $\Sp(Y)$ for $Y$ which inherits a ribbon structure (that is, a cyclic ordering of half-edges incident to each vertex) from the orientation of $Y$.
The transverse measure on $\cO_{\partial Y}(Y)$ equips this ribbon graph with a metric structure, in which the length of each edge of $\Sp(Y)$ is equal to the length of either of its projections to $\partial Y$. 
Equivalently, each vertex of an edge $e$ of $\Sp(Y)$ corresponds to singular leaves of $\cO_{\partial Y}(Y)$, and the length of $e$ is equal to the measure of any arc transverse to $\cO_{\partial Y}(Y)$ and connecting those leaves.
Note that this is {\em not} the same as the geodesic length of the edge.

Recall that any ribbon graph has a natural thickening to a surface with boundary.
A {\bf marking} of a ribbon graph $T$ is a choice of embedding $T \hookrightarrow \Sigma$ together with a deformation retract $\Sigma \to T$.
Let $\TRG(\Sigma)$ denote the space of marked metric ribbon graphs with the same topological type as $\Sigma$ whose vertices all have valence at least 3, with markings taken up to homotopy; note that the mapping class group of $\Sigma$ acts naturally by changing markings.
In the case where $\Sigma$ has crowned boundary, the ribbon graphs in $\TRG(\Sigma)$ must also have infinite rays emanating from vertices, corresponding to the spikes of $\Sigma$.
This space has a natural polyhedral structure determined by the combinatorics of the ribbon graph (see Section \ref{subsec:RGs and arcs} immediately below).

Performing the spine construction in families yields a map
\[\Sp: \cT(\Sigma) \to \TRG(\Sigma).\]
Generalizing a result of Luo for surfaces with compact boundary \cite{Luo} (as well as work of \cite{BowdEpst,Do,Ushijima}), Farre and the first author proved the following:

\begin{theorem}[Theorem 6.4 of \cite{shshI}]\label{thm:spinecoords}
If $\Sigma$ is a finite-volume surface with crowned boundary, then $\Sp$ is a $\Mod(\Sigma)$-equivariant (and stratified real-analytic) homeomorphism.
\end{theorem}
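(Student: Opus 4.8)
The plan is to establish the three assertions separately—$\Mod(\Sigma)$-equivariance, real-analyticity on strata, and the homeomorphism property—following the template of Luo's theorem for compact boundary \cite{Luo} (compare also \cite{BowdEpst,Do,Ushijima} and the Penner-style duality between spines and ideal cell decompositions), but upgrading each step to accommodate crowned ends. Equivariance is the easy part: the orthogeodesic foliation $\cO_{\partial Y}(Y)$, and hence the spine together with its transverse-measure metric, is extracted from the hyperbolic structure alone, so precomposing $Y$ by a mapping class $\phi$ precomposes the marking of $\Sp(Y)$ by $\phi$.

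For the analytic structure I would stratify both sides by combinatorial type. For a ribbon graph $\Gamma$ with all valences $\ge 3$, the topological type of $\Sigma$, and the correct number of infinite rays, let $\cT_\Gamma \subseteq \cT(\Sigma)$ be the locus of $Y$ with $\Sp(Y)$ of combinatorial type $\Gamma$, and let $\mathcal{O}_\Gamma = \RR_{>0}^{E(\Gamma)} \subseteq \TRG(\Sigma)$ be the corresponding open orthant (indexed by finite edges). The core task is to show that $\cT_\Gamma$ is a locally closed real-analytic submanifold, of dimension $\#E(\Gamma) = \dim\cT(\Sigma)$ when $\Gamma$ is trivalent, and that $\Sp|_{\cT_\Gamma}\colon \cT_\Gamma \to \mathcal{O}_\Gamma$ is a real-analytic diffeomorphism. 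Real-analyticity follows by expressing each edge length as the transverse measure of $\cO_{\partial Y}(Y)$ between two of its singular leaves: cutting along the dual arc system decomposes $Y$ into hyperbolic polygons (now permitted to have ideal vertices and geodesic-boundary sides), and the positions of the singular leaves—hence the edge lengths—are real-analytic functions of generalized Fenchel--Nielsen/shear coordinates on $\cT_\Gamma$. Bijectivity and nondegeneracy of $D\Sp$ I would get, as Luo does, by a hyperbolic-trigonometry computation showing the relevant Jacobian is (the restriction of) a definite form; equivalently, one reconstructs $Y \in \cT_\Gamma$ directly from a tuple of positive edge lengths by assembling the corresponding polygons and gluing, and checks this is inverse to $\Sp|_{\cT_\Gamma}$. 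One then verifies that these stratum-wise identifications are compatible with the face relations: collapsing an edge of $\Gamma$ sends its length to $0$ and lands in the stratum of the collapsed graph, matching the way $\mathcal{O}_\Gamma$ sits in the closure of the orthant of a refinement.

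To globalize, I would check that $\Sp$ is continuous on all of $\cT(\Sigma)$ (the spine varies continuously with $Y$, with combinatorial jumps occurring exactly where edge lengths cross $0$) and that it is proper: if $Y_k$ exits every compact subset of $\cT(\Sigma)$—a simple closed geodesic pinches, a boundary length degenerates, or a shear/twist parameter blows up—then a collar or thin-part estimate forces a corresponding edge length of $\Sp(Y_k)$ to tend to $0$ or $\infty$, so $\Sp(Y_k)$ exits every compact subset of $\TRG(\Sigma)$. A proper continuous bijection onto a locally compact Hausdorff space is a homeomorphism; combined with the stratum-wise real-analytic diffeomorphisms above, this yields the stratified real-analytic homeomorphism. (Alternatively, once $\Sp$ is known to be a local real-analytic diffeomorphism on the top stratum, properness together with a degree argument and invariance of domain give surjectivity and injectivity.)

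The main obstacle, I expect, is the crowned-boundary geometry, which is precisely what forces one beyond the compact-boundary arguments. In Luo's and Bowditch--Epstein's settings every complementary region of the spine is a compact hypercyclic annulus and every face of the dual decomposition is a compact right-angled polygon; with crowns, the complementary regions are non-compact ``crown neighborhoods'' containing spikes running to infinity, the dual faces acquire ideal vertices, and a crown contributes additional moduli (its shape, measured for instance by metric residues; compare Theorem \ref{thm:imageofcut}). Making the dimension count come out right, showing $\Sp|_{\cT_\Gamma}$ remains a local diffeomorphism near these ends, and proving properness as crown parameters degenerate are the steps requiring genuinely new input; this is where the orthogeodesic-foliation perspective of Section \ref{subsec:spinebasics} does the heavy lifting.
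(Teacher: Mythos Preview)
There is nothing in the present paper to compare your proposal against: Theorem~\ref{thm:spinecoords} is not proved here. It is quoted verbatim as ``Theorem~6.4 of \cite{shshI}'' and attributed to Farre and the first author, with the surrounding text explaining that it generalizes results of Luo, Bowditch--Epstein, Do, and Ushijima. The paper uses the statement as a black box (e.g., in the discussion of $\TRG(\Sigma)$ and in Section~\ref{subsec:RGs and arcs}) but supplies no argument for it.

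Your outline is a reasonable high-level sketch of how such a theorem is typically proved, and it correctly identifies the crowned-boundary case as the substantive new input, but since the actual proof lives in \cite{shshI} you would need to consult that paper to assess whether your approach matches theirs or diverges from it.
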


The metric residue conditions mentioned at the end of the previous subsection define linear equations on the spine coordinates.
Thus, if $S$ is a closed hyperbolic surface and $\lambda$ is a chain-recurrent lamination on $S$, then $\Sp$ identifies $\cT(S \setminus \lambda)$ with a polyhedral subcomplex of $\prod \TRG(\Sigma_j)$.
We note that not every topological type of ribbon graph on $S \setminus \lambda$ may occur in $\Sp(\cT(S\setminus \lambda))$,
because the combinatorics of the graph may not be compatible with the residue conditions (see, e.g., \cite[Figure 3]{twisttorus}).

\subsection{Ribbon graphs and arc systems}\label{subsec:RGs and arcs}
There is a natural duality between ribbon graphs of a given topological type and $\RR_{>0}$-weighted, filling arc systems on a surface.
This formalism will make our bookkeeping slightly nicer and will allow us to compare the geometry of different hyperbolic surfaces even when their spines do not have the same topological type.

Given a ribbon graph $\rg$ whose thickening has topological type $\Sigma$, one obtains a collection of arcs on $\Sigma$ by connecting points in $\partial \Sigma$ that come from thickening the same edge. These arcs break up into finitely many isotopy classes, and one can pick pairwise disjoint representatives of these classes to yield a(n isotopy class of) {\bf dual arc system} $\arc(\rg)$. By definition, the complement of $\arc(\rg)$ in $\Sigma$ is a union of polygons. If $\rg$ is also equipped with a metric, then one can weight each arc of $\arc(\rg)$ by the length of the edge dual to that arc.

When a ribbon graph arises as the spine of a hyperbolic surface $Y \in \cT(\Sigma)$, the dual arc system $\arc(Y) := \arc(\Sp(Y))$ can equivalently be obtained as the set of isotopy classes of leaves of the orthogeodesic foliation. Each isotopy class contains a unique geodesic representative orthogonal to $\partial Y$ at its endpoints, and we often use $\arc(Y)$ to refer to these representatives. It is important to note that these representatives may {\em not} be leaves of $\cO_{\partial Y}(Y)$, though they will be if they are short enough.

The discussion above implies that there is a natural piecewise-linear homeomorphism 
\[\TRG(\Sigma) \cong |\mathscr{A}_{\text{fill}}(\Sigma, \partial \Sigma)|_{\RR},\]
where the second term is the space of all positively weighted, filling arc systems on $\Sigma$ such that no arc is homotopic into a spike.\footnote{Observe that the correspondence we have described always results in an arc with weight $\infty$ truncating each spike, so it is not necessary to record those data.}
When $\Sigma$ is an ideal polygon the empty arc system is also filling.
See \cite[\S2.1]{Mond_handbook} or \cite[\S6]{shshI} for more detailed descriptions of this complex.

The space $|\mathscr{A}_{\text{fill}}(\Sigma, \partial \Sigma)|_{\RR}$ naturally sits inside the space of all weighted arc systems, and this viewpoint also allows one to take limits of weighted arc systems with weights going to $0$.
This allows us to circumvent many of the subtleties encountered when attempting to compactify the moduli space of ribbon graphs {\`a} la \cite{Kont_Witten}.
We caution the reader that the correct topology to take on the space of weighted arc systems is the metric, not CW, topology, and so some residual subtlety yet remains. See, e.g., \cite[\S8]{BowdEpst}.

\section{Optimal maps}\label{sec:tightmaps}

In this section, we collect statements about tight and boundary-tight maps (Section \ref{subsec:Thmet_basics}) and then prove some preliminary results using the structure of spikes of $X \setminus \lambda$ (Section \ref{subsec:preserve spikes}).

\subsection{Thurston metric}\label{subsec:Thmet_basics}
Given $X,X' \in \cT(S)$, a \textbf{change-of-marking} is a map $f: X \to X'$ in the homotopy class of $ \rho_{X'} \circ \rho_X^{-1}$, where $\rho_X : S \to X$ is the marking on $X$. Following Thurston \cite{Th_stretch}, we define the distance from $X$ to $X'$ to be
\[ d(X,X') = \inf_f \log \text{Lip}(f),\]
where $f: X \to X'$ is a change-of-marking map.
A change-of-marking map $f: X \to X'$ is \textbf{tight} if $\log \text{Lip}(f) =d (X,X')$.

Thurston showed that $d$ is an asymmetric metric on $\cT(S)$, meaning it satisfies the reflexive property and the triangle inequality but $d(X,X')$ may not coincide with $d(X',X)$.
Let us recall some results which we have already mentioned in the Introduction.

\begin{theorem}[\cite{Th_stretch}]
Given $X,X' \in \cT(S)$, the following statements hold.
\begin{enumerate} 
\item For simple closed curve $\alpha$ on $S$, denote by $\ell_X(\alpha)$ the length of the geodesic representative of $\alpha$ on $X$. Then
\[ d(X,X') =  \sup_\alpha \log \frac{\ell_{X'}(\alpha)}{\ell_X(\alpha)}.\]
\item There exists a tight homeomorphism $f: X \to X'$.
\item There exists a chain-recurrent lamination $\lambda=\lambda(X,X')$ such that every optimal map $f: X \to X'$ maps leaves of $\lambda$ affinely to leaves of $\lambda$ with slope Lip$(f)$, and $\lambda$ is maximal with respect to this property.
\item There exists a geodesic $\{X_t\}$ from $X$ to $X'$ such that for all $t$, $\lambda(X,X') \subseteq \lambda(X,X_t) \cap \lambda(X_t,X')$.  
\end{enumerate}
\end{theorem}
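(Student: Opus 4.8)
The plan is to prove the four items in order, with (1) carrying essentially all of the analytic content and (2)--(4) then extracted from the structure of the near-optimal maps built along the way. For (1), the inequality $d(X,X')\ge\sup_\alpha\log(\ell_{X'}(\alpha)/\ell_X(\alpha))$ is immediate: an $L$-Lipschitz change-of-marking $f$ sends the geodesic representative of a simple closed curve $\alpha$ to a loop on $X'$ freely homotopic to $\alpha$ of length at most $L\,\ell_X(\alpha)$, so $\ell_{X'}(\alpha)\le L\,\ell_X(\alpha)$; take logs, then the supremum over $\alpha$ and the infimum over $f$. The content is the reverse bound. Set $K=\sup_\alpha\ell_{X'}(\alpha)/\ell_X(\alpha)$; by density of weighted simple closed curves in $\ML$ and continuity of geodesic length, $K=\sup_{\mu\in\ML}\ell_{X'}(\mu)/\ell_X(\mu)$, and compactness of $\PML$ produces a maximizer $\mu_0$. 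Complete $\supp(\mu_0)$ to a maximal geodesic lamination $\Lambda$ on $X$ (with ideal-triangle complement) and equip $X$ with Thurston's partial horocyclic foliation transverse to $\Lambda$: the stretch map multiplying the transverse measure to $\Lambda$ by $e^t$ while preserving horocyclic leaves is $e^t$-Lipschitz and carries $X$ onto a surface on which $\mu_0$ has length $e^t\ell_X(\mu_0)$. The remaining point, and the one I expect to be the main obstacle, is to show that finite concatenations of such stretch segments---along possibly varying complete laminations---join $X$ to $X'$, and that the path can be chosen so that every intermediate stretch lamination contains $\supp(\mu_0)$; along such a path the Lipschitz constants multiply to exactly $K$, producing a change-of-marking $X\to X'$ of Lipschitz constant $K$ and hence $d(X,X')\le\log K$. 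This is where Thurston's argument does the real work: one needs both the flexibility to build stretch paths to an arbitrary target and a bookkeeping argument showing that the maximal-ratio lamination so constrains each segment that no slack accumulates.

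For (2), take change-of-marking maps $f_n\colon X\to X'$ with $\log\Lip(f_n)\to d(X,X')$. Being equi-Lipschitz maps between compact surfaces, they subconverge by Arzel\`a--Ascoli to a map $f$ with $\log\Lip(f)=d(X,X')$, which is therefore tight. A tight \emph{homeomorphism} then exists because the stretch-path maps constructed for (1) are themselves homeomorphisms---each stretch segment is a homeomorphism, being a diffeomorphism off its stretch locus and the identity on it---and their composition realizes the optimal Lipschitz constant.

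For (3) and (4), let $f$ be \emph{any} tight map. Then $\Lip(f)=K$ together with $\ell_{X'}(\mu_0)=K\,\ell_X(\mu_0)$ forces $f$ to preserve the $\mu_0$-length, and a lemma I would establish is that a Lipschitz map which does not shrink the length of a measured lamination must straighten its support to a geodesic lamination and act on it affinely with slope equal to its Lipschitz constant; hence $\supp(\mu_0)$ is maximally stretched by every tight map. One then sets $\lambda=\lambda(X,X')$ to be the maximal geodesic lamination stretched affinely with slope $K$ by every tight map---this requires checking that the union of the relevant geodesics is laminar and that the maximal such lamination is chain-recurrent, a leaf violating chain-recurrence being shown to escape the maximal stretch locus of some tight map---and $\lambda\ne\emptyset$ since it contains $\supp(\mu_0)$. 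This gives (3). For (4), run a stretch path from $X$ to $X'$ whose every intermediate stretch lamination completes $\lambda$; for each intermediate surface $X_t$ the restrictions of the path-map to the segments $[X,X_t]$ and $[X_t,X']$ are both tight and both maximally stretch $\lambda$, so $\lambda\subseteq\lambda(X,X_t)\cap\lambda(X_t,X')$, and reparametrizing by the metric $d$ makes $\{X_t\}$ a geodesic, as required.
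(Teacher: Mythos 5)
This theorem is stated in the paper as a citation to Thurston's preprint \cite{Th_stretch}; the paper gives no proof, so there is no in-paper argument to compare against, and your write-up has to be judged as a sketch of Thurston's original proof.

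As such it is a fair reconstruction of the skeleton: the easy inequality by pushing forward geodesic representatives, passing to $\PML$ by compactness to get a ratio-maximizing $\mu_0$, completing $\supp(\mu_0)$ to a maximal lamination and using Thurston's stretch maps along the horocyclic foliation, Arzel\`a--Ascoli for existence of a tight map in (2), and the ``no slack'' lemma (a $K$-Lipschitz map that preserves the $K$-scaled length of a measured lamination must straighten its support and act affinely with slope $K$) for (3). You also flag, honestly, the two places where all the work is: showing a finite concatenation of stretch segments reaches an arbitrary target with no accumulated slack, and showing that the union of maximally-stretched leaves is laminar and chain-recurrent.

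One gap you do not flag is in (4). From a concatenated stretch path along completions of $\lambda$, you only directly obtain that the particular restriction $f_t:X\to X_t$ affinely stretches $\lambda$. But $\lambda(X,X_t)$ is defined as the maximal lamination affinely stretched by \emph{every} tight map $X\to X_t$, so the inclusion $\lambda\subseteq\lambda(X,X_t)$ requires the further argument that once one tight map maximally stretches a leaf of $\lambda$, so does every other. For measured sublaminations this follows from the length-ratio equality you set up in (3): no tight map $X\to X_t$ can shrink the $X_t$-length of $\mu_0$ below $e^t\ell_X(\mu_0)$, forcing affine stretching. But the tension lamination generally contains leaves that carry no transverse measure, and those are not captured by the length argument; handling them is precisely the subtle ``forcing'' part of Thurston's proof (and is related to the chain-recurrence you set aside). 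As a roadmap of the cited theorem your proposal is serviceable, but the step from ``one stretch-path map stretches $\lambda$'' to ``$\lambda\subseteq\lambda(X,X_t)$'' is not a formality and should be called out explicitly.
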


We call $\lambda(X,X')$ the \textbf{tension lamination} from $X$ to $X'$. A bi-infinite geodesic or a geodesic ray $\{X_t\}$ likewise has a tension lamination $\lambda$, which is a chain-recurrent lamination with the property that for all $s < t$, $\lambda \subseteq \lambda(X_s,X_t)$, and such that $\lambda$ is maximal with respect to this property.

Given a tight map $X \to X'$, cutting along the tension lamination $\lambda(X, X')$ yields a Lipschitz map between the complements that sends boundaries to boundaries affinely.
We abstract this in the following:

\begin{definition}
Let $Y, Y' \in \cT(\Sigma)$ be hyperbolic surfaces with (possibly crowned) boundary. A map $f: Y \to Y'$ is said to be {\bf boundary-tight} if it is $L$-Lipschitz, takes $\partial Y$ onto $\partial Y'$, and is affine with stretch factor $L$ when restricted to each geodesic of $\partial Y$.
\end{definition}

Conversely, we can always glue together boundary-tight maps.
In particular, in order to describe a tight map $f:X \to X'$ with a given tension lamination $\lambda$, it is equivalent to understand the induced maps on the complementary subsurface $X \setminus \lambda$.
The following is a consequence of \cite[Proposition 4.1]{Th_stretch}.
\footnote{As pointed to us by Huiping Pan, to apply that Proposition one also needs to know that boundary-tight maps preserve the (germ of the) partial horocyclic foliation in a neighborhood of $\lambda$, which is the content of Lemma \ref{lem:preserve horocycle endpts} below.}

\begin{theorem}[Gluing boundary-tight maps]\label{thm:glueoptimal}
Suppose that $\lambda$ is a chain-recurrent geodesic lamination on a hyperbolic surface $X \in \cT(S)$ and let $Y_1, \ldots, Y_m$ denote the components of (the metric completion of) $X \setminus \lambda$.
Suppose that $L > 1$ and for each $j = 1, \ldots, m$, we are given a $Y_j' \in \cT(\Sigma_j)$ and an $L$-Lipschitz, boundary-tight map
\[f_j: Y_j \to Y_j'.\]
Then there is an $X' \in \cT(S)$ and an $L$-Lipschitz map
\[f: X \to X'\]
such that $X' \setminus \lambda = \bigcup Y_j'$ and $f|_{Y_j} = f_j$. In particular, $f |_\lambda$ is affine.
\end{theorem}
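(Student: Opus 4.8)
The plan is to glue the target surfaces $Y_j'$ along the combinatorial pattern prescribed by $\lambda$, using the affine boundary behavior of the $f_j$ to make the gluing consistent, and then to verify that the resulting map $f$ is globally $L$-Lipschitz. The gluing data for $X$ along $\lambda$ consists of: (a) the identification of boundary leaves of the $Y_j$ that are glued together to reconstitute a leaf of $\lambda$, and (b) the shear parameters along those leaves (equivalently, the transverse measure/cocycle data, or in the crowned-boundary language, matching up the spikes and recording residues). I would first use Thurston's description (cf.\ \cite[Proposition 4.1]{Th_stretch}) of how a hyperbolic structure $X$ with a chain-recurrent $\lambda$ is assembled from the pieces $Y_j$, so that specifying $X'$ amounts to specifying hyperbolic structures on the $Y_j'$ together with gluing data along $\lambda$.

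Next I would define the gluing data for $X'$. Since each $f_j$ is boundary-tight with the \emph{same} constant $L$, it restricts to an affine $L$-homothety on every geodesic of $\partial Y_j$; in particular, the two boundary leaves of $X\setminus\lambda$ that are glued to form a given leaf $\ell$ of $\lambda$ have lengths/arclength parametrizations that are both scaled by exactly $L$, so the induced identification of their images is still compatible as an isometric gluing after rescaling by $L$, and the shear along $\ell$ simply gets multiplied by $L$. When $\lambda$ has closed leaves this is the elementary statement that two curves of equal length still have equal length after both are scaled by $L$; when $\lambda$ has crowned complementary pieces one must also check that spikes match up, which is precisely where the footnoted Lemma~\ref{lem:preserve horocycle endpts} (boundary-tight maps preserve the germ of the partial horocyclic foliation near $\lambda$) is invoked: it guarantees that the $f_j$ send the horocyclic data coherently, so the crowns glue. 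This produces $X'\in\cT(S)$ with $X'\setminus\lambda=\bigcup Y_j'$, and the maps $f_j$ patch along $\lambda$ to a continuous map $f\colon X\to X'$ with $f|_{Y_j}=f_j$; by construction $f|_\lambda$ is affine with slope $L$.

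It remains to check that $f$ is globally $L$-Lipschitz, which is the main obstacle: each $f_j$ is $L$-Lipschitz on $Y_j$, but a naive patching of Lipschitz maps along a subset need not be Lipschitz across that subset unless one controls behavior in collars of the seam. The key point is that the $f_j$ are not just $L$-Lipschitz but \emph{boundary-tight}, hence they stretch the boundary leaves affinely by exactly $L$ and (by Lemma~\ref{lem:preserve horocycle endpts}) respect the horocyclic foliation transverse to $\lambda$; this rigidity near $\partial Y_j$ is exactly what makes the two pieces fit together without creating a Lipschitz defect along $\lambda$. Concretely, I would take a path $\gamma$ in $X$ crossing $\lambda$, approximate it by paths crossing $\lambda$ transversally in finitely many points, and estimate $\mathrm{length}(f\circ\gamma)$ by breaking $\gamma$ at $\lambda$: on each piece in some $Y_j$ the length is scaled by at most $L$, and the contributions along $\lambda$ itself are scaled by exactly $L$ since $f|_\lambda$ is affine, so $\mathrm{length}(f\circ\gamma)\le L\cdot\mathrm{length}(\gamma)$; taking infima over paths gives $d_{X'}(f(p),f(q))\le L\, d_X(p,q)$. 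This is precisely the content abstracted from \cite[Proposition 4.1]{Th_stretch}, and the honest work is verifying the collar estimate that lets one pass from piecewise control to a global Lipschitz bound. Since $L>1$ strictly, $f$ is a homotopy equivalence restricting to the change-of-marking on each piece, hence is the required change-of-marking map $X\to X'$.
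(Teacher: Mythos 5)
Your proposal follows essentially the same route the paper takes: the paper gives no detailed proof for this theorem but deduces it from Thurston's Proposition~4.1 in \cite{Th_stretch}, with the one explicit note (in the footnote) that the hypothesis of that proposition — preservation of the germ of the partial horocyclic foliation near $\lambda$ — is supplied by Lemma~\ref{lem:preserve horocycle endpts}, which is exactly the reduction you carry out. One small caution in your Lipschitz verification: for a minimal $\lambda$, a transversal path meets $\lambda$ in a Cantor set, so "approximate by paths crossing finitely" is not quite the right move; the actual estimate works on the measure-zero complement, and you correctly acknowledge that this is the content you are abstracting from Thurston's proposition rather than reproving.
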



\subsection{Spikes and horocycles}\label{subsec:preserve spikes}
Let us demonstrate one immediate consequence of our shift in perspective from closed surfaces to complementary subsurfaces.
When $\lambda$ is not a geodesic multicurve, it contains asymptotic geodesics, so the components of $S \setminus \lambda$ have spikes.
These spikes greatly constrain the structure of tight maps with tension lamination $\lambda$.

Let $Y$ denote a simply-connected hyperbolic surface with geodesic boundary (the reader should think of $Y$ either as an ideal polygon or the universal cover of a finite-area surface).
Identifying $Y$ with a subset of $\HH^2$, a {\bf horocycle} in $Y$ is the intersection of $Y$ with a horocycle based at some point of its ideal boundary. Note that with this definition, horocycles in $Y$ may not necessarily be connected.
Horocycles can also be intrinsically defined as the level sets of Busemann functions based at the spikes.

\begin{lemma}[Horocycle endpoints preserved]\label{lem:preserve horocycle endpts}
Suppose that $f:Y \to Y'$ is a boundary-tight map between simply-connected hyperbolic surfaces with geodesic boundary and that $g_1, g_2 \subset \partial Y$ are asymptotic geodesics.
Then for any horocycle $H$ based at the ideal endpoint of $g_1$ and $g_2$, the points $f(g_1 \cap H)$ and $f(g_2 \cap H)$ must also lie on a horocycle based at the common endpoint of $f(g_1)$ and $f(g_2)$.
\end{lemma}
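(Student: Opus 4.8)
The plan is to work in the universal cover and use the defining property of a boundary-tight map together with the growth behavior of arcs measured along horocycles. Let $H$ be a horocycle based at the common ideal endpoint $\xi$ of $g_1$ and $g_2$, and let $H'$ denote the horocycle based at the common endpoint $\xi'$ of $f(g_1)$ and $f(g_2)$ that passes through $f(g_1 \cap H)$. I want to show $f(g_2 \cap H) \in H'$. Suppose not; then $f(g_2 \cap H)$ lies on a strictly larger or strictly smaller horocycle $H''$ based at $\xi'$, so there is a positive gap $\delta > 0$ (measured as signed distance along $f(g_2)$, or equivalently as a difference of Busemann values at $\xi'$) between $f(g_2 \cap H)$ and the point $g_2' \cap H'$ where $f(g_2)$ meets $H'$.

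First I would set up the quantitative comparison. For $t \ge 0$, let $p_1(t) \in g_1$ and $p_2(t) \in g_2$ be the points at (signed) distance $t$ from $H$ along $g_1, g_2$ in the direction away from $\xi$ (so into the thick part). Because $f$ is affine with stretch factor $L$ on each boundary geodesic, $f(p_i(t))$ is the point on $f(g_i)$ at distance $Lt$ from $f(g_i \cap H)$. The key geometric input is the elementary horocyclic estimate: if two points lie at distance $t$ from $\xi$ along two distinct geodesics asymptotic to $\xi$, the length of the horocyclic segment joining them at that level decays like $c \, e^{-t}$ for a constant $c$ depending on the ``shear'' between $g_1$ and $g_2$ (i.e., the length of the horocyclic segment at level $0$). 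Crucially, the horocyclic distance between two points is $\ge$ the hyperbolic distance between them, and for points far out the two are comparable up to a bounded additive error; moreover one has the sharp inequality $d(p_1(t), p_2(t)) \le 2\log(\text{something}) - 2t + O(1)$, or more simply $d(p_1(t), p_2(t))$ is bounded above by a quantity going to $0$ as $t \to \infty$ if and only if the two points sit on the same horocycle. So I would record: $d_Y(p_1(t), p_2(t)) \to 0$ as $t\to\infty$, while if $f(p_1(t))$ and $f(p_2(t))$ were to sit on horocycles differing by a fixed gap $\delta$, then $d_{Y'}(f(p_1(t)), f(p_2(t))) \ge \delta - o(1)$ (the two points drift apart along $f(g_1), f(g_2)$ but their ``radial'' positions relative to $\xi'$ stay offset by $\delta$, and distance is at least the difference of Busemann values). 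Combining these: for large $t$, $d_{Y'}(f(p_1(t)), f(p_2(t))) \ge \delta/2 > 0$ while $d_Y(p_1(t),p_2(t)) \to 0$, contradicting the Lipschitz bound $d_{Y'}(f(x),f(y)) \le L\, d_Y(x,y)$.

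The main obstacle I anticipate is making the second inequality — that points on $f(g_1), f(g_2)$ with a fixed horocyclic offset stay a definite distance apart — fully rigorous, since $f(p_1(t))$ and $f(p_2(t))$ are going off to infinity and one must be careful that their separation does not itself shrink to zero. The clean way is to use Busemann functions directly: let $b_{\xi'}$ be the Busemann function at $\xi'$ (normalized so $H' = b_{\xi'}^{-1}(0)$). Then $b_{\xi'}(f(p_1(t))) = -Lt$ exactly (affinity on $g_1$, since $f(g_1\cap H)\in H'$), whereas $b_{\xi'}(f(p_2(t))) = -Lt + \delta$ (affinity on $g_2$, with the constant offset $\delta$ coming from $f(g_2\cap H)\notin H'$). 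Since $b_{\xi'}$ is $1$-Lipschitz, $d_{Y'}(f(p_1(t)),f(p_2(t))) \ge |b_{\xi'}(f(p_1(t))) - b_{\xi'}(f(p_2(t)))| = \delta$ for all $t$. That eliminates the delicate asymptotics on the target side entirely, and the contradiction is immediate: $\delta \le L\, d_Y(p_1(t),p_2(t)) \to 0$. The only remaining routine points are (i) checking the sign/direction conventions so that ``affine with stretch $L$ on each boundary geodesic'' really forces $b_{\xi'}\circ f$ to be affine with slope $\mp L$ along $g_1, g_2$ with the asserted offset, and (ii) the standard fact that $d_Y(p_1(t), p_2(t)) \to 0$ for points at equal depth on two asymptotic geodesics, which is a one-line hyperbolic trigonometry computation in the upper half-plane.
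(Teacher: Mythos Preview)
Your argument is correct and is essentially the same as the paper's: both go deep into the spike, use that the two boundary geodesics become arbitrarily close there, apply the $L$-Lipschitz bound, and then compare horocyclic levels on the target (you via the $1$-Lipschitz Busemann function $b_{\xi'}$, the paper via the equivalent fact that projection along horocycles is $1$-Lipschitz). One slip to fix: you parametrize $p_i(t)$ as moving \emph{away} from $\xi$ (``into the thick part''), but then invoke $d_Y(p_1(t),p_2(t))\to 0$, which holds only when moving \emph{toward} $\xi$ into the spike; reverse the direction and the rest of your Busemann computation goes through verbatim.
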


\begin{proof}
Throughout the proof, let us set
\[p_1 = g_1 \cap H
\text{ and }
p_2 = g_2 \cap H.\]
We will decorate with primes for the corresponding objects on $Y'$, so that $p_1' := f(g_1 \cap H)$ and so on.
Let $L$ to be the Lipschitz constant of $f$.
Consider the horocycle $H'$ on $Y'$ through $p_1'$ and based at the common endpoint of $g_1'$ and $g_2'$, and set $q' := H' \cap g_2'$. See Figure \ref{fig:spike}.

\begin{figure}[ht]
    \centering
    \includegraphics[width=0.8\linewidth]{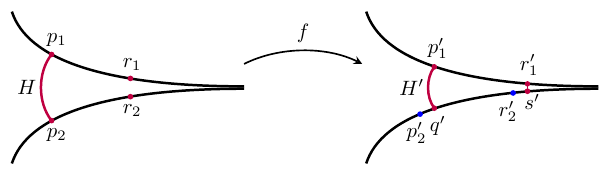}
    \caption{Endpoints of horocycles as in Lemma \ref{lem:preserve horocycle endpts}.}
    \label{fig:spike}
\end{figure}

Fix $\varepsilon > 0$ and choose points $r_i \in g_i$ deep in the spike formed by the $g_i$ such that 
\[d(r_1, r_2) < \varepsilon/L
\text{ and }
d(p_1, r_1) = d(p_2, r_2).\]
Now by the Lipschitz property of $f$, we know that 
\[d(r_1', r_2') < \varepsilon\]
and in particular the horocycle through $r_1'$ meets $g_2'$ at a point $s'$ at distance at most $\varepsilon$ from $r_2'$ (this follows from the fact that projection along the leaves of the horocyclic foliation is 1-Lipschitz).

Now by boundary-tightness, $d(p_1', r_1') = d(p_2', r_2')$,
and $d(p_1', r_1') = d(q', s')$ by construction, so
\[d(p_2', q') = d(s', r_2') < \varepsilon.\]
Since $\varepsilon$ was arbitrary, we conclude that $p_2' = q'$, which is what we wanted to show.
\end{proof}

In particular, this gives our first restriction on the structure of boundary-tight maps.

\begin{proposition}\label{prop:residues}
Let $\vec C$ be an oriented $n$-crown of $\Sigma$.
Then for any $Y, Y' \in \cT(\Sigma)$ and any $L$-Lipschitz, boundary-tight map $f:Y \to Y'$,
\[L \cdot \res_Y(\vec C) = \res_{Y'}(\vec C).\]
\end{proposition}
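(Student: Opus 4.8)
The plan is to reduce the claim to Lemma \ref{lem:preserve horocycle endpts} by unwinding the definition of $\res_Y(\vec C)$ in terms of horocyclic truncations, and tracking how the stretch factor $L$ enters along the boundary geodesics. First I would pass to the universal cover: lift $f$ to a boundary-tight map $\widetilde f: \widetilde Y \to \widetilde Y'$ between simply-connected hyperbolic surfaces with geodesic boundary, and lift the crown $C$ to a chain of asymptotic boundary geodesics $\ldots, g_1, g_2, \ldots, g_n, \ldots$ (indices mod $n$ after quotienting) with $g_i$ asymptotic to $g_{i+1}$. The metric residue is computed by choosing small horocycles at each ideal vertex of $C$, truncating $g_i$ to the subsegment $\widehat g_i$ between consecutive horocycles, and forming the signed sum $\sum_i \varepsilon_i \ell(\widehat g_i)$. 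Since the value is independent of the (sufficiently small) choice of truncating horocycles, I may choose them freely on $\widetilde Y$ and then measure $\res_{Y'}(\vec C)$ using their images under $\widetilde f$.

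The key step is the following. Let $v_i$ be the ideal vertex shared by $g_i$ and $g_{i+1}$, and let $H_i$ be the chosen horocycle at $v_i$. Then $\widehat g_i$ is the segment of $g_i$ between $H_{i-1}\cap g_i$ and $H_i \cap g_i$. By Lemma \ref{lem:preserve horocycle endpts} applied to the asymptotic pair $g_i, g_{i+1}$ and the horocycle $H_i$, the two points $\widetilde f(H_i \cap g_i)$ and $\widetilde f(H_i \cap g_{i+1})$ lie on a common horocycle $H_i'$ based at the image ideal vertex $v_i' = \widetilde f(v_i)$. Thus the points $\{\widetilde f(H_i \cap g_i), \widetilde f(H_i \cap g_{i+1})\}$ assemble into a valid system of truncating horocycles $\{H_i'\}$ for the image crown $\vec C$ on $\widetilde Y'$ — precisely because $\widetilde f$ sends the endpoints of each truncated segment $\widehat g_i$ to points of $g_i'$ that serve simultaneously as the endpoints of the truncations coming from the two adjacent horocycles. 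Now $\widetilde f$ restricted to $g_i$ is affine with stretch factor $L$ by boundary-tightness, so $\ell(\widetilde f(\widehat g_i)) = L\cdot \ell(\widehat g_i)$; and $\widetilde f(\widehat g_i)$ is exactly the truncated segment $\widehat{g_i'}$ of $g_i'$ determined by the horocycles $H_{i-1}', H_i'$. Since $\widetilde f$ preserves the side of each boundary geodesic on which the surface lies (it maps $Y$ onto $Y'$ as a homeomorphism), the signs $\varepsilon_i$ are unchanged, and summing gives
\[
\res_{Y'}(\vec C) = \sum_{i=1}^n \varepsilon_i\, \ell(\widehat{g_i'}) = \sum_{i=1}^n \varepsilon_i\, L\, \ell(\widehat g_i) = L\cdot \res_Y(\vec C).
\]

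The main obstacle I anticipate is the bookkeeping around orientations and signs: one must check that the orientation $\vec C$ on the domain crown really does induce (via the marking/identification of ideal boundaries used throughout the paper) the matching orientation on the image crown, so that the $\varepsilon_i$ genuinely agree term-by-term rather than up to a global sign — though since $\res$ enters later only through its vanishing locus this is a minor point. A secondary subtlety is verifying that the chosen horocycles are small enough that the truncated segments $\widehat g_i$ are nonempty and disjoint \emph{and} that their images remain a legal (small) truncation system on $Y'$; this follows because $\widetilde f$ is $L$-Lipschitz, so images of small horoball neighborhoods stay small, but it should be remarked on. Everything else is a direct application of Lemma \ref{lem:preserve horocycle endpts} together with the affine-on-the-boundary hypothesis.
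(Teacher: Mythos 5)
Your proof is correct and takes essentially the same route as the paper's: recall that the residue is computed from horocyclic truncations, invoke Lemma \ref{lem:preserve horocycle endpts} to see that the image truncations are again bounded by horocycles, and use the affine boundary stretch to extract the factor $L$. The paper's version is terser and leaves implicit the lift to the universal cover and the sign bookkeeping, both of which you spell out correctly.
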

\begin{proof}
Recall that the metric residue of $\vec C$ (in either $Y$ or $Y'$) is computed by fixing horocycles in the spikes and then taking a signed combination of lengths of segments $\hat g_i$ running between the endpoints of the truncating horocycles.
By Lemma \ref{lem:preserve horocycle endpts}, the endpoints of horocycles are sent to the endpoints of horocycles under $f$, so the metric residue of $\vec C$ in $Y'$ can be computed using the segments $f(\hat g_i)$ of $\partial Y'$.
Now $f$ stretches the boundary of $Y$ by an affine factor of $L$, so the length of $f(\hat g_i)$ is $L$ times the length of $\hat g_i$.
\end{proof}

Using a very similar argument to Lemma \ref{lem:preserve horocycle endpts}, we can also show that $f$ maps every point of a horocycle at least as deep into the spike as $f$ maps its endpoints.

\begin{lemma}[Spikes into spikes]\label{lem:cusps into cusps}
Let all notation be as in Lemma \ref{lem:preserve horocycle endpts}.
Then for every point $q$ on the horocycle $H$, $f(q)$ lies within the (closed) region bounded by $\partial Y'$ and the horocycle through $p_1'$ and $p_2'$.
\end{lemma}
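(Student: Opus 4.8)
The plan is to adapt the argument of Lemma~\ref{lem:preserve horocycle endpts}, replacing ``endpoint of a horocycle'' by ``signed distance to $H'$''. Write $\xi$ for the common ideal endpoint of $g_1, g_2$ and $\xi'$ for that of $g_1' := f(g_1)$ and $g_2' := f(g_2)$, and let $\beta$, $\beta'$ be the Busemann functions of $Y$ at $\xi$ and of $Y'$ at $\xi'$, normalized to decrease at unit speed toward their basepoints and so that $H = \beta^{-1}(0)$ and $H' = (\beta')^{-1}(0)$; here $H'$ is the horocycle through $p_1'$ and $p_2'$ supplied by Lemma~\ref{lem:preserve horocycle endpts}. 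The region in the statement is exactly $\{\beta' \le 0\} \cap Y'$: since $Y'$ is convex with $g_1', g_2'$ among its boundary geodesics it lies between them, so this intersection is the portion of the spike of $Y'$ cut off by the arc of $H'$ from $p_1'$ to $p_2'$. It therefore suffices to prove that $\beta'(f(q)) \le 0$ for every $q \in H$.

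Fix such a $q$ and an $\varepsilon > 0$. Let $\rho_q$ be the geodesic ray from $q$ to $\xi$, and $\rho_1$ the ray along $g_1$ from $p_1$ to $\xi$, both parametrized by arclength. Since $q = \rho_q(0)$ and $p_1 = \rho_1(0)$ lie on the common horocycle $H$, the points $\rho_q(t)$ and $\rho_1(t)$ lie on a common horocycle based at $\xi$ for every $t$; as these horocycles shrink toward $\xi$, we have $d_Y(\rho_q(t), \rho_1(t)) \to 0$ as $t \to \infty$. Choose $t$ with $d_Y(\rho_q(t), \rho_1(t)) < \varepsilon/L$ and put $w := \rho_q(t)$, $r_1 := \rho_1(t) \in g_1$, so that $d_{Y'}(f(w), f(r_1)) < \varepsilon$. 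By boundary-tightness $f$ stretches $g_1$ affinely by $L$, carries $p_1$ to $p_1' \in H'$, and takes the end of $g_1$ toward $\xi$ to the end of $g_1'$ toward $\xi'$; hence $f(r_1)$ lies at distance $Lt$ from $p_1'$ along $g_1'$ in the direction of $\xi'$, i.e.\ $\beta'(f(r_1)) = -Lt$.

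Now combine these facts with the observations that $\beta'$ is $1$-Lipschitz, $f$ is $L$-Lipschitz, and $d_Y(q, w) = t$:
\[
\beta'(f(q)) \;\le\; \beta'(f(r_1)) + d_{Y'}\!\big(f(r_1), f(w)\big) + d_{Y'}\!\big(f(w), f(q)\big) \;<\; -Lt + \varepsilon + Lt \;=\; \varepsilon .
\]
As $\varepsilon > 0$ was arbitrary, $\beta'(f(q)) \le 0$, and by the reduction of the first paragraph this is exactly the claim.

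The point requiring care --- and the reason one cannot simply transcribe Lemma~\ref{lem:preserve horocycle endpts} --- is that $q$ may sit far from $\partial Y$, so there is no boundary point near $q$ at which to test the Lipschitz estimate. The fix is to first travel a distance $t$ up the ray $\rho_q$ into the spike, where the narrowing forces $f(w)$ to be $\varepsilon$-close to the image of a boundary point $r_1 \in g_1$ whose image depth $-Lt$ is determined exactly by boundary-tightness; the cost of the detour, at most $Lt$ in $\beta'$ by the Lipschitz bound, is cancelled precisely by the depth gained, leaving only the controllable error $\varepsilon$. Note that only the $g_1$-side of the spike is used in the estimate; $g_2$ enters solely through Lemma~\ref{lem:preserve horocycle endpts}, to know that $H'$ passes through $p_2'$ as well.
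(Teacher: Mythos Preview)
Your proof is correct and follows essentially the same strategy as the paper's: pick a point $r_1$ deep in the spike on $g_1$, use the narrowing of the spike to get a companion point ($w$ in your notation, $s$ in the paper) close to $r_1$ and at the right distance from $q$, then combine the $L$-Lipschitz bound with boundary-tightness on $g_1$ so that the two contributions of size $Lt$ cancel. The only difference is packaging: you phrase the conclusion as $\beta'(f(q)) \le 0$ via the $1$-Lipschitz property of the Busemann function, while the paper projects $f(q)$ along horocycles to $g_1'$ and compares to $p_1'$; these are equivalent since the Busemann value at a point equals the signed distance from $p_1'$ to its horocyclic projection on $g_1'$. Your point $w=\rho_q(t)$ is in fact the same point as the paper's $s$.
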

\begin{proof}
As before, take some point $r_1$ deep in the spike such that the horocycle through $r_1$ has length at most $\varepsilon/L$. Let $s$ be the unique point of that horocycle such that 
\[d(q,s) = d(p_1, r_1).\]
Set $q' = f(q)$ and let $q''$ denote the intersection of $g_1'$ with the horocycle through $q'$.

By the fact that projection along horocycles is 1-Lipschitz, the triangle inequality, and the fact that $f$ is $L$-Lipschitz,
\[d(q'',r_1') \le d(q', r_1') \le d(q', s') + \varepsilon\]
Using the Lipschitz property again together with boundary tightness, we get
\[d(q', s') \le L d(q,s) 
 = L d(p_1, r_1) = d(p_1', r_1').\]
Putting these together, we see that $q'$ lies at most $\varepsilon$ outside the horocycle through $p_1'$. Sending $\varepsilon$ to $0$ completes the proof.
\end{proof}

\section{Deflations and regular polygons}\label{sec:obstructpoly}
In this section we prove one direction of Theorem \ref{mainthm:polygon}.
Our proof rests on the theory of deflations, which are introduced in Section \ref{subsec:deflations}.
After established basic properties of these maps, we show in Section \ref{subsec:regpoly_deflate} how the necessity part of Theorem \ref{mainthm:polygon} follows from a statement about deflations (Proposition \ref{prop:deflate to star}).
We deduce this Proposition in Section \ref{subsec:obstruct star} as a consequence of broader restrictions on deflations, which will be further generalized in the following Section \ref{sec:generalobstruct}.

\subsection{Deflations}\label{subsec:deflations}
Given a hyperbolic surface $Y$ with (crowned) boundary, there is a natural 1-Lipschitz map $Y \to \Sp(Y)$ obtained by collapsing the leaves of the orthogeodesic foliation. By definition of the metric along $\Sp(Y)$, this map restricts to an isometry along each geodesic of $\partial Y$.
Similarly, if one takes an ideal triangulation of $Y$ and considers the horocyclic foliation with respect to that triangulation, then collapsing the leaves results in another map with the same properties (1-Lipschitz and isometric along boundary geodesics).

We abstract these phenomena in the following definition:

\begin{definition}\label{def:deflation}
Let $\Sigma$ be a surface with (possibly crowned) boundary, and let $Y \in \cT(\Sigma)$ and $T \in \TRG(\Sigma)$.
A {\bf deflation} $\dfl:Y \to T$ is a homotopy equivalence such that:
\begin{enumerate}
    \item $\dfl$ is 1-Lipschitz. 
    \item $\dfl$ restricts to an isometric embedding on each geodesic of $\partial Y$.
    \item If $\varphi: \Sigma \to Y$ and $r: \Sigma \to T$ denote the markings, then $\dfl \circ \varphi$ is homotopic to $r$.
\end{enumerate}
\end{definition}

The key property of deflations is that they play well with boundary-tight maps. The proof of the following lemma is immediate by unpacking definitions.

\begin{lemma}\label{lem:composite_deflation}
Suppose that $Y, Y' \in \cT(\Sigma)$ and that $f:Y \to Y'$ is a boundary-tight map with Lipschitz constant $L$.
Suppose also that $T \in \TRG(\Sigma)$ and $\dfl:Y' \to T$ is a deflation.
Then the composition of $\dfl \circ f$ and the rescaling action yields a deflation
$Y \to 1/L \cdot T.$
\end{lemma}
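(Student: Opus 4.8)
The plan is to verify the three defining properties of a deflation for the composite map $\dfl \circ f$, followed by the rescaling action $T \to (1/L)\cdot T$, which I will write $g : Y \to (1/L)\cdot T$. Recall that rescaling a metric ribbon graph by $1/L$ multiplies all edge lengths (equivalently, all arc weights $c_\alpha$) by $1/L$, and the underlying combinatorial/marking data is unchanged; the natural identification $T \to (1/L)\cdot T$ is then an $L^{-1}$-homothety with respect to the path metrics. So $g$ is the composition $Y \xrightarrow{f} Y' \xrightarrow{\dfl} T \xrightarrow{\sim} (1/L)\cdot T$.

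First I would check property (1), that $g$ is $1$-Lipschitz. Since $f$ is $L$-Lipschitz and $\dfl$ is $1$-Lipschitz, the composite $\dfl \circ f$ is $L$-Lipschitz; post-composing with the $L^{-1}$-homothety to $(1/L)\cdot T$ makes $g$ exactly $1$-Lipschitz. Next, property (2): $f$ takes $\partial Y$ onto $\partial Y'$ and is affine with stretch factor $L$ on each geodesic of $\partial Y$ (this is the definition of boundary-tight); $\dfl$ restricts to an isometric embedding on each geodesic of $\partial Y'$; and the homothety scales lengths by $1/L$. Composing, $g$ restricts to an affine map with stretch factor $L \cdot 1 \cdot (1/L) = 1$ on each geodesic of $\partial Y$, i.e.\ an isometric embedding, as required. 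Finally, property (3): if $\varphi : \Sigma \to Y$, $\varphi' : \Sigma \to Y'$, and $r : \Sigma \to T$ are the markings, then $f$ being a change-of-marking map means $f \circ \varphi \simeq \varphi'$, and $\dfl$ being a deflation gives $\dfl \circ \varphi' \simeq r$; the homothety $T \to (1/L)\cdot T$ is the identity on marking data, so $g \circ \varphi \simeq \dfl \circ f \circ \varphi \simeq \dfl \circ \varphi' \simeq r$, which is the marking on $(1/L)\cdot T$. Being a composite of homotopy equivalences, $g$ is itself a homotopy equivalence.

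The only point that requires a word of care — and the closest thing to an obstacle — is making precise that the rescaling action $T \mapsto (1/L)\cdot T$ on $\TRG(\Sigma)$ is realized by an honest $L^{-1}$-homothety of metric graphs that fixes the combinatorics and marking; once this is granted (it is immediate from the description of the polyhedral coordinates on $\TRG(\Sigma)$ as edge lengths, and of the $O(1)$-bordification via arc weights $c_\alpha$ in Section \ref{subsec:RGs and arcs}), everything above is a line-by-line unwinding of definitions. I would therefore keep the written proof to a sentence or two, exactly as the paper's phrasing ``immediate by unpacking definitions'' suggests.
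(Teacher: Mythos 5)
Your proposal is correct and matches the paper's approach exactly: the paper declares the proof ``immediate by unpacking definitions,'' and you have simply carried out that unpacking, verifying conditions (1)--(3) of Definition~\ref{def:deflation} for the composite. The only implicit point you correctly surfaced is that a boundary-tight map between $Y,Y'\in\cT(\Sigma)$ is understood to be a change-of-marking map, which is needed for condition (3) and for the statement to type-check.
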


In particular, obstructions to the existence of deflations $Y \to 1/L \cdot \Sp(Y')$ also obstruct the existence of boundary-tight maps $Y \to Y'$ (compare Proposition \ref{prop:deflate to star} below).
\medskip

In our motivating examples of deflations, it was easy to understand how the surface $Y$ deflated to the graph $T$. In general, the preimages of points of $T$ may have interior and can even be disconnected --- consider a small perturbation of the map that collapses the horocyclic foliation of an ideal triangle.
However, just from first principles we can prove that the map $f:\partial Y \to T$ is generically 2-to-1.

\begin{lemma}\label{lem:deflation_doubleback}
Let $\dfl:Y \to T$ be a deflation. Then for any open edge $e$ of $T$, the preimage $\dfl^{-1}(e) \cap \partial Y$ consists of two connected intervals.
\end{lemma}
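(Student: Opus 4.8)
The plan is to use the combinatorial structure of the ribbon graph together with the fact that $T$ is the spine of the thickening $\Sigma$ in which $Y$ sits. First I would fix an open edge $e$ of $T$; since $\dfl$ is a homotopy equivalence respecting the markings (condition (3)), the edge $e$ corresponds to a dual arc $\alpha = \alpha_e$ in the arc system $\arc(T)$ on $\Sigma$, and under the deformation retract $r:\Sigma\to T$ the preimage $r^{-1}(e)$ is (the interior of) a thickened band $B_e$ of $\Sigma$ whose intersection with $\partial\Sigma$ is exactly two disjoint intervals — the two ``sides'' of the band running parallel to $\alpha$. The key point is that $\dfl\colon Y\to T$ is homotopic (rel nothing, but as a map of pairs up to homotopy) to the retraction precomposed with the identification $Y\simeq\Sigma$, so I want to transfer the statement $r^{-1}(e)\cap\partial\Sigma = (\text{two intervals})$ across to $\dfl^{-1}(e)\cap\partial Y$.

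The honest way to do this is to argue directly on $\partial Y$ using condition (2): $\dfl$ restricts to a \emph{local isometry} (in particular a local homeomorphism, and in particular a monotone, non-constant, hence locally injective) map on each geodesic boundary leaf of $\partial Y$. I would first observe that $\dfl|_{\partial Y}$ is an \emph{immersion} of a $1$-manifold into the graph $T$ which misses the vertices of $T$ on the open set $\dfl^{-1}(e)$, so on that open set $\dfl|_{\partial Y}$ is a local homeomorphism onto $e\cong(0,\ell(e))$; thus $\dfl^{-1}(e)\cap\partial Y$ is a disjoint union of open intervals each mapping homeomorphically onto $e$ or covering part of it monotonically. (Here one uses that a crowned boundary geodesic or a closed boundary geodesic, mapped by an arclength-preserving map into a metric graph, can only turn around at preimages of vertices.) It then remains to count these intervals: I would compute the homotopy class of the loop (or the relative class, for crowned pieces) $\dfl_*[\partial Y]$ in $H_1(T)$, or more precisely count the algebraic and geometric number of times a core boundary curve of $Y$ crosses the cocycle dual to $e$. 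Because $\dfl$ is a homotopy equivalence compatible with markings, this count agrees with the intersection number of $\partial\Sigma$ with the arc $\alpha_e$, which is exactly $2$: the band $B_e$ meets $\partial\Sigma$ in two arcs and the retraction collapses each transversally once across $e$. Monotonicity from condition (2) upgrades ``algebraic intersection number $2$ and no backtracking'' to ``exactly two connected intervals, each mapped homeomorphically onto $e$.''

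More concretely and avoiding homology bookkeeping: I would use that $\Sigma\setminus\alpha_e$ has the same number of components as $\Sigma\setminus e$ in $T$, that each complementary polygon of $\arc(T)$ corresponds to a vertex-neighborhood in $T$, and track how $\partial Y$ threads through the bands. The cleanest statement is: the two sides of the band $B_e$ are the only parts of $\partial\Sigma$ that the retraction sweeps across the interior of $e$, and everything else in $\partial\Sigma$ retracts into $T\setminus e$ or into the endpoints of $e$. Pulling this back through the homotopy $\dfl\simeq r$ (which is a homotopy of maps $Y\to T$, so preserves the homotopy class of the restriction to $\partial Y$ up to free homotopy in $T$) and then using condition (2) to rule out any spurious extra crossings — a monotone arclength-preserving map cannot ``waste'' length doubling back — gives that $\dfl^{-1}(e)\cap\partial Y$ has exactly two components.

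\textbf{Main obstacle.} The delicate point is making the last step rigorous: \emph{a priori} $\dfl^{-1}(e)$ could contain intervals of $\partial Y$ that $\dfl$ sends back and forth across $e$ (so the geometric count exceeds the algebraic count of $2$). This is exactly where condition (2) must be used in an essential way rather than just condition (3): an isometric-on-each-leaf map has no backtracking \emph{along a single boundary geodesic}, but one has to also rule out a boundary geodesic leaving $e$ through a vertex and re-entering — this is where one invokes that the only edges of $T$ incident to a given vertex are accounted for by the (finitely many) spikes/corners of the complementary polygon, so a boundary geodesic of $Y$ re-entering $e$ would force two distinct arcs of $\arc(T)$ to be parallel, contradicting that $\arc(T)$ is a (reduced) arc system with polygonal complement. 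I expect formalizing this ``no re-entry'' claim — i.e. relating the combinatorics of $\partial\Sigma$ threading the ribbon structure of $T$ to the geodesic structure of $\partial Y$ under a deflation — to be the part that needs the most care.
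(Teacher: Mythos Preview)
Your approach is headed in the right direction---using that $\dfl|_{\partial Y}$ is an isometric embedding on each boundary leaf, hence has no local backtracking, is the correct instinct---but you have correctly identified a genuine gap. Ruling out ``global'' re-entry (a boundary geodesic crossing $e$, leaving through a vertex, and later returning to cross $e$ again) does not follow from the arc-system combinatorics downstairs: a closed geodesic in a metric graph with cycles can legitimately cross a given edge many times, so the algebraic count of $2$ does not immediately upgrade to a geometric count of $2$ without further argument.

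The paper resolves this with a single move that you are missing: lift everything to universal covers. Then $\widetilde T$ is a \emph{tree}, and an isometric embedding of a line into a tree is a bi-infinite geodesic, which crosses each edge of the tree at most once (re-entering an edge would force a backtrack, impossible in a tree). Hence each boundary geodesic of $\widetilde Y$ has preimage in a fixed lift $\tilde e$ that is either empty or a single open interval. The ribbon structure then singles out exactly two boundary geodesics of $\widetilde T$ that cross $\tilde e$---the two ``sides'' of the band in your language---so exactly two boundary geodesics of $\widetilde Y$ meet $\tilde e$, each in one interval. Your combinatorial description of the band $B_e$ and its two sides is essentially this last step, but passing to the cover makes your homological count and the delicate no-re-entry analysis unnecessary.
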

\begin{proof}
Lift everything to universal covers, so that we have an equivariant map $\tilde f$ from a simply-connected hyperbolic surface $\widetilde Y$ to a metric ribbon tree $\widetilde T$.
Fix one lift $\tilde e$ of the (open) edge $e$.
Since we are in a tree, every geodesic in $\widetilde T$ that meets $\tilde e$ must cross it, and there are exactly two such geodesics which are also compatible with the ribbon structure.
Since $\tilde f$ is an isometry on the boundary geodesics of $\partial \widetilde Y$, we see that exactly two of them map to $\tilde e$, and the preimage of $\tilde e$ on each is a connected (open) interval.
\end{proof}

\subsection{Maps to regular polygons}\label{subsec:regpoly_deflate}
For any $n \ge 3$, recall that $P_{\text{reg}}$ denotes the regular ideal $n$-gon, i.e., the unique ideal $n$-gon with a dihedral symmetry group of order $2n$.
Given an ideal $n$-gon $P$, recall from the \hyperref[sec:intro]{Introduction} that an (immersed) {\bf horogon} is a choice of horocycles, one based at each vertex of $P$, such that the two horocycles based at the endpoints of each edge of $P$ are tangent.

\begin{lemma}\label{lem:whenhorogon}
An ideal $n$-gon admits a unique horogon if $n$ is odd, a 1-parameter family if $n$ is even and $P$ has zero residue, and no horogon if $n$ is even and $P$ has nonzero residue.
\end{lemma}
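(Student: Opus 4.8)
The plan is to parametrize the vertices of the ideal $n$-gon $P$ and reduce the existence of a horogon to a fixed-point / product-of-ratios condition. Fix the cyclic sequence of ideal vertices $v_1, \ldots, v_n$ of $P$. For each edge $e_i$ joining $v_i$ to $v_{i+1}$ (indices mod $n$), a choice of horocycle $H_i$ at $v_i$ is recorded by its ``height'' parameter $h_i$; the tangency condition between $H_i$ and $H_{i+1}$ along the edge $e_i$ amounts to a single equation relating $h_i$ and $h_{i+1}$. The cleanest bookkeeping uses the signed distance $\delta_i$ along the (geodesic) edge $e_i$ between the two feet of the horocycles $H_i$ and $H_{i+1}$ (i.e.\ between $e_i \cap H_i$ and $e_i \cap H_{i+1}$); tangency of consecutive horocycles means exactly that these feet coincide, i.e.\ $\delta_i = 0$. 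Shifting the horocycle $H_i$ inward by hyperbolic distance $t$ moves its foot on $e_{i-1}$ by $+t$ and its foot on $e_i$ by $-t$, so the constraint ``move each $H_i$ so that all $\delta_i$ vanish simultaneously'' is an affine system over $\ZZ/n$: we need $t_{i} + t_{i+1} = \delta_i^{(0)}$ for the initial discrepancies $\delta_i^{(0)}$ coming from any reference choice of horocycles.

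First I would set up exactly this affine system $A\mathbf{t} = \mathbf{d}$, where $A$ is the $n\times n$ circulant matrix with $1$'s on the diagonal and the superdiagonal (cyclically). The kernel and cokernel of $A$ are governed by whether $-1$ is an eigenvalue of the cyclic shift, equivalently by the parity of $n$: for $n$ odd, $A$ is invertible, giving a unique solution $\mathbf t$ and hence a unique horogon; for $n$ even, $A$ has one-dimensional kernel (spanned by the alternating vector $(1,-1,1,-1,\ldots)$) and one-dimensional cokernel, so solutions exist iff $\mathbf d$ lies in the image, in which case they form a $1$-parameter family. The compatibility condition is precisely the vanishing of the alternating sum $\sum_i (-1)^i d_i$ of the initial discrepancies. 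The remaining step is to identify this alternating sum with the metric residue of $P$: the initial discrepancies $d_i$ are differences of lengths of segments of the edges $e_i$ cut off by horocycles, and the alternating sum telescopes into exactly the signed sum of truncated edge lengths $\sum_i \varepsilon_i \ell(\hat g_i)$ appearing in the definition of $\res_P$, independent of the reference horocycles (this independence is itself a consequence of the same $\ZZ/n$ combinatorics). One must also check that the solution actually produces horocycles rather than formal objects and, for the existence clause, that the $1$-parameter family is genuinely nonempty (not just the affine condition being solvable) — but solvability of the linear system directly yields a horogon since any real values of the $h_i$ correspond to honest horocycles.

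The main obstacle I anticipate is the careful sign and orientation bookkeeping in the last step: matching the combinatorial alternating sum to the geometric quantity $\res_P(\vec C)$ requires being precise about which side of each edge $P$ lies on, how the two feet of a given horocycle on its two adjacent edges are oriented relative to each other, and reconciling this with the $\varepsilon_i = \pm 1$ convention in the definition of the metric residue. A clean way to handle this is to first prove the statement for a \emph{single} reference configuration (e.g.\ take all $h_i$ so that the feet on edge $e_i$ from the two sides are symmetric, or use the incenter picture), verify the parity dichotomy there, and then argue that changing the reference configuration changes both sides of the claimed identity by the same amount. The odd case is essentially immediate once the circulant matrix is diagonalized; the even case with nonzero residue follows from the cokernel computation; and the even, zero-residue case is the assertion that the kernel direction sweeps out a genuine $1$-parameter family, which is clear since translating along the alternating vector just slides consecutive tangency points in opposite directions while preserving all tangencies.
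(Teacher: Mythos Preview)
Your proposal is correct and is essentially the same idea as the paper's proof, just packaged in linear-algebra language. The paper avoids the circulant matrix entirely: it fixes an arbitrary starting point $x_1$ on one side, propagates the tangency constraint once around the polygon to produce a point $x_{n+1}$ on the same side, and observes that the map $x_1 \mapsto x_{n+1}$ is an isometry of that geodesic which is orientation-reversing for $n$ odd (hence has a unique fixed point) and a translation by $|\res(P)|$ for $n$ even (hence either no fixed point or every point is fixed). This is exactly your system $A\mathbf t = \mathbf d$ solved by back-substitution rather than by diagonalizing $A$; your identification of the cokernel obstruction with the alternating sum, and then with the metric residue, is precisely the paper's computation of $d(x_1,x_{n+1})$ using the propagated horocycles as the truncating family. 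Your version is a bit more systematic and would generalize more readily; the paper's version is shorter and sidesteps the sign bookkeeping you flag as the main obstacle, since the propagated horocycles are themselves the horocycles used to compute the residue.
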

\begin{proof}
Pick an arbitrary point $x_1 \in \partial P$. Choose one of the horocycles $h_1$ through $x_1$ and based at one of the endpoints of the edge $g_1$ of $P$ containing $x_1$.
Let $g_2$ denote the other geodesic of $\partial P$ with endpoint the basepoint of $h_1$, and set $x_2 = g_2 \cap h_1$.
Let $h_2$ denote the horocycle through $x_2$ based at the other endpoint of $g_2$, and continue on in this manner, defining points $x_i$ on consecutive sides of $P$ until returning back to $g_1$; let $x_{n+1} = g_1 \cap h_n$.

If $x_{n+1} = x_1$, then the union of the $h_i$ a horogon with vertices $x_1, \ldots, x_n$.
If $n$ is odd then moving $x_1$ along $g_1$ moves $x_{n+1}$ in the opposite direction, so there is a unique point at which they meet.
If $n$ is even, then we use the $h_i$ in our computation of the metric residue of $P$, deducing that the distance between $x_1$ and $x_{n+1}$ equals $|\res(P)|$. In the case where $\res(P)=0$, moving $x_1$ along $g_1$ does not change the fact that $x_1 = x_{n+1}$, so any placement of our initial point defines a horogon for $P$. 
\end{proof}

Recall that we say a horogon is {\bf embedded} if horocycles are all pairwise disjoint except for the specified tangencies, and is {\bf self-tangent} if any pair of horocycles intersects at most once but it is not embedded.

Let $\ast_n$ denote the ``$n$-valent star'' graph, obtained by gluing together $n$ copies of $\RR_{\ge 0}$ along $0$.
Let $P$ be any ideal $n$-gon containing either an embedded or self-tangent horogon.
Collapsing along the leaves of the horocyclic foliation in each spike and crushing the horogon to a point yields a deflation $P \to \ast_n$.
We prove that such $P$ are the {\em only} polygons that deflate to $\ast_n$.

\begin{proposition}\label{prop:deflate to star}
Let $P$ be an ideal $n$-gon. There exists a deflation from $P$ to $\ast_n$ if and only $P$ contains either an embedded or self-tangent horogon.
\end{proposition}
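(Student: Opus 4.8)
The plan is to prove both directions by carefully analyzing what a deflation $\dfl : P \to \ast_n$ must do to the ideal spikes of $P$. The ``if'' direction was already sketched in the text: if $P$ has an embedded or self-tangent horogon $H$, then $P \setminus H$ consists of $n$ spikes, each foliated by horocyclic arcs and by orthogonal geodesic rays into a vertex; collapsing each spike onto its central geodesic ray (equivalently, crushing the horocyclic foliation) and crushing $H$ to the central vertex produces a map $P \to \ast_n$. One checks the three conditions of Definition \ref{def:deflation}: it is $1$-Lipschitz because collapsing the horocyclic foliation is $1$-Lipschitz, it is an isometric embedding on each boundary geodesic (a boundary geodesic of $P$ runs through exactly two spikes and gets mapped isometrically onto the two corresponding rays of $\ast_n$, with the turning point at the tangency point of the two horocycles of $H$), and it is a homotopy equivalence in the correct marking class since $\ast_n$ is the spine of $P_{\mathrm{reg}}$ with the obvious marking. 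Embeddedness/self-tangency is exactly what is needed to ensure the images of consecutive boundary geodesics overlap only at the star vertex, so the map is well-defined and boundary-isometric.

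For the ``only if'' direction, suppose $\dfl : P \to \ast_n$ is a deflation. By Lemma \ref{lem:deflation_doubleback}, for each of the $n$ open edges $e_i$ of $\ast_n$ the preimage $\dfl^{-1}(e_i) \cap \partial P$ is a union of exactly two intervals, and since $\partial P$ has $n$ geodesic sides and $\ast_n$ has $n$ edges, a counting argument forces each boundary geodesic $g_j$ of $P$ to cover exactly two edges of $\ast_n$, isometrically, turning around at a single point $x_j \in g_j$ that maps to the central vertex. So each $g_j$ is folded in half (isometrically on each half) at a distinguished point $x_j$, with the two halves running out the two rays meeting at the two ideal vertices of $g_j$. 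The key geometric input is then to show that the truncating locus $\{x_j\}$ is precisely a horogon: the preimage $\dfl^{-1}(\mathrm{vertex})$ is a compact, connected, $1$-Lipschitz-retract subset of $P$ whose complement has $n$ components, each an ``end'' asymptotic to an ideal vertex of $P$; Busemann-function / horocycle arguments in the style of Lemmas \ref{lem:preserve horocycle endpts} and \ref{lem:cusps into cusps} show that the boundary point $x_j$ and $x_{j'}$ attached to a common ideal vertex of $P$ must lie on a common horocycle there (since the deflation is $1$-Lipschitz and isometric along $\partial P$, a horocyclic arc joining them projects to a point), and this is exactly the tangency condition defining a horogon. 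Finally, embeddedness or self-tangency follows because $\dfl^{-1}(\mathrm{vertex})$ is connected and the map to it is a $1$-Lipschitz retraction, so the horocycles cannot cross transversely (a transverse crossing would create, in the complement, a region that cannot be coherently collapsed into a ray while staying $1$-Lipschitz).

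I expect the main obstacle to be pinning down the geometry of $\dfl^{-1}(\mathrm{vertex})$ and extracting the horocycle from the abstract deflation hypothesis: a priori this preimage can have nonempty interior and be disconnected (as the text warns, a perturbation of the horocyclic collapse is still a deflation), so one cannot simply read off a horogon. The right move is to argue on the level of each spike: fix a spike $\mathfrak{s}_j$ of $P$ bounded by two consecutive boundary geodesics $g_{j-1}, g_j$ meeting at an ideal vertex $v_j$; the point $x_{j-1} \in g_{j-1}$ and $x_j \in g_j$ are where those geodesics first turn toward $v_j$. Using that $\dfl$ restricted to $g_{j-1}$ and to $g_j$ is isometric onto the ray $r_j$ and both halves land on the \emph{same} ray, we get $\dfl(x_{j-1})$ and $\dfl(x_j)$ at equal distance from the star vertex; combined with $1$-Lipschitz-ness and the fact that horocyclic projection toward $v_j$ is $1$-Lipschitz, an $\varepsilon$-limit argument identical in structure to the proof of Lemma \ref{lem:preserve horocycle endpts} forces $x_{j-1}$ and $x_j$ to lie on a common horocycle based at $v_j$. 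Doing this at every vertex produces the horogon, and the embedded-or-self-tangent dichotomy comes from the connectedness of the collapsed locus together with the observation that the $n$ spike-collapses must be compatible along $\dfl^{-1}(\mathrm{vertex})$.
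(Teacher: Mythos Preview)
Your overall framework is the paper's: identify the turning points $x_j \in g_j$ mapping to the vertex of $\ast_n$, show consecutive ones lie on a common horocycle (giving a horogon), then argue no transverse crossings. The first two steps are essentially the paper's Lemma~\ref{lem:deflations to leaves} and Corollary~\ref{cor:preimages of leaves}. One small imprecision: you write that the horocyclic arc joining $x_{j-1}$ and $x_j$ ``projects to a point,'' but this is stronger than what is true or needed. What the $\varepsilon$-limit argument actually gives (and what the paper records as Lemma~\ref{lem:deflations to leaves}) is that every point on the horocycle through $p \in \dfl^{-1}(t) \cap \partial P$ maps at least as deep into the edge as $t$; interior points of the horocycle may well map strictly deeper. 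This weaker statement still yields the horogon, since it forces the two boundary preimages of each $t$ to lie on a common horocycle (Corollary~\ref{cor:preimages of leaves}).

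The genuine gap is in your argument for embedded-or-self-tangent. You appeal to connectedness of $\dfl^{-1}(\mathrm{vertex})$ and to ``compatibility of the spike-collapses,'' but you yourself note the preimage of the vertex may be disconnected and have interior, so this line cannot be made to work as stated. The paper's replacement is short and avoids the vertex preimage entirely: for each half-infinite edge $e$, the \emph{open} spike region $V_e := \bigcup_{t>0} H_t$ satisfies $\dfl(V_e) \subset e$ by Lemma~\ref{lem:deflations to leaves}. Since distinct open edges of $\ast_n$ are disjoint, the open spikes $V_{e_i}$ are pairwise disjoint (Corollary~\ref{cor:2 leaves 2 spikes}), hence their bounding horocycles $H_0 = \lim_{t \to 0} H_t$ meet at most tangentially. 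This is the missing step; once you have the ``maps at least as deep'' lemma, the no-transverse-crossings conclusion is a one-line consequence.
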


Assuming the Proposition (which we prove in the subsection immediately below), let us prove one direction of Theorem \ref{mainthm:polygon}.

\begin{proof}[Proof of necessity in Theorem \ref{mainthm:polygon}]
Suppose there is a boundary-tight map $f:P \to P_{\text{reg}}$. Now $P_{\text{reg}}$ deflates to the $n$-valent star $\ast_n$ by collapsing the regular horocyclic foliation, so by Lemma \ref{lem:composite_deflation}, there exists a deflation $P \to \ast_n$ (note that $\ast_n$ is fixed by rescaling the metric since each leaf has infinite length).
Proposition \ref{prop:deflate to star} then guarantees that $P$ must contain either an embedded or self-tangent horogon.

Suppose that $P$ does not contain an embedded horogon. Take the self-tangent horogon inside $P$ and choose $p \in P$ to be one of the self-tangencies; then $p$ lies in two non-adjacent spikes of $P$.
Now Lemma \ref{lem:cusps into cusps} ensures that $f(p)$ must lie inside the closures of each of the corresponding spikes of $P_{\text{reg}}$ complementary to the regular horogon, but these regions do not intersect. This is a contradiction.
\end{proof}

\subsection{Obstructing deflations}\label{subsec:obstruct star}
As we have already seen in Section \ref{subsec:preserve spikes}, spikes place strong constraints on the structure of boundary-tight maps. It turns out similar constraints are true for deflations.
The following is analogous to Lemma \ref{lem:cusps into cusps}.

\begin{lemma}[Spikes into infinite edges]\label{lem:deflations to leaves}
Suppose that $\dfl:Y \to T$ is a deflation. Let $e$ be any (open) half-infinite edge of $T$.
Let $p \in \partial Y$ be such that $\dfl(p) \in e$.
Then for any point $q$ on the horocycle through $p$ based at the spike of $Y$ corresponding to $e$,
\[\dfl(q) \ge \dfl(p),\]
where we have identified $e$ with $(0, \infty)$.
\end{lemma}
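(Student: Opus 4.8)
The plan is to mimic the argument of Lemma \ref{lem:cusps into cusps}, replacing the boundary-tightness hypothesis there with the $1$-Lipschitz/boundary-isometry hypothesis built into a deflation, and using Lemma \ref{lem:deflation_doubleback} to locate where the horocycle endpoints go in the infinite edge $e$. Let $g_1, g_2 \subset \partial Y$ be the two boundary geodesics bounding the spike corresponding to $e$; by Lemma \ref{lem:deflation_doubleback}, $\dfl^{-1}(e) \cap \partial Y$ is two intervals, one in each $g_i$, and since $\dfl$ is an isometry on each $g_i$, the restriction $\dfl|_{g_i \cap \dfl^{-1}(e)}$ is an isometric parametrization of (a ray in) $e \cong (0,\infty)$. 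So we may set $p_i = g_i \cap H$ for the given horocycle $H$, note $\dfl(p_1), \dfl(p_2) \in e$, and by symmetry reduce to showing $\dfl(q) \ge \dfl(p_1)$ for every $q$ on $H$ (relabeling so $\dfl(p_1) \le \dfl(p_2)$ if needed, though in fact the symmetry of deflations on a spike should force $\dfl(p_1) = \dfl(p_2)$, which I would either establish first or sidestep).

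The key estimate is the $\varepsilon$-argument. Fix $\varepsilon > 0$ and choose a point $r_1 \in g_1$ deep in the spike, beyond $p_1$, such that the horocycle $H_r$ through $r_1$ (based at the same ideal point) has total length $\le \varepsilon$. Let $s$ be the point of $H_r$ with $d(q,s) = d(p_1, r_1)$ — such a point exists because the horocyclic foliation of the spike has leaves of decreasing length and the $d(\cdot, \cdot)$-distance to $H_r$ from points on $H$ varies continuously. Now apply: (i) projection along horocyclic leaves is $1$-Lipschitz, so if $q''$ denotes the point where $g_1$ meets the horocycle through $\dfl(q)$, then $d(q'', \dfl(r_1)) \le d(\dfl(q), \dfl(r_1))$; (ii) the triangle inequality and $\diam \dfl(H_r) \le \varepsilon$ give $d(\dfl(q), \dfl(r_1)) \le d(\dfl(q), \dfl(s)) + \varepsilon$; (iii) the Lipschitz property gives $d(\dfl(q), \dfl(s)) \le d(q,s) = d(p_1, r_1)$, and the boundary-isometry property gives $d(p_1, r_1) = d(\dfl(p_1), \dfl(r_1))$. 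Chaining these, $d(q'', \dfl(r_1)) \le d(\dfl(p_1), \dfl(r_1)) + \varepsilon$. Since $q''$, $\dfl(p_1)$, $\dfl(r_1)$ all lie on $g_1 \cap \dfl^{-1}(e)$, which $\dfl$ maps isometrically into $e \cong (0,\infty)$ with $\dfl(r_1)$ the largest of the three coordinates, this inequality of distances translates directly into $\dfl(q) \ge \dfl(p_1) - \varepsilon$. Letting $\varepsilon \to 0$ finishes it.

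The main obstacle, as in Lemma \ref{lem:cusps into cusps}, is bookkeeping the tree geometry correctly: one must be sure that $q''$ really does land in the same edge ray $e$ (so that the distance-to-coordinate translation is legitimate) and that $\dfl(q)$ and $q''$ have the same $e$-coordinate, i.e. that passing from $q$ to its horocyclic projection $q''$ onto $g_1$ does not change the image under $\dfl$. The latter is exactly the $1$-Lipschitz-along-horocycles input combined with the fact that $\dfl$ collapses the leaf through $q$ (or rather, maps it within distance $0$ of a point) — more carefully, one uses that $\dfl$ is $1$-Lipschitz so $\dfl$ of the horocyclic segment from $q$ to $q''$ has length $\le$ (length of that segment), but that segment may be long; so instead one argues as above purely with the estimate on $d(q'', \dfl(r_1))$ versus $d(\dfl(q), \dfl(r_1))$, never needing $\dfl(q) = \dfl(q'')$ exactly. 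Handling the degenerate case where $q$ lies on the same leaf as $p_1$ or $p_2$ (so the projection is one of the $p_i$ themselves) is immediate and can be dispatched in a line. I expect the rest to be routine given the machinery already in place.
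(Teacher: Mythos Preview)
Your core argument is the paper's: pick $r$ on the same boundary geodesic as $p$ deep in the spike so the horocycle through $r$ has length $\le \varepsilon$, pick $s$ on that horocycle with $d(q,s) = d(p,r)$, and chain the boundary-isometry property ($d(p,r) = \dfl(r) - \dfl(p)$) with the 1-Lipschitz property ($d(q,s) \ge d_T(\dfl(q),\dfl(s))$ and $d_T(\dfl(s),\dfl(r)) \le \varepsilon$) to get $\dfl(q) \ge \dfl(p) - \varepsilon$. One caution: your auxiliary point $q''$ --- ``where $g_1$ meets the horocycle through $\dfl(q)$'' --- is ill-defined, since $\dfl(q)$ lives in the tree $T$, not a hyperbolic surface; this is a vestige of the proof of Lemma~\ref{lem:cusps into cusps} that does not transplant to a tree target. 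As you yourself observe, it is dispensable: steps (ii) and (iii) alone yield $d_T(\dfl(q), \dfl(r)) \le d_T(\dfl(p), \dfl(r)) + \varepsilon$, and since $\dfl(r)$ can be taken arbitrarily far out on $e$, tree geometry then forces $\dfl(q) \in e$ with the claimed inequality. The paper writes exactly this, directly in the coordinate on $e$, without the detour.
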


\begin{proof}
Fix $\varepsilon > 0$ and pick $r$ living on the same geodesic as $p$ but much deeper in the spike, such that the horocyclic segment through $r$ has length $\varepsilon$. Let $s$ on that horocycle be such that $d(r,p) = d(s,q).$
Then using the properties of deflations,
\[\dfl(r)-\dfl(p) 
= d(r,p) = d(s,q) 
\ge \dfl(s) -\dfl(q),\]
and so
\[\dfl(q) \ge \dfl(p) + \left(\dfl(s) - \dfl(r)\right)
\ge \dfl(p)-\varepsilon.\]
Sending $\varepsilon \to 0$ completes the proof.
\end{proof}

This immediately yields the following useful corollary:

\begin{corollary}\label{cor:preimages of leaves}
Let all notation be as in Lemma \ref{lem:deflations to leaves}. Then for any $t >0$, the points of $\dfl^{-1}(t) \cap \partial Y$ are connected by a horocyclic segment $H_t$ that is completely contained in $Y$.
\end{corollary}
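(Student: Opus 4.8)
The plan is to pass to the universal cover and run an argument in the spirit of Lemma~\ref{lem:deflations to leaves}: follow the horocyclic arc through one of the two relevant boundary preimages until it first re-meets $\partial\widetilde Y$, and show it cannot do so before it reaches the other side of the spike, exactly at the second preimage point.

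First I would fix $t>0$, lift $\dfl$ to an equivariant map $\tilde\dfl\colon\widetilde Y\to\widetilde T$, and fix a lift $\tilde e$ of $e$. Let $g_1,g_2\subset\partial\widetilde Y$ be the two boundary geodesics bounding the spike of $\widetilde Y$ corresponding to $\tilde e$, and let $\xi$ be their common ideal point. By (the proof of) Lemma~\ref{lem:deflation_doubleback}, $\tilde\dfl$ restricts to an isometric embedding of each $g_i$ onto a bi-infinite geodesic of $\widetilde T$ that runs through $\tilde e$; since $\tilde e$ is half-infinite, any such geodesic contains all of $\tilde e$, so for each $t$ there is a unique $p_i\in g_i$ with $\tilde\dfl(p_i)=t$, and $p_1,p_2$ are the only points of $\partial\widetilde Y$ over $t$.

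The core step is to show $p_1$ and $p_2$ lie on a common horocycle $\widetilde H$ based at $\xi$ with connecting sub-arc in $\widetilde Y$. Let $\widetilde H$ be the horocycle based at $\xi$ through $p_1$, let $q^\ast=\widetilde H\cap g_2$ be the unique intersection point, and let $\sigma$ be the arc of $\widetilde H$ from $p_1$ to $q^\ast$ leaving $p_1$ into the spike. If $\sigma$ exited $\widetilde Y$, let $w$ be its first point on $\partial\widetilde Y$ after $p_1$. Applying Lemma~\ref{lem:deflations to leaves} with $p=p_1$ along $[p_1,w)\subseteq\sigma\cap\widetilde Y$ and letting $q\to w$ shows $\tilde\dfl(w)\in\tilde e$ with parameter $\ge t$; hence $w\in\tilde\dfl^{-1}(\tilde e)\cap\partial\widetilde Y\subseteq g_1\cup g_2$ by Lemma~\ref{lem:deflation_doubleback}, and since a geodesic asymptotic to $\xi$ meets $\widetilde H$ exactly once, $w\neq p_1$ forces $w=q^\ast$. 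In either case $[p_1,q^\ast]_{\widetilde H}\subseteq\widetilde Y$ and $\tilde\dfl(q^\ast)\in\tilde e$ has parameter $\ge t$; applying Lemma~\ref{lem:deflations to leaves} once more with $p=q^\ast$ and $q=p_1$ gives $t\ge\tilde\dfl(q^\ast)$, so $\tilde\dfl(q^\ast)=t$ and therefore $q^\ast=p_2$. Projecting $[p_1,p_2]_{\widetilde H}$ down to $Y$ then produces the claimed horocyclic segment $H_t$, which is automatically contained in $Y$.

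The step I expect to require the most care is justifying that Lemma~\ref{lem:deflations to leaves} genuinely confines $\tilde\dfl(\sigma\cap\widetilde Y)$ to the \emph{open} edge $\tilde e$, and not merely that it gives a formal inequality of ``parameters''; this is what powers the dichotomy $w\in g_1\cup g_2$ and hence the whole argument. The cleanest way to secure it is to phrase Lemma~\ref{lem:deflations to leaves} (or a one-line addendum) in terms of the Busemann function of the free end of $\tilde e$, which is positive precisely on $\tilde e$, so that ``$\tilde\dfl(q)\ge\tilde\dfl(p)>0$'' literally entails $\tilde\dfl(q)\in\tilde e$. The remaining ingredients --- that the horocyclic arc from $p_1$ into the spike must eventually meet $g_2$, that $\widetilde Y$ is closed so a first exit point exists, and the descent to $Y$ --- are routine and I would not dwell on them.
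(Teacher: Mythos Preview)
Your proof is correct and follows essentially the same strategy as the paper's: apply Lemma~\ref{lem:deflations to leaves} symmetrically to the two boundary preimages to force them onto a common horocycle, and use Lemma~\ref{lem:deflation_doubleback} to rule out any other boundary crossing of the horocyclic arc. The only cosmetic differences are that the paper separates the two conclusions (same horocycle, then containment in $Y$) while you handle them in a single pass, and that you make explicit the Busemann-function reading of ``$\dfl(q)\ge\dfl(p)$'' needed to conclude $\dfl(w)\in e$, a point the paper leaves implicit.
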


Note that Lemma \ref{lem:deflation_doubleback} ensures that $\dfl^{-1}(t) \cap \partial Y$ is a pair of points.

\begin{proof}
To see the first statement, pick $p \in \dfl^{-1}(t) \cap \partial Y$ and let $q$ be the other point of intersection of the horocycle through $p$ with $\partial Y$; then Lemma \ref{lem:deflations to leaves} ensures $\dfl(q) \ge \dfl(p)$.
Applying the Lemma with the roles reversed gives equality.
The second statement is a consequence of the fact that deflations are isometries on $\partial Y$ (else, there would be two distinct points of the same boundary geodesic mapping to the same point of $\ell$).
\end{proof}

\begin{remark}\label{rmk:limit_H0}
The limiting horocycle $H_0 = \lim_{t \to 0} H_t$ connects points of $\dfl^{-1}(0)$ and is contained in $Y$, but may a priori be tangent to $\partial Y$.
Note that $\dfl^{-1}(0) \cap \partial Y$ may contain points {\em not} on $H_0$ (consider the deflation $P_{\text{reg}} \to \ast_n$ obtained by collapsing the orthogeodesic foliation).
\end{remark}

\begin{remark}
The above statements share more than just a similarity with the arguments in Section \ref{subsec:preserve spikes}.
Indeed, Corollary \ref{cor:preimages of leaves} can be used to give another, morally equivalent, proof of Lemma \ref{lem:preserve horocycle endpts}. 
Given a boundary-tight map $Y \to Y'$, deflating $Y'$ along its orthogeodesic foliation and rescaling yields a deflation of $Y$ to a ribbon graph with a half-infinite edge.
Preimages in $\partial Y$ of points in that edge must be connected by horocyclic segments, and since the orthogeodesic foliation is equivalent to the horocyclic foliation in spikes, the corresponding points in $\partial Y'$ must also lie on a horocycle.
\end{remark}

Given a deflation $\dfl: Y \to T$ as in Lemma \ref{lem:deflations to leaves} and a half-infinite edge $e$ of $T$, let $V_{e}$ denote the (open) spike of $Y$ foliated by the horocycles $H_t$ guaranteed by Corollary \ref{cor:preimages of leaves}:
\[V_{e}:=
\bigcup_{t > 0} H_t.\]
We observe the following, which is strictly stronger than Proposition \ref{prop:deflate to star}.

\begin{corollary}\label{cor:2 leaves 2 spikes}
If $e_1$ and $e_2$ are distinct half-infinite edges, then $V_{e_1}$ and $V_{e_2}$ are disjoint.
\end{corollary}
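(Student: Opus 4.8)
The plan is to argue by contradiction, showing that if the spikes $V_{e_1}$ and $V_{e_2}$ were to overlap, then the deflation $\dfl$ would be forced to violate either $1$-Lipschitzness or the topological constraint in Definition \ref{def:deflation}(3). First I would recall the structure we have available: by Corollary \ref{cor:preimages of leaves}, for each $i$ the spike $V_{e_i}$ is a genuine open subsurface of $Y$ foliated by embedded horocyclic segments $H_t^{(i)}$, each contained in $Y$, with $\dfl$ mapping $H_t^{(i)}$ into $e_i$ and (by Lemma \ref{lem:deflations to leaves} and its reverse) sending the two endpoints of $H_t^{(i)}$ to the single point $t \in e_i \cong (0,\infty)$. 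Lifting to universal covers as in Lemma \ref{lem:deflation_doubleback}, a spike $V_{e_i}$ is cut off in $\tilde Y$ by a single leaf $\tilde H^{(i)}_{0}$ (the limiting horocycle of Remark \ref{rmk:limit_H0}) and is bounded by exactly the two boundary geodesics whose images under $\tilde f$ run along the lift $\tilde e_i$ of $e_i$; everything outside $\tilde H_0^{(i)}$ is the rest of $\tilde Y$.

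The key step is to show $V_{e_1} \cap V_{e_2} \neq \emptyset$ is impossible. Suppose $x$ lies in both, so $\dfl(x) \in e_1$ and $\dfl(x) \in e_2$; since $e_1 \neq e_2$ are distinct edges of the graph $T$ meeting only at vertices, and $\dfl(x)$ is in the open edge $e_1$, we already get $\dfl(x)\notin e_2$, a contradiction --- \emph{unless} the two ``spikes'' $V_{e_1},V_{e_2}$ are actually the same region of $Y$ mapping to the same edge under two different descriptions, which cannot happen because each $V_{e_i}$ is by construction $\bigcup_{t>0} H_t^{(i)}$ with $H_t^{(i)}$ being \emph{the} component of $\dfl^{-1}(t)\cap\partial Y$ determined by the edge $e_i$. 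So the honest content is not that a point maps to two edges, but that $V_{e_1}$ and $V_{e_2}$ could overlap as \emph{subsets of $Y$} even though points in the overlap map into, say, $e_1$: the issue is whether the horocyclic foliations of the two spikes, read off from the two different boundary-asymptotic pairs, can cross in the interior of $Y$. To rule this out I would work in $\tilde Y$: each lifted spike $\tilde V_{e_i}$ is a half-plane-like region bounded by the single horocyclic leaf $\tilde H^{(i)}_0$. Two such regions in the hyperbolic plane, each bounded by one horocycle and each ``opening toward'' a distinct pair of ideal points of $\partial\tilde Y$, can intersect; but if they did, the overlap region would have to be collapsed by $\tilde f$ simultaneously into $\tilde e_1$ and into $\tilde e_2$, forcing $\tilde f$ to send an open set to the intersection point of those two edges (a vertex of the tree). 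Since $\tilde f$ is an isometry on boundary geodesics and $1$-Lipschitz, a boundary geodesic passing through $\tilde V_{e_1}\cap\tilde V_{e_2}$ would be required to map into $\tilde e_1$ on one part and into $\tilde e_2$ on another while also being a \emph{single} geodesic whose image is isometric to a subinterval of $\tilde T$; an embedded geodesic arc in a tree cannot traverse two distinct edges and return, which gives the contradiction.

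The step I expect to be the main obstacle is making precise the claim that the two lifted spikes $\tilde V_{e_1}, \tilde V_{e_2}$ cannot overlap in the interior, \emph{as opposed to} along $\partial\tilde Y$ where they genuinely might share a vertex (when $e_1,e_2$ are incident edges, the spikes of $Y$ are adjacent and their closures can meet at a boundary point --- this is exactly the self-tangent horogon picture). So the argument must be careful to only conclude disjointness of the \emph{open} spikes $V_{e_i}$, and to locate the contradiction at an interior point. I would do this by taking an alleged interior intersection point $x$, pushing it along the horocyclic leaf of $V_{e_1}$ to a boundary point $p_1\in\partial Y$ with $\dfl(p_1)=\dfl(x)=:t\in e_1$, and simultaneously along the leaf of $V_{e_2}$ through $x$ to a boundary point with image in $e_2$; the two leaves through $x$ are distinct (they are based at different ideal points), so $x$ has a neighborhood on which $\dfl$ is forced to be non-injective transverse to \emph{both} foliations in incompatible ways, and comparing with the fact that $\dfl^{-1}(t)\cap\partial Y$ is exactly two points (Lemma \ref{lem:deflation_doubleback}) lying on a single horocycle (Corollary \ref{cor:preimages of leaves}) yields that the leaf of $V_{e_2}$ through $x$ would have to coincide with $H_t^{(1)}$, hence $e_1 = e_2$, the desired contradiction.
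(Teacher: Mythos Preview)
You already have the paper's proof in hand and then talk yourself out of it. The paper's argument is one line: by Lemma~\ref{lem:deflations to leaves}, $\dfl(V_e)\subset e$; since the open half-infinite edges $e_1$ and $e_2$ are disjoint in $T$, so are $V_{e_1}$ and $V_{e_2}$. You state exactly this in your second paragraph (``$\dfl(x)\in e_1$ and $\dfl(x)\in e_2$ \ldots\ contradiction''), and you even record the key input in your first paragraph (``$\dfl$ mapping $H_t^{(i)}$ into $e_i$'').

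Your subsequent worry --- that $V_{e_1}$ and $V_{e_2}$ might overlap as subsets of $Y$ while points in the overlap map only into $e_1$ --- is not a real possibility. The inequality $\dfl(q)\ge\dfl(p)$ in Lemma~\ref{lem:deflations to leaves}, read with $e$ identified with $(0,\infty)$, is asserting that $\dfl(q)$ lies in the open edge $e$ at depth at least $\dfl(p)>0$: the half-infinite edge is precisely the locus where the Busemann function for its end is positive, and that is what the inequality says. So $x\in V_{e_2}$ genuinely forces $\dfl(x)\in e_2$, and the simple contradiction goes through. The universal-cover argument in your third paragraph, and the attempt to analyze crossing horocyclic foliations, are unnecessary.
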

\begin{proof}
By Lemma \ref{lem:deflations to leaves},
$\dfl(V_{e}) = e$, and the open half-infinite edges $e_1$ and $e_2$ are disjoint.
\end{proof}

\begin{proof}[Proof of Proposition \ref{prop:deflate to star}]
Suppose that $P$ is an ideal $n$-gon and that $\dfl:P \to \ast_n$ is a deflation.
Let $0 \in \ast_n$ denote the unique vertex.
By Corollary \ref{cor:preimages of leaves} and Remark \ref{rmk:limit_H0}, for each pair of asymptotic geodesics $g_1, g_2$ of $P$, the points
\[\dfl^{-1}(0) \cap g_1 \text{ and } \dfl^{-1}(0) \cap g_2\]
lie on a common horocycle.
By Corollary \ref{cor:2 leaves 2 spikes}, these horocycles bound spikes with disjoint interiors and so intersect at most tangentially.
Thus, the points of $\dfl^{-1}(0) \cap \partial P$ are the vertices of a horogon with at worst self-tangencies.
\end{proof}

\section{Spines and deflations}\label{sec:generalobstruct}
In the previous sections, we saw that spikes place strong restrictions on the structure of both boundary-tight maps and deflations.
In this one, we use similar arguments to constrain the structure of deflations for arbitrary hyperbolic surfaces $Y$ with (crowned) boundary.
We can still apply the lemmas from Section \ref{sec:obstructpoly} to any spikes of $Y$, but for those geodesics of $\partial Y$ that are not asymptotic, our estimates must necessarily be coarser.

In Section \ref{subsec:edgesource}, we show that if $Y \to T$ is a deflation, then long edges of $T$ must all come from long edges of $\Sp(Y)$. This follows from basic properties of deflations.
For the converse, that long edges of $\Sp(Y)$ must always persist under deflation, we generalize our arguments from the previous section. This requires more detailed hyperbolic-geometric estimates and is accomplished in Section \ref{subsec:bands_deflate}.
Together, these allow us to prove Theorem \ref{mainthm:deflate} in Section \ref{subsec:generalspinedeflate}.
These results have consequences for the structure of boundary-tight maps $Y \to Y'$ and hence the geometry of Thurston geodesics; these corollaries are collected in Section \ref{subsec:deflconseq_geos}.

Throughout this section, we fix a surface with (possibly crowned) boundary $\Sigma$ together with $Y \in \cT(\Sigma)$, $T \in \TRG(\Sigma)$, and a deflation $\dfl:Y \to T$.
We will use $\arc(Y)$ and $\arc(T)$ to denote the (unweighted) arc systems dual to $\Sp(Y)$ and $T$, respectively.
For each arc $\alpha$ in either arc system we will use $c_\alpha(Y)$ and $c_\alpha(T)$ to denote its weight, equivalently, the length of the corresponding edge in $\Sp(Y)$ or $T$.

\subsection{Long edges come from thin bands}\label{subsec:edgesource}
First, let us show that if $T$ has a sufficiently long edge, then $\Sp(Y)$ must have a corresponding edge of at least roughly that length.

We begin by defining a useful geometry quantity. Set 
\[\rad(Y):= \sup_{y \in Y}d(y, \partial Y). \]
Note $\rad(Y)$ is also equal to the radius of the largest circle embedded in the universal cover of $Y$, equivalently, it is half the length of the longest (singular) leaf of $\cO_{\partial Y}(Y)$.
This quantity is not uniformly bounded over all of $\cT(\Sigma)$ \cite[Figure 1]{shshII}, but is over the thick part. The following Lemma will be useful in allowing us to ensure that the rest of our arguments in this section have uniform constants so long as $Y$ is chosen to be thick.

\begin{lemma}\label{lem:rad unif bdd}
For every $s >0$ there is an $R$ depending only on $\chi(\Sigma)$ and $s$ such that for every $Y \in \cT(\Sigma)$ with $\sys(Y) \ge s$, we have 
$\rad(Y) \le R$.
\end{lemma}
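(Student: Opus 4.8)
\textbf{Proof plan for Lemma \ref{lem:rad unif bdd}.}
The plan is to show that a point $y \in Y$ far from $\partial Y$ forces either a short closed geodesic or a bound on the complexity of $\Sigma$, by a standard ``embedded disk'' argument. Concretely, $\rad(Y)$ equals the supremal radius $r$ of a metric disk $D(\tilde y, r)$ embedded in the universal cover $\widetilde Y \subset \HH^2$ that projects \emph{injectively} to $Y$ (here I use that $Y$ has geodesic boundary, so the nearest-point projection to $\partial Y$ witnesses exactly the failure of this injectivity at distance $\rad(Y)$). So it suffices to bound the radius of any embedded disk in $Y$ in terms of $\sys(Y) \ge s$ and $\chi(\Sigma)$.

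First I would recall the area bound: by Gauss--Bonnet, $\Area(Y) = 2\pi|\chi(\Sigma)|$ (the crowned boundary and ideal vertices contribute no extra area, as each ideal vertex/spike has angle $0$). In $\HH^2$ a metric disk of radius $r$ has area $2\pi(\cosh r - 1)$, which grows like $\pi e^r$. Hence if $D(\tilde y, r)$ embeds in $Y$ then $2\pi(\cosh r - 1) \le 2\pi |\chi(\Sigma)|$, giving an immediate bound $r \le \operatorname{arccosh}(|\chi(\Sigma)| + 1)$ that depends only on $\chi(\Sigma)$ and not even on $s$. This already proves the Lemma with $R = \operatorname{arccosh}(|\chi(\Sigma)|+1)$; the systole hypothesis is not actually needed once one has the area identity, although it would be needed in the alternative (purely local-to-global) argument where one only knows injectivity radius at points near $\tilde y$.

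The one point that requires a little care — and is the main obstacle — is justifying that $\rad(Y)$ really is realized by an embedded disk of that radius in $Y$ itself (as opposed to in $\widetilde Y$), i.e.\ that the projection $\widetilde Y \to Y$ is injective on $D(\tilde y,\rad(Y))$ when $\tilde y$ lies over a point maximizing $d(\cdot,\partial Y)$. This is exactly the content of the remark in Section \ref{subsec:spinebasics} that $\rad(Y)$ is the radius of the largest circle embedded in the universal cover and is half the length of the longest singular leaf of $\cO_{\partial Y}(Y)$: a geodesic of $\partial Y$ lifts to a complete geodesic in $\widetilde Y$, and a point at distance $\rho$ from $\partial \widetilde Y$ sits at the center of an embedded half-disk of radius $\rho$ in $\widetilde Y$; this disk maps injectively to $Y$ precisely because no two lifts of $\partial Y$ come closer than would be forced by a short geodesic, and since we are inside a single fundamental domain adjacent to the boundary there is no identification to worry about. (If one prefers to avoid this discussion entirely, one can instead bound $\rad(Y)$ directly: the open metric $\rho$-neighborhood of a point realizing $d(y,\partial Y) = \rad(Y)$ is disjoint from $\partial Y$ and embeds in $Y$ because any self-identification would produce a geodesic loop of length $< 2\rho$ through $y$ disjoint from $\partial Y$, hence an essential closed curve of length $< 2\rho$; combined with $\sys(Y) \ge s$ this gives $\rho \le s/2$ unless $\rho$ is already bounded by the area estimate. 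Taking $R = \max\{s/2,\ \operatorname{arccosh}(|\chi(\Sigma)|+1)\}$ then works.) I would present the area argument as the main line and mention the systolic refinement only as needed to match the exact statement.
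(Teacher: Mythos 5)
There are two gaps here, and the second is fatal --- the bound you propose is actually false. To start, the area identity $\Area(Y) = 2\pi|\chi(\Sigma)|$ fails for crowned surfaces: Gauss--Bonnet gives $\Area(Y) = -2\pi\chi(\Sigma) + k\pi$ where $k$ is the number of spikes, since each ideal vertex has interior angle $0$ and so contributes $\pi$, not $0$. (An ideal $n$-gon has area $(n-2)\pi$ while $\chi = 1$ for every $n$.) This is recoverable, since the number of spikes is part of the topological type of $\Sigma$, but it already shows the bound cannot depend on $\chi(\Sigma)$ alone.

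The decisive problem is the embeddedness of the disk $B(\tilde y, \rad(Y))$. Your ``single fundamental domain'' remark is unjustified, and your systolic fallback runs the implication backwards: if the ball fails to embed, some nontrivial $g \in \pi_1(Y)$ has $d(\tilde y, g\tilde y) < 2\rho$, yielding a closed geodesic of length $< 2\rho$; combined with $\sys(Y) \ge s$ this forces $\rho > s/2$, a \emph{lower} bound on $\rho$, so $R = \max\{s/2, \operatorname{arccosh}(|\chi|+1)\}$ does not follow. And the proposed $R$ is genuinely false: let $Y$ be a one-holed torus with $\partial Y$ of length $1$ and an interior simple closed geodesic of length $s \to 0$. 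Then $\chi(\Sigma) = -1$ and $\Area(Y) = 2\pi$, so your $R \approx \operatorname{arccosh}(2) \approx 1.32$ stays bounded, yet the core of the standard collar around the short geodesic lies at distance $\approx \log(4/s) \to \infty$ from $\partial Y$, while the largest embedded disk centered there has radius only $\approx s/2$; the inequality $2\pi(\cosh r - 1) \le \Area(Y)$ therefore gives no information about $\rad(Y)$. The dependence of $R$ on $s$ is unavoidable, and the paper's proof never invokes area: it uses compactness of the $s$-thick part of moduli space for closed boundary, a march across ideal triangles for ideal $n$-gons (giving the bound $n\log 3$), and, for crowns attached to a closed boundary head, geometric series whose decay rate $\approx e^{-s}$ degenerates exactly as $s \to 0$.
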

\begin{proof}
This statement is clear when $\Sigma$ only has closed boundary components, as the (universal curve over) the $s$-thick part of moduli space is compact.
It is also immediate when $\Sigma$ is an ideal $n$-gon, as the diameter of the circle inscribed in an ideal triangle is $\log 3$, so $d(y, \partial Y)$ is bounded by $n \log 3$.

It therefore suffices to bound, for any hyperbolic crown fitting a head with circumference at least $s$, the distance between the closed boundary geodesic $\gamma$ and the non-closed ones.
This can be done by observing that this distance is at most a sum of geometric series (one for each spike spiraling around $\gamma$) and an appropriate multiple of $\log 3$ (traveling through non-spiraling spikes).
But now each individual geometric series has decay rate $\approx e^{-s}$, thus so as long as $s$ is bounded below we get an upper bound for the sum.
\end{proof}

We have already noted in Remark \ref{rmk:isolate} that having very short systole allows for great flexibility in both deflations and boundary-tight maps. 
The previous Lemma implies that if $Y$ has sufficiently large systole, then the arcs with large weight must fill $Y$, which (combined with Theorem \ref{mainthm:deflate}) restricts deflations and boundary-tight maps from $Y$.

\begin{lemma}[Large arcs fill thick surfaces]\label{lem:big sys then fill}
For every $C > 0$, there is some $s_0$ such that for every $Y$ with $\sys(Y) \ge s_0$, the sub-arc system $\{\alpha \mid c_\alpha(Y) \ge C\}$ fills $Y$.
\end{lemma}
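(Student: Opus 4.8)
The plan is to argue by contradiction using the arc-complement structure together with the uniform bound from Lemma \ref{lem:rad unif bdd}. Recall that the arcs of $\arc(Y)$ cut $Y$ into a union of polygons (truncated triangles and ideal polygons coming from crowns), and the weight $c_\alpha(Y)$ of an arc $\alpha$ equals the length of either of its projections to $\partial Y$. Suppose the sub-arc system $\arc_{\ge C}(Y) := \{\alpha \mid c_\alpha(Y) \ge C\}$ does not fill $Y$. Then there is an essential simple closed curve or an essential arc (with endpoints on $\partial Y$) $\delta$ that is disjoint from every arc in $\arc_{\ge C}(Y)$, i.e., $\delta$ meets only arcs of weight $< C$. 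First I would reduce to the case of a curve: if the non-filling witness is an arc, one can band-sum it with a nearby boundary arc to produce a closed curve that still avoids $\arc_{\ge C}(Y)$ up to a bounded change in intersection pattern. So assume $\delta$ is an essential simple closed curve crossing only small-weight arcs.

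Next I would estimate $\ell_Y(\delta)$ from above. Realize $\delta$ transverse to the orthogeodesic foliation $\cO_{\partial Y}(Y)$, or better, push $\delta$ into a position adapted to $\Sp(Y)$: homotope $\delta$ to a curve that alternately runs along edges of $\Sp(Y)$ and crosses the dual polygons. Each time $\delta$ crosses a polygon of $Y \setminus \arc(Y)$ through a pair of its sides, the length contributed is at most $2\rad(Y) \le 2R$, where $R = R(\chi(\Sigma), s_0)$ comes from Lemma \ref{lem:rad unif bdd} (applied with $s = s_0$). Each time $\delta$ travels along (a collar neighborhood of) an edge $\alpha$ of $\Sp(Y)$, the contribution is at most $c_\alpha(Y) + O(1) < C + O(1)$, since $\delta$ only uses small-weight edges. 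The number of polygons and edges $\delta$ must traverse is bounded by the complexity of the triangulation dual to $\arc(Y)$, hence by a constant $N = N(\chi(\Sigma))$ (the number of arcs is $-3\chi$, and the number of complementary polygons is likewise topologically bounded). Combining, $\ell_Y(\delta) \le N \cdot (C + 2R + O(1)) =: C_1$, a constant depending only on $C$ and $\chi(\Sigma)$.

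Finally, I would derive the contradiction by choosing $s_0 > C_1$: then $\sys(Y) \ge s_0 > C_1 \ge \ell_Y(\delta)$ forces $\ell_Y(\delta) < \sys(Y)$, impossible for an essential closed curve. This fixes the threshold $s_0 = s_0(C, \chi(\Sigma))$. The main obstacle I anticipate is making the length estimate for $\delta$ genuinely uniform: one must control how many edges of $\Sp(Y)$ the curve $\delta$ runs along, which is not a priori bounded if $\delta$ is allowed to wind, so the key technical point is to choose a \emph{taut} (minimal-position) representative of $\delta$ relative to the dual ideal triangulation — a standard but slightly fiddly normal-form argument — so that it crosses each dual polygon boundedly many times and runs monotonically along each edge it follows. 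Once that normalization is in place, the counting and the application of Lemma \ref{lem:rad unif bdd} are routine. (A clean alternative that avoids the normal-form bookkeeping: use that $\delta$ disjoint from $\arc_{\ge C}(Y)$ is contained in the subsurface $Y'$ filled by the remaining arcs, whose spine edges all have length $< C$; by Theorem \ref{thm:spinecoords} together with Lemma \ref{lem:rad unif bdd} the hyperbolic geometry of such $Y'$ is controlled, so its systole is bounded above by a constant $C_1(C,\chi)$, and again take $s_0 > C_1$.)
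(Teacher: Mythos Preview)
Your approach is essentially the paper's, but executed in the hyperbolic metric on $Y$ rather than in the spine metric on $\Sp(Y)$. Both routes use Lemma~\ref{lem:rad unif bdd} and the cardinality bound $|\arc(Y)| \le \chi$; the paper simply observes that closest-point projection to $\partial Y$ distorts lengths by at most $e^{R}$, so $\sys(\Sp(Y)) \ge e^{-R}\sys(Y)$, and then runs the pigeonhole directly on an embedded cycle in the spine graph (at most $\chi$ edges, each of spine-length $<C$, hence total length $<\chi C$, contradicting the systole bound).

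There is one genuine gap in your main argument: the bound on $N$. You propose to control the number of edges $\delta$ traverses by putting $\delta$ in taut position relative to the dual triangulation, but this does \emph{not} bound the combinatorial length of an arbitrary essential simple closed curve --- even in minimal position, a simple curve can cross a given arc arbitrarily many times. The fix is not to normalize a given $\delta$ but to \emph{choose} $\delta$ well: if $\arc_{\ge C}(Y)$ fails to fill, then the subgraph of $\Sp(Y)$ spanned by the small-weight edges contains a loop, hence an \emph{embedded} cycle $\gamma$, and this $\gamma$ has at most $\chi$ edges automatically. Pushing $\gamma$ to $\partial Y$ along the orthogeodesic foliation then gives your estimate $\ell_Y(\gamma) \le \chi(C + 2R)$ rigorously. (Your parenthetical alternative via the subsurface $Y'$ gestures at the same idea but is vaguer than needed.) One further small point: $R = R(s_0)$ depends on the threshold you are trying to define, so to close the loop you should note, as the paper does, that $R(s)$ is non-increasing in $s$; then fix any $s_1$, set $R = R(s_1)$, and take $s_0 > \max\{s_1,\ \chi(C+2R)\}$.
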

\begin{proof}
By definition, the constant $R(s)$ from Lemma \ref{lem:rad unif bdd} does not increase as $s$ increases.

Suppose that $Y$ has systole $s$. Lemma \ref{lem:rad unif bdd} states that $\rad(Y) \ge R(s)$, and since closest-point projection distorts distances at worst exponentially, the systole of $\Sp(Y)$ is at least $e^{-R}s$.
There is a uniform upper bound $\chi$ on the cardinality of $\arc(Y)$ depending only on the topology of $Y$, so every loop in $\Sp(Y)$ must cross an edge with length at least
$e^{-R}s/ \chi$, which is in particular larger than $C$ for $s$ sufficiently large.
\end{proof}

The number $\rad(Y)$ in a sense quantifies the difference of $Y$ from being a tree; as such, it is natural that it appears as an error term when making estimates on deflations.

\begin{lemma}[Long edges come from bands]\label{lem:long edges come from bands}
Let $\dfl:Y \to T$ be a deflation. 
If there is an arc $\alpha \in \arc(T)$ with 
$c_\alpha(T) > 2 \rad(Y),$
then $\alpha \in \arc(Y)$.
\end{lemma}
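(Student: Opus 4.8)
The plan is to show that an arc $\alpha\in\arc(T)$ with very long dual edge forces a long edge in $\Sp(Y)$ by pulling back a suitable sub-interval of the edge to $\partial Y$. Let $e$ be the open edge of $T$ dual to $\alpha$, identified with the interval $(0,c_\alpha(T))$. By Lemma \ref{lem:deflation_doubleback}, $\dfl^{-1}(e)\cap\partial Y$ consists of exactly two open intervals $J_1,J_2$ of $\partial Y$; since $\dfl$ is an isometric embedding on each boundary geodesic, each $J_i$ maps isometrically onto (a sub-interval of) $e$, so $J_1$ and $J_2$ are geodesic segments of $\partial Y$ of length at most $c_\alpha(T)$. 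The key point is that, away from the two ends of $e$, both $J_1$ and $J_2$ fully cover $e$: for a point $t\in(0,c_\alpha(T))$ with $\dfl^{-1}(t)\cap\partial Y$ a single point, the ribbon structure (exactly as in the proof of Lemma \ref{lem:deflation_doubleback}) would force a boundary geodesic to double back, contradicting that $\dfl$ restricted to it is an isometric embedding. Hence except on a set of $t$ of measure zero, $\dfl^{-1}(t)\cap\partial Y$ has exactly two points $p_1(t)\in J_1$, $p_2(t)\in J_2$.

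Next I would estimate $d_Y(p_1(t),p_2(t))$. Since $\dfl$ is $1$-Lipschitz and $\dfl(p_i(t))=t$, the two points $p_1(t),p_2(t)$ lie in the same fiber-like region; more precisely, a path in $Y$ from $p_1(t)$ to $p_2(t)$ projects to a path in $T$ from $t$ to $t$, and the shortest such path in $Y$ has length at least… here is where $\rad(Y)$ enters. The cleanest way: take the geodesic $\gamma_t$ in $Y$ from $p_1(t)$ to $p_2(t)$; it is a leaf-like arc, and I claim its length is at most $2\rad(Y)$ precisely because every point of $Y$ is within $\rad(Y)$ of $\partial Y$ — in fact the orthogeodesic-foliation leaf through any interior point of $\gamma_t$ has half-length at most $\rad(Y)$, so $\gamma_t$ is homotopic rel endpoints to a concatenation of two such half-leaves, giving $\ell_Y(\gamma_t)\le 2\rad(Y)$. (This is the step I expect to require the most care: one must argue that the relevant arc connecting $p_1(t)$ and $p_2(t)$ really is short, not merely that each boundary point has a short orthogeodesic leaf; I would use that $\dfl$ collapses each such leaf to a point of $T$ and that $\dfl$ is $1$-Lipschitz to pin down which leaves are relevant.)

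Now fix two values $t<t'$ in $(0,c_\alpha(T))$, both at distance more than $\rad(Y)$ from each endpoint of $e$, so that the corresponding boundary points $p_i(t),p_i(t')$ are well-defined on each $J_i$. The arcs $\gamma_t$ and $\gamma_{t'}$ are disjoint embedded arcs in $Y$ (their images under $\dfl$ are the distinct points $t,t'$), each of length $\le 2\rad(Y)$, and together with the two sub-segments of $J_1$ and $J_2$ between $p_i(t)$ and $p_i(t')$ they bound an embedded band $B\subset Y$. Because $\dfl(B)$ is the sub-interval $[t,t']$ of $e$ and $\dfl$ is an isometry on $\partial Y$, the two long sides of $B$ (the sub-segments of $J_1,J_2$) both have length exactly $|t'-t|$. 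This band is embedded, has two long geodesic sides of equal length $|t'-t|$ and two short sides of length $\le 2\rad(Y)$; such a band is isotopic rel $\partial Y$ to a regular neighborhood of an essential arc $\beta$ of $Y$ running between the two boundary geodesics carrying $J_1,J_2$ (taking $|t'-t|$ close to $c_\alpha(T)>2\rad(Y)$ guarantees the band is long enough to be essential and not homotopic into a spike). I would then identify $\beta$ with the arc $\alpha$: indeed the core of $B$ is, after straightening, the orthogeodesic-foliation leaf isotopic to $\alpha$, so $\alpha\in\arc(Y)$. Finally, since $\dfl$ is an isometric embedding on $\partial Y$ and the width of the band is the $\dfl$-preimage of almost all of $e$, one reads off $c_\alpha(Y)\ge c_\alpha(T)-2\rad(Y)$, so in particular $\alpha\in\arc(Y)$, which is the statement. (Only the membership $\alpha\in\arc(Y)$ is claimed here; the quantitative bound on $c_\alpha(Y)$ is presumably extracted in the next subsection.)
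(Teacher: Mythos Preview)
Your argument has a genuine gap at the step you yourself flagged: the claim that $\ell_Y(\gamma_t)\le 2\rad(Y)$, where $\gamma_t$ is the geodesic in $Y$ between the two boundary preimages $p_1(t),p_2(t)$ of a point $t\in e$. Knowing that every point of $Y$ is within $\rad(Y)$ of $\partial Y$ does \emph{not} bound the distance between two specified boundary points. Your suggested fix --- that $\gamma_t$ is ``homotopic rel endpoints to a concatenation of two such half-leaves'' --- fails because the orthogeodesic half-leaves through an interior point of $\gamma_t$ need not land at $p_1(t)$ and $p_2(t)$ (they land at the nearest boundary points, which can be on entirely different geodesics, or at different parameters on $g_1,g_2$), and homotopy does not control length anyway. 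The $1$-Lipschitz property of $\dfl$ also points the wrong way: it says close in $Y$ implies close in $T$, whereas you need the converse. So the band $B$ you construct need not be thin, and even if it were, the final line (``the core of $B$ is, after straightening, the orthogeodesic-foliation leaf isotopic to $\alpha$'') \emph{assumes} what you are trying to prove, namely that some orthogeodesic leaf is isotopic to $\alpha$.

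The paper's argument avoids this entirely by running the contrapositive. Lift to the universal cover and suppose $\alpha\notin\arc(Y)$. Then either some arc of $\arc(Y)$ crosses $\alpha$, or $\alpha$ lies in a singular leaf of $\cO_{\partial Y}(Y)$; in either case one finds boundary geodesics $g_3,g_4$ on \emph{opposite sides} of $\alpha$ joined by a (possibly singular) orthogeodesic leaf, so $d(g_3,g_4)\le 2\rad(Y)$. Because $\widetilde T$ is a tree and $g_3,g_4$ lie on opposite sides of the edge $e$ dual to $\alpha$, their images satisfy $d_T(\dfl(g_3),\dfl(g_4))\ge c_\alpha(T)$. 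The $1$-Lipschitz property then gives $c_\alpha(T)\le 2\rad(Y)$, contradicting the hypothesis. The key difference is that the paper uses $\rad(Y)$ to control the distance between \emph{geodesics} (via the orthogeodesic foliation, which is tautologically short), not between arbitrary preimage points, and uses the tree structure of $T$ for the lower bound --- exactly the ingredient missing from your direct approach.
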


\begin{proof}
We begin by lifting everything to the universal cover and choosing some lift of $\alpha$ (which we will call by the same name).
Let $g_1$ and $g_2$ be the geodesics of $\partial \smash{\widetilde Y}$ connected by $\alpha$. 
Suppose towards contradiction the statement is not true; then either $\arc(Y)$ contains an arc $\beta$ crossing $\alpha$ or $\alpha$ is realized inside a singular leaf of $\cO_{\partial Y}(Y)$, which must have prongs emanating from both sides.
In either case, we can pick geodesics $g_3, g_4 \subset \partial \smash{\widetilde Y}$ which are on different sides of $\alpha$ and which are joined by a (possibly singular) leaf of $\cO_{\partial Y}(Y)$.
In particular, $d(g_3, g_4) \le 2 \rad(Y)$.

Since $g_3$ and $g_4$ live on different sides of $\alpha$, their images under $\dfl$ must live in different components of $\smash{\widetilde{T}} \setminus e$, where $e$ is the edge dual to $\alpha$.
In particular, since $\smash{\widetilde T}$ is a tree, this implies
\[c_\alpha(T) \le d_T(\dfl(g_3), \dfl(g_4)).\]
On the other hand, deflations are 1-Lipschitz, so putting everything together we get
\[c_\alpha(T) \le d_T(\dfl(g_3), \dfl(g_4))
\le d(g_3, g_4) \le 2 \rad(Y).\]
Thus so long as $c_\alpha(T) >2\rad(Y)$ we get a contradiction, ensuring $\alpha \in \arc(Y)$.
\end{proof}

Slightly tweaking the above proof gives an estimate comparing the weights of arcs.

\begin{lemma}\label{lem:lengthcomparison T<Y}
Let $\dfl:Y \to T$ be a deflation and suppose that $\alpha \in \arc(Y) \cap \arc(T)$.
Then
\[c_\alpha(T) \le c_\alpha(Y) + 4 \rad(Y).\]
\end{lemma}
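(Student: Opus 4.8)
The plan is to mirror the proof of Lemma \ref{lem:long edges come from bands}, straddling across $\alpha$ not a single short leaf but the genuine band of width $c_\alpha(Y)$ that $\alpha$ cuts across, and paying the extra $c_\alpha(Y)$ for doing so. If $c_\alpha(Y)=\infty$ there is nothing to prove, so assume it is finite; then the edge of $\Sp(Y)$ dual to $\alpha$ is compact. Lift everything to universal covers, so that $\dfl$ becomes an equivariant $1$-Lipschitz map $\widetilde Y\to\widetilde T$ from a simply connected hyperbolic surface with geodesic boundary to a metric ribbon tree $\widetilde T$, restricting to an isometric embedding on each boundary geodesic, and fix a lift $\widetilde\alpha$. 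Since $\alpha\in\arc(Y)$, $\widetilde\alpha$ is dual to a compact edge $\widetilde e_Y$ of $\Sp(\widetilde Y)$ whose two shadows on the boundary geodesics $\widetilde g_1,\widetilde g_2$ met by $\widetilde\alpha$ are intervals of length $c_\alpha(Y)$; write $\widetilde I_1=[a_1,b_1]\subset\widetilde g_1$ for the shadow on $\widetilde g_1$. Let $v,w$ be the endpoints of $\widetilde e_Y$, so that $a_1$ (resp.\ $b_1$) is the foot on $\widetilde g_1$ of the singular leaf $L_v$ (resp.\ $L_w$) of $\cO_{\partial Y}(Y)$ through $v$ (resp.\ $w$), and $d(v,\partial\widetilde Y),\,d(w,\partial\widetilde Y)\le\rad(Y)$. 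Since $\alpha\in\arc(T)$ as well, $\widetilde\alpha$ is also dual to an edge $\widetilde e_T$ of $\widetilde T$ of length $c_\alpha(T)$.

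Because every vertex of $\Sp(Y)$ has valence at least three, $L_v$ has a prong ending on a boundary geodesic $g_3$ distinct from $\widetilde g_1$ and $\widetilde g_2$, and likewise $L_w$ has a prong ending on some $g_4\ne\widetilde g_1,\widetilde g_2$. The leaves $L_v$ and $L_w$ are disjoint from the orthogeodesic leaf $\ell_0$ of $\cO_{\partial Y}(Y)$ through the midpoint of $\widetilde e_Y$, an embedded arc from $\widetilde g_1$ to $\widetilde g_2$ isotopic to $\widetilde\alpha$ that crosses $\widetilde e_Y$ once, separating $v$ from $w$. Hence $L_v$ lies entirely on the $v$-side of $\ell_0$, and since $g_3\ne\widetilde g_1,\widetilde g_2$ it meets no point of $\ell_0$, so all of $g_3$ lies on the $v$-side; symmetrically all of $g_4$ lies on the $w$-side. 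In particular $g_3$ and $g_4$ are distinct boundary geodesics of $\widetilde Y$ on opposite sides of $\widetilde\alpha$, so, exactly as in the proof of Lemma \ref{lem:long edges come from bands}, $\dfl(g_3)$ and $\dfl(g_4)$ lie in different components of $\widetilde T\setminus\widetilde e_T$. Consequently every path in the tree $\widetilde T$ from $\dfl(g_3)$ to $\dfl(g_4)$ traverses $\widetilde e_T$, giving $c_\alpha(T)=|\widetilde e_T|\le d_{\widetilde T}(\dfl(g_3),\dfl(g_4))$.

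It remains to bound the right-hand side. Here I concatenate the subsegment of $L_v$ from $g_3\cap L_v$ to $a_1$ (which passes through $v$, hence has length $2\,d(v,\partial\widetilde Y)\le 2\rad(Y)$), the subinterval of $\widetilde g_1$ from $a_1$ to $b_1$ (length $c_\alpha(Y)$), and the subsegment of $L_w$ from $b_1$ to $g_4\cap L_w$ (length $\le 2\rad(Y)$). This exhibits a point of $g_3$ and a point of $g_4$ at distance at most $c_\alpha(Y)+4\rad(Y)$ in $\widetilde Y$, so since $\dfl$ is $1$-Lipschitz,
\[
c_\alpha(T)=|\widetilde e_T|\le d_{\widetilde T}(\dfl(g_3),\dfl(g_4))\le d_{\widetilde Y}(g_3,g_4)\le c_\alpha(Y)+4\rad(Y),
\]
which is the claim.

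The step I expect to be most delicate is the one imported from Lemma \ref{lem:long edges come from bands}: that two boundary geodesics on opposite sides of $\widetilde\alpha$ have $\dfl$-images in distinct components of $\widetilde T\setminus\widetilde e_T$. This rests on the duality between $\arc(T)$ and the edges of $T$ together with the homotopy $\dfl\circ\varphi\simeq r$ from Definition \ref{def:deflation}, so that $\dfl(\widetilde\alpha)$ meets the arc transverse to $\widetilde e_T$ with algebraic intersection $\pm1$ and hence the two pieces of $\widetilde Y\setminus\widetilde\alpha$ are sent into the two pieces of $\widetilde T\setminus\widetilde e_T$. A secondary point is to ensure the chosen prongs of $L_v$ and $L_w$ land on boundary geodesics genuinely different from $\widetilde g_1$ and $\widetilde g_2$, so that they are not subdivided by $\widetilde\alpha$; this is exactly where valence $\ge 3$ and the disjointness of distinct leaves of $\cO_{\partial Y}(Y)$ are used.
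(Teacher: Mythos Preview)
Your proof is correct and follows essentially the same approach as the paper: pick boundary geodesics $g_3,g_4$ (the paper calls them $g_p,g_q$) distinct from $\widetilde g_1,\widetilde g_2$ meeting the singular leaves through the two endpoints of the spine edge dual to $\widetilde\alpha$, observe they lie on opposite sides of $\widetilde\alpha$ so their $\dfl$-images are separated by $\widetilde e_T$, and bound $d(g_3,g_4)$ by the concatenated path along singular leaf, then $\widetilde g_1$, then singular leaf. Your write-up is in fact more careful than the paper's in justifying the opposite-sides claim (via the midpoint leaf $\ell_0$) and in noting where valence $\ge 3$ is used.
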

\begin{proof}
As in the previous Lemma, lift to the universal cover and let $g_1$ and $g_2$ denote the geodesics of $\partial Y$ that $\alpha$ connects.
Let $p,q \in g_1$ be the endpoints of the projection of the edge of $\Sp(Y)$ dual to $\alpha$; equivalently, the leaves of the orthogeodesic foliation emanating from any point between $p$ and $q$ are isotopic to $\alpha$. Note $d(p,q) = c_\alpha(Y)$ by definition.

Pick some other geodesic $g_p \neq g_1, g_2$ of $\partial \smash{\widetilde Y}$ meeting the singular leaf of $\cO_{\partial Y}(Y)$ through $p$ and similarly pick $g_q$.
Then we have
\[d(g_p, g_q) \le c_\alpha(Y) + 4\rad(Y)\]
by tracing the path from $g_p$ to $p$ along a singular leaf, then to $q$ along $g_1$, then to $g_q$ along another singular leaf.
However, $g_p$ and $g_q$ are separated by $\alpha$, so 
\[c_\alpha(T) \le d_T(\dfl(g_p), \dfl(g_q)).\]
Invoking the 1-Lipschitz property of $\dfl$ finishes the proof.
\end{proof}

\subsection{Thin bands map to long edges}\label{subsec:bands_deflate}
We now show converses to the results of the previous subsection.
First, let us show that if two geodesics of $\partial Y$ are close (equivalently, if $\Sp(Y)$ has a long enough edge) then they map to the same edge of $T$.

\begin{lemma}[Share edge when close]\label{lem:thickthin_deflation}
There is a universal $\varepsilon_0$ ($\approx .83$) such that the following holds. Let $\dfl: Y \to T$ be any deflation.
Then if two geodesics $g_1$ and $g_2$ of $\partial Y$ are at distance less than $\varepsilon_0$ from each other, then the geodesics $\dfl(g_1)$ and $\dfl(g_2)$ must share an edge in $T$.
\end{lemma}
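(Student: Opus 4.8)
The plan is to argue by contradiction in the universal cover, exploiting the fact that two disjoint geodesics of $\partial\widetilde{Y}$ spread apart strictly more slowly, deep in a ``band'', than two bi‑infinite geodesics of the tree $\widetilde{T}$ can if they do not share an edge. Lift $\dfl$ to an equivariant $1$‑Lipschitz map $\widetilde{\dfl}\colon\widetilde{Y}\to\widetilde{T}$ and choose lifts $g_1,g_2$ of the two boundary geodesics with $d(g_1,g_2)=D<\varepsilon_0$. If $g_1$ and $g_2$ are asymptotic then $D=0$ and they bound a spike, which maps into a half‑infinite edge $e$ of $\widetilde{T}$; Corollary~\ref{cor:preimages of leaves} shows that the preimage of each point of $e$ meets $\partial\widetilde{Y}$ in a horocyclic segment running between $g_1$ and $g_2$, so $\dfl(g_1)$ and $\dfl(g_2)$ both contain $e$ and we are done. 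So assume $g_1,g_2$ are not asymptotic; since $\widetilde{Y}$ is convex their common perpendicular $\eta$ lies in $\widetilde{Y}$, and we set $p_i=g_i\cap\eta$, so $d(p_1,p_2)=D$.

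Now suppose for contradiction that $\dfl(g_1)$ and $\dfl(g_2)$ do not share an edge. Because $\widetilde{\dfl}$ restricts to an isometric embedding on each $g_i$ (Definition~\ref{def:deflation}), the images $\dfl(g_1),\dfl(g_2)$ are bi‑infinite geodesics of the tree $\widetilde{T}$ whose intersection is empty or a single point. Let $a_1\in\dfl(g_1)$ and $a_2\in\dfl(g_2)$ be the two ends of the bridge between them ($a_1=a_2$ if the two geodesics meet), so that the standard tree identity $d_{\widetilde{T}}(u,v)=d(u,a_1)+d(a_1,a_2)+d(a_2,v)$ holds for every $u\in\dfl(g_1)$ and $v\in\dfl(g_2)$. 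Applying this to $u=\dfl(p_1)$, $v=\dfl(p_2)$ and using that the length of $\dfl(\eta)$ is at most that of $\eta$ (which is $D$, since $\dfl$ is $1$‑Lipschitz), we obtain $d(\dfl(p_1),a_1)\le D$ and $d(a_2,\dfl(p_2))\le D$.

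Fix $t>0$ and let $x\in g_1$, $y\in g_2$ be the points at distance $t$ from $p_1$, $p_2$ in the symmetric (``same side'') configuration, so that $p_1p_2yx$ is a Saccheri quadrilateral with base $\eta$ of length $D$ and legs of length $t$; a direct computation gives $\cosh d_{\HH^2}(x,y)=1+(\cosh D-1)\cosh^2 t$, hence $d_{\HH^2}(x,y)=2t+\ln\bigl(\tfrac{\cosh D-1}{2}\bigr)+o(1)$ as $t\to\infty$. On the other hand, isometry of $\widetilde{\dfl}$ on $g_1$ gives $d(\dfl(x),\dfl(p_1))=t$, so $d(\dfl(x),a_1)\ge t-D$, and likewise $d(a_2,\dfl(y))\ge t-D$; the tree identity then yields $d_{\widetilde{T}}(\dfl(x),\dfl(y))\ge 2t-2D$. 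Combining this with the $1$‑Lipschitz bound $d_{\widetilde{T}}(\dfl(x),\dfl(y))\le d_{\HH^2}(x,y)$ and letting $t\to\infty$ forces $2e^{-2D}\le\cosh D-1$. Taking $\varepsilon_0$ to be the unique positive solution of $2e^{-2\varepsilon_0}=\cosh\varepsilon_0-1$ (numerically $\varepsilon_0\approx 0.83$), this inequality fails for every $D<\varepsilon_0$ — a contradiction — so $\dfl(g_1)$ and $\dfl(g_2)$ must share an edge.

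I expect the main obstacle to be the third paragraph: the comparison of divergence rates has to be sharp enough to extract an honest constant, which means carrying out the Saccheri‑quadrilateral identity exactly rather than with a crude triangle inequality, and checking that the ``same side'' placement of $x,y$ is the one minimizing $d_{\HH^2}(x,y)$ among all choices at depth $t$. A secondary point needing a line of care is the reduction to the convex universal cover, in particular that $\eta$ and all the segments used above really do lie inside $\widetilde{Y}$ and that $d(g_1,g_2)$ measured in $Y$ agrees with the $\HH^2$‑distance between the lifted boundary lines.
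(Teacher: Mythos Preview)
Your proof is correct and follows essentially the same route as the paper: lift to the universal cover, take the common perpendicular of length $\varepsilon$, place symmetric points at depth $Q$ on the same side, compare the Saccheri-quadrilateral summit length with the tree lower bound $2Q-2\varepsilon$, and extract a contradiction for $\varepsilon<\varepsilon_0$. Your asymptotic expansion $d(x,y)=2t+\ln\bigl((\cosh D-1)/2\bigr)+o(1)$ and the paper's closed form $d=2\sinh^{-1}(\cosh Q\,\sinh(\varepsilon/2))$ are equivalent and yield the same threshold; your explicit handling of the asymptotic case $D=0$ (which the paper leaves implicit) and your identification of the bridge points $a_1,a_2$ are minor elaborations, not a different argument.
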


We emphasize that $\varepsilon_0$ is independent of the thickness of $Y$ and the topological type of $\Sigma$. The result holds for any hyperbolic surface with boundary.

\begin{proof}
As always, lift to the universal cover.
Consider the orthogeodesic arc $\alpha$ connecting $g_1$ and $g_2$ and let $\varepsilon$ be its length. Let $a_i \in g_i$ denote the endpoints of $\alpha$.
Suppose $\dfl(g_1)$ and $\dfl(g_2)$ do not share an edge. Then since $T$ is a tree, there is a unique geodesic path $\gamma$ connecting $\dfl(g_1)$ and $\dfl(g_2)$.
Since $\dfl$ is 1-Lipschitz, the points $\dfl(a_1)$ and $\dfl(a_2)$ are at most $\varepsilon$ apart in $T$, and thus are each at most $\varepsilon$ away from the endpoints of $\gamma$. See Figure \ref{fig:deflate_quad}.

\begin{figure}[ht]
    \centering
    \includegraphics[width=.8\linewidth]{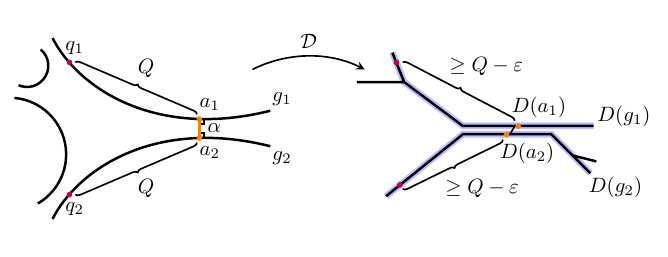}
    \caption{Points and geodesics in the proof of Lemma \ref{lem:thickthin_deflation}}
    \label{fig:deflate_quad}
\end{figure}

Now consider a pair of points $q_i \in g_i$ that are both some large distance $Q$ away from the $a_i$ (on the same side of $\alpha$).
By the hyperbolic trigonometry of Saccheri quadrilaterals,
\begin{equation}\label{eqn:hypdist}
d_{Y}(q_1, q_2) = 2 \sinh^{-1}\left(\cosh(Q) \sinh(\varepsilon/2) \right).
\end{equation}
On the other hand, since $T$ is a tree and $\dfl$ is an isometry on each leaf of $\partial Y$,
\begin{equation}\label{eqn:treedist}
d_T(\dfl(q_1), \dfl(q_2)) \ge 2Q-2\varepsilon.
\end{equation}
The 1-Lipschitz property of $\dfl$ enforces that \eqref{eqn:hypdist} $\ge$ \eqref{eqn:treedist} for all $Q$: but this only occurs so long as $\varepsilon$ is sufficiently large. Taking $\varepsilon_0 = .83$, for example, ensures that \eqref{eqn:treedist} $>$ \eqref{eqn:hypdist} for sufficiently large $Q$, yielding a contradiction. Thus $\dfl(g_1)$ and $\dfl(g_2)$ must share an edge.
\end{proof}

Now that we know that nearby geodesics $g_1$ and $g_2$ deflate to a common edge of $T$, we can further estimate how much of the $g_i$ must map to that edge.

\begin{proposition}\label{prop:short leaves deflate to same edge}
Let $\dfl:Y \to T$ be a deflation.
Suppose $\ell$ is a leaf of of $\cO_{\partial Y}(Y)$ and:
\begin{itemize}
    \item The orthogeodesic arc $\alpha$ homotopic to $\ell$ has length $\varepsilon < \varepsilon_0$.
    \item The length of $\ell$ is at most $\log 3$.
    \item The endpoints of $\ell$ are at least $\varepsilon$ away from those of the $\log 3$ leaf.
\end{itemize}
Then the endpoints of $\ell$ must map to the same edge of $T$.
\end{proposition}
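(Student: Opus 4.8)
The plan is to re-run the proof of Lemma~\ref{lem:thickthin_deflation}, but with the two ``legs'' of the Saccheri comparison anchored at the endpoints of $\ell$ rather than at the feet of $\alpha$, so that the only genuinely new work is to bound the extra slack introduced by $\ell$ not being the common perpendicular.

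\emph{Setup.} Lift to universal covers $\dfl\colon\widetilde Y\to\widetilde T$. Let $g_1,g_2\subset\partial\widetilde Y$ be the boundary geodesics met by $\ell$ and let $\alpha$, of length $\varepsilon<\varepsilon_0$, be their common perpendicular, with feet $b_i\in g_i$; let $m$ be the geodesic equidistant from $g_1$ and $g_2$ (the piece of $\Sp(\widetilde Y)$ here), which meets $\alpha$ orthogonally at its midpoint. Since $\ell$ is a leaf of $\cO_{\partial Y}(Y)$ running from $g_1$ to $g_2$, it passes through a single point $v\in m$, has the form $[a_1v]\cup[va_2]$ with $a_i\in g_i$, and is fixed by the reflection through $m$; hence $a_1,a_2$ lie at matching signed distances $s_0$ from $b_1,b_2$ along $g_1,g_2$, and $d(a_i,v)=\tfrac12|\ell|=:h\le\tfrac{\log 3}{2}$. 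The inequality $d(a_1,g_2)\le|\ell|\le\log 3$ already forces $s_0$ to be universally bounded ($\cosh s_0\le\cosh(\log 3)$), and likewise $h$ is universally bounded; these are the only quantities that will enter the error terms. Finally, $1$-Lipschitzness of $\dfl$ on each half of $\ell$ gives $d_{\widetilde T}(\dfl a_i,\dfl v)\le h$, and the tree-geodesic $[\dfl a_1,\dfl a_2]$ has length $\le|\ell|\le\log 3$.

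\emph{Main argument.} Suppose $\dfl a_1$ and $\dfl a_2$ do not lie on a common edge of $\widetilde T$. Since $\varepsilon<\varepsilon_0$, Lemma~\ref{lem:thickthin_deflation} applies to $\alpha$: the isometrically embedded lines $\dfl g_1,\dfl g_2$ meet in a segment $\sigma$ containing an edge, and past each endpoint of $\sigma$ they proceed along distinct edges. The contradiction hypothesis then forces a vertex $u$ of $\widetilde T$ on $[\dfl a_1,\dfl a_2]$ with $\dfl a_1,\dfl a_2$ on different branches at $u$; in the principal case $u$ is an endpoint of $\sigma$ and $\dfl a_1$ lies on the edge of $\dfl g_1$ just past $u$, at distance $\le\log 3$ from $u$, whence $d_{\widetilde T}(\dfl b_i,u)\le s_0+\log 3$ (using also $d_{\widetilde T}(\dfl b_1,\dfl b_2)\le\varepsilon$). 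Push $q_i\in g_i$ to signed distance $s_0+Q$ from $b_i$ on the side of $u$; the Saccheri quadrilateral over $\alpha$ with legs $s_0+Q$ gives
\[d_{\widetilde Y}(q_1,q_2)=2\sinh^{-1}\!\bigl(\cosh(s_0+Q)\sinh(\varepsilon/2)\bigr)=2Q+2s_0+2\log\sinh(\varepsilon/2)+o(1),\]
while $\dfl q_1,\dfl q_2$ lie far past $u$ on the two distinct edges, so
\[d_{\widetilde T}(\dfl q_1,\dfl q_2)\ge 2(s_0+Q)-2d_{\widetilde T}(\dfl b_1,u)-\varepsilon\ge 2Q-2\log 3-\varepsilon.\]
Comparing via $d_{\widetilde T}\le d_{\widetilde Y}$ and letting $Q\to\infty$ forces $\log\sinh(\varepsilon/2)\ge-s_0-\log 3-\tfrac{\varepsilon}{2}$; as the right side is universally bounded while the left tends to $-\infty$ with $\varepsilon$, this is false for $\varepsilon$ below a universal threshold, which one then checks is $\ge\varepsilon_0$ (sharpening the trigonometry if the crude bounds on $s_0,h$ are not enough).

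\emph{Where the hypotheses enter, and the hard part.} The bound $|\ell|\le\log 3$ is exactly what makes $s_0$, $h$, and $d(a_1,g_2)$ universally bounded and keeps the length of $\dfl(\ell)$ small; the $\varepsilon$-gap from the $\log 3$ leaf is used when perturbing $a_1$ by up to $\varepsilon$ along $g_1$, where one wants the perturbed point still joined to $g_2$ by a leaf of length $\le\log 3$, which controls the direction in which $\dfl$ carries a neighbourhood of $a_i$ inside $\dfl g_i$ and pins down that $u$ is reached on the correct side. I expect the genuine obstacles to be (i) recovering the exact constant $\varepsilon_0$ rather than a worse universal bound, which requires the trigonometry done carefully instead of through the crude estimates above, and (ii) excluding the degenerate case where $\dfl a_1,\dfl a_2$ both sit inside a shared segment $\sigma$ carrying an interior vertex of $\widetilde T$, where the $Q$-pushing is inert; there one must instead use the structure of the deflation at that vertex (Lemma~\ref{lem:deflation_doubleback}, giving two further boundary geodesics doubling back over the third edge) together with $d_{\widetilde T}(\dfl v,\dfl a_i)\le h\le\tfrac{\log 3}{2}$ to show that the two half-leaves of $\ell$ cannot be routed past that vertex consistently.
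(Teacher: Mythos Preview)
Your plan contains a fatal arithmetic error that cannot be repaired within the strategy you outline. You claim ``$\cosh s_0 \le \cosh(\log 3)$'' so that $s_0$ is universally bounded, but this is false: the distance from a point on $g_1$ at parameter $s_0$ to the line $g_2$ satisfies $\sinh(d) = \sinh(\varepsilon)\cosh(s_0)$, so the bound $d\le\log 3$ only gives $\cosh(s_0)\le \sinh(\log 3)/\sinh(\varepsilon)$, and in fact the endpoint of the $\log 3$ leaf sits at $s^*$ with $\cosh(s^*)\sinh(\varepsilon/2)=\sinh(\tfrac{\log 3}{2})=1/\sqrt 3$, so $s^*\sim\log(1/\varepsilon)$ as $\varepsilon\to 0$. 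Your whole contradiction rests on the right side of ``$\log\sinh(\varepsilon/2)\ge -s_0-\log 3-\varepsilon/2$'' being bounded while the left goes to $-\infty$; once $s_0$ is allowed to be as large as $s^*-\varepsilon$, both sides behave like $\log\varepsilon$ and the inequality is \emph{satisfied} rather than violated (a short computation shows the difference of the two sides tends to $\log(2\sqrt 3)>0$). So the Saccheri comparison anchored at $a_i$ gives no contradiction whatsoever for leaves near the $\log 3$ leaf, which is precisely the regime the proposition is about.

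The paper circumvents this by not trying to contradict directly at the leaf $\ell$. Instead it first proves an ``almost symmetric'' lemma: if $t$ lies on the common edge and $p_i=\dfl^{-1}(t)\cap g_i$, then $d(p_2,p_1^{\mathrm{opp}})\le 2\varepsilon$, i.e.\ the deflation identifies $g_1$ with $g_2$ in essentially the same way as the geometric reflection. With this in hand, a Busemann--cocycle argument (pushing to infinity as you do, but summing the contributions from $p_1$ and $p_2$ and then using the symmetry to collapse them) pins down the \emph{location} of the vertex preimages $p_i$: they lie at most $\varepsilon$ inward of a reference point $b_i$ which itself lies outside the $\log 3$ leaf. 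This locates the entire shared edge, and any leaf satisfying your three hypotheses automatically has endpoints between the $p_i$. Your ``degenerate case (ii)'' is not the obstruction; the missing ingredient is this symmetry lemma, without which the orientations of $\dfl g_1,\dfl g_2$ along $\sigma$ (and hence the fate of your pushed points $q_i$) cannot be controlled, and the balance of constants collapses.
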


As a consequence, we see that the endpoints of {\em all} sufficiently short leaves of $\cO_{\partial Y}(Y)$ must map to the same edge under any deflation.

\begin{corollary}\label{cor:uniform shortness}
With all notation as above, there is an $\ell_0$ ($\approx .425$) such that the endpoints of any leaf of $\cO_{\partial Y}(Y)$ with length at most $\ell_0$ must map to the same edge of $T$.
\end{corollary}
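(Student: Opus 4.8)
The plan is to deduce Corollary~\ref{cor:uniform shortness} from Proposition~\ref{prop:short leaves deflate to same edge} by choosing $\ell_0$ small enough ($\ell_0 \approx .425$) that every leaf $\ell$ of $\cO_{\partial Y}(Y)$ of length at most $\ell_0$ automatically satisfies the three hypotheses of that Proposition. Two of the three come essentially for free. If $\alpha$ denotes the orthogeodesic arc homotopic to $\ell$, then $\operatorname{length}(\alpha) \le \operatorname{length}(\ell)$ since $\alpha$ minimizes length in its homotopy class rel $\partial Y$, so the first bullet (that $\varepsilon := \operatorname{length}(\alpha) < \varepsilon_0 \approx .83$) holds as soon as $\ell_0 < \varepsilon_0$, and the second bullet ($\operatorname{length}(\ell) \le \log 3$) holds as soon as $\ell_0 < \log 3$. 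So the substance is the third bullet: one must bound below, by $\varepsilon$, the distance along $\partial Y$ between the endpoints of $\ell$ and the endpoints of the ``$\log 3$ leaf'' of the band of leaves isotopic to $\alpha$.

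The first thing I would do is dispose of the case where $\ell$ is isotopic into a spike: there $\cO_{\partial Y}(Y)$ agrees with the horocyclic foliation, and Lemma~\ref{lem:deflations to leaves} together with Corollary~\ref{cor:preimages of leaves} already force the two endpoints of $\ell$ to have a common image in the corresponding half-infinite edge of $T$, with no length hypothesis needed. In the remaining case $\ell$ is isotopic to an arc $\alpha \in \arc(Y)$, and I would parametrize the leaves of the band $I_\alpha$ by the signed distance $Q$ of their feet from the foot of $\alpha$ along $\partial Y$. By the Saccheri-quadrilateral identity~\eqref{eqn:hypdist}, the distance between the two endpoints of the leaf at parameter $Q$ equals $2\sinh^{-1}\!\big(\cosh(Q)\sinh(\varepsilon/2)\big)$, which is strictly increasing in $|Q|$. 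Since the leaf-length function is monotone in $|Q|$ along $I_\alpha$ (by convexity of the distance-to-$\partial Y$ function along the dual spine edge, which is geodesic in the universal cover) and $\operatorname{length}(\ell) \le \ell_0 < \log 3$, the foot of $\ell$ sits at a parameter $|Q_\ell|$ strictly inside the parameter $|Q_3|$ of the $\log 3$ leaf, with $\cosh(Q_\ell)\sinh(\varepsilon/2) \le \sinh(\ell_0/2)$. Using $\cosh(Q_3) - \cosh(Q_\ell) \le (Q_3 - Q_\ell)\cosh(Q_3)$ and $\sinh(\tfrac12\log 3) = 1/\sqrt 3$, the separation in parameter should satisfy
\[
Q_3 - Q_\ell \;\ge\; 1 - \frac{\cosh(Q_\ell)}{\cosh(Q_3)} \;\ge\; 1 - \sqrt 3\,\sinh(\ell_0/2),
\]
and one checks numerically that $1 - \sqrt 3\,\sinh(\ell_0/2) > \ell_0 \ge \varepsilon$ for $\ell_0 = .425$. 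Verifying the third bullet this way, Proposition~\ref{prop:short leaves deflate to same edge} then gives the conclusion.

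I expect the main obstacle to be the quantitative lower bound on the parameter $|Q_3|$ --- that is, showing the $\log 3$ leaf really sits a full $\varepsilon$ past $\ell$ in the band. This hinges on controlling the rate at which leaf length grows in $Q$ along $I_\alpha$, equivalently on controlling the ``corner'' that a non-orthogeodesic leaf makes where it meets the spine, so that raising the leaf length from $\le \ell_0$ to $\log 3$ forces $Q$ to increase by at least $\varepsilon$. One must also handle the degenerate configuration where $I_\alpha$ has no leaf of length $\log 3$ at all (its bounding singular leaves being shorter than $\log 3$): there the third hypothesis is vacuous and nothing further is needed, so the estimate above is only invoked when a $\log 3$ leaf genuinely occurs. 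Everything else reduces to the hyperbolic trigonometry of Saccheri quadrilaterals encoded in~\eqref{eqn:hypdist}, and the conservative choice $\ell_0 \approx .425$ (well below the threshold $\approx .53$ of the crude inequality above) leaves room for the error terms arising from these corner corrections.
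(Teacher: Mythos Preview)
Your overall plan—reducing to Proposition~\ref{prop:short leaves deflate to same edge} and arguing that the third bullet follows once the leaf is short enough—is exactly right, and it is also what the paper does. But your execution of the key quantitative step has a genuine gap.

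The Saccheri formula~\eqref{eqn:hypdist} computes the \emph{geodesic distance} $D(Q)=2\sinh^{-1}(\cosh(Q)\sinh(\varepsilon/2))$ between the two endpoints of the leaf at parameter $Q$. But a leaf of $\cO_{\partial Y}(Y)$ is not a geodesic segment: it is the concatenation of two segments, each perpendicular to a boundary geodesic, meeting at the spine with a genuine corner. Its length $L(Q)$ equals twice the distance from the spine point to $\partial Y$, and one has $L(Q)>D(Q)$ for $Q\neq 0$. Your displayed inequality uses $\cosh(Q_3)\sinh(\varepsilon/2)=\sinh(\tfrac12\log 3)$, i.e.\ $D(Q_3)=\log 3$; but the ``$\log 3$ leaf'' is defined by $L(Q_3)=\log 3$, which only gives $D(Q_3)\le\log 3$ and hence only an \emph{upper} bound on $\cosh(Q_3)$. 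Your lower bound on $Q_3-Q_\ell$ needs the opposite inequality, so the chain breaks precisely there. You flag this yourself as the ``corner correction,'' but asserting that $\ell_0=.425$ ``leaves room'' for it is not a proof: you would need an actual lower bound on $\cosh(Q_3)$ in terms of $L(Q_3)$, which means controlling $L(Q)$ itself (e.g.\ via the Lambert quadrilateral with legs $Q$, $\varepsilon/2$ and hypotenuse $L(Q)/2$), not $D(Q)$.

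The paper avoids all of this with a one-line comparison principle: as one travels from the $\log 3$ leaf toward $\alpha$, the leaf length $L(Q)$ decays no faster than it would in a spike (where the two boundary geodesics are asymptotic and the leaves are horocyclic arcs of length $\propto e^{-Q}$). This immediately gives $Q_3-Q_\ell\ge\log(\log 3/\ell_0)$, and one simply picks $\ell_0$ so that this exceeds $\varepsilon_0$. No Saccheri quadrilaterals, no corner terms, and your separate treatment of the spike case becomes unnecessary since the spike is exactly the extremal configuration. (Incidentally, your ``degenerate configuration'' never occurs: the singular leaves bounding a band through a spine vertex always have length at least $\log 3$, since that vertex is the center of a disk of radius $\ge\log\sqrt 3$ tangent to at least three boundary geodesics.)
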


The exact value of the threshold $\ell_0$ will not be important to our arguments.

\begin{proof}[Proof (assuming Proposition \ref{prop:short leaves deflate to same edge})]
Given $\alpha \in \arc(Y)$, let $\ell(\alpha,Y)$ denote the maximal length of a leaf of $\cO_{\partial Y}(Y)$ isotopic to $\alpha$ and satisfying the hypotheses of the Proposition.
Our goal is to give a lower bound on $\ell(\alpha,Y)$ that is uniform in $\alpha$ and $Y$.
As one travels from the $\log 3$ leaf to the shortest representative of $\alpha$, the length of the leaves of $\cO_{\partial Y}(Y)$ decays no faster than the length of leaves in a spike.
Thus, it suffices to compute the length of the orthogeodesic leaf in a spike that is $\varepsilon_0$ away from the leaf of the length $\log 3$, which can be done explicitly (in the upper half-plane model, for example).
\end{proof}

Our proof of Proposition \ref{prop:short leaves deflate to same edge} combines ideas from Sections \ref{subsec:preserve spikes} and \ref{subsec:obstruct star} with those of Lemma \ref{lem:thickthin_deflation}.
The Proposition can also be proven by explicit hyperbolic trigonometry, directly generalizing of the argument of Lemma \ref{lem:thickthin_deflation}; we leave this alternative proof to the reader as a fun (but rather involved) exercise.

Let us first prove an analogue of Lemma \ref{lem:preserve horocycle endpts} for arbitrary deflations, showing that endpoints of some leaves of the orthogeodesic foliation must map near each other.

Given two disjoint geodesics $g_1$ and $g_2$ in $\HH^2$ and a point $p_1 \in g_1$, define $p_1^{opp} \in g_2$ to be the image of $p_1$ under the unique reflection interchanging $g_1$ and $g_2$.
Observe that if $g_1, g_2 \subset \partial Y$ and the leaf of $\cO_{\partial Y}(Y)$ through $p_1$ hits $g_2$, then it does so at $p_1^{opp}$.

\begin{lemma}[Almost symmetric]\label{lem:opp vs p2}
Suppose that $\dfl:Y \to T$ is a deflation. Let $t \in T$ and suppose that $p_1, p_2 \in \dfl^{-1}(t) \cap \partial Y$ lie on non-asymptotic geodesics $g_1, g_2$ of $\partial Y$.
Let $\alpha$ denote the unique orthogeodesic arc between $g_1$ and $g_2$ and set $\varepsilon$ to be its length.
Then $d_Y(p_2, p_1^{opp}) \le 2 \varepsilon$.
\end{lemma}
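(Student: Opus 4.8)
The plan is to mimic the argument of Lemma \ref{lem:preserve horocycle endpts}, replacing the horocyclic foliation (which is the right object when $g_1,g_2$ are asymptotic) by the orthogeodesic foliation $\cO_{\partial Y}(Y)$ restricted to the band between $g_1$ and $g_2$, and controlling the discrepancy by $\varepsilon$. Lift everything to the universal cover. First I would set up the geometry of the Saccheri-type band: let $\alpha$ be the orthogeodesic arc between $g_1$ and $g_2$ with feet $a_1\in g_1$, $a_2\in g_2$, and let $m$ be the reflection interchanging $g_1$ and $g_2$ (its axis is the common perpendicular through $\alpha$). For a point $x\in g_1$ at signed distance $Q$ from $a_1$, the point $x^{opp}=m(x)\in g_2$ is at distance $Q$ from $a_2$ on the corresponding side, and by the hyperbolic trigonometry of Saccheri quadrilaterals (the same formula \eqref{eqn:hypdist} used in Lemma \ref{lem:thickthin_deflation}) one has $d_Y(x,x^{opp})=2\sinh^{-1}(\cosh(Q)\sinh(\varepsilon/2))$. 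Crucially, the leaf of $\cO_{\partial Y}(Y)$ through $x$, \emph{provided it actually reaches $g_2$}, lands exactly at $x^{opp}$, and projection along these leaves is $1$-Lipschitz.

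The core of the argument is then a "chase deep along the leaves" exactly as in Lemma \ref{lem:preserve horocycle endpts}. Fix $\delta>0$. Choose $r_1\in g_1$ far enough in one direction that the orthogeodesic leaf through $r_1$ reaches $g_2$ at $r_1^{opp}=:r_2$ with $d_Y(r_1,r_2)<\delta$ — this is possible because the band between $g_1,g_2$ has finite width near $\alpha$ but opens up so that the leaf-lengths $2\sinh^{-1}(\cosh(Q)\sinh(\varepsilon/2))$ do \emph{not} shrink; wait — I need the opposite: I need leaves to get \emph{short}. So instead I would push $r_1$ to where $g_1$ and $g_2$ are close, i.e. take $Q$ large in the direction in which the band narrows if it narrows, or more robustly: since $\dfl(g_1),\dfl(g_2)$ share an edge $e$ of $T$ (this is forced by Lemma \ref{lem:thickthin_deflation} once $\varepsilon<\varepsilon_0$, and in any case by the hypothesis $p_1,p_2\in\dfl^{-1}(t)$ with $t$ on that shared arc), work on the finite band that deflates \emph{into} $e$. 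On that band, parametrize $g_1$ by $\dfl$, which is an isometry; pick $r_1\in g_1$ with $\dfl(r_1)$ near the far end of $e$ from $t$, so that $d_T(\dfl(p_1),\dfl(r_1))=d_{Y}(p_1,r_1)$ is whatever it is, and pick $r_2:=r_1^{opp}\in g_2$. Since $\dfl$ is $1$-Lipschitz and $d_Y(r_1,r_2)=2\sinh^{-1}(\cosh(Q)\sinh(\varepsilon/2))$, and since on the shared edge $d_T(\dfl(r_1),\dfl(r_2))=|\,\dfl(r_1)-\dfl(r_2)\,|$, we get a bound $|\dfl(r_1)-\dfl(r_2)|\le 2\sinh^{-1}(\cosh(Q)\sinh(\varepsilon/2))$; but also by the isometry-on-boundary property $\dfl(r_1)$ and $\dfl(r_2)$ are each exactly at distance $d_Y(p_1,r_1)=d_Y(p_2,r_2)+[\dfl(p_2)-\dfl(p_1)$-correction$]$… — so I would instead \emph{first} establish $\dfl(r_2)=\dfl(r_1)+O(\varepsilon)$ and $d_Y(p_2,r_2)=d_Y(p_1,r_1)$ (arrange $r_2$ so the latter holds), and then run: $d_T(\dfl(p_2),\dfl(r_2)) = d_Y(p_2,r_2)=d_Y(p_1,r_1)=d_T(\dfl(p_1),\dfl(r_1))=d_T(t,\dfl(r_1))$, while $d_T(t,\dfl(r_2))=d_T(\dfl(p_2),\dfl(r_2))$ since $\dfl(p_2)=t$; combining, $\dfl(r_2)$ and $\dfl(r_1)$ are equidistant from $t$ along $e$, hence within $O(\varepsilon)$ of each other forces — wait, they're on the same edge so equidistant from $t$ means equal or reflected; being within $\varepsilon$ pins them equal up to $\varepsilon$. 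Unwinding, $d_Y(p_2,p_1^{opp})\le d_Y(p_2,r_2)+d_Y(r_2,r_1)+d_Y(r_1,p_1^{opp})$-type triangle chasing, and sending the auxiliary slack to zero yields $d_Y(p_2,p_1^{opp})\le 2\varepsilon$; the factor $2$ comes from the single leaf-arc of length $\le 2\sinh^{-1}(\cosh Q\sinh(\varepsilon/2))$ contributing together with the $\varepsilon$-Lipschitz slack, each bounded by $\varepsilon$ in the limit $Q$ chosen so the band has narrowed.

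The main obstacle I anticipate is \emph{the bookkeeping of which direction along $g_1$ to push the auxiliary points}: unlike the asymptotic case, the band between two non-asymptotic geodesics does not narrow to zero width, so I cannot make the comparison leaf arbitrarily short. The fix is to exploit that $\dfl(g_1)$ and $\dfl(g_2)$ share only a \emph{finite} edge $e$, so the portion of $g_i$ deflating into $e$ is a finite segment; by sending the auxiliary points $r_i$ to the ends of these segments and using that $\dfl$ is an isometry there, one converts "deep in the spike" into "far along the shared edge," and the relevant small quantity becomes $\varepsilon$ itself (the width of the band over the shared edge) rather than something tending to $0$. The second, more routine obstacle is verifying that the orthogeodesic leaf through the chosen auxiliary point on $g_1$ genuinely reaches $g_2$ (and not some third boundary geodesic); this holds as long as the point lies over the shared edge $e$, which is exactly the constraint we impose. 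Modulo carefully stating these geometric facts about the band over a shared edge, the inequality $d_Y(p_2,p_1^{opp})\le 2\varepsilon$ follows from the same triangle-inequality-plus-limit argument as Lemma \ref{lem:preserve horocycle endpts}.
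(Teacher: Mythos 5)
There is a genuine gap. Your proposal correctly diagnoses that the ``chase deep into a spike'' trick from Lemma~\ref{lem:preserve horocycle endpts} cannot work verbatim here, because the band between two non-asymptotic geodesics does not narrow to zero: the Saccheri quadrilateral quantity $2\sinh^{-1}(\cosh Q\sinh(\varepsilon/2))$ \emph{grows} with $Q$. But your fallback never actually fixes this. You pick $r_1$ ``near the far end of $e$'' and $r_2$ with $d_Y(p_2,r_2)=d_Y(p_1,r_1)$, deduce that $\dfl(r_1)$ and $\dfl(r_2)$ are equidistant from $t$, and then assert that ``being within $\varepsilon$ pins them equal up to $\varepsilon$''---but you never establish that $\dfl(r_1)$ and $\dfl(r_2)$ are within $\varepsilon$ of each other (the only bound you derived for $d_T(\dfl(r_1),\dfl(r_2))$ is the large Saccheri one). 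Likewise the closing ``triangle chasing'' step $d_Y(p_2,p_1^{opp})\le d_Y(p_2,r_2)+d_Y(r_2,r_1)+\cdots$ involves $d_Y(r_1,r_2)$, which by your own computation is not small, and $d_Y(r_1,p_1^{opp})$ compares a point of $g_1$ with a point of $g_2$, so it also cannot be made small. There is no ``auxiliary slack tending to zero''; no limiting argument appears in the actual proof.

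The missing idea is to anchor the argument at the \emph{feet} $a_1,a_2$ of the orthogeodesic arc $\alpha$, not at the endpoints of the shared edge. In the paper, one lets $\gamma$ be the geodesic in $T$ from $\dfl(a_1)$ to $\dfl(a_2)$ and sets $r$ to be the closest-point projection of $t$ onto $\gamma$; then $r_i:=\dfl^{-1}(r)\cap g_i$ is well-defined. The whole estimate collapses because $r$ lies \emph{on} $\gamma$, so $d_T(r,\dfl(a_1))+d_T(r,\dfl(a_2))=d_T(\dfl(a_1),\dfl(a_2))\le d_Y(a_1,a_2)=\varepsilon$ by the 1-Lipschitz property, and the triangle path $r_1\to a_1\to a_2\to r_2$ (the first and last legs along $g_1,g_2$, the middle along $\alpha$) gives $d_Y(r_1,r_2)\le\varepsilon+d_T(\dfl(a_1),\dfl(a_2))\le 2\varepsilon$. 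The symmetry step $d_Y(p_1^{opp},p_2)=d_Y(r_1^{opp},r_2)$ then uses that $p_1\mapsto r_1$ and $p_2\mapsto r_2$ are both boundary-isometric translations by $d_T(t,r)$, and a 1-Lipschitz projection argument converts $d_Y(r_1^{opp},r_2)$ to $d_Y(r_1,r_2)$. Your proposal has the right flavor---use the shared edge and boundary isometry---but without the feet $a_1,a_2$ and the projection point $r$ there is no source for the $2\varepsilon$, and the estimate cannot be closed.
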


\begin{figure}[ht]
    \centering
    \includegraphics[width=.85\linewidth]{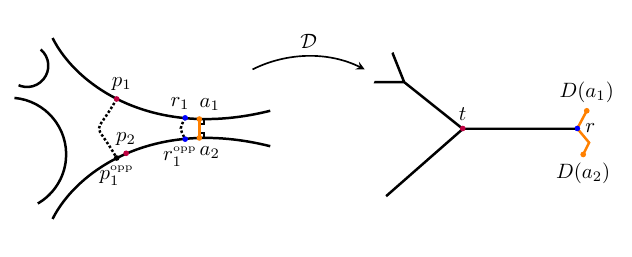}
    \caption{Points and their opposites as in the proof of Lemma \ref{lem:opp vs p2}.}
    \label{fig:opp_p2}
\end{figure}

\begin{proof}
Reference to Figure \ref{fig:opp_p2} will be useful throughout this proof.

Let $a_i$ denote the endpoints of $\alpha$ on $g_i$ and consider the geodesic segment $\gamma$ in $T$ between $\dfl(a_1)$ and $\dfl(a_2)$.
Let $r \in T$ denote the closest point projection of $t$ to $\gamma$ (it is possible that $t=r$, or that $\gamma = r$ is a point).
Observe that the geodesics from $t$ to each $\dfl(p_i)$ run from $t$ to $r$ then along $\gamma$. Thus, there is a point of $g_i$ mapping to $r$; set $r_i = \dfl^{-1}(r) \cap g_i$ and define $r_1^{opp} \in g_2$ as above (if $t=r$, then $r_i = p_i$ and the same for opposite points).

By definition of the opposite points $p_1^{opp}$ and $r_1^{opp}$, and because $\dfl$ is an isometry restricted to $g_1$ and $g_2$,
\[d_Y(p_1^{opp}, r_1^{opp}) = d_Y(p_1, r_1)
= d_T(t,r) = d_Y(p_2, r_2).\]
Thus
\[d_Y(p_1^{opp}, p_2) = d_Y(r_1^{opp}, r_2) \le d_Y(r_1, r_2),\]
where the inequality is a consequence of the fact that projecting to the boundary along the leaves of $\cO_{\partial Y}(Y)$ is $1$-Lipschitz.
Applying the triangle inequality and the fact that $\dfl$ is an isometry on the boundary again,
\begin{align*}
d_Y(r_1, r_2) 
&\le d_Y(r_1, a_1) + d_Y(a_1, a_2) + d_Y(a_2, r_2) \\
& = d_T(r, \dfl(a_1)) + \varepsilon + d_T(r, \dfl(a_2))\\
& = \varepsilon + d_T(\dfl(a_1), \dfl(a_2))
\le 2\varepsilon.
\end{align*}
where the last equality comes from the fact that $r$ is on the geodesic from $\dfl(a_1)$ to $\dfl(a_2)$.
\end{proof}

\begin{proof}[Proof of Proposition \ref{prop:short leaves deflate to same edge}]
So long as $C < \varepsilon_0$, Lemma \ref{lem:thickthin_deflation} ensures that the geodesics $g_1$ and $g_2$ are connected by an arc $\alpha$ of $\cO_{\partial Y}(Y)$ and deflate to a common edge of $T$. 
Since $\varepsilon_0 < \log 3$, there are leaves of $\cO_{\partial Y}(Y)$ homotopic to $\alpha$ (on either side of $\alpha$) of length exactly $\log 3$ (compare \cite[proof of Lemma 6.6]{shshI}).

Let $\varepsilon$ denote the length of $\alpha$, as before.
Let $t \in T$ denote either of the vertices of the common edge of $\dfl(g_1)$ and $\dfl(g_2)$ and let $p_i \in g_i$ be its preimages as in Lemma \ref{lem:opp vs p2}. That Lemma shows that $p_1$ and $p_2$ are roughly symmetric; the main content of this proof is to show that we can estimate their position using horocycles based at the endpoints of the $g_i$.

Let $\xi_1$ denote the ideal endpoint of $g_1$ obtained by traveling from $\alpha \cap g_1$ through $p_1$ and continuing on (and similarly for $\xi_2$). Consider the geodesic from $\xi_1$ to $\xi_2$ and let $m$ be its ``midpoint,'' i.e., the unique point fixed by the reflection swapping $g_1$ and $g_2$. See Figure \ref{fig:busemann}.

\begin{figure}[ht]
    \centering
    \includegraphics[width=.85\linewidth]{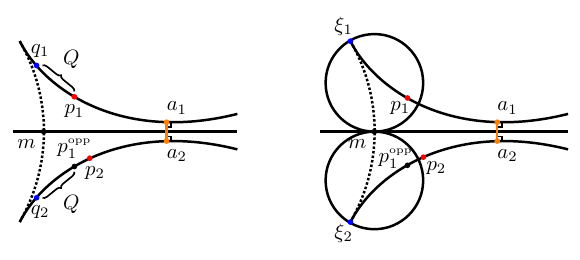}
    \caption{The horocycles used to estimate the $p_i$ in Claim \ref{claim:busemann}.}
    \label{fig:busemann}
\end{figure}

\begin{claim}\label{claim:busemann}
For $i=1,2$, we have
\[\beta_{\xi_i}(m, p_i) \ge -\varepsilon\]
where $\beta_\xi$ is the Busemann cocycle based at $\xi \in \partial \HH^2$.
\end{claim}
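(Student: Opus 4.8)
The plan is to run a coarsened version of the ``horocycle maps at least as deep as its endpoints'' arguments (Lemmas~\ref{lem:deflations to leaves} and~\ref{lem:cusps into cusps}): where those exploited that $\dfl$ is an isometry on $\partial Y$ to deduce an exact equality, here I only ask for an additive error of size $\varepsilon$, and I recover the sharp Busemann bound by letting a comparison point on $g_1$ escape out to the ideal vertex $\xi_1$. It suffices to treat $i=1$; the case $i=2$ is identical after exchanging the roles of $g_1$ and $g_2$. The only delicate point is the final limiting step, where the sublinear defect must be pinned down \emph{exactly}.

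Work in the universal cover, so that $\dfl$ maps $\widetilde Y$ into a metric tree $\widetilde T$. Let $t$ be the vertex of the common edge of $\dfl(g_1)$ and $\dfl(g_2)$ on the side of $\xi_1,\xi_2$, chosen so that the rays $[p_1,\xi_1)\subset g_1$ and $[p_2,\xi_2)\subset g_2$ map into distinct branches of $\widetilde T$ at $t$ (this is possible because $\xi_1,\xi_2$ lie on the same side of the common subsegment of $\dfl(g_1)$ and $\dfl(g_2)$), with $p_i=\dfl^{-1}(t)\cap g_i$ as in Lemma~\ref{lem:opp vs p2}. Since $\dfl$ restricts to an isometric embedding of $g_i$ with $\dfl(p_i)=t$, the ray $[p_i,\xi_i)$ maps onto a geodesic ray in $\widetilde T$ issuing from $t$ into a branch $\mathfrak b_i$, and $\mathfrak b_1\neq\mathfrak b_2$. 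For $D>0$ let $r_i=r_i(D)\in g_i$ be the point at distance $D$ from $p_i$ along $[p_i,\xi_i)$.

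Now bound $d_Y(r_1,r_2)$ from both sides. As $\dfl$ is an isometric embedding on $g_i$ and $\dfl(p_i)=t$, the point $\dfl(r_i)$ lies at tree-distance $D$ from $t$ inside the component of $\widetilde T\setminus\{t\}$ containing $\mathfrak b_i$; these components are distinct, so $d_{\widetilde T}(\dfl(r_1),\dfl(r_2))=2D$, and the $1$-Lipschitz property gives $d_Y(r_1,r_2)\ge 2D$ (compare~\eqref{eqn:treedist}). For the reverse bound, let $\sigma$ be the reflection interchanging $g_1$ and $g_2$ with $\sigma(\xi_1)=\xi_2$ and $\sigma(p_1)=p_1^{\mathrm{opp}}$, and let $\widetilde\beta$ be its fixed geodesic, so $m\in\widetilde\beta$. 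Then $\sigma(r_1)\in g_2$ is the point at distance $D$ from $p_1^{\mathrm{opp}}$ toward $\xi_2$; Lemma~\ref{lem:opp vs p2} gives $d_Y(p_1^{\mathrm{opp}},p_2)\le 2\varepsilon$, and since sliding both points a distance $D$ farther along $g_2$ toward $\xi_2$ does not change the distance between them, $d_Y(\sigma(r_1),r_2)\le 2\varepsilon$. As $Y$ is convex in $\HH^2$ we have $d_Y(r_1,\sigma(r_1))=2\,d(r_1,\widetilde\beta)$, so the triangle inequality yields $2D\le 2\,d(r_1,\widetilde\beta)+2\varepsilon$, that is, $d(r_1,\widetilde\beta)\ge D-\varepsilon$ for every $D>0$.

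Finally, let $D\to\infty$. Since $m\in\widetilde\beta$, $d(r_1,\widetilde\beta)\le d_Y(r_1,m)$. As $r_1=r_1(D)$ escapes to $\xi_1$ along the geodesic $g_1$, the metric distance $d_Y(r_1,m)$ and the horospherical quantity $\beta_{\xi_1}(m,r_1)$ differ by $o(1)$, while the cocycle identity together with the fact that $r_1$ is $D$ closer to $\xi_1$ than $p_1$ gives $\beta_{\xi_1}(m,r_1)=\beta_{\xi_1}(m,p_1)+D$. Hence $D-\varepsilon\le \beta_{\xi_1}(m,p_1)+D+o(1)$; cancelling $D$ and letting $D\to\infty$ gives $\beta_{\xi_1}(m,p_1)\ge-\varepsilon$. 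The crux is that no constant is lost in $d(r_1,\widetilde\beta)\le d_Y(r_1,m)$, which holds because $\widetilde\beta$ meets the geodesic $[\xi_1,\xi_2]$ orthogonally at $m$, making $m$ the point of $\widetilde\beta$ nearest $\xi_1$ in the horocyclic sense; this is exactly why $m$ is the correct reference point and why the estimate comes out sharply as $-\varepsilon$. The remaining ingredients — the tree computation and the $\sigma$-symmetrization — parallel Lemma~\ref{lem:thickthin_deflation} and the spike arguments of Sections~\ref{subsec:preserve spikes} and~\ref{subsec:obstruct star}.
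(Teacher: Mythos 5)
Your proof is correct and is essentially the paper's argument reorganized: both hinge on the tree-distance lower bound $d_Y(r_1,r_2)\ge 2D$ (from $1$-Lipschitzness plus the isometric embedding of $\partial Y$), on the reflection $\sigma$ together with Lemma~\ref{lem:opp vs p2} for the complementary upper bound, and on a Busemann limit as the comparison points escape toward $\xi_1,\xi_2$. The only difference is that you apply $\sigma$-symmetry and Lemma~\ref{lem:opp vs p2} \emph{before} passing to the limit, arriving at the clean intermediate estimate $d(r_1,\widetilde\beta)\ge D-\varepsilon$, whereas the paper first limits to the symmetric sum $\beta_{\xi_1}(m,p_1)+\beta_{\xi_2}(m,p_2)\ge 0$ and only then separates the two terms via $\sigma$-invariance and the lemma.
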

\begin{proof}[Proof of Claim]
As in the proof of Lemma \ref{lem:thickthin_deflation}, pick points $q_i \in g_i$ that are both some large distance $Q$ farther away from the endpoints of $\alpha$ than $p_1$ and $p_1^{opp}$.
By our choice of $p_i$ and fact that $\dfl$ is an isometry on the geodesics of $\partial Y$,
\[d_T(\dfl(q_1), \dfl(q_2)) 
= d_T(\dfl(q_1), \dfl(p_1)) + d(\dfl(q_2), \dfl(p_2))
= d_Y(q_1, p_1) + d(q_2, p_2).\]
By definition of the point $m$, we have
\[d_Y(q_1, q_2) = d_Y(q_1,m) + d_Y(q_2, m)\]
(where the two terms in the sum are actually equal).

Now since $\dfl$ is 1-Lipschitz, it must be that $d_Y(q_1, q_2) \ge d_T(\dfl(q_1), \dfl(q_2)).$
Using the equations from the previous paragraph and rearranging, we get
\begin{equation}\label{eqn:busemann_prelimit}
\left(d_Y(q_1,m) - d_Y(q_1, p_1)\right) + \left(d_Y(q_2,m) - d_Y(q_2, p_2)\right) \ge 0.
\end{equation}
Taking $Q \to \infty$, the points $q_i$ converge to the ideal endpoints $\xi_i$ and the two terms in \eqref{eqn:busemann_prelimit} converge to the Busemann cocycles at these endpoints. Thus, in the limit we get
\[\beta_{\xi_1}(m, p_1) + \beta_{\xi_2}(m, p_2) \ge 0.\]

By symmetry, we have that $\beta_{\xi_1}(m, p_1) = \beta_{\xi_2}(m, p_1^{opp})$, and Lemma \ref{lem:opp vs p2} allows us to replace $p_2$ with $p_1^{opp}$ at the cost of $2\varepsilon$.
Putting this all together, we get that
\[\beta_{\xi_1}(m, p_1) = \beta_{\xi_2}(m, p_1^{opp}) \ge -\varepsilon\]
as desired. The statement for $p_2$ follows by interchanging the labels.
\end{proof}

Consider now the points $b_i \in g_i$ defined such that $\beta_{\xi_i}(m, b_i) = 0.$ Let $B$ denote the bisector of $g_1$ and $g_2$, that is, the fixed set of the reflection interchanging them.
The geodesic segment emanating from $b_i$ orthogonal to $g_i$ and running until it hits $B$ has some length depending only on $\varepsilon$.
This length is monotonically decreasing with the length of $\alpha$, with a minimum of $\log \sqrt 3$ when $g_1$ and $g_2$ are asymptotic (this is the radius of the inscribed circle inside of an ideal triangle).
Thus, if $s_i$ denote the endpoints of the $\log 3$ leaf homotopic to $\alpha$ (on the same side as $b_i$), we have that
\[\beta_{\xi_i}(s_i, b_i) \ge 0.\]

Putting this all together, we know by Lemma \ref{lem:thickthin_deflation} that $\dfl(g_1)$ and $\dfl(g_2)$ must share an edge $e$.
The subsegment of each $g_i$ that maps to $e$ ends at the point $p_i$, and by Claim \ref{claim:busemann} and the inequality above,
\[\beta_{\xi_i}(s_i, p_i) = 
\beta_{\xi_i}(s_i, b_i) + \beta_{\xi_i}(b_i, p_i) \ge -\varepsilon.\]
Hence, the endpoints of any leaf of $\cO_{\partial Y}(Y)$ satisfying the hypotheses of the Proposition are closer to $\alpha$ than the $p_i$ and therefore must map to the same edge $e$.
\end{proof}

\subsection{Proof of Theorem \ref{mainthm:deflate}}\label{subsec:generalspinedeflate}
Using Proposition \ref{prop:short leaves deflate to same edge}, we can deduce that all sufficiently long edges of the spine of $Y$ have a corresponding edge in $T$ of comparable length.

\begin{corollary}[Thin bands deflate to long edges]\label{cor:thin bands long edges}
Set $C_0 = 2\log(6 + 3\sqrt 3)$ $(\approx 4.83)$.
If there is an arc $\alpha \in \arc(Y)$ with length $\ell(\alpha) < \varepsilon_0$ and weight $c_{\alpha}(Y) > C_0$, then
\[c_\alpha(Y)- C_0 \le c_\alpha(T).\]
\end{corollary}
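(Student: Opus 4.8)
The plan is to run the same universal-cover argument as in Lemma \ref{lem:long edges come from bands} and Lemma \ref{lem:lengthcomparison T<Y}, but now extracting a \emph{lower} bound for $c_\alpha(T)$ rather than an upper bound, using Proposition \ref{prop:short leaves deflate to same edge} to certify that a long subsegment of $\partial\widetilde Y$ actually maps \emph{onto} (a subinterval of) the dual edge. First I would lift the deflation $\dfl:Y\to T$ to $\tilde\dfl:\widetilde Y\to\widetilde T$ and fix a lift of $\alpha$; let $g_1,g_2\subset\partial\widetilde Y$ be the two geodesics it connects, and let $p,q\in g_1$ be the endpoints of the projection of the $\Sp(Y)$-edge dual to $\alpha$, so $d(p,q)=c_\alpha(Y)$. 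Because $\ell(\alpha)<\varepsilon_0$, every point strictly between $p$ and $q$ spans a leaf of $\cO_{\partial Y}(Y)$ isotopic to $\alpha$ whose length is at most $\log 3$ (as in the proof of Proposition \ref{prop:short leaves deflate to same edge}, compare \cite[Lemma 6.6]{shshI}); for all but a boundedly-long initial and terminal segment, these leaves also satisfy the third hypothesis of Proposition \ref{prop:short leaves deflate to same edge}. Hence there is a subinterval $J\subset g_1$ of length at least $c_\alpha(Y)-(\text{bounded error})$ whose every point maps, together with its opposite, to the \emph{same} edge $e$ of $\widetilde T$.

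The next step is to turn ``every point of $J$ deflates into the fixed edge $e$'' into a lower bound on the length of $e$. Since $\dfl$ is an isometric embedding on $g_1$, the restriction $\dfl|_J$ is an isometric embedding of the interval $J$ into the edge $e$ (an interval in $\widetilde T$), so the length of $e$ — which is exactly $c_\alpha(T)$ once we check $\alpha\in\arc(T)$, i.e.\ that $e$ is indeed the edge dual to $\alpha$ — is at least the length of $J$. Identifying $\alpha$ with the dual arc of $e$ is immediate: $g_1$ and $g_2$ lie on opposite sides of $e$ in the tree $\widetilde T$ (their images meet only along $e$), so the isotopy class of the orthogeodesic arc between them is forced. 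Combining, $c_\alpha(T)\ge |J|\ge c_\alpha(Y)-C_0$ once the bounded error is pinned down.

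The only real work is to compute the error term $C_0$, i.e.\ to bound how much of the $p$-to-$q$ segment can fail the hypotheses of Proposition \ref{prop:short leaves deflate to same edge} — this is the length, on $g_1$, of the two end regions where the leaf isotopic to $\alpha$ has length between $\log 3$ and something slightly larger, plus the $\varepsilon$-buffer demanded by the third hypothesis. As in the proof of Corollary \ref{cor:uniform shortness}, the worst case is a spike, where this length is computed explicitly in the upper half-plane; carrying out that trigonometry with $\varepsilon_0\approx .83$ gives the stated $C_0=2\log(6+3\sqrt3)$. I expect this explicit estimate to be the main (though entirely routine) obstacle; the structural part of the argument is a direct transcription of the universal-cover/tree reasoning already used repeatedly in this section.
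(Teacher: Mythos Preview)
Your approach is essentially the paper's: use Proposition~\ref{prop:short leaves deflate to same edge} to certify that both endpoints of every leaf in a long subinterval $J\subset g_1$ land in a single edge $e$, then invoke the isometry of $\dfl|_{g_1}$ to get $c_\alpha(T)=|e|\ge |J|$. Two corrections to your write-up of the error term. First, it is \emph{not} true that every leaf strictly between $p$ and $q$ has length at most $\log 3$; the leaves near the singular endpoints can be arbitrarily long (up to $2\rad(Y)$). What is uniformly bounded is the \emph{$g_1$-distance} from the singular leaf to the $\log 3$ leaf, and that is precisely the quantity you need. Second, a spike computation in the style of Corollary~\ref{cor:uniform shortness} will not produce this bound, because spikes contain no singular leaves; the relevant estimate is of a different nature. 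The paper instead imports from \cite[Lemma~7.10]{shshII} that the $\log 3$ leaf lies within $\log(2+\sqrt 3)$ of the singular leaf along $g_1$; adding the $\varepsilon<\log 3$ buffer required by the third hypothesis of Proposition~\ref{prop:short leaves deflate to same edge} gives $C_0/2=\log(2+\sqrt 3)+\log 3=\log(6+3\sqrt 3)$ on each side.
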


If one increases $C_0$ to $2\pi/\varepsilon_0 (\approx 7.57)$, then \cite[Lemma 6.6]{shshI} implies that $\alpha$ has length at most $\varepsilon_0$. Similarly, using the fact that lengths of leaves of $\cO_{\partial Y}(Y)$ grow uniformly exponentially as one travels out from the geodesic representative, there is a slightly smaller $\varepsilon_0'$ such that if one stipulates that the length of $\alpha$ is at most $\varepsilon_0'$, then $c_\alpha(Y) \ge C_0$ (an exact formula for $\varepsilon_0'$ can be deduced from \cite[Lemma 6.6]{shshI}).
We have chosen not to phrase the result in this way to keep the source of our bounds more transparent.

\begin{proof}
In the following proof, whenever we talk about the distance between leaves of $\cO_{\partial Y}(Y)$, we mean as measured along $\partial Y$. Equivalently, this is the same as the distance between the corresponding points in the spine after collapsing the leaves of $\cO_{\partial Y}(Y)$.

As recorded in \cite[Lemma 7.10]{shshII}, the leaves homotopic to $\alpha$ of length $\log 3$ are at most $\log(2 + \sqrt 3)$ away from the singular leaves of $\cO_{\partial Y}(Y)$.
Proposition \ref{prop:short leaves deflate to same edge} ensures that all leaves homotopic to $\alpha$ that are at least $\ell(\alpha) < \log 3$ farther from the singular leaves than the $\log 3$ leaves must map to the same edge of $T$ under deflation.
Thus, all leaves homotopic to $\alpha$ that are at least
\[\log(2 + \sqrt 3) + \log 3 = C_0/2\]
away from the singular leaves must map to the same edge $e$ of $T$. 
Combining these estimates on each side of the band of parallel leaves homotopic to $\alpha$ finishes the proof.
\end{proof}

When $\arc(Y)$ is maximal, these bounds get better as the weights get larger.

\begin{corollary}\label{cor:thin bands long edges max}
Suppose that $\arc(Y)$ is maximal and $c_\alpha(Y) > C_0$ for every $\alpha \in \arc(Y)$.
Set $\varepsilon$ to denote the maximum length of any arc of $\arc(Y)$ and set $E$ to be the maximum distance between the singular leaves of $\cO_{\partial Y}(Y)$ and the leaves of length $\log 3$. Then for $\alpha \in \arc(Y)$,
\[| c_\alpha(Y) - c_\alpha(T)| \le 2(E + \varepsilon).\]
\end{corollary}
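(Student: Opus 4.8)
The plan is to combine the two one-sided estimates already in place: Lemma~\ref{lem:lengthcomparison T<Y} gives an upper bound on $c_\alpha(T)$ in terms of $c_\alpha(Y)$ (with error controlled by $\rad(Y)$), and Corollary~\ref{cor:thin bands long edges} gives a lower bound $c_\alpha(Y) - C_0 \le c_\alpha(T)$. The extra hypothesis here is that $\arc(Y)$ is maximal, which forces every complementary region of $\arc(Y)$ in $\Sigma$ to be an ideal triangle (or a once-punctured monogon near a crown, handled the same way); the payoff of maximality is that the length $\ell(\alpha)$ of the shortest representative of $\alpha$ and the distance from the singular leaves to the $\log 3$ leaves are both genuinely bounded above by the stated $\varepsilon$ and $E$, so that we can replace the crude universal constant $C_0$ by the surface-specific quantity $2(E+\varepsilon)$.

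First I would record that since $\arc(Y)$ is maximal, each arc $\alpha$ bounds (on each side) an ideal-triangle complementary region, so the band of leaves of $\cO_{\partial Y}(Y)$ homotopic to $\alpha$ has a controlled geometry: its shortest (geodesic) representative has length $\ell(\alpha) \le \varepsilon$, and the $\log 3$ leaves on either side lie within distance $E$ of the singular leaves of $\cO_{\partial Y}(Y)$, by definition of $\varepsilon$ and $E$. Next, rerun the argument of Corollary~\ref{cor:thin bands long edges} with these sharper inputs: Proposition~\ref{prop:short leaves deflate to same edge} tells us that any leaf homotopic to $\alpha$ lying at least $\ell(\alpha)$ farther from the singular leaves than the $\log 3$ leaf maps to a single edge $e$ of $T$ under $\dfl$; hence every leaf at least $E + \varepsilon$ from the singular leaves on each side maps to $e$. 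Measuring the width of the portion of the band that maps to $e$ along $\partial Y$, and using that $\dfl$ restricted to $\partial Y$ is an isometry, yields $c_\alpha(Y) - 2(E+\varepsilon) \le c_\alpha(T)$.

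For the reverse inequality, I would not reuse Lemma~\ref{lem:lengthcomparison T<Y} verbatim (its error term is $4\rad(Y)$, which need not be $\le 2(E+\varepsilon)$), but instead observe that maximality lets us improve it: the path from $g_p$ to $p$ along a singular leaf, then along $g_1$, then from $q$ to $g_q$ along another singular leaf, has length at most $c_\alpha(Y) + 4E$ rather than $c_\alpha(Y) + 4\rad(Y)$, because in a maximal arc system the singular leaves adjacent to the band of $\alpha$ have length at most $2E$ (they terminate at the vertices of the adjacent ideal triangles, and the $\log 3$ leaf of that triangle is within $E$ of each singular leaf). Since $E \le \varepsilon$ need not hold, a small bookkeeping check is needed to see $4E \le 2(E+\varepsilon)$ fails in general — so instead I would argue directly that the segment of $g_p$ (resp. $g_q$) lying outside the band of $\alpha$ but before it reaches a region where $\dfl$ ``turns the corner'' contributes at most $E + \varepsilon$ on each end, matching the forward bound. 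Concretely: the endpoints of the band of $\alpha$ in $\Sp(Y)$ are vertices $t_\pm$ of $T$ bounding the edge $e$; chase a point $p \in g_1$ with $\dfl(p) = t_+$ back to the singular leaf through it, bound the detour by $E+\varepsilon$ using that the singular leaf is within $E$ of the $\log 3$ leaf and the $\log 3$ leaf is within $\varepsilon$ of the geodesic representative, and use the $1$-Lipschitz property of $\dfl$ to conclude $c_\alpha(T) \le c_\alpha(Y) + 2(E+\varepsilon)$.

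The main obstacle I anticipate is the bookkeeping on the reverse inequality: one must be careful that the ``corner-turning'' of $\dfl$ near the ends of the band is genuinely captured by $E + \varepsilon$ on each side and not something larger, and that the two ends of the band (which may abut different ideal triangles with different geometries) are each individually controlled by the \emph{maxima} $\varepsilon$ and $E$. Everything else is a matter of assembling Proposition~\ref{prop:short leaves deflate to same edge}, the $1$-Lipschitz and boundary-isometry properties of deflations, and the ideal-triangle geometry forced by maximality of $\arc(Y)$; no genuinely new estimate is required beyond what Section~\ref{subsec:bands_deflate} already provides.
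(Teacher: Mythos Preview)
Your lower bound is exactly the paper's: rerun the proof of Corollary~\ref{cor:thin bands long edges} replacing $\log(2+\sqrt 3)$ by $E$ and $\log 3$ by $\varepsilon$.

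Your upper bound, however, has a genuine gap. You correctly observe that Lemma~\ref{lem:lengthcomparison T<Y} gives $4\rad(Y)$, which is too crude, and that the naive sharpening to $4E$ does not yield $2(E+\varepsilon)$. But your fallback---``chase a point $p \in g_1$ with $\dfl(p) = t_+$ back to the singular leaf''---presupposes exactly what you need to prove: you have no a priori reason why $\dfl^{-1}(t_+) \cap g_1$ sits near the singular-leaf endpoint. The geometry of the $\log 3$ leaf and the geodesic representative of $\alpha$ tells you nothing about where $\dfl$ decides to put the vertex of $T$.

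The mechanism you are missing, and the place where maximality really bites, is this: apply the \emph{lower bound to the adjacent arcs}. Let $u \in g_1$ be a singular-leaf endpoint and let $p_2, p_3 \in g_1$ sit at distance $E+\varepsilon$ on either side of $u$. The leaf through $p_3$ is homotopic to some arc $\beta \neq \alpha$ of $\arc(Y)$ (this is where maximality is used---the adjacent band corresponds to another arc of $\arc(Y)$, not a region where the foliation is uncontrolled). Your lower bound applied to $\beta$ forces $\dfl(p_3)$ into the interior of the edge $e_\beta$ of $T$, and Lemma~\ref{lem:deflation_doubleback} (the $2$-to-$1$ property of $\dfl|_{\partial Y}$) guarantees $e_\beta \neq e_\alpha$. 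Hence $\dfl(g_1)$ must exit $e_\alpha$ somewhere on the segment $[p_2,p_3]$ of length $2(E+\varepsilon)$. Doing this at both ends of the $\alpha$-band gives $c_\alpha(T) \le (c_\alpha(Y) - 2(E+\varepsilon)) + 2 \cdot 2(E+\varepsilon)$. The upper bound is thus not a distance estimate in $Y$ at all, but a consequence of the lower bounds for the neighboring edges squeezing out the room left over for $e_\alpha$.
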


We note that $E$ goes to 0 as $\varepsilon$ does and vice versa.

\begin{proof}
The lower bound can be extracted immediately from the proof of Corollary \ref{cor:thin bands long edges}: simply replace $\log(2+\sqrt 3)$ by $E$ and $\log 3$ by $\varepsilon$ and repeat the proof {\em mutatis mutandis.}

To get the upper bound, we need to use the maximality hypothesis. As in the proof above, let $u$ be the endpoint of a singular leaf on some geodesic $g_1 \subset \partial Y$.
Let $p_2, p_3\in g_1$ denote the points distance $E + \varepsilon$ on either side of $u$; the orthogeodesic leaves through these points are nonsingular and meet some other geodesics $g_2 \neq g_3$ of $\partial Y$.
Observe that $d(p_2, p_3) = 2(E+\varepsilon).$

By the first statement of the Corollary and maximality, we know that $p_2$ and $p_3$ must map to the interiors of different edges $e_2$ and $e_3$ of $T$. However, by Lemma \ref{lem:deflation_doubleback}, we know $g_2$ does not map to $e_3$ and $g_3$ does not map to $e_2$, so $p_2$ and $p_3$ must each be within $2(E+\varepsilon)$ of the endpoints of their respective edges.
Combining estimates on each side as before, we get that the length of the edge of $T$ corresponding to an arc $\alpha$ of $\arc(Y)$ is at most
\[(c_\alpha(Y) - 2(E+\varepsilon)) + 2(2(E+\varepsilon)),\]
completing the proof.
\end{proof}

Theorem \ref{mainthm:deflate} now follows by assembling the results we have proven to this point.

\begin{proof}[Proof of Theorem \ref{mainthm:deflate}]
Recall that we are given a deflation $\dfl:Y \to T$. 
Set 
\[C = 2\pi/\varepsilon_0 + 4\rad(Y).\]
By Lemma \ref{lem:rad unif bdd}, this is uniformly bounded depending only on the systole and topological type of $Y$.
We need to prove:
\begin{enumerate}
    \item For $\alpha \in \arc(Y) \cap \arc(T),$ we have
$|c_\alpha(Y) - c_\alpha(T)| \le C$.
    \item For $\alpha \in \arc(Y) \setminus \arc(T),$ we have
    $c_\alpha(Y) \le C$.
    \item For $\alpha \in \arc(T) \setminus \arc(Y),$ we have
    $c_\alpha(T) \le C$.
\end{enumerate}
The combination of Lemma \ref{lem:lengthcomparison T<Y} and Corollary \ref{cor:thin bands long edges} immediately yields (1).
To get (2), if $\alpha \in \arc(Y)$ has weight at least $C$, then Lemma 6.6 of \cite{shshI} implies that the orthogeodesic representative of $\alpha$ has length at most $\varepsilon_0$, so Lemma \ref{lem:thickthin_deflation} ensures $\alpha \in \arc(T)$.
Claim (3) is just the statement of Lemma \ref{lem:long edges come from bands}.
\end{proof}

\subsection{The asymptotic shape of a Lipschitz geodesic}\label{subsec:deflconseq_geos}
As an immediate consequence of our work in the previous subsection, any short enough arc of $\cO_{\partial Y}(Y)$ must persist after applying a boundary-tight $f:Y \to Y'$ and we can even estimate its weight (equivalently, the length of the dual edge of $\Sp(Y')$).
The following quantifies Corollary \ref{cor:rescale}.

\begin{proposition}\label{prop:shortarcs_persist_optimal}
Suppose that $f:Y \to Y'$ is a boundary-tight Lipschitz map with Lipschitz constant $L>1$. If $\arc(Y)$ contains an arc $\alpha$ of length less than $\varepsilon_0$, then $\arc(Y')$ also contains $\alpha$.
For any such arc,
\[c_\alpha(Y)-C_0
\le \frac{c_\alpha(Y')}{L}
\le c_\alpha(Y) + 4\rad(Y).\]
\end{proposition}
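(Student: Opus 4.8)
The plan is to reduce every assertion to the deflation estimates of Section~\ref{subsec:generalspinedeflate} by post-composing $f$ with the canonical deflation of the target. Let $\dfl' \colon Y' \to \Sp(Y')$ denote the deflation collapsing the leaves of $\cO_{\partial Y'}(Y')$. Applying Lemma~\ref{lem:composite_deflation} to the composite $\dfl' \circ f$ produces a deflation
\[
\dfl \colon Y \longrightarrow T := (1/L) \cdot \Sp(Y').
\]
Since rescaling the metric on a ribbon graph changes neither its combinatorial type nor its dual arc system, $\arc(T) = \arc(Y')$ and $c_\beta(T) = c_\beta(Y')/L$ for every $\beta \in \arc(Y')$. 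Thus all three claims of the Proposition become assertions about the single deflation $\dfl$, which is precisely what Section~\ref{subsec:generalspinedeflate} controls.

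For membership I would take $\alpha \in \arc(Y)$ with orthogeodesic length $\ell(\alpha) < \varepsilon_0$, lift to universal covers, and let $g_1, g_2 \subset \partial \widetilde Y$ be the two boundary geodesics joined by $\alpha$, so that $d(g_1, g_2) = \ell(\alpha) < \varepsilon_0$. Lemma~\ref{lem:thickthin_deflation} then forces $\dfl(g_1)$ and $\dfl(g_2)$ to share an edge $e$ of $\widetilde T$, and Lemma~\ref{lem:deflation_doubleback} shows that $g_1, g_2$ are the only boundary geodesics whose $\dfl$-images cross $e$; hence the arc dual to $e$ is $\alpha$ itself, so $\alpha \in \arc(T) = \arc(Y')$. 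For the inequalities, $\alpha$ now lies in $\arc(Y) \cap \arc(T)$: Lemma~\ref{lem:lengthcomparison T<Y} gives $c_\alpha(T) \le c_\alpha(Y) + 4\rad(Y)$, which after multiplying by $L$ is the right-hand bound. For the left-hand bound, if $c_\alpha(Y) > C_0$ then, using also $\ell(\alpha) < \varepsilon_0$, Corollary~\ref{cor:thin bands long edges} yields $c_\alpha(Y) - C_0 \le c_\alpha(T)$; and if $c_\alpha(Y) \le C_0$ then $c_\alpha(Y) - C_0 \le 0 \le c_\alpha(T)$ automatically. Either way $c_\alpha(Y) - C_0 \le c_\alpha(T) = c_\alpha(Y')/L$, as wanted.

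I do not expect a serious obstacle: the hyperbolic-geometric content is already contained in Lemmas~\ref{lem:thickthin_deflation} and \ref{lem:lengthcomparison T<Y} and Corollary~\ref{cor:thin bands long edges}, and the argument merely assembles them through this rescaling trick. The main thing to be careful about is the bookkeeping around the rescaling --- verifying that $T = (1/L) \cdot \Sp(Y')$ is a genuine point of $\TRG(\Sigma)$ with $\arc(T) = \arc(Y')$ and all weights scaled by $1/L$, and that $\dfl' \circ f$ (composed with the rescaling action) really does satisfy the axioms of Definition~\ref{def:deflation} --- both of which are supplied by Lemma~\ref{lem:composite_deflation}. A secondary subtlety, handled above via Lemma~\ref{lem:deflation_doubleback}, is upgrading the conclusion ``$\dfl(g_1)$ and $\dfl(g_2)$ share an edge'' to the sharper statement ``$\alpha$ is dual to that edge.''
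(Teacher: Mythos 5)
Your proof is correct and follows essentially the same route as the paper: apply Lemma~\ref{lem:composite_deflation} to get a deflation $Y \to (1/L)\cdot\Sp(Y')$, then invoke Lemma~\ref{lem:thickthin_deflation}, Lemma~\ref{lem:lengthcomparison T<Y}, and Corollary~\ref{cor:thin bands long edges}. Your use of Lemma~\ref{lem:deflation_doubleback} to pin down that the shared edge is actually dual to $\alpha$ (rather than merely ``some arc'' connecting the two geodesics), and your explicit case split when $c_\alpha(Y) \le C_0$, are small refinements that the paper's terser proof leaves implicit, but they do not change the argument.
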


\begin{proof}
By Lemma \ref{lem:composite_deflation}, the composite map
\[Y \xrightarrow{f} Y' \to \Sp(Y') \xrightarrow{1/L} 1/L \cdot \Sp(Y'),\]
where the second map is projection along the leaves of $\cO_{\partial Y'}(Y')$, is a deflation. Lemma \ref{lem:thickthin_deflation} ensures that geodesics of $\partial Y$ connected by an arc of length $< \varepsilon_0$ map to a common edge of $1/L \Sp(Y')$, hence the corresponding geodesics of $Y'$ must as well. Thus there is an arc of $\arc(Y')$ connecting these geodesics.
The second statement follows by applying Corollary \ref{cor:thin bands long edges} and Lemma \ref{lem:lengthcomparison T<Y} to this composite deflation.
\end{proof}

This allows us to give a description of the life cycle of an arbitrary Thurston geodesic $(X_t)$ with tension lamination $\lambda$: either it always has small systole, in which case it looks like a Thurston geodesic on a lower-complexity subsurface, or the following must occur.
Every thin band of $X_0 \setminus \lambda$ persists in $X_t \setminus \lambda$ and is (roughly) exponentially stretched. As time goes on, new regions of $X_t \setminus \lambda$ may become thin, at which point those bands persist and are stretched in all $X_{t+s} \setminus \lambda$.
New bands occur at only finitely many critical times; after the last of these, the final arc system is stretched exponentially while the geometry of the complement can only vary in a bounded way.

\begin{theorem}[Thick-thin along geodesics]\label{thm:lifecycle}
Let $(X_t)_{t=0}^\infty$ be an arbitrary Thurston geodesic ray with tension lamination $\lambda$ and whose systole is uniformly bounded below.
Then there is a sequence of times 
\[0 < t_1 < \ldots < t_n \text{ where } n \le \dim_{\RR} \T(S \setminus \lambda) \]
and a sequence of nested arc systems 
$\arc_0 \subset \arc_1 \subset \ldots \subset \arc_n$ on $S \setminus \lambda$ such that:
\begin{enumerate}
    \item For all $t > t_i$, the arc system $\arc(X_t)$ contains $\arc_i$.
    \item For each arc $\alpha$ of $\arc_n$ and any $s >0$,
\[\frac{c_\alpha(X_{t+s})}{c_\alpha(X_t)} \xrightarrow{t\to \infty} e^s.\]
    \item Set $i(t) := \max \{i \mid t > t_i\}$. Then the weights of the arcs of $\arc(X_t) \setminus \arc_{i(t)}$ are universally bounded.
\end{enumerate}
Moreover, if the final arc system $\arc_n$ is maximal, then (2) can be upgraded to
\[{c_\alpha(X_{t+s})} - e^s{c_\alpha(X_t)} \xrightarrow{t\to \infty} 0.\]
\end{theorem}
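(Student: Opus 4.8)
The plan is to cut along $\lambda$ and run the deflation estimates through the boundary-tight maps that a Thurston geodesic ray induces on complementary pieces. Since $(X_t)$ has tension lamination $\lambda$, for all $s<t$ we have $\lambda \subseteq \lambda(X_s,X_t)$, so a tight map $X_s \to X_t$ restricts to a boundary-tight map $f_{s,t}\colon X_s\setminus\lambda \to X_t\setminus\lambda$ of Lipschitz constant $e^{t-s}$. A non-peripheral closed geodesic of $X_t\setminus\lambda$ is non-peripheral in $X_t$, so $\sys(X_t\setminus\lambda)\ge \sys(X_t)$; thus the hypothesis furnishes a uniform lower bound $s_0$ on $\sys(X_t\setminus\lambda)$ and hence, by Lemma~\ref{lem:rad unif bdd}, a uniform bound $\rad(X_t\setminus\lambda)\le R$. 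Choose a threshold $C_1$ large enough that any arc of weight exceeding $C_1$ has orthogeodesic length $<\varepsilon_0$ (Lemma~6.6 of \cite{shshI}), that the arcs of weight $>C_1-C_0$ fill any surface with $\sys\ge s_0$ (Lemma~\ref{lem:big sys then fill}), and that $e(C_1-C_0)>C_1$. We assume throughout that $s_0$ exceeds these filling thresholds, i.e.\ that we are in the non-degenerate case of the dichotomy preceding the statement.

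The mechanism is persistence. If $c_\alpha(X_t)>C_1$ for some $t$, then the orthogeodesic arc $\alpha$ has length $<\varepsilon_0$, so Proposition~\ref{prop:shortarcs_persist_optimal} applied to $f_{t,t'}$ gives $\alpha\in\arc(X_{t'})$ for every $t'>t$ together with
\[ c_\alpha(X_t)-C_0 \;\le\; e^{-(t'-t)}c_\alpha(X_{t'}) \;\le\; c_\alpha(X_t)+4R. \]
The lower bound and $e(C_1-C_0)>C_1$ force $c_\alpha(X_{t'})>C_1$ whenever $t'-t\ge 1$; iterating, $c_\alpha(X_t)\to\infty$, and the two-sided bound pins $c_\alpha(X_t)$ to grow comparably to $e^t$. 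Putting $t'=t+s$ in the display and dividing by $c_\alpha(X_t)$, both sides tend to $1$ as $t\to\infty$, which is conclusion (2) for any arc that is ever heavy. When the limiting arc system is maximal, $\arc(X_t)$ equals it for $t$ large, all its arcs have weight $\to\infty$ hence orthogeodesic length $\to 0$, and feeding the composite deflation $X_t\setminus\lambda \to e^{-s}\cdot\Sp(X_{t+s}\setminus\lambda)$ (Lemma~\ref{lem:composite_deflation}) into Corollary~\ref{cor:thin bands long edges max} gives $\lvert c_\alpha(X_t)-e^{-s}c_\alpha(X_{t+s})\rvert \le 2(E_t+\varepsilon_t)$ with $E_t,\varepsilon_t\to 0$; this is the additive refinement of (2).

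Now set $\arc_\infty:=\{\alpha : c_\alpha(X_t)\to\infty\}$, which by the above coincides with the (finite) set of ever-heavy arcs. For $\alpha\in\arc_\infty$ let $\tau(\alpha)$ be the first time $c_\alpha$ exceeds $C_1$; let $0<t_1<\cdots<t_n$ enumerate the distinct values of $\tau$ and set $\arc_i:=\{\alpha : \tau(\alpha)\le t_i\}$, with $\arc_0:=\{\alpha : \tau(\alpha)\le 0\}$. Conclusion (1) is immediate from persistence, and (3) holds because, away from the jump times, an arc of $\arc(X_t)\setminus\arc_{i(t)}$ has never been heavy, hence has weight at most $C_1$. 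For the bound $n\le\dim_\RR\cT(S\setminus\lambda)$, the plan is to check that each $\arc_i$ fills and is $(S\setminus\lambda)$-residue-compatible: just past $t_i$ the arcs of $\arc_i$ all have weight $>C_1-C_0$, and by Lemma~\ref{lem:big sys then fill} the weight-$>(C_1-C_0)$ arcs fill, while residue-compatibility follows since $\arc_i$'s dual graph is a collapse of the residue-compatible spine $\Sp(X_t\setminus\lambda)$ and the polyhedral complex $\cT(S\setminus\lambda)$ of \S\ref{subsec:envelopes} is closed under passing to faces. Thus each $\arc_i$ indexes a cell $\sigma_i$ of that complex, and $\arc_i\subsetneq\arc_{i+1}$ exhibits $\sigma_i$ as a proper face of $\sigma_{i+1}$; since dimension strictly increases along such inclusions and is confined to $[0,\dim_\RR\cT(S\setminus\lambda)]$, we conclude $n\le\dim_\RR\cT(S\setminus\lambda)$.

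The delicate point — and the one I expect to be the main obstacle — is this last paragraph: arranging the nested systems $\arc_i$ so that they simultaneously enjoy persistence (for (1)), fill and are residue-compatible (for the dimension bound), and leave only bounded weight behind (for (3)), while reconciling the two thresholds $C_1$ and $C_1-C_0$ and the behavior exactly at the critical times $t_i$, where an arc may sit near the threshold and momentarily dip below $C_1$. This needs a careful choice of constants and some bookkeeping near those times, but no new geometric input beyond Proposition~\ref{prop:shortarcs_persist_optimal}, Corollary~\ref{cor:thin bands long edges max}, and Lemma~\ref{lem:big sys then fill}. A minor secondary point is justifying $\sys(X_t\setminus\lambda)\ge\sys(X_t)$ carefully via the classification of peripheral curves in $X_t\setminus\lambda$, which is routine.
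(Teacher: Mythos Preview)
Your approach is essentially the paper's: cut along $\lambda$, use the boundary-tight maps $X_t\setminus\lambda\to X_{t'}\setminus\lambda$, invoke Lemma~\ref{lem:rad unif bdd} for a uniform $\rad$ bound, and then feed everything through Proposition~\ref{prop:shortarcs_persist_optimal} for persistence and growth and through Corollary~\ref{cor:thin bands long edges max} for the additive refinement in the maximal case. That part is fine, and your identification of the mechanism (once heavy, always heavy, with weight $\asymp e^t$) matches the paper exactly.

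The genuine gap is in your argument for $n\le\dim_\RR\cT(S\setminus\lambda)$. You impose the extra hypothesis that $s_0$ is large enough to force each $\arc_i$ to fill (via Lemma~\ref{lem:big sys then fill}), then use the polyhedral face structure. But the theorem makes no such largeness assumption on the systole bound, and in fact the Remark immediately following the theorem in the paper notes that $\arc_n$ need \emph{not} fill $S\setminus\lambda$ in general. So your filling step can fail, and the residue-compatibility/face-inclusion argument with it. The paper's argument here is much simpler and does not need filling at all: the $\arc_i$ are strictly nested systems of pairwise disjoint arcs, so $|\arc_n|\ge n$, and the maximal size of an arc system on $S\setminus\lambda$ equals $\dim_\RR\cT(S\setminus\lambda)$ (this is the content of the identification $\Sp:\cT(\Sigma)\xrightarrow{\sim}\TRG(\Sigma)$). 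Replace your last paragraph with that one-liner and drop the large-$s_0$ assumption.

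Your stated worry about arcs ``momentarily dipping below $C_1$'' near the critical times is a red herring: Proposition~\ref{prop:shortarcs_persist_optimal} already guarantees $\alpha\in\arc(X_{t'})$ for all $t'>t$ once $\alpha$ is $\varepsilon_0$-short at time $t$, regardless of whether the weight temporarily falls below your chosen threshold. Conclusions (1) and (3) therefore need no special bookkeeping at the $t_i$.
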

\begin{proof}
The statements in this theorem are just an encapsulation of our discussion to this point.
Set $\arc_0$ to be the set of arcs on $X_0 \setminus \lambda$ with length at most $\varepsilon_0$ and weight greater than $C_0$.
Since there is a boundary-tight map $X_0 \setminus \lambda \to X_t \setminus \lambda$ for all $t >0$, Lemma \ref{lem:thickthin_deflation} ensures that $\arc(X_t)$ must contain $\arc_0$ and Proposition \ref{prop:shortarcs_persist_optimal} ensures that the weight of these arcs grows roughly exponentially.
Applying the Proposition to the complement of $\lambda$ in $X_t$ and $X_{t+s}$ for $t \gg 0$ gives the desired growth rate of the weights (note that $\rad(X_t \setminus \lambda)$ is uniformly bounded by assumption and Lemma \ref{lem:rad unif bdd}).

If any other arc of $\arc(X_t)$ becomes $\varepsilon_0$-short and gains weight greater than $C_0$, then we record the first time it does so as $t_1$ and repeat the above argument. This process eventually terminates because the maximal size of an arc system on $X \setminus \lambda$ is the same as the dimension of its Teichm{\"u}ller space.
If an arc of $X_t$ does not appear then either its weight is not $C_0$-large or it is not $\varepsilon_0$-short.
As in the remark after Corollary \ref{cor:thin bands long edges}, either possibility implies a universal bound (slightly larger than $C_0$) on the weight \cite[Lemma 6.6]{shshI}.

The final statement of the Theorem follows from Corollary \ref{cor:thin bands long edges max}. Namely, since the weights of all arcs of $\arc(X_t)$ are going to $\infty$, their lengths are going to $0$, and the complementary hexagons of $X_t \setminus (\lambda \cup \arc(X_t))$ are converging to ideal triangles.
Thus, the Corollary implies
\[
|c_{\alpha}(X_t) - e^{-s}c_{\alpha}(X_{t+s})|
\xrightarrow{t\to \infty} 0\]
which for fixed $s$ gives the desired statement.
\end{proof}

\begin{remark}
It is {\em not} necessarily true that for fixed $t$,
\[\frac{c_\alpha(X_{t+s})}{c_\alpha(X_t)} \sim e^s\]
as $s \to \infty$.
This is because the endpoints of the singular leaves of $\cO_{\lambda}(X_t \setminus \lambda)$ (which are used to define the arc weights) do not have to be mapped precisely to the endpoints of the singular leaves of $\Ol(X_{t+s} \setminus \lambda)$, only nearby. An explicit example can be constructed by taking concatenations of stretch lines in different completions of the same lamination.
This also explains why we get a stronger statement in Theorem \ref{thm:lifecycle} for maximal arc systems: as $t \to \infty$, any maps from $X_t$ to $X_{t+s}$ must take endpoints closer and closer to endpoints.
\end{remark}

\begin{remark}
The final arc system $\arc_n$ may be empty. For example, if $\lambda$ is maximal then the Teichm{\"u}ller space of $S \setminus \lambda$ is a single point. 
More generally, whenever $\lambda$ fills $S$ (so that $S \setminus \lambda$ consists of a union of ideal polygons) then one can construct Thurston geodesics with empty $\arc_n$ using ideal polygons containing embedded horogons (see the \hyperref[sec:intro]{Introduction}).

If the systoles of any $X_t$ is at least $s_0(\varepsilon_0)$, then Lemma \ref{lem:big sys then fill} ensures that all $\arc_i$ for $i \ge i(t)$ are filling. However, in general $\arc_n$ does not have to fill $S \setminus \lambda$; compare Remark \ref{rmk:isolate}.
\end{remark}

We highlight one immediate consequence of this theorem, which essentially says that along every Thurston geodesic, the complement has a well-defined limit.

\begin{corollary}
Given any Thurston geodesic $(X_t)_{t=0}^\infty$ with tension lamination $\lambda$, the rescaled arc system $e^{-t} \cdot \arc(X_t\setminus \lambda)$ has a well-defined limit in the (geometric realization of the) arc complex $\mathscr{A}(S \setminus \lambda)$.
\end{corollary}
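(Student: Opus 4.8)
The plan is to first treat the case where $\sys(X_t\setminus\lambda)$ is uniformly bounded below (the standing hypothesis of Theorem~\ref{thm:lifecycle}), reducing the statement to the convergence of the individual rescaled arc weights, and then to deduce that convergence from the persistence estimates of \S\ref{subsec:deflconseq_geos}. Write
\[ a_\alpha(t) := e^{-t}\,c_\alpha(X_t \setminus \lambda) \]
for an arc $\alpha$ of $S \setminus \lambda$; the point $e^{-t}\cdot\arc(X_t\setminus\lambda) \in |\mathscr{A}(S\setminus\lambda)|$ is exactly the weighted arc system recording these numbers on the (finite) set $\arc(X_t\setminus\lambda)$. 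By Theorem~\ref{thm:lifecycle} there is a fixed arc system $\arc_n$ and a time $t_n$ such that for all $t > t_n$ one has $\arc_n \subseteq \arc(X_t\setminus\lambda)$ and every arc of $\arc(X_t\setminus\lambda)\setminus\arc_n$ has weight bounded by a universal constant; since an arc system on $S\setminus\lambda$ has boundedly many arcs, the total weight carried off of $\arc_n$ by $e^{-t}\cdot\arc(X_t\setminus\lambda)$ is $O(e^{-t})$. It therefore suffices to show that $a_\alpha(t)$ converges to a nonnegative real as $t\to\infty$ for every $\alpha \in \arc_n$: the limit point of $e^{-t}\cdot\arc(X_t\setminus\lambda)$ is then the weighted arc system supported on $\arc_n$ with those weights, because coordinatewise convergence on the finite set $\arc_n$ together with the vanishing of the remaining coordinates is precisely convergence in the metric topology on $|\mathscr{A}(S\setminus\lambda)|$ (the relevant topology, cf.\ \S\ref{subsec:RGs and arcs}).

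Fix $\alpha \in \arc_n$. By construction $\alpha$ first enters some $\arc_i$ at a time $t_i$ at which it is $\varepsilon_0$-short with weight strictly larger than $C_0$, so the lower bound of Proposition~\ref{prop:shortarcs_persist_optimal} applied on $[t_i,t]$ gives $c_\alpha(X_t\setminus\lambda) \ge e^{t-t_i}\bigl(c_\alpha(X_{t_i}\setminus\lambda)-C_0\bigr)\to\infty$; in particular, by the remark following Corollary~\ref{cor:thin bands long edges}, the orthogeodesic representative of $\alpha$ has length $<\varepsilon_0$ in $X_t\setminus\lambda$ for all large $t$. For such $t$ and any $s>0$, Proposition~\ref{prop:shortarcs_persist_optimal} applied to the $e^{s}$-Lipschitz boundary-tight map $X_t\setminus\lambda\to X_{t+s}\setminus\lambda$ yields
\[ c_\alpha(X_t\setminus\lambda) - C_0 \;\le\; e^{-s}\,c_\alpha(X_{t+s}\setminus\lambda) \;\le\; c_\alpha(X_t\setminus\lambda) + 4\,\rad(X_t\setminus\lambda), \]
and multiplying through by $e^{-t}$,
\[ a_\alpha(t) - e^{-t}C_0 \;\le\; a_\alpha(t+s) \;\le\; a_\alpha(t) + 4e^{-t}\,\rad(X_t\setminus\lambda). \]
Since $\sys(X_t\setminus\lambda)$ is bounded below, Lemma~\ref{lem:rad unif bdd} bounds $\rad(X_t\setminus\lambda)$ uniformly, so both error terms are at most $C_1 e^{-t}$ for a fixed $C_1$; hence $|a_\alpha(t')-a_\alpha(t)| \le C_1 e^{-t}$ for all large $t'\ge t$, so $(a_\alpha(t))$ is Cauchy and converges. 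This proves the corollary for Thurston geodesics of uniformly bounded systole, with limiting arc system supported on $\arc_n$ and weighted by $\lim_t a_\alpha(t)$ (the empty system if $\arc_n=\emptyset$).

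The step I expect to be the genuine obstacle is removing the bounded-systole hypothesis, under which the error terms above — principally $\rad(X_t\setminus\lambda)$ — need no longer be uniform in $t$. I observe that parts (1) and (3) of Theorem~\ref{thm:lifecycle} and the persistence half of Proposition~\ref{prop:shortarcs_persist_optimal} rely only on the uniform lower bound $C_0$, so the difficulty is confined to the upper estimate. The natural way around it is the collar-insulation picture of Remark~\ref{rmk:isolate}: any arc crossing a short curve is itself long, hence never lies in $\arc_n$ and has uniformly bounded weight by Theorem~\ref{thm:lifecycle}(3); while arcs lying in a subsurface insulated by a short collar are stretched strictly more slowly than $\partial(X_t\setminus\lambda)$, so that their weights are $o(e^{t})$ and again rescale to zero. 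Establishing rigorously that all of the degeneration is confined to coordinates that vanish in the limit — so that only the arcs of $\arc_n$, for which one does have uniform control, contribute — is the part I would need to work out with care.
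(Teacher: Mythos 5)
The argument you give for the bounded-systole case is correct and, in a small but important way, sharper than what the paper formally provides. The paper declares this an ``immediate consequence'' of Theorem~\ref{thm:lifecycle}, but notice that item (2) of that theorem only records the multiplicative statement $c_\alpha(X_{t+s})/c_\alpha(X_t) \to e^s$, which is equivalent to $\log c_\alpha(X_{t+s}) - \log c_\alpha(X_t) - s \to 0$ and is satisfied, for instance, by $c_\alpha(X_t) = e^{t + \sqrt{t}}$, for which $e^{-t} c_\alpha(X_t)$ diverges. So one genuinely must return to the additive bounds of Proposition~\ref{prop:shortarcs_persist_optimal} (or to the ``moreover'' clause of Theorem~\ref{thm:lifecycle}, which the paper only asserts when $\arc_n$ is maximal) to get the Cauchy property of $a_\alpha(t)$, exactly as you do. Your treatment of the coordinates outside $\arc_n$ via item (3) of the theorem and the $e^{-t}$ rescaling, and your note that the relevant topology on $|\mathscr{A}(S\setminus\lambda)|$ is the metric one, are also correct.

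The gap you flag at the end --- removing the uniform lower bound on $\sys(X_t \setminus \lambda)$ --- is real, and you are right to be uneasy about it: the error term $4\rad(X_t\setminus\lambda)$ in Proposition~\ref{prop:shortarcs_persist_optimal} is controlled only via Lemma~\ref{lem:rad unif bdd}, which requires a systole bound. But the paper does not carry out the general case either. Theorem~\ref{thm:lifecycle} carries the bounded-systole hypothesis, and the Corollary is presented as a consequence of it, so the consistent reading is that the Corollary silently inherits that standing hypothesis (despite the phrase ``any Thurston geodesic''). The informal dichotomy immediately preceding the theorem (``either it always has small systole, in which case it looks like a Thurston geodesic on a lower-complexity subsurface, or the following must occur'') gestures at the general case, but this is never formalized, and the dichotomy itself is imprecise --- the systole could oscillate, or tend to zero without ever being uniformly small, and neither branch then applies verbatim. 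In short: you have given a correct and slightly more careful proof of the statement the paper actually establishes, and a sound identification of where the Corollary's stated generality outruns the arguments supplied.
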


We note that this limit must be taken in the arc complex, not in the moduli space of ribbon graphs; this is because of the possibility that $\arc_n$ may not be filling.
One cannot even take a limit in Kontsevich's compactification (see \cite{Mond_handbook}) --- which can be described as a certain blowup of the arc complex \cite{BowdEpst, Loo_arccx} --- because the (bounded weight) arcs of $\arc(X_t) \setminus \arc_n$ may not have a stable combinatorial type.

\section{Constructing Lipschitz maps via h-gons}\label{sec:construct horogon}
We now complete the proof of Theorem \ref{mainthm:polygon} by constructing new boundary-tight maps.
Given an ideal polygon $P$, define $P_L$ to be the polygon such that 
\[L \cdot \Sp(P) = \Sp(P_L).\]
For any $P$, define its {\bf waist} to be the length of the shortest (possibly singular) leaf of its orthogeodesic foliation $\cO_{\partial P}(P)$ that is not isotopic into a spike.

Our main goal for this section is to prove the following:

\begin{theorem}\label{thm:scalespineup}
For every $n \ge 3$ and $w >0$, there is an $L_0$ such that for any ideal $n$-gon $P$ with waist at least $w$ and any $L \ge L_0$, there is an $L$-Lipschitz, boundary-tight map $P \to P_L$.
\end{theorem}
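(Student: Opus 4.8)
The plan is to construct $f$ by gluing together explicit model maps on geometric pieces of $P$ and $P_L$ dictated by the spine $\Sp(P)$, using the hypothesis $\text{waist}\ge w$ to make every estimate uniform. First I would record that the ideal $n$-gons with waist at least $w$ form a compact family: an ideal $n$-gon is determined by its $n-3$ shears along a fixed triangulation, and letting a shear tend to $\pm\infty$ forces the corresponding diagonal --- hence the shortest orthogeodesic leaf isotopic to it --- to become arbitrarily short; equivalently, by \cite[Lemma 6.6]{shshI} the waist bound caps the finite-edge weights $c_e$ of $\Sp(P)$ from above, and $\rad(P)\le n\log 3$ always. Thus all metric data of $P$ --- the $c_e$, the lengths of the leaves of $\cO_{\partial P}(P)$ and their variation, the positions of the singular leaves --- are bounded in terms of $n$ and $w$. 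Since $P_L=\Sp^{-1}(L\cdot\Sp(P))$ has the same combinatorial spine with every finite weight multiplied by $L$, it degenerates in a controlled way as $L\to\infty$ (its diagonals become uniformly short), and I would stretch $P$ onto it one band at a time.

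Cutting $P$ along the measure-zero singular leaves of $\cO_{\partial P}(P)$ decomposes it into bands $\{B_e\}$ indexed by the edges $e$ of $\Sp(P)$: the $n$ half-infinite edges give spikes, and the $n-3$ finite edges give bands whose two boundary arcs (one on each flanking side of $P$) both have length $c_e$. Doing the same to $P_L$ gives bands $B_e^L$ with identical combinatorics and finite boundary arcs of length $Lc_e$. On a spike I would use Thurston's construction unchanged: in upper half-plane coordinates based at the ideal vertex the orthogeodesic foliation is horocyclic, and $x+iy\mapsto x+iy_0(y/y_0)^L$ fixes the base horocycle, has diagonal differential $\operatorname{diag}(1,L(y/y_0)^{L-1})$ --- hence is exactly $L$-Lipschitz since $y\ge y_0$ --- and is affine of ratio $L$ on the two bounding rays about the foot of the spine; rescaling the target coordinate so the spike lands in $B_e^L$ is harmless because spikes are parametrized by a single positive number. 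This spike map is the prototypical boundary-tight map; the real work is filling in the finite bands compatibly with it.

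On a finite band $B_e$ I would build the map leaf-by-leaf using a smooth realization of the orthogeodesic foliation: replace the piecewise-geodesic leaves of $\cO_{\partial P}(P)$ near the spine by smooth arcs and equip $B_e$ with the transverse foliation $\cF_e$ that is hypercyclic (equidistant to the two flanking geodesics) near $\partial P$ and horocyclic near $e$, chosen orthogonal to $\cO_{\partial P}(P)$ everywhere. In the resulting orthogonal coordinates $(\sigma,\rho)$ --- $\sigma$ the $\cF_e$-coordinate, restricting to arclength along $\partial P$, and $\rho$ the coordinate along an $\cO$-leaf --- I would let $f$ send $\sigma\mapsto L\sigma$ (realizing $c_e\mapsto Lc_e$ and the affine slope-$L$ behavior on the boundary) and map the $\cO$-leaf over $\sigma$ onto the $\cO$-leaf over $L\sigma$ in $B_e^L$ by an orientation-preserving, $1$-Lipschitz map that is isometric near $\partial P$ and absorbs any excess length near $e$. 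Orthogonality makes the differential triangular with diagonal entries $\le L$ and a single off-diagonal term governed by the variation of the leaf-length profile along $B_e$, which the compactness step bounds by $C(n,w)$; the $\cosh$-geometry of the flanking collars provides slack of order $L$ into which this term fits, so for $L\ge L_0(n,w)$ the restriction of $f$ to $B_e$ is exactly $L$-Lipschitz. The hard part is precisely this last assertion: forcing the leaf-direction to be genuinely non-expanding requires a careful matching of leaf-lengths, and of the singular leaves bounding $B_e$ with those bounding $B_e^L$, which is why the smoothed foliation --- not the literal orthogeodesic one --- is needed and why the bound is only uniform once $L$ is large. I expect this band estimate, essentially a quantitative single-band version of the existence of boundary-tight maps and a close relative of the Saccheri-quadrilateral computation in Lemma \ref{lem:thickthin_deflation}, to be the main obstacle.

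Finally I would glue: the restriction of $f$ to a singular leaf is pinned down by the affine slope-$L$ behavior on the adjacent geodesic sides together with preservation of the horocyclic/Busemann structure (the mechanism of Lemmas \ref{lem:preserve horocycle endpts} and \ref{lem:opp vs p2}), so the definitions on the two sides of each singular leaf agree and $f\colon P\to P'$ is globally $L$-Lipschitz and boundary-tight for some ideal $n$-gon $P'$. By construction each band of $P'$ has boundary arcs and spine edge stretched exactly by $L$, so $\Sp(P')=L\cdot\Sp(P)$ and hence $P'=P_L$ by Theorem \ref{thm:spinecoords}. Since every constant entering the argument depends only on $n$ and $w$, the threshold $L_0=L_0(n,w)$ is uniform, which is the statement of the theorem.
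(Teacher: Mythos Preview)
Your proposal shares the paper's high-level architecture (decompose, build model maps on pieces, glue) and your treatment of spikes is exactly the paper's Lemma~\ref{Lem:CuspLip}. But your decomposition into \emph{bands} cut by the singular leaves of $\cO_{\partial P}(P)$ is coarser than the paper's, and this is where a genuine gap appears.

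The paper does \emph{not} cut along singular leaves. It cuts each band further by h-cycles orthogonal to $\partial P$, producing an inscribable \emph{h-gon} around every spine vertex and a genuine right-angled \emph{rectangle} along every finite edge (Lemma~\ref{Lem:HgonDecomp}). The point of isolating the h-gons is that the map between corresponding h-gons of $P$ and $P_L$ has a Lipschitz constant bounded \emph{independently of $L$} (Proposition~\ref{Prop:HLip}); this requires delicate ``standard maps'' in radial coordinates near each tangential vertex (Lemmas~\ref{lem:radcontract}--\ref{lem:HLip}) and a Kirszbraun--Valentine extension (Theorem~\ref{Thm:KV}) to fill in the interior. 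The rectangles then absorb all of the $L$-stretch (Proposition~\ref{Prop:RectLip}), and the two families are made to agree along h-cycles using the globally compatible truncation of Lemma~\ref{lem:exists_truncation}. Example~\ref{example:nonLip} explains why this care is needed: the na\"ive piecewise-affine map on $\partial H$ is not Lipschitz at a vertex where tangent h-cycles are stretched at different rates, so one cannot simply prescribe boundary behavior and hope for the best.

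Your bands, by contrast, extend all the way into the spine vertices, so the wedge regions that the paper handles with h-gons are buried inside your $(\sigma,\rho)$ construction. Two concrete problems: (i) Your gluing argument appeals to Lemmas~\ref{lem:preserve horocycle endpts} and~\ref{lem:opp vs p2} to ``pin down'' the map on singular leaves, but those lemmas are \emph{constraints} any boundary-tight map must satisfy, not recipes for constructing one; nothing there forces the $\rho$-reparametrizations $g_\sigma$ coming from adjacent bands to agree on a shared prong, and you have not specified a consistent choice. (ii) Even granting consistency, the claim that the off-diagonal term $\partial_\sigma g_\sigma(\rho)$ is bounded by $C(n,w)$ and absorbed into the $L$-budget is exactly the hard estimate, as you yourself note; it is not proved. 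Near a spine vertex the leaf-length profile of $B_e$ is not smooth (the boundary leaf is genuinely bent at $u$), and the ``$\cosh$-slack'' you invoke vanishes as $\rho\to 0$ where $g_\sigma(\rho)=\rho$. The paper's separation into h-gon (bounded stretch, no $L$-dependence) and rectangle ($L$-stretch, but honest product geometry with no bent boundary) is what makes each estimate tractable; without that separation you owe a replacement for Proposition~\ref{Prop:HLip}, and the proposal does not supply one.
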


The methods we use to prove this Theorem will also yield (a stronger version of) the sufficiency part of Theorem \ref{mainthm:polygon}.(1) --- see Corollary \ref{cor:Horo to Horo}. The proof of Theorem \ref{mainthm:polygon}.(2) will be deduced from Theorem \ref{thm:scalespineup} at the end of the section.

The main idea of the proof of Theorem \ref{thm:scalespineup} is to decompose $P$ into nice pieces, build maps on those pieces, and then glue them back together.
To perform this gluing, our maps must agree on their overlaps, which in turn requires us to ensure that our constructions are compatible with each other.
One of the key points of the proof is therefore to get uniform control on the geometry of the pieces we use (Lemma \ref{lem:Hgonbounds}, and compare the hypotheses in Proposition \ref{Prop:HLip}).

In Section \ref{sec:decomp} we describe a decomposition of $P$ using its spine; this also results in a version of the orthogeodesic foliation $\cO_{\partial P}(P)$ with smooth leaves.
From there, in Sections \ref{subsec:hgon vs}--\ref{subsec:rectmaps} we build maps on each of the pieces and bound their Lipschitz constants. There are two important subtleties here: the first is that the na{\"i}ve maps between some pieces are generally {\em not} Lipschitz (Example \ref{example:nonLip}), and the other is that we need to ensure that our maps realize their Lipschitz constants along $\partial P$, which must be done by explicit construction (Proposition \ref{Prop:RectLip} and Lemma \ref{Lem:CuspLip}). 
Finally, in Section \ref{subsec:glue maps} we glue these maps together.

\subsection{Decompositions of polygons}\label{sec:decomp}
In order to prove Theorem \ref{thm:scalespineup}, we introduce new geometric objects called \textbf{h-gons}.
An h-gon is a compact polygon $H \subset \mathbb{H}^2$ with at least $3$ sides where each side is either a segment of a horocycle or a hypercycle and all vertices have angle 0.
If all sides have constant curvature $1$, {\em i.e.}, they are horocyclic, then we still call $H$ a horogon. For simplicity, we also refer to a segment of a horo- or hypercycle as an \textbf{h-cycle}. An h-gon is said to be \textbf{embedded} if no two non-adjacent sides touch. We also consider \textbf{rectangles}, which throughout this section are right-angled quadrilaterals bounded on two opposite sides by hypercycles, which we refer to as {\bf horizontal}, and on the other two sides by geodesics of the same length, which we refer to as {\bf vertical}. Finally, a \textbf{spike} is a closed non-compact region bounded by two asymptotic geodesic rays and a segment of a horocycle based at their shared point at infinity. 

We can decompose any ideal polygon into a union of these pieces.

\begin{lemma}\label{Lem:HgonDecomp}
Let $P$ be an ideal polygon. Dual to its spine $\Sp(P)$, there is a decomposition of $P$ into embedded h-gons, rectangles, and spikes, such that each vertex corresponds to an embedded h-gon, each compact edge to a rectangle, and each ray to a spike.
\end{lemma}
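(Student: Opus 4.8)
The plan is to build the decomposition directly from the spine $\Sp(P)$ and its dual arc system, using the orthogeodesic foliation $\cO_{\partial P}(P)$ as the organizing structure. Recall that each leaf of $\cO_{\partial P}(P)$ is a fiber of the closest-point projection to $\partial P$. The singular leaves (those hitting a vertex of the spine) cut $P$ into regions, one for each cell of $\Sp(P)$: the region over a vertex $v$ of $\Sp(P)$ is the set of points whose closest-point projection set ``sees'' $v$, the region over a compact edge $e$ is a band of leaves each of which meets the two boundary arcs dual to $e$, and the region over a half-infinite ray is a spike. So the combinatorial skeleton of the decomposition is forced; the content of the lemma is identifying the \emph{geometry} of each piece.

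First I would treat the rectangles. Over a compact edge $e$ of $\Sp(P)$ dual to an arc $\alpha$, the region consists of all orthogeodesic leaves isotopic to $\alpha$. Each such leaf is a geodesic segment orthogonal to $\partial P$ at both ends (after isotopy to the orthogeodesic representative, or as an honest leaf when short), and the two ``vertical'' sides are the geodesic subsegments of $\partial P$ that project to the endpoints of $e$. The two ``horizontal'' sides — the outermost leaves on either side of the band — are equidistant curves from the core orthogeodesic $\alpha$, hence hypercycles; one checks the two vertical geodesic sides have the same length since the region is foliated by the orthogonal geodesic leaves of a Saccheri-type quadrilateral. This matches the definition of rectangle given in the excerpt. (One subtlety: if the band is wide, the extremal ``leaves'' may not be honest leaves of $\cO_{\partial P}(P)$ but their isotopic geodesic/hypercyclic representatives; I would note that this is exactly the distinction flagged in Section~\ref{subsec:RGs and arcs} and does not affect the topological decomposition.)

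Next the spikes: over a half-infinite ray of $\Sp(P)$ corresponding to a spike of $P$, the leaves of $\cO_{\partial P}(P)$ are precisely the horocyclic arcs based at the ideal vertex (as noted in Section~\ref{subsec:preserve spikes}, in a spike the orthogeodesic foliation agrees with the horocyclic foliation), so the region is a spike in the sense defined — two asymptotic geodesic rays capped by a horocyclic segment, the cap being the singular leaf adjacent to the ray. Finally, and this is where I expect the real work, I would analyze the region over a vertex $v$ of $\Sp(P)$ of valence $k\ge 3$. Its boundary alternates between: (i) geodesic subsegments of $\partial P$ (the pieces projecting to $v$, one for each half-edge at $v$ — but a valence-$k$ vertex is equidistant from $k$ boundary arcs, and for the \emph{innermost} point $v$ itself these may degenerate), and (ii) the extremal leaves shared with the adjacent rectangles/spikes, which are hypercycle or horocycle segments. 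The vertices of this region are the points where a singular leaf meets $\partial P$, which have angle $0$ since the leaf is orthogonal to $\partial P$ — wait, that gives a right angle, so I must instead observe that consecutive boundary arcs of the region are asymptotic and the region is a polygon with ideal-type (angle $0$) vertices where two h-cycles meet, while the geodesic-to-h-cycle corners are right angles. I would reconcile this by noting an h-gon as defined allows its sides to be h-cycles and I should describe the vertex region as an h-gon once the geodesic boundary subsegments are absorbed: more precisely the correct statement is that the closure of the vertex region, with the shared h-cycle sides, is an embedded h-gon because no two non-adjacent sides touch (two non-adjacent extremal leaves bounding the region come from distinct edges of $\Sp(P)$ at $v$, which are disjoint in $P$ away from $v$). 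Embeddedness of each piece follows because closest-point projection fibers are disjoint and the spine is embedded. The main obstacle is thus making the vertex-region analysis clean — correctly enumerating its sides, checking each is an h-cycle, and verifying the angle and embeddedness conditions in the definition of h-gon — and I would carry this out by working in the universal cover, lifting $v$ and its incident edges, and using that the extremal leaves are equidistants from the dual arcs meeting at $v$.
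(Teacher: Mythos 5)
Your decomposition is organized around the orthogeodesic foliation, but the paper's proof is organized around a different object: the inscribed circle at each spine vertex. At a valence-$k$ vertex $v$ of $\Sp(P)$ there is a maximal ball $B$ centered at $v$ and tangent to the $k$ sides $s_0,\dots,s_{k-1}$; the h-gon $H_v$ is the region cut out by the $k$ h-cycles joining consecutive tangency points $x_i=s_i\cap\partial B$ (each orthogonal to both $s_i$ and $s_{i+1}$). This is the crux of the argument, and it resolves exactly the point you flagged as a difficulty and then waved past: because the h-cycle sides meet $\partial P$ at the tangency points $x_i$, there are \emph{no} geodesic subsegments of $\partial P$ in $\partial H_v$ at all, and consecutive h-cycle sides are mutually tangent (angle $0$) at the $x_i$, so $H_v$ is literally an h-gon as defined. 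Your phrase ``once the geodesic boundary subsegments are absorbed'' is not a proof --- one has to specify the truncating h-cycles precisely enough that those geodesic subsegments degenerate to points, and the inscribed circle is what does that. Relatedly, you never say which hypercycle to use as the ``extremal leaf'' truncating a rectangle; without the inscribed circle, there is a one-parameter family of equidistants and no canonical choice that makes the pieces fit.

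The embeddedness claim is also a genuine gap. You assert non-adjacent sides of $H_v$ are disjoint because ``two non-adjacent extremal leaves bounding the region come from distinct edges of $\Sp(P)$ at $v$, which are disjoint in $P$ away from $v$,'' but the h-cycle sides of $H_v$ do not lie along edges of the spine, so disjointness of spine edges says nothing directly about them. The paper's argument uses the inscribed circle in an essential way: each h-cycle side of $H_v$ is orthogonal to $\partial B$ at both endpoints, so after giving $B$ its Poincar\'e metric the sides become a chain of consecutively asymptotic geodesics, which cannot cross or touch except at their shared ideal points. That is what makes embeddedness automatic. Your rectangle and spike descriptions are correct in spirit (and match the paper once the truncations are fixed by the inscribed circles), so the fix is localized: introduce the ball $B$ at each spine vertex, define $H_v$ from its tangency points, and use orthogonality to $\partial B$ for embeddedness.
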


\begin{figure}[ht]
    \centering
    \includegraphics[width=0.7\linewidth]{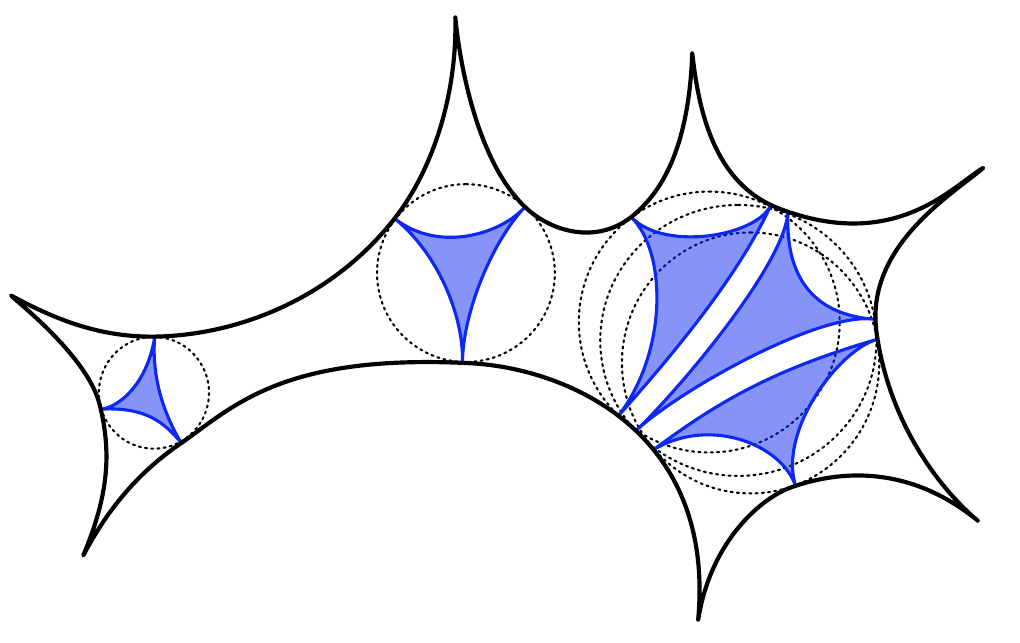}
    \caption{The decomposition of an ideal polygon dual to its spine.}
    \label{fig:decomp}
\end{figure}

\begin{proof}
Recall that a vertex $x$ of $\Sp(P)$ of valence $k$ means there is a closed ball $B \subset P$ centered at $x$ such that $\partial B$ is tangent to $k$ sides of $P$. Label the sides to which $B$ is tangent by $s_0$, $s_1$, \ldots, $s_{k-1}$ in cyclic order, and define $x_i = s_i \cap B$. Then $x_i$ and $x_{i+1}$, where $i$ is taken mod $k$, can be joined by a horocyclic segment if $s_i$ and $s_{i+1}$ are asymptotic, and by a hypercyclic segment orthogonal to both $s_i$ and $s_{i+1}$ otherwise. See Figure \ref{fig:decomp}. Let $H$ denote the union of these h-cycles.

To see that $H$ is an embedded h-gon, we observe that each h-cycle of $H$ is a circular arc orthogonal to $P$ at its endpoints, hence is orthogonal to the circle $\partial B$. We therefore have a collection of circular arcs connecting cyclically ordered points on $\partial B$ and all orthogonal to $\partial B$.
These must be disjoint except for their endpoints.\footnote{For an amusing proof of this fact, equip $B$ with its Poincar{\'e} metric. Then the h-cycles become a chain of consecutively asymptotic geodesics, which clearly cannot be tangent or intersect. \label{footnote:hyphgon}}

By construction, the complementary regions of $\bigcup_x H_x$ are comprised of spikes, one for each ray of $\Sp(P)$, and rectangles, one for each compact edge of $\Sp(P)$.
The rectangles may be foliated by hypercycles orthogonal to $\partial P$ and connecting the endpoints of leaves of the orthogeodesic foliation $\cO_{\partial P}(P)$.
\end{proof}

We say an embedded h-gon $H$ is \textbf{inscribable} if there is an ideal polygon such that $H$ appears as one of the pieces of the decomposition of Lemma \ref{Lem:HgonDecomp}.
Equivalently, $H$ is inscribable if there exists a circle $C$ containing each of the vertices of $H$ and perpendicular to every side, such that the geodesics tangent to $C$ at the points of $H \cap C$ are all disjoint, except perhaps at their ideal endpoints.\footnote{
Observe that one can always connect the endpoints of these geodesics by a chain of consecutively tangent geodesics very far away from $C$, hence an inscribable h-gon always appears as a decomposition piece of {\em some} polygon, perhaps with many sides.}
The {\bf radius} of an inscribable h-gon is the hyperbolic radius of its circumscribing circle.

\begin{remark}
Not every h-gon which can be drawn inside an ideal polygon $P$ is necessarily inscribable (for example, embedded horogons are not always).
\end{remark}

The following is immediate from the fact that the area of an ideal $n$-gon is $(n-2)\pi$.

\begin{lemma}\label{lem:Hgon rad bd}
For every $n \ge 3$, there is an $r_n$ such that any h-gon arising from the decomposition of an ideal $n$-gon (via Lemma \ref{Lem:HgonDecomp}) has radius at most $r_n$.
In particular, its diameter and the length of its longest side are both at most $2r_n$.
\end{lemma}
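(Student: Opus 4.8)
The plan is to show that every such h-gon is contained in the embedded inscribed ball witnessing the corresponding vertex of $\Sp(P)$, bound the radius of that ball by a one-line area comparison, and then read off the diameter and side-length estimates as elementary consequences.

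First I would recall from the proof of Lemma~\ref{Lem:HgonDecomp} that an h-gon $H$ in the decomposition of an ideal $n$-gon $P$ is attached to a vertex $x$ of $\Sp(P)$ of some valence $k \ge 3$, and is assembled from the $k$ h-cycles joining consecutive tangency points of a closed ball $B \subset P$ centered at $x$ with the sides of $P$. By construction the vertices of $H$ lie on $\partial B$ and every side of $H$ meets $\partial B$ orthogonally at both endpoints, so $\partial B$ is the circumscribing circle of $H$ and the radius of $H$ (in the sense of the definition preceding the lemma) equals the hyperbolic radius $\rho$ of $B$. Moreover, passing to the Poincar\'e metric on $B$ as in footnote~\ref{footnote:hyphgon}, each side becomes a complete geodesic of that metric (a chain of consecutively asymptotic ones), hence is entirely contained in $\overline{B}$; therefore all of $H$ lies in the closed disk $\overline{B}$.

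Next I would run the area estimate. Since $B$ is embedded in $P$, we have $\Area(B) \le \Area(P) = (n-2)\pi$, and a hyperbolic disk of radius $\rho$ has area $2\pi(\cosh\rho - 1)$, so $\cosh\rho \le n/2$. Putting $r_n := \operatorname{arccosh}(n/2)$ gives $\rho \le r_n$ for every h-gon produced by Lemma~\ref{Lem:HgonDecomp} from an ideal $n$-gon, which is the first assertion. For the second, $H \subseteq \overline{B}$ immediately yields $\diam(H) \le \diam(\overline{B}) = 2\rho \le 2r_n$. Each side of $H$ is a horocyclic or hypercyclic segment whose endpoints lie in $\overline{B}$, hence at distance at most $2r_n$; a horocyclic segment between points at distance $d$ has length $2\sinh(d/2)$, and a hypercyclic segment with the same endpoints is shorter, so every side has length at most $2\sinh(r_n)$. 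Replacing $r_n$ by $\max\{\operatorname{arccosh}(n/2),\, \sinh(\operatorname{arccosh}(n/2))\}$ if one insists on the single bound $2r_n$ for all three quantities, the statement follows.

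There is essentially no obstacle here. The one point deserving care is the containment $H \subseteq \overline{B}$ --- this is what allows the inscribed ball to control the entire h-gon rather than just its vertices, and it is exactly the content of the Poincar\'e-metric observation in footnote~\ref{footnote:hyphgon}. The comparison of horocyclic versus hypercyclic arc lengths in the last step is standard: in the upper half-plane model one computes the horocyclic case $2\sinh(d/2)$ directly and checks that hypercyclic curves with fixed endpoints are flatter, hence shorter.
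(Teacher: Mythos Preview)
Your proof is correct and follows the same approach as the paper, which simply notes that the statement is ``immediate from the fact that the area of an ideal $n$-gon is $(n-2)\pi$.'' You have supplied the details: the circumscribing circle is the inscribed ball $B$ at the spine vertex, its area is bounded by that of $P$, and the containment $H \subseteq \overline{B}$ (via the Poincar\'e-metric trick of footnote~\ref{footnote:hyphgon}) gives the diameter bound. Your extra care with the side-length estimate --- obtaining $2\sinh(r_n)$ rather than $2r_n$ and then enlarging $r_n$ accordingly --- is appropriate, since the lemma only asserts existence of some $r_n$.
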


This bound gives us uniform control on the geometry of such h-gons. Observe that the following bounds do not hold for the family of embedded horogons inside ideal $n$-gons (which are usually not inscribable).

\begin{lemma}[Uniform bounds on inscribable h-gons] \label{lem:Hgonbounds}
The shortest side of any inscribable h-gon has length at least $1$.
Consequently, for every $r>0$, there are $\theta_r, c_r, d_r$ such that for every inscribable h-gon $H$ of radius $\le r$:
\begin{enumerate}
    \item The angle formed between the radii connecting consecutive vertices of $H$ to the center of the circumscribing circle is at least $\theta_r$.
    \item At each vertex of $H$, the (signed) geodesic curvatures of the incoming sides differ by at least $c_r$.
    \item The non-consecutive sides of $H$ are at least $w_r$ apart.
\end{enumerate}
\end{lemma}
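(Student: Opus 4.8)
The plan is to work in the disk model of $\HH^2$ centered at the center $O$ of the circumscribing circle $C$ of $H$, so that $C$ becomes a Euclidean circle of Euclidean radius $R = \tanh(\rho/2)$, where $\rho \le r$ is the radius of $H$, and the vertices $v_0, \dots, v_{k-1}$ of $H$ (with $k \ge 3$) sit at angular positions $\alpha_0 < \alpha_1 < \cdots < \alpha_{k-1} < \alpha_0 + 2\pi$. The geodesic $s_l$ tangent to $C$ at $v_l$ is then the geodesic at distance $\rho$ from $O$, with ideal endpoints at angles $\alpha_l \pm \beta$, where $\cos\beta = \tanh\rho$. Consequently, inscribability --- the requirement that consecutive $s_l$ be disjoint or asymptotic --- translates exactly into $\psi_l := \alpha_{l+1} - \alpha_l \ge 2\beta$ for every $l$. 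Two consequences are immediate: summing over $l$ and using $k \ge 3$ forces $\beta \le \pi/3$, i.e. $\rho \ge \log\sqrt 3$; and $\psi_l \ge 2\beta = 2\arccos(\tanh\rho) \ge 2\arccos(\tanh r)$, which is exactly claim (1) with $\theta_r := 2\arccos(\tanh r)$.

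The heart of the matter is an explicit description of the sides. Each side $e_l = [v_l, v_{l+1}]$ is orthogonal to $C$ at both endpoints (it is orthogonal to $s_l$ at $v_l$, and $s_l$ is tangent to $C$ there), so it is an arc of the unique Euclidean circle orthogonal to $C$ through $v_l, v_{l+1}$; a direct computation shows this circle has center $\tfrac{R}{\cos(\psi_l/2)}\,e^{i\bar\alpha_l}$ (where $\bar\alpha_l = \tfrac{\alpha_l+\alpha_{l+1}}{2}$) and Euclidean radius $R\tan(\psi_l/2)$, the orthogonality to $C$ amounting to the identity $(\text{center distance})^2 - (\text{radius})^2 = R^2$. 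From the standard formula for the hyperbolic curvature of a Euclidean circle one then reads off
\[
\kappa_l \;=\; \frac{1-R^2}{2R\tan(\psi_l/2)} \;=\; \frac{1}{\sinh\rho\,\tan(\psi_l/2)} \;\in\; [0,1],
\]
equal to $1$ precisely when $\psi_l = 2\beta$ (so $e_l$ is a horocyclic segment) and to $0$ precisely when $\psi_l = \pi$ (so $e_l$ is a geodesic); in particular every side genuinely is an h-cycle. The endpoints of $e_l$ are at distance $d_l$ with $\sinh(d_l/2) = \sinh\rho\,\sin(\psi_l/2)$ (law of cosines in the isoceles triangle $Ov_lv_{l+1}$); feeding this and $\kappa_l$ into the Saccheri-quadrilateral length relation for an h-cyclic arc and simplifying, everything collapses to
\[
\ell(e_l) \;=\; 2\tanh\rho \cdot g\!\bigl(\cosh\rho\,\sin(\psi_l/2)\bigr), \qquad g(p) := \frac{p\,\operatorname{arccosh} p}{\sqrt{p^2-1}}.
\]
Since $g$ is increasing on $[1,\infty)$ with $g(1^+) = 1$ --- which I would verify from $g'(p) = (p^2-1)^{-3/2}\bigl(p\sqrt{p^2-1} - \operatorname{arccosh} p\bigr) \ge 0$ --- and $\cosh\rho\,\sin(\psi_l/2) \ge \cosh\rho\,\sin\beta = 1$, we obtain $\ell(e_l) \ge 2\tanh\rho \ge 2\tanh(\log\sqrt 3) = 1$. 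This proves the first assertion (in fact, that \emph{every} side has length at least $1$).

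For (2), at a vertex $v_l$ the two incoming sides $e_{l-1}$ and $e_l$ share the tangent line perpendicular to $Ov_l$, but the centers of their underlying circles lie at angles $\bar\alpha_{l-1} < \alpha_l < \bar\alpha_l$, so their curvature vectors at $v_l$ point to opposite sides of that tangent line; hence, co-oriented as curves terminating at $v_l$, their signed curvatures are $-\kappa_{l-1}$ and $+\kappa_l$, so that they differ by
\[
\kappa_{l-1} + \kappa_l \;=\; \frac{1}{\sinh\rho}\Bigl(\cot\tfrac{\psi_{l-1}}{2} + \cot\tfrac{\psi_l}{2}\Bigr) \;=\; \frac{\sin\tfrac{\psi_{l-1}+\psi_l}{2}}{\sinh\rho\,\sin\tfrac{\psi_{l-1}}{2}\sin\tfrac{\psi_l}{2}} \;\ge\; \frac{2\tan(\beta/2)}{\sinh\rho},
\]
where the last inequality uses $\psi_{l-1}+\psi_l \le 2\pi - 2\beta$ (valid since the remaining $k-2 \ge 1$ gaps are each $\ge 2\beta$) together with the elementary fact that $\sin\tfrac{\psi_{l-1}}{2}\sin\tfrac{\psi_l}{2}$ is largest, for fixed sum, when the two angles are equal. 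As $\beta = \arccos(\tanh\rho) \ge \arccos(\tanh r)$ and $\sinh\rho \le \sinh r$, this is at least $c_r := \tfrac{2\tan(\frac12\arccos(\tanh r))}{\sinh r} > 0$. For (3), I would invoke the footnote to Lemma \ref{Lem:HgonDecomp}: endowing the disk $B$ bounded by $C$ with its own hyperbolic metric turns each $e_l$ into a complete geodesic of $B$, so that $H$ becomes an ideal $k$-gon in $B$; hence non-consecutive $e_i, e_j$ have four distinct, unlinked ideal endpoints on $\partial B$ and are therefore disjoint (so in particular $H$ is automatically embedded), whence $\operatorname{dist}_{\HH^2}(e_i, e_j) > 0$. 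Since $\psi_l \ge \theta_r$ for all $l$, there are at most $2\pi/\theta_r$ sides, hence only finitely many combinatorial types; for each, the data $(\rho, \alpha_0, \dots, \alpha_{k-1})$ of an inscribable h-gon of radius $\le r$ ranges over a compact set on which $\operatorname{dist}_{\HH^2}(e_i, e_j)$ is a positive continuous function, and minimizing over this set and over the finitely many triples $(k,i,j)$ produces the required constant $d_r > 0$.

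The routine-but-delicate part is the block of hyperbolic trigonometry in the second paragraph: establishing the curvature and length formulas and the monotonicity of $g$. The conceptual point one must not miss is that inscribability is captured cleanly by the single inequality $\psi_l \ge 2\arccos(\tanh\rho)$, which is simultaneously what makes the sides genuine h-cycles, what forces $\rho \ge \log\sqrt 3$ via $k \ge 3$, and --- through the intrinsic metric on $B$ --- what forces $H$ to be embedded.
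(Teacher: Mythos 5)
Your proof is correct, and it follows the same geometric reduction as the paper's (normalize so the circumscribing circle $C$ has center $O$; the constraint that tangent geodesics be disjoint is exactly Buser's Theorem~2.2.2(v), which both you and the paper invoke for part~(1)), but carries the argument out with considerably more precision. Where the paper asserts that side length is ``monotonic in radius and in the subtended angle'' and passes to the extremal ideal triangle, you derive the closed formula $\ell(e_l) = 2\tanh\rho\cdot g\bigl(\cosh\rho\,\sin(\psi_l/2)\bigr)$ and the monotonicity of $g$, from which $\ell \ge 2\tanh\rho \ge 1$ falls out cleanly; I checked this formula (the key identities are $\sinh(a/2)=\sqrt{p^2-1}$ and $\cosh\delta = \sinh\rho\sin(\psi_l/2)/\sqrt{p^2-1}$, where $a$ is the axis segment and $\delta$ the distance from the axis) and it is right. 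For (2) the paper argues informally that small curvature forces close endpoints, while you compute $\kappa_l = 1/(\sinh\rho\tan(\psi_l/2))$ exactly and extract a concrete $c_r$; and for (3) the paper gives a direct curvature-bound argument, whereas you first observe (via the footnote to Lemma~5.1) that inscribability forces embeddedness, and then invoke compactness. Both routes to (3) work; yours is slicker but less self-contained. One point you leave implicit, which is worth a sentence: the sign bookkeeping in (2) — that the two incident circle-centers sit on opposite sides of the shared tangent line, so the signed curvatures are $-\kappa_{l-1}$ and $+\kappa_l$ — uses $\psi_l \le \pi$ (so that every side curves inward, toward $O$), not merely $\psi_l \ge 2\beta$. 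This upper bound is automatic for genuine decomposition pieces (the h-gon contains the spine vertex), but it is not visibly enforced by the paper's ``disjoint tangent geodesics'' characterization, so it deserves to be stated rather than left tacit.
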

\begin{proof}
We will actually prove a more detailed version of (1), from which the lower bound on side length will follow.
Consider any hypercycle $\gamma$ of any inscribable h-gon, and let $x$ and $y$ denote its endpoints on the circumscribing circle $C$.
The geodesics $g_x$ and $g_y$ tangent to $C$ at $x$ and $y$ are disjoint by assumption, which in particular means that the angle formed by the radii through $x$ and $y$ is at least
$2 \sin^{-1}(\sech(r))$, where $r$ is the radius of $C$ 
\cite[Theorem 2.2.2 (v)]{Buser}.
See Figure \ref{fig:inscribable}.

\begin{figure}
    \centering
    \includegraphics[width=0.5\linewidth]{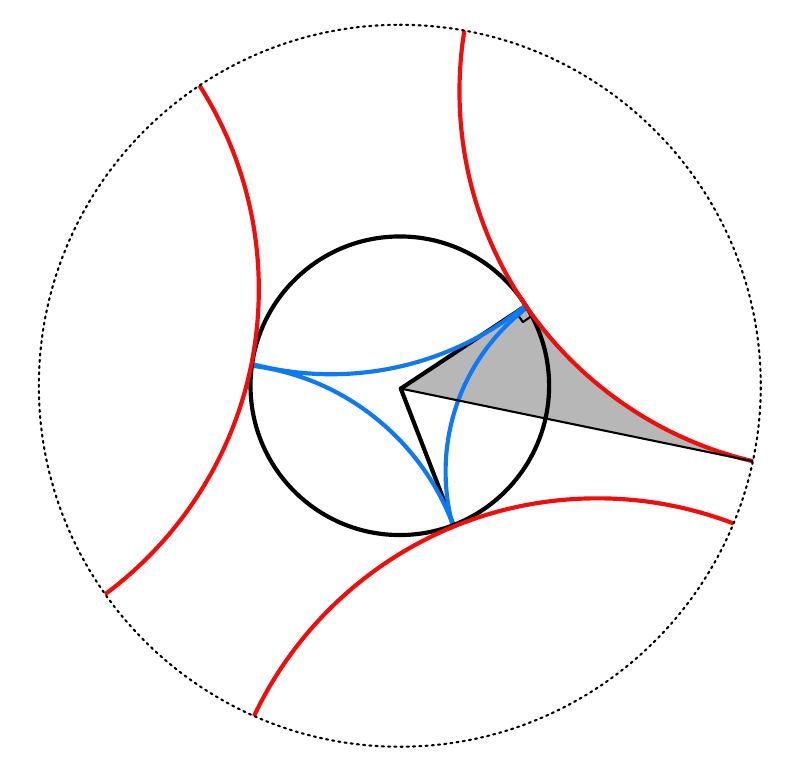}
    \caption{An inscribable h-gon and geodesics tangent to its circumscribing circle. The shaded region is a triangle with angles $0, \pi/2,$ and $\sin^{-1}(\sech(r))$.}
    \label{fig:inscribable}
\end{figure}

To establish the lower bound on side length, we observe that an h-gon must have at least 3 sides, so the radii connecting vertices of $H$ to the center of $C$ subtend some angle of size at most $2\pi/3$. This puts a lower bound on $r$ of $\log \sqrt 3$. Since the length of the h-cycle connecting two points is monotonic in radius and in the subtended angle, the absolute minimum is achieved when $r = \log \sqrt 3$ and the angle is $2\pi/3$, i.e., when $\gamma$ is one of the horocycles of the horogon in an ideal triangle.
These horocycles each have unit length.

To get the bound in (2), we observe that if the curvature of two h-cycles emanating from the same vertex is very small, then their other points of intersection with $C$ are very close, contradicting (1).
For (3), we similarly note that if two non-consecutive sides of $H$ are close, then since their geodesic curvature is bounded by $1$ their endpoints on $C$ must also be close, leading to the same contradiction.
\end{proof}

Similarly, we say a rectangle $R$ is \textbf{inscribable} if there exists an ideal polygon $P$ such that $R$ arises a decomposition piece determined by $\Sp(P)$.
Note that every inscribed rectangle always buttresses some inscribed h-gon. In particular, the following statement holds.

\begin{corollary} \label{cor:rectbounds}
If a rectangle $R$ is inscribable in an $n$-sided ideal polygon, then the arc lengths of  horizontal sides of $R$ are bounded above by $2r_n$ and below by 1.
\end{corollary}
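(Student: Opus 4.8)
The plan is to reduce both inequalities to facts already established about inscribable h-gons, by making precise the observation (the ``buttressing'' remark preceding the statement) that every inscribable rectangle is glued to inscribable h-gons along \emph{both} of its horizontal sides.

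First I would unwind the decomposition of Lemma~\ref{Lem:HgonDecomp} at the level of a single compact edge $e$ of $\Sp(P)$. Since the rays of $\Sp(P)$ are non-compact, such an $e$ joins two (not necessarily distinct) vertices $v_1, v_2$ of $\Sp(P)$, each of valence at least $3$, and is dual to an orthogeodesic arc of $\cO_{\partial P}(P)$ connecting two non-asymptotic sides $s, s'$ of $P$ (non-asymptotic exactly because a compact edge is not isotopic into a spike). Tracing the proof of Lemma~\ref{Lem:HgonDecomp}, the rectangle $R = R_e$ is the complementary region between the h-gons $H_{v_1}$ and $H_{v_2}$: its two ``vertical'' sides are the subsegments of $s$ and $s'$ — of common length equal to the weight of $e$ — running between the tangency points of the two inscribed balls, and its two ``horizontal'' sides are \emph{full} sides of $H_{v_1}$ and $H_{v_2}$, namely the sides of those h-gons ``facing'' $e$. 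These facing sides are hypercycles (not horocycles) precisely because $s$ and $s'$ are non-asymptotic. The one thing to check carefully here is that no third h-gon intrudes on $\partial R$, which holds because $e$ is a single edge of $\Sp(P)$ and so has no interior vertices.

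Granting this, the two bounds are immediate. For the upper bound: each horizontal side of $R$ is a side of an h-gon ($H_{v_1}$ or $H_{v_2}$) arising from the decomposition of the ideal $n$-gon $P$, so by Lemma~\ref{lem:Hgon rad bd} that h-gon has radius at most $r_n$, whence all of its sides — in particular the horizontal side of $R$ — have length at most $2r_n$. For the lower bound: $H_{v_1}$ and $H_{v_2}$ are inscribable, being decomposition pieces of the genuine ideal polygon $P$, so the first assertion of Lemma~\ref{lem:Hgonbounds} gives that each of their sides, hence each horizontal side of $R$, has length at least $1$.

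The only real work is the bookkeeping in the middle paragraph: matching the horizontal sides of $R$ with the appropriate sides of the neighbouring h-gons and confirming that they are hypercyclic. But this is just a re-reading of Lemma~\ref{Lem:HgonDecomp} and its proof, requires no new hyperbolic geometry, and so I expect no genuine obstacle.
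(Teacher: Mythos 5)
Your proof is correct and follows essentially the same route the paper takes, which the authors compress into the single remark ``every inscribed rectangle always buttresses some inscribed h-gon'' before invoking Lemmas \ref{lem:Hgon rad bd} and \ref{lem:Hgonbounds}. Your middle paragraph spells out the bookkeeping that remark elides — namely that the two hypercyclic sides of $R_e$ are precisely the facing sides of the h-gons $H_{v_1}$, $H_{v_2}$ at the endpoints of the dual edge $e$, and that no third piece intervenes because $e$ has no interior vertices — and then the upper and lower bounds drop out of Lemmas \ref{lem:Hgon rad bd} and \ref{lem:Hgonbounds} exactly as you say.
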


\subsection{The vertices of h-gons}\label{subsec:hgon vs}
In the next three subsections, we build Lipschitz maps between the pieces of the decomposition of Lemma \ref{Lem:HgonDecomp} which we will then glue to yield Lipschitz maps between the ambient ideal polygons.

To aid this, we first recall Lang and Schroeder's extension of the Kirszbraun–Valentine theorem to hyperbolic space \cite{LS_KV} (see also \cite{GK}). 

\begin{theorem}\label{Thm:KV}
Let $D \subset \mathbb{H}^n$ be a set and $f : D \to  \mathbb{H}^n$ a Lipschitz map. Then $f$ can be extended to a map from $\mathbb{H}^n$ to itself with the same Lipschitz constant.
\end{theorem}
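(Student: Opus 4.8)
This is the Kirszbraun--Valentine theorem: Valentine proved it for $\HH^n$ directly, and Lang--Schroeder \cite{LS_KV} recast it in the general setting of metric spaces with two-sided curvature bounds (see also \cite{GK}), so we only indicate the argument. The relevant point is that $\HH^n$ is a complete, simply connected manifold of constant curvature $-1$, hence simultaneously an Alexandrov space of curvature $\ge -1$ and a $\operatorname{CAT}(-1)$ (in particular $\operatorname{CAT}(0)$) space; thus it is admissible both as the source and as the target, and the Kirszbraun--Valentine extension scheme applies.

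The first move is the standard reduction to a one-point extension via Zorn's lemma. Order the $L$-Lipschitz maps $g$ with $D \subseteq \operatorname{dom}(g) \subseteq \HH^n$ and $g|_D = f$ by extension; unions of chains are again $L$-Lipschitz, so a maximal element $\bar f$ exists, and it suffices to show that $\bar f$ extends $L$-Lipschitz-ly over one more point. Given $x \notin \operatorname{dom}(\bar f)$, such an extension exists precisely when there is a $y \in \HH^n$ lying in every closed ball $\overline{B}\big(\bar f(p),\, L\, d(x,p)\big)$, $p \in \operatorname{dom}(\bar f)$. Since $\HH^n$ is proper these balls are compact, so by the finite intersection property it is enough to intersect finitely many of them, say $\overline{B}(y_i, L\lambda_i)$ with $y_i = \bar f(p_i)$ and $\lambda_i = d(x,p_i) > 0$; here $d(y_i,y_j) \le L\, d(p_i,p_j)$ for all $i,j$.

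For this finite problem I would minimize the function $\phi(z) = \max_i d(z,y_i)/\lambda_i$ on $\HH^n$, which is convex (distance functions on a Hadamard manifold are convex, and a finite maximum of convex functions is convex) and proper, hence attains its minimum at some $z^{*}$; the goal is to prove $\phi(z^{*}) \le L$, which yields the required common point. Writing $I$ for the set of indices realizing the maximum at $z^{*}$ and $u_i \in T_{z^{*}}\HH^n$ for the unit vector pointing toward $y_i$, a first-variation computation shows that no tangent direction simultaneously decreases all the active ratios $d(\cdot,y_i)/\lambda_i$; a separating-hyperplane argument then gives $0 \in \operatorname{conv}\{u_i : i \in I\}$, so there are weights $t_i \ge 0$ with $\sum_{i\in I} t_i = 1$ and $\sum_{i\in I} t_i u_i = 0$. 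One then plays the hyperbolic law of cosines --- valid in $\HH^n$ because any geodesic triangle lies in a totally geodesic $\HH^2$ --- for the triangles $z^{*}y_iy_j$ against those for $x p_i p_j$, and feeds in the Lipschitz bounds $d(y_i,y_j)\le L\,d(p_i,p_j)$; weighting by $t_i t_j$ and invoking the balance relation then forces $\phi(z^{*}) \le L$. This final trigonometric step --- reconciling the first-variation balance in the target with the triangle comparisons in the source --- carries essentially the entire technical content of the theorem, and is the part I expect to be the genuine obstacle; it is executed, in much greater generality, in \cite{LS_KV}, to which we defer for the full proof.
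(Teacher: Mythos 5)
The paper does not prove this statement at all: Theorem~\ref{Thm:KV} is stated purely as a recollection of the Lang--Schroeder extension of Kirszbraun--Valentine, cited from \cite{LS_KV} (with a pointer to \cite{GK}) and used as a black box. Your proposal correctly identifies this, supplies a plausible outline of the Lang--Schroeder argument (Zorn reduction to a one-point extension, properness and finite-intersection to reduce to finitely many balls, minimization of $\phi(z)=\max_i d(z,y_i)/\lambda_i$, first-variation balance $\sum t_i u_i = 0$, law-of-cosines comparison), and appropriately defers the crux to the reference --- so it is essentially the same approach as the paper, just with more exposition. Two small remarks for the sake of accuracy: Lang--Schroeder's theorem as stated is for $1$-Lipschitz maps between spaces where the source's lower curvature bound matches the target's upper curvature bound, so the $L$-Lipschitz case for $\HH^n$ is obtained by rescaling the target metric by $1/L$ (making it a $\operatorname{CAT}(-L^2)$ space, while $\HH^n$ still has curvature $\geq -1 \geq -L^2$); and the final comparison you describe --- weighting the law-of-cosines identities by $t_it_j$ and invoking the balance relation --- goes through cleanly in the Euclidean case but is genuinely more delicate in $\HH^n$ because of the non-multiplicativity of $\cosh$ and $\sinh$, which is part of why \cite{LS_KV} is the right place to send the reader rather than trying to do it in a few lines.
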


This theorem will allow us to build Lipschitz maps between h-gons $H_1$ and $H_2$ by specifying a Lipschitz map $\partial H_1 \to \partial H_2$.
Unfortunately, it turns out that the map which stretches each h-cycle linearly with respect to arc length is generally {\em not} Lipschitz if tangent h-cycles are stretched at different rates 
(compare the proof of Lemma \ref{lem:preserve horocycle endpts}).

\begin{example}\label{example:nonLip}
Suppose two h-cycles $h_1$ and $h_2$ are tangent at a point $v$ and we are given a map $f$ that affinely stretches them by rates $L_1 \neq L_2$.
Take points $p_i$ on $h_i$ which are $\varepsilon$ away from $v$; then $d(p_1, p_2) \approx O(\varepsilon^2)$.
However, $d(v,f(p_i)) \approx L_i \varepsilon$, and
\[d(f(p_1), f(p_2)) \gtrapprox (L_1 - L_2) \varepsilon - O(\varepsilon^2)\]
Taking $\varepsilon$ sufficiently small makes the Lipschitz constant blow up.
\end{example}

Thus, we will have to work harder and specify maps which uniformly expand/contract near the vertices of $H_1$.
This is the content of the current subsection.
\medskip

In the following, we set up some notation to describe the local picture at a vertex of an h-gon.
For each $0 \neq r \in [-2,2]$, consider the Euclidean circle 
\[x^2 + \left(y-\frac{1}{r}\right)^2 = \frac{1}{r^2}.\]
Consider the Poincar{\'e} disk model $\mathbb{D}^2$ of the hyperbolic plane; then the intersection $h_r$ of this circle with $\mathbb{D}^2$ is an h-cycle through $0$. Set $h_0$ to be the intersection of the $x$-axis with the disk.
Observe that $h_0$ is a geodesic, $h_{\pm 2}$ are horocycles, and the rest are hypercycles.
Denote by $\mathbb{D}^2_+$ the region of $\mathbb{H}^2$ with $x \ge 0$, and set $h_r^+ = h_r \cap \mathbb{D}^2_+$. 

Since we will be concerned with the local picture at a point of tangency, we once and for all fix a small $\delta_0$ so that the hyperbolic and Euclidean metrics are uniformly bi-Lipschitz in the $\delta_0$ neighborhood of $0 \in \mathbb{D}^2$.
All computations in this section will take place in this neighborhood.
We write $A \approx B$ to mean $A$ and $B$ are equal up to a uniform multiplicative error depending only on our choice of $\delta_0$, and likewise for the notation $A \lessapprox B$.

Given $\varepsilon \ll 1$, let $C_\varepsilon$ be the (Euclidean) circle $(x-\varepsilon)^2 + y^2 = \varepsilon^2$.
By construction, both $C_\varepsilon$ and $h_r$ pass through $0$, and $C_\varepsilon$ is tangent to the $y$-axis while $h_r$ is tangent to the $x$-axis. Therefore, $C_\varepsilon$ always meets $h_r$ (twice) orthogonally.
Given $r \in [-2,2]$ and $\varepsilon >0$, let $h_r^+(\varepsilon)$ be the segment of $h_r^+$ within $C_\varepsilon$. The \textbf{radial parametrization}\footnote{
In the vein of footnote \ref{footnote:hyphgon}, this parametrization is akin to taking ``horocyclic coordinates'' near each vertex of an h-gon \cite[Figure 1.1.2]{Buser}.}
of $h_r^+(\varepsilon)$ is the map $[0,\varepsilon] \to h_r^+(\varepsilon)$ given by $u \mapsto C_u \cap h_r^+ $.

\begin{lemma}[Radial parametrizations uniformly bi-Lipschitz] \label{lem:radcontract}
There is $M > 1$ such that
\[ \frac{1}{M} \varepsilon \le \ell(h_r^+(\varepsilon)) \le M \varepsilon\]
for all $\varepsilon \le \delta_0$ and all $r \in [-2,2]$, where $\ell(h_r^+(\varepsilon))$ denotes (hyperbolic) arc length.
\end{lemma}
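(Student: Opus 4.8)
The plan is to work entirely within the fixed $\delta_0$-neighborhood of $0 \in \mathbb{D}^2$, where by our choice of $\delta_0$ the hyperbolic and Euclidean metrics are uniformly bi-Lipschitz. Thus it suffices to prove the two-sided estimate $\frac{1}{M'}\varepsilon \le \ell_{\mathrm{eucl}}(h_r^+(\varepsilon)) \le M' \varepsilon$ for the \emph{Euclidean} arc length, uniformly in $r \in [-2,2]$ and $\varepsilon \le \delta_0$; absorbing the bi-Lipschitz constant then gives the claim with a slightly worse $M$.

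First I would set up coordinates explicitly. The h-cycle $h_r$ is the circular arc $x^2 + (y - 1/r)^2 = 1/r^2$ through the origin (for $r \ne 0$), tangent to the $x$-axis at $0$; for $r = 0$ it is the $x$-axis itself. The circle $C_\varepsilon$ is $(x-\varepsilon)^2 + y^2 = \varepsilon^2$, tangent to the $y$-axis at $0$. Both pass through the origin and, as noted in the text, they meet orthogonally. The segment $h_r^+(\varepsilon)$ runs from $0$ along $h_r$ in the half-plane $x \ge 0$ until it exits $C_\varepsilon$, at the point $C_\varepsilon \cap h_r^+$. I would parametrize $h_r$ near $0$ by its $x$-coordinate: a short computation gives $y = y_r(x)$ with $y_r(x) = O(r x^2)$ as $x \to 0$ (more precisely $y_r(x) \to 0$ and $y_r'(x) \to 0$ uniformly in $r$ as $x \to 0$, since $r$ ranges over the \emph{compact} set $[-2,2]$). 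So along $h_r^+$, arc length and $x$-coordinate are comparable up to a factor tending to $1$, uniformly in $r$.

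Next I would identify the exit point. Writing $x^*(r,\varepsilon)$ for the $x$-coordinate of $C_\varepsilon \cap h_r^+$, substituting $y = y_r(x)$ into the equation of $C_\varepsilon$ and using $y_r(x) = O(rx^2)$ shows $x^*(r,\varepsilon) = 2\varepsilon(1 + O(\varepsilon))$ uniformly in $r$ (geometrically: near $0$, $h_r$ is a tiny perturbation of the $x$-axis, whose intersection with $C_\varepsilon$ other than the origin is exactly $(2\varepsilon,0)$). Combining this with the previous paragraph, $\ell_{\mathrm{eucl}}(h_r^+(\varepsilon)) = x^*(r,\varepsilon)(1 + o(1)) = 2\varepsilon(1 + o(1))$ as $\varepsilon \to 0$, uniformly in $r \in [-2,2]$. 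Choosing $\delta_0$ small enough that the error terms are bounded by, say, $1/2$ on $[0,\delta_0]$, and then folding in the Euclidean-hyperbolic bi-Lipschitz constant, yields the desired $M$. (One could alternatively run the argument with the ``horocyclic coordinates'' of footnote~\ref{footnote:hyphgon}, conformally rescaling to the Poincaré metric on the disk bounded by $C_\varepsilon$, but the explicit Euclidean computation is quickest.)

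The only real point requiring care — and the step I expect to be the main obstacle — is establishing the \emph{uniformity in $r$} of the estimate $y_r(x) = O(rx^2)$ and hence of the exit-point and arc-length asymptotics; this is where it is essential that $r$ lives in the compact interval $[-2,2]$ (equivalently, that we have excluded hypercycles at distance $\to \infty$ from their axis and worse-than-horocyclic curves). Once that uniform Taylor estimate is in hand, everything else is a routine continuity/compactness argument, and the bound $M$ depends only on $\delta_0$ and this uniform constant.
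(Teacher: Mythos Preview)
Your proposal is correct and follows essentially the same approach as the paper: reduce to Euclidean arc length via the bi-Lipschitz comparison on the $\delta_0$-neighborhood, then use that the family $\{h_r\}_{r\in[-2,2]}$ has uniformly bounded Euclidean curvature to get the two-sided estimate. The paper's proof is simply a two-sentence version of yours, invoking ``bounded Euclidean curvature'' in place of your explicit Taylor expansion and exit-point computation; your more detailed argument (which in fact recovers the explicit intersection formula used later in the proof of Lemma~\ref{lem:HLip}) is a faithful unpacking of that phrase.
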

\begin{proof}
The corresponding statement for Euclidean arc length follows because the arcs $h_r$ have bounded (Euclidean) curvature.
Our choice of $\delta_0$ gives a uniform comparison between the hyperbolic and Euclidean metrics, so the desired statement follows.
\end{proof}

Any map $g: [0,\varepsilon] \to [0,\varepsilon']$ induces a map $g_{r,r'}$ from $h_r^+(\varepsilon)$ to $h_{r'}^+(\varepsilon')$, simply by using $g$ to interlace their radial parametrizations. That is, $g_{r,r'}$ is the composition
\[h_r^+(\varepsilon) \to [0,\varepsilon] \xrightarrow{g}  [0,\varepsilon'] \to h_{r'}^+(\varepsilon').\]
Conversely, from any map $g_{r,r'}$ between small segments of the h-cycles $h_r^+$ and $h_{r'}^+$, we can recover a map  
$g: [0,\varepsilon] \to [0,\varepsilon']$ using their radial parametrizations, defined by
\[g(u) = u' \text{ where } g_{r,r'}(C_u \cap h_r^+) = C_{u'} \cap h_{r'}^+.\]

We now upgrade our radial parametrizations from an h-cycle to an entire neighborhood of the vertex of an h-gon.
Given $-2 \le r < s \le 2$ and $\varepsilon > 0$, let $N(\varepsilon, r,s)$ be the (closed) region bounded by $C_\varepsilon$, $h_r^+$, and $h_s^+$.
Its {\bf corner divergence} is defined to be $s-r$.
There is a pair of orthogonal foliations of $N(\varepsilon, r,s)$ given by the family of circles $C_u$, with $u \in (0,\varepsilon]$, and h-cycles $h_v^+$, with $v \in [r,s]$.

Given two subintervals $[r,s]$ and $[r',s']$ of $[-2,2]$, let
$l :[r,s] \to [r',s']$ be the unique affine bijection.
Then for any $g: [0,\varepsilon] \to [0,\varepsilon']$, the map 
\[\begin{array}{rccc}
f: & N(\varepsilon, r,s) & \to & N(\varepsilon', r',s') \\
& (u,v) & \mapsto & (g(u),l(v))
\end{array}\]
is called the \textbf{standard map} induced by $g$.
In particular, $f$ takes leaves of the foliations on $N(\varepsilon,r,s)$ to those of $N(\varepsilon',r',s')$. 

\begin{lemma}[Lipschitz maps near vertices] \label{lem:HLip}
Fix $\delta_0$ as above and $c>0$.
Given any $\varepsilon, \varepsilon' \le \delta_0$, any $K$-Lipschitz map $g:[0,\varepsilon] \to [0,\varepsilon']$, and any pair of intervals $[r,s]$ and $[r',s']$ with corner divergence $\ge c$,
there exists $L$ such that the standard map 
\[f: N(\varepsilon, r,s)  \to  N(\varepsilon', r',s')\]
induced by $g$ is $L$-Lipschitz.
\end{lemma}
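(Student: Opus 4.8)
The plan is to work in the Euclidean model on the $\delta_0$-neighborhood of $0$, where by our choice of $\delta_0$ the hyperbolic and Euclidean metrics are uniformly bi-Lipschitz, and to show that the standard map $f$ is Lipschitz as a Euclidean map; the conclusion for the hyperbolic metric then follows at the cost of a fixed multiplicative constant. The key geometric input is that the orthogonal foliations $\{C_u\}$ and $\{h_v^+\}$ of $N(\varepsilon,r,s)$ are \emph{coordinates}: I would first record that the radial coordinate $(u,v)$ is, on the region with corner divergence $\geq c$, a bi-Lipschitz reparametrization of (a piece of) Euclidean polar-type coordinates centered at $0$, with bi-Lipschitz constants depending only on $\delta_0$ and $c$. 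The point of the hypothesis $s-r\geq c$ (equivalently $s'-r'\geq c$, since affine maps preserve the lower bound only after using $[-2,2]$ is bounded — one should note the corner divergence of the target is at least $c$ as well, or simply that it is bounded below by something depending on $c$ and the ambient bound $2$, which is all that is needed) is exactly that the two h-cycles $h_r^+,h_s^+$ bounding the corner are uniformly transverse, so that moving in the $v$-direction genuinely moves you, with no degeneration of the Jacobian.

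The main steps, in order: (1) Establish the bi-Lipschitz equivalence between the radial coordinates $(u,v)$ on $N(\varepsilon,r,s)$ and Euclidean coordinates, uniformly in $\varepsilon\le\delta_0$ and in $r,s$ with $s-r\ge c$ — this uses that the arcs $h_v^+$ have uniformly bounded Euclidean curvature and uniformly separated tangent directions at $0$ (controlled by $c$), and that the circles $C_u$ are orthogonal to every $h_v^+$; Lemma \ref{lem:radcontract} is the $1$-dimensional shadow of this. Do the same on the target $N(\varepsilon',r',s')$. (2) In radial coordinates the map $f$ is literally $(u,v)\mapsto(g(u),l(v))$, which is $\max(K,\Lip(l))$-Lipschitz for the product metric on coordinate space, since $g$ is $K$-Lipschitz by hypothesis and $l$ is affine with slope $(s'-r')/(s-r)$, a quantity bounded above by $4/c$. (3) Compose: $f$ on $N(\varepsilon,r,s)$ equals (radial chart on target)${}^{-1}\circ(\text{coordinate product map})\circ(\text{radial chart on }N(\varepsilon,r,s))$, a composition of maps each Lipschitz with constant depending only on $\delta_0$, $c$, $K$ (and the fixed ambient bound); hence $f$ is $L$-Lipschitz for such an $L$, and converting back from Euclidean to hyperbolic metric only changes $L$ by the fixed factor from the choice of $\delta_0$. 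One minor point to handle is the corner point $0\in C_\varepsilon$ and the two boundary h-cycles, where the coordinates degenerate; this is harmless because the radial chart extends continuously to the corner and remains bi-Lipschitz up to the boundary (the $u\to 0$ degeneration is a genuine corner but is the \emph{same} corner on both sides, and $f$ fixes it), so one can verify the Lipschitz estimate on the interior and pass to the closure by continuity, or simply note that $C_u\cap h_v^+$ depends Lipschitz-continuously on $(u,v)\in(0,\varepsilon]\times[r,s]$ with constants controlled as above and that distances are continuous.

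The step I expect to be the main obstacle is (1): making the bi-Lipschitz comparison between radial coordinates and honest Euclidean coordinates genuinely \emph{uniform} in all the parameters simultaneously — in particular uniform as $\varepsilon\to 0$ and as the interval $[r,s]$ slides around inside $[-2,2]$ subject only to $s-r\ge c$. The subtlety is that near the corner $0$ the two families of curves both pass through $0$, so one is working in a neighborhood of a point where the ``polar'' coordinate is singular; the saving grace is that $C_\varepsilon$ meets every $h_v^+$ orthogonally and the $h_v^+$ are uniformly $C^1$-bounded with uniformly spread tangent lines, which pins down the Jacobian of the coordinate change away from $0$ with constants depending only on $c$ and $\delta_0$, and the estimate is scale-invariant enough (the $C_u$ are nested circles through $0$) that the $\varepsilon\to 0$ limit causes no loss. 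I would organize (1) as an explicit local computation: parametrize $h_v^+$ near $0$ as a graph over the $x$-axis with curvature $|v|/(\text{something})$ bounded, intersect with $C_u$, and read off that $(u,v)\mapsto C_u\cap h_v^+$ has derivative bounded above and below in norm, uniformly; the lower bound on the $\partial_v$-component is precisely where $c$ enters (it is comparable to $u$ times the angular spread, which is $\gtrsim u\cdot c$), and dividing by $u$ — i.e. rescaling each circle $C_u$ to unit size — removes the degeneration.
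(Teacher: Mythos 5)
There is a genuine gap. Your step (1) asserts that the radial chart $\Psi\colon (u,v)\mapsto C_u\cap h_v^+$ is bi-Lipschitz onto its image, uniformly in $\varepsilon\le\delta_0$. This is false: using the paper's explicit formula $C_u\cap h_v^+ = \bigl(\tfrac{2u}{1+u^2v^2}, \tfrac{2u^2v}{1+u^2v^2}\bigr)$, one computes that $\partial_u\Psi$ and $\partial_v\Psi$ are orthogonal with $\|\partial_u\Psi\| = \tfrac{2}{1+u^2v^2}\approx 2$ but $\|\partial_v\Psi\| = \tfrac{2u^2}{1+u^2v^2}\approx 2u^2$, so the Jacobian determinant of $\Psi$ tends to $0$ as $u\to 0$ and there is no uniform lower Lipschitz bound near the corner. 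Consequently your step (3), which factors $f$ through $\Psi_{\text{domain}}^{-1}$ and $\Psi_{\text{target}}$ and multiplies Lipschitz constants, cannot be carried out: $\Psi_{\text{domain}}^{-1}$ expands the $v$-direction by a factor $\sim 1/u^2$, which is unbounded. You flag this degeneration in your last paragraph but describe it as minor, and the proposed fix (``dividing by $u$'') is not integrated into the composition argument; in particular your step (2) computes the Lipschitz constant of $(u,v)\mapsto(g(u),l(v))$ in the product metric on $[0,\varepsilon]\times[r,s]$, which is the wrong quantity once the chart is not bi-Lipschitz.

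What actually saves the argument, and what the paper does, is a direct computation of the Jacobian of $f$ itself exploiting the cancellation between the two degenerations. Since the $\{C_u\}$ and $\{h_v^+\}$ foliations are orthogonal and $f$ carries one to the other, it suffices to bound the derivative of $f$ in each foliation direction separately. In the $u$-direction this is Lemma~\ref{lem:radcontract}. In the $v$-direction the paper computes $\tfrac{d\sigma_{g(u)}}{d\sigma_u} = l'(v)\cdot \tfrac{g(u)^2}{u^2}\cdot\tfrac{1+u^2v^2}{1+g(u)^2l(v)^2}$, and the bound now comes from the fact that $g(0)=0$ (forced by the standard map being defined at the corner) and $g$ being $K$-Lipschitz give $g(u)/u\le K$: the $u^{-2}$ degeneration of the domain chart is exactly cancelled by the $g(u)^2$ degeneration of the target chart, leaving a factor bounded by $K^2$. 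This cancellation is the heart of the proof, not a boundary technicality, and your proposal never invokes $g(0)=0$ nor the ratio $g(u)/u$. Replacing your step (1) with this Jacobian computation would repair the argument, but at that point it has become the paper's proof.
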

\begin{proof}
To show $f$ is Lipschitz, we bound its Lipschitz constant along the leaves of each foliation.
In the $u$ direction, we observe that the restriction of $f$ to any h-cycle $h_v$ is just the map $g_{v, \ell(v)}$ described after Lemma \ref{lem:radcontract}, which by that Lemma is at worst $KM^2$-Lipschitz.

In the $v$ direction, fix $u$ and let $\sigma_u$ denote the arc length element along the circle $C_u$. Our goal is to bound $d \sigma_{g(u)} / d \sigma_u$.
One explicitly computes that
\[C_u \cap h_v^+ = \left\{0, \frac{2u}{1+u^2v^2} + i \frac{2u^2v}{1+u^2v^2}\right\}\]
and so
\[\frac{\partial x}{\partial v} = 
\frac{-4u^3v}{(1+u^2v^2 )^2}
\qquad
\text{and}
\qquad
\frac{\partial y}{\partial v} = 
\frac{2u^2(1-u^2v^2)}{(1+u^2v^2)^2}.
\]
In particular, the change in the arc length of $C_u$ is given by 
\[\frac{d\sigma_u}{dv} = 
\frac{2u^2}{1+u^2v^2}.\]
We therefore have
\[\frac{d\sigma_{g(u)}}{d\sigma_{u}} = 
\frac{d\sigma_{g(u)}}{d l(v)} \cdot \frac{d l(v)}{dv} \cdot \frac{dv}{d\sigma_{u}} =
l'(v) \cdot \frac{g(u)^2}{u^2}\cdot 
\frac{1+u^2v^2}{1+g(u)^2 l(v)^2}
.\]
The last term is uniformly bounded once $\varepsilon$ and $\varepsilon'$ are small enough, so since $g$ is $K$-Lipschitz, $c \le s-r$, and $s'-r' \le 4$, we get
\[\frac{d\sigma_{g(u)}}{d\sigma_{u}} \lessapprox 4K^2/c.\]
Combining this with our prior bound for fixed $v$ finishes the proof.
\end{proof}

\subsection{Maps on h-gons}\label{subsec:hgonmaps}
Now that we have constructed Lipschitz maps near the vertices of h-gons, we can use Theorem \ref{Thm:KV} to extend these to maps between the h-gons themselves. First, we must set up a bit more notation to describe the neighborhoods of vertices in h-gons.

Let $H$ be an h-gon and $v \in H$ be any of its vertices. 
A \textbf{standard neighborhood} of $v$ in $H$ is the region in $H$ bounded by a circle through $v$ and orthogonal to the sides of $H$ meeting at $v$.
For $\varepsilon > 0$, we denote by $N_\varepsilon(v)$ the standard neighborhood of $v$ bounded by the circle with the same curvature as $C_\varepsilon$;
by abuse of notation, we refer to $\varepsilon$ as the \textbf{radius} of $N_\varepsilon(v)$.
A \textbf{truncation} of $H$ is a choice of a standard neighborhood at each vertex of $h$ such that no two pairwise intersect; $H$ is \textbf{$\varepsilon$-truncatable} if the standard neigborhoods of its vertices of radius $\varepsilon$ are all disjoint.
The complement of the truncating neighborhoods is a right-angled polygon $\overline{H}$ with sides alternating between h-cycles and segments of circles.
The \textbf{injectivity radius} of $\overline{H}$ is the quantity $\inf_{p,q} d(p,q)$, where $p, q$ lie on non-adjacent sides of $\overline{H}$; note that this is $O(\varepsilon^2)$ once $\varepsilon$ is sufficiently small.

Likewise, given an h-cycle $\gamma$ (thought of as a side of $H$) with vertices $v$ and $w$, the $\varepsilon$-truncation $\bar{\gamma}$ of $\gamma$ is the removal of $N_\varepsilon(v) \cup N_{\varepsilon}(w)$ from $\gamma$.
Let $b$ be the arc length of $\bar{\gamma}$; then we can parametrize $\gamma$ by $[0,b+2\varepsilon]$ as follows:
   \begin{itemize}
      \item $[\varepsilon,b+\varepsilon] \to \bar{\gamma}$ is the arc length parametrization.
      \item $\gamma \setminus \bar{\gamma}$ is radially parametrized by $[0,\varepsilon] \cup [b+\varepsilon,b+2\varepsilon]$.
  \end{itemize}
By Lemma \ref{lem:radcontract}, this parametrization is biLipschitz with respect to arc length.
  
Given two h-cycles $\gamma, \gamma'$ which are truncatable by $\varepsilon$ and $\varepsilon'$, respectively, then any $g: [0,\varepsilon] \to [0,\varepsilon']$ induces a \textbf{standard map} $g_{\gamma,\gamma'}: \gamma \to \gamma'$ via
\[
   g_{\gamma,\gamma'}(t) =
   \begin{cases}
       g(t), & t \in [0,\varepsilon] \\
       \dfrac{b'}{b} (t-\varepsilon)+\varepsilon, & t \in [\varepsilon,b+\varepsilon] \\
       g(-t+b+2\varepsilon), & t \in [b+\varepsilon,b+2\varepsilon],
   \end{cases} 
  \]
where $[0,b+2\varepsilon]$ is the above parametrization of $\gamma$, and similarly for $\gamma'$.
In other words, $g_{\gamma,\gamma'}$ is one of the maps $g_{r,r'}$ described after Lemma \ref{lem:radcontract} on each component of $\gamma \setminus \bar{\gamma}$, and maps $\bar{\gamma}$ onto $\bar{\gamma}'$ by constant slope $b'/b$ with respect to arc length. 

We can now extend the standard maps on (standard neighborhoods of) vertices and edges of an h-gon to a Lipschitz map on the entire h-gon.
Moreover, given a bound on the geometric quantities described above, the family of such maps have uniform quality. For maps between two $k$-sided h-gons, we always assume that their sides are cyclically ordered so the $j$-th side maps to the $j$-th side. 
 
\begin{proposition}[Lipschitz maps between h-gons] \label{Prop:HLip}
Let $\{H_\alpha\}_{\alpha \in I}$ be a family of embedded $k$-sided h-gons such that either all of the following hold:
\begin{enumerate}
    \item[(a)] There are uniform, positive upper and lower bounds to their side lengths.
    \item[(b)] There is a uniform, positive lower bound to the corner divergence of each vertex.
    \item[(c)] There is a uniform, positive lower bound to the distance between any pair of non-consecutive sides.
\end{enumerate}
Or:
\begin{enumerate}
    \item[(a')] There exists $n\ge 3$ such that each $H_\alpha$ is inscribable in an ideal $n$-gon.
\end{enumerate}
Then there exists $\varepsilon_I$ such that the following statement holds. 
Given any $\varepsilon, \varepsilon' \le \varepsilon_I$ and any $K$-Lipschitz bijection $g: [0,\varepsilon] \to [0,\varepsilon']$,
there exists $L$ such that for all $\alpha, \beta \in I$,
there is an $L$-Lipschitz map $f_{\alpha\beta}: H_\alpha \to H_{\beta}$ such that:
    \begin{itemize}
        \item[(1)] $f_{\alpha,\beta}$ restricts to the standard map $N_\varepsilon(v) \to N_{\varepsilon'}(f_{\alpha,\beta}(v))$ induced by $g$ near each vertex $v$ of $H_\alpha$.
        \item[(2)] $f_{\alpha \beta}$ restricts to the standard map $\gamma \to f_{\alpha,\beta}(\gamma)$ induced by $g$ on each side $\gamma$ of $H_\alpha$.
    \end{itemize}
\end{proposition}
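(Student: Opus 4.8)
The plan is to assemble $f_{\alpha\beta}$ from the standard maps on truncating neighborhoods and edges, fill in the truncated core via the Kirszbraun--Valentine extension (Theorem \ref{Thm:KV}), and then bound the Lipschitz constant uniformly using the geometric controls in hypotheses (a)--(c) (or their consequences under (a$'$)). First I would reduce the two sets of hypotheses to a common setting: by Lemma \ref{lem:Hgon rad bd} and Lemma \ref{lem:Hgonbounds}, hypothesis (a$'$) implies (a), (b), and (c) with constants depending only on $n$, so it suffices to run the argument assuming (a)--(c). Fix the resulting constants: an upper bound $B$ and lower bound $b_0$ on side lengths, a lower bound $c$ on corner divergences, and a lower bound $w$ on distances between non-consecutive sides.

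Next I would choose $\varepsilon_I$. It must be small enough that: (i) every $H_\alpha$ is $\varepsilon_I$-truncatable (possible since the standard neighborhoods have radius-$O(\varepsilon)$ diameter and non-adjacent sides are $\ge w$ apart, while adjacent sides have length $\ge b_0$), (ii) $\varepsilon_I \le \delta_0$ so the vertex computations of Section \ref{subsec:hgon vs} apply, and (iii) the ``last term is uniformly bounded'' estimate in the proof of Lemma \ref{lem:HLip} holds. Given $\varepsilon,\varepsilon' \le \varepsilon_I$ and the $K$-Lipschitz bijection $g$, define $f_{\alpha\beta}$ first on the union $D_{\alpha}$ of the $\varepsilon$-truncating neighborhoods $N_\varepsilon(v)$ and the truncated edges $\bar\gamma$, by using the standard map $N_\varepsilon(v) \to N_{\varepsilon'}(f(v))$ induced by $g$ near each vertex and the standard map $\gamma \to f(\gamma)$ induced by $g$ on each edge. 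These agree on the overlaps $N_\varepsilon(v) \cap \gamma$ by construction, so $f_{\alpha\beta}$ is well-defined and continuous on $D_\alpha$. By Lemma \ref{lem:HLip} (applied with corner divergence $\ge c$) and Lemma \ref{lem:radcontract} together with the side-length bounds (the edge part has slope $b'/b \le B/b_0$ with respect to arc length on the $\bar\gamma$ pieces, and is $g_{r,r'}$ with constant $\lessapprox KM^2$ near the ends), the restriction $f_{\alpha\beta}|_{D_\alpha}$ is $L_0$-Lipschitz for some $L_0 = L_0(K,B,b_0,c,\delta_0)$.

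Now apply Theorem \ref{Thm:KV}: embed everything in $\HH^2$ and extend $f_{\alpha\beta}|_{D_\alpha}$ to an $L_0$-Lipschitz self-map of $\HH^2$, then restrict to $H_\alpha$. The extension need not land in $H_\beta$, so I would post-compose with the nearest-point projection $\pi_{H_\beta}$ onto the convex set $H_\beta$ (convexity of $H_\beta$ holds since its sides have curvature $\le 1$, i.e., each side bounds a convex region; $H_\beta$ is the intersection of these convex regions) --- projection onto a convex set in $\HH^2$ is $1$-Lipschitz, and fixes $D_\beta \subset \partial H_\beta \subset H_\beta$ pointwise, so it does not disturb properties (1) and (2). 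Wait --- one must check that the image of $D_\alpha$ already lies in $\partial H_\beta$, which it does by the defining formulas. Setting $L = L_0$ and $f_{\alpha\beta} := \pi_{H_\beta} \circ (\text{extension})|_{H_\alpha}$ then yields the claimed map, with $L$ depending only on $g$ (through $K,\varepsilon,\varepsilon'$) and on the uniform geometric data, as required.

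\textbf{Main obstacle.} The crux is the uniformity of the Lipschitz bound $L_0$ on $D_\alpha$: one must verify that the constants produced by Lemmas \ref{lem:radcontract} and \ref{lem:HLip} truly depend only on $K$, $\delta_0$, and the geometric bounds $(b_0, B, c, w)$ --- and hence, under (a$'$), only on $n$ --- rather than on the individual $H_\alpha$. This requires tracking that the ``uniformly bounded'' terms in the proof of Lemma \ref{lem:HLip} (the factors involving $u^2v^2$ and $g(u)^2 l(v)^2$) are controlled by the choice of $\varepsilon_I$ alone, independent of which intervals $[r,s], [r',s'] \subset [-2,2]$ arise as the corner data of $H_\alpha$ and $H_\beta$; this is where hypothesis (b) (corner divergence $\ge c$) is essential, since it bounds $l'(v) = (s'-r')/(s-r)$ away from both $0$ and $\infty$. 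The embeddedness hypothesis and (c) enter only to guarantee $\varepsilon_I$-truncatability and the convexity/disjointness needed to make the extension-then-project step clean.
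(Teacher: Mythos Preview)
Your overall architecture matches the paper's: reduce (a$'$) to (a)--(c), define $f_{\alpha\beta}$ on the union $D_\alpha$ of vertex neighborhoods and edges via the standard maps, then fill in the interior using Kirszbraun--Valentine. But two steps are genuinely broken.

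First, your assertion that $f_{\alpha\beta}|_{D_\alpha}$ is uniformly $L_0$-Lipschitz does not follow from Lemmas \ref{lem:radcontract} and \ref{lem:HLip}. Those lemmas only control the map on each individual piece (a single vertex neighborhood or a single edge). The set $D_\alpha$ is not path-connected through short paths: you must also bound $d(f(p),f(q))/d(p,q)$ when $p$ and $q$ lie on \emph{different} pieces. This is exactly the content of the paper's case analysis (points on non-consecutive sides, points on consecutive truncated edges separated by a vertex, a point on an edge versus a point in an adjacent vertex neighborhood), and the resulting constant depends on the lower bound $w$ from hypothesis (c) via the quantities $W_I$ and $R_I(\varepsilon)$. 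Your claim that $L_0 = L_0(K,B,b_0,c,\delta_0)$ with (c) entering ``only to guarantee $\varepsilon_I$-truncatability'' is therefore wrong: without (c), pairs on nearby non-consecutive sides could be arbitrarily close while their images are far apart. The paper's proof in fact first observes ``locally Lipschitz on a compact set, hence Lipschitz'' to get \emph{some} constant, then explicitly notes this is not enough and performs the case-by-case estimate to obtain uniformity.

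Second, the projection step fails because $H_\beta$ is \emph{not} convex. At each vertex the interior angle is $0$, so the two incident h-cycles form an outward-pointing cusp; equivalently, each side curves \emph{away} from the interior of $H_\beta$ (for the regular horogon in an ideal triangle, each side is a horocyclic arc curving toward its ideal vertex, hence away from the center). Thus $H_\beta$ is not an intersection of horoballs/hyper-half-spaces but rather lies in their complements, and nearest-point projection onto it is neither $1$-Lipschitz nor even well-defined. The paper simply does not address whether the KV extension lands in $H_\beta$; if you want to be more careful than the paper here, you need a different device than convex projection.
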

\begin{proof}
Lemmas \ref{lem:Hgon rad bd} and \ref{lem:Hgonbounds} imply that condition (a') implies (a), (b), and (c), so we will just prove the proposition under these three assumptions.

Pick $\varepsilon_I$ such that it is smaller than the constant of Lemma \ref{lem:HLip} and such that every $H_\alpha$ is $2\varepsilon_I$-truncatable.
We will further shrink $\varepsilon_I$ at the end of the proof.
For each $j=1, \ldots, k$ and $\alpha \in I$, let $v_\alpha^j$ be the $j$-th vertex of $H_\alpha$.
As above, we consider the standard neighborhoods $N_\varepsilon(v_\alpha^j)$ of each vertex $v_\alpha^j$.
Let $\overline{H}_\alpha^\varepsilon$ denote the truncation of $H_\alpha$ with radius $\varepsilon$ at each vertex. 

Now let $\alpha, \beta \in I$ be given. 
Assumption (b) ensures that the family of standard neighborhoods satisfies the hypotheses of Lemma \ref{lem:HLip}, thus $g$ induces a standard map
\[N_\varepsilon(v_\alpha^j) \to N_{\varepsilon'}(v_\beta^j)\] 
which is $L$-Lipschitz for all $j=1, \ldots, k$, where $L$ depends only on $K$.

Set $\partial_\varepsilon \overline{H}_\alpha := \partial H_\alpha \cap \overline{H}_\alpha^\varepsilon,$
i.e., it is the union of the $\varepsilon$-truncations of the sides of $H_\alpha$.
Since each side is $2 \varepsilon_I$-truncatable, the length of the shortest $\varepsilon$-truncated side is still comparable to the lower bound from assumption (a).
Observe that 
\[H_\alpha \setminus \text{interior}\left(\overline{H}_\alpha^\varepsilon \right)
= 
\partial_\varepsilon \overline{H}_\alpha \cup  \bigcup_j N_\varepsilon(v_\alpha^j) .\]
A similar story holds for the $\varepsilon'$-truncation of $H_\beta$.
We now define the map
\[f_{\alpha \beta} :
H_\alpha \setminus \text{interior}\left(\overline{H}_\alpha^\varepsilon \right)
\to
H_\beta \setminus \text{interior}\left(\overline{H}_\beta^{\varepsilon'} \right)\]
to be the union of standard maps near vertices and piecewise linear on $\partial_\varepsilon H_\alpha \to \partial_{\varepsilon'} H_\beta$. In particular, the restriction of $f_{\alpha\beta}$ to each side of $H_\alpha$ is a standard map induced by $g$, which is Lipschitz by Lemma \ref{lem:radcontract} and our bounds from assumption (a).
The map $f_{\alpha\beta}$ is locally Lipschitz on a compact set, hence Lipschitz, and so by Theorem \ref{Thm:KV} extends to a Lipschitz map $H_\alpha \to H_\beta$ of the same Lipschitz constant.

However, this does not complete the proof, as we need a uniform bound on the Lipschitz constant of $f_{\alpha \beta}$ as $\alpha, \beta$ vary in $I$.
To estimate this, let us introduce some more constants.
For each $\alpha \in I$ and $j=1, \ldots, k$, let $\gamma_{\alpha,j}$ denote the $j$-th side of $H_\alpha$, and let $\bar\gamma_{\alpha,j}^\varepsilon$ denote the $\varepsilon$-truncation of $\gamma_{\alpha,j}$.
Set
\[ L_I := \sup_{\alpha,\beta} \max_j \frac
{\ell\left(\bar \gamma_{\beta,j}^{\varepsilon'}\right)}
{\ell\left(\bar \gamma_{\alpha,j}^\varepsilon\right)}, 
\qquad
D_I := \sup_\alpha \text{diam}( H_\alpha).\]
By assumption (a) and our choice of $\varepsilon_I$, both of these quantities are finite.
Similarly, set $W_I$ to be the infimal distance between non-consecutive sides of $H_\alpha$, and define 
\[R_{I}(\varepsilon) := \inf_\alpha R_{\alpha}(\varepsilon) = O(\varepsilon^2),\]
where $R_\alpha(\varepsilon)$ is the injectivity radius of $\overline{H}_\alpha^\varepsilon$. Assumptions (b) and (c) imply both terms are positive (for any choice of $\varepsilon$).

Let $C(\varepsilon) = O(\varepsilon^2)$ denote the length of the arc of the truncating circle $C_\varepsilon$ between the horocycles $h_{\pm 2}$; then for each $p$ in a standard neighborhood of a vertex, it is within $C(\varepsilon)$ of two different sides of $H_\alpha$.
Take $\varepsilon_I$ small enough to ensure $C(\varepsilon) < W_I/2$.
Then for any $p,q \in H_\alpha \setminus \text{interior}(\overline{H}_\alpha^\varepsilon)$ with $d(p,q) \le W_I/2$, one of four things can hold:
\begin{enumerate}
    \item $p$ and $q$ lie on the same edge of $\partial_\varepsilon H_\alpha$.
    \item $p$ and $q$ lie in a standard neighborhood $N_\varepsilon(v_\alpha^j)$ of some vertex.
    \item $p, q \in \partial_\varepsilon H_\alpha$ lie on (truncations of) sides which share a vertex.
    \item $p \in \partial_\varepsilon H_\alpha$ and $q$ is in some standard neighborhood adjoining that side.
\end{enumerate}
We now bound the Lipschitz constant of $f_{\alpha\beta}$.
Given $p \neq q \in H_\alpha \setminus \text{interior}(\overline{H}_\alpha^\varepsilon)$, set
\[ L_{p,q}=\frac{d(f_{\alpha \beta}(p),f_{\alpha\beta}(q))}{d(p,q)}.\]
If $d(p,q) \ge W_I/2$, this is bounded by $2D_I/W_I$. Thus, we can focus on the four cases enumerated above.
\begin{enumerate}
    \item The Lipschitz constant (with respect to arc length) restricted to any side of $\partial_\varepsilon \overline{H}_\alpha$ is at most $L_I$. Since the curvature of any h-cycle is bounded by 1 and $d(p,q) \le W_I/2$, we get a uniform bound on $L_{p,q}$.
    \item Lemma \ref{lem:HLip} immediately furnishes a bound on $L_{p,q}$ for $p,q$ in the same $N_\varepsilon(v_\alpha^j)$. This is uniform by assumption (b).
    \item By definition of $R_I(\varepsilon)$, we have that $d(p,q) \ge R_I(\varepsilon)$, so $L_{p,q} \le D_I/ R_I(\varepsilon)$.
    \item Finally, suppose that $p$ lies in some truncated side $\bar \gamma_\alpha^\varepsilon$ and $q$ is in some standard neighborhood $N_\varepsilon(v)$ adjoining that side.
    Set $q'$ to be the point of $N_\varepsilon(v_\alpha) \cap \gamma$ with the same radial coordinate as $q$; then $d(q,q') \le C(\varepsilon)$.
    Thus, so long as $p$ is sufficiently far ($2C(\varepsilon)$ will do) away from $N_\varepsilon(v_\alpha)$ we get a lower bound on $d(p,q)$, hence an upper bound on $L_{p,q}$.
    
    It remains to consider the case when $p$ is close to $N_{\varepsilon}(v_\alpha)$.
    In this case, $q \in N_{\tilde \varepsilon}(v_\alpha)$ for some $\tilde \varepsilon > \varepsilon$, and likewise $f_{\alpha \beta}(q) \in N_{\tilde \varepsilon'}(v_\beta)$.
    The standard maps $\gamma_\alpha \to \gamma_\beta$ induce a map $\tilde g$ on these slightly larger truncation parameters:
    \[\tilde g: [0, \tilde \varepsilon] \to [0, \tilde \varepsilon'].\]
    Lemma \ref{lem:radcontract} and our bound on $L_I$ together imply that the Lipschitz constant of $\tilde g$ is uniformly bounded.
    Shrinking $\varepsilon_I$ as necessary to ensure that $\tilde \varepsilon = \varepsilon + O(\varepsilon^2)$ and $\tilde \varepsilon'$ are small enough to apply Lemma \ref{lem:HLip}, we get the desired bound on $L_{p,q}$.
\end{enumerate} 
Taking the maximum of the bounds above gives a uniform bound on the Lipschitz constant of $f_{\alpha \beta}$, hence on the Lipschitz constant of the extension $H_\alpha \to H_\beta$.
\end{proof}

  \begin{figure}[htp!]
      \centering
      \includegraphics[width=0.7\linewidth]{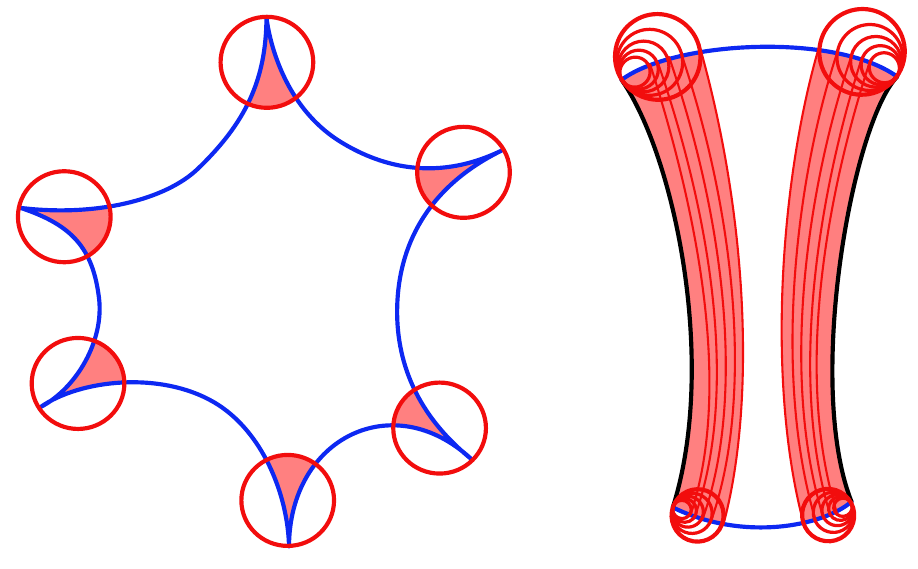}
      \caption{Truncations of an h-gon and of a rectangle. Note that the $\varepsilon$-truncation of one side of a rectangle is compatible with a truncation with different defining parameter.}
      \label{fig:truncate}
  \end{figure}

\begin{remark}
Comparing the bounds in cases (1)--(4), one expects that the final Lipschitz constant is $O(1/\varepsilon^2)$, coming from case (3).
This makes sense because the non-Lipschitz behavior of the affine map on $\partial H_\alpha$ is concentrated at the vertices (Example \ref{example:nonLip}), so shrinking the truncation parameter should make its Lipschitz constant blow up.
\end{remark}

\subsection{Maps on rectangles and spikes}\label{subsec:rectmaps}
We now build Lipschitz maps between the remaining pieces of the decomposition of Lemma \ref{Lem:HgonDecomp}. 

Recall that for us, a rectangle is a right-angled quadrilateral whose sides alternate between geodesics and hypercycles. 
Given a rectangle $R$, there is a vertical foliation of $R$ by geodesic segments of equal length, which we call the \textbf{height} of $R$.
There is also a transverse horizontal foliation of $R$ by hypercycles, with possibly one geodesic leaf. The {\bf width} of $R$ is the length of the shortest such hypercycle.
We will always suppose that each rectangle comes equipped with a choice of ``bottom'' horizontal side and maps preserve this notion. A map $f: R \to R'$ is called \textbf{standard} if $f$ maps vertical leaves to vertical leaves and horizontal leaves to horizontal leaves, and the restriction of $f$ to any vertical leaf has constant slope equal to the ratio of the heights of $R'$ and $R$. 

Let $[0,X]$ be the arc length parametrization of the bottom side of $R$ (oriented so $R$ is to the left of, i.e., above, the bottom side).
We can promote this to a parametrization of all of $R$ as follows.
The rectangle is foliated by hypercycles at a fixed distance from the bottom side of $R$, which all share the same geodesic axis.
We may therefore parametrize $R$ via
\[\Phi_R: [0,X] \times [Y_1,Y_2] \xrightarrow{\sim} R,\]
by insisting that the horizontal line at height $y$ is mapped to the hypercycle $h_y$ at distance $y$ from the geodesic axis.
Note that $y$ is signed, so $y < 0$ ($>0$) corresponds to hypercycles on the right (left) of the geodesic $h_0$, where the orientation of $h_0$ comes from that of the bottom side of $R$.
With this parametrization, we see that vertical lines are mapped to the vertical geodesic segments isometrically.

Define the following map taking the bottom leaf to the leaf at height $y$:
\[\phi_y: h_{Y_1}=\Phi_R(x, Y_1) \mapsto \Phi_R(x,y)=h_y\]
This is the map induced by ``flowing upwards'' along the vertical geodesics for time $y-Y_1$.

\begin{lemma}\label{lem:flowup affine}
For any $y \in [Y_1, Y_2]$, the map $\phi_y$ is affine (with respect to arc length) of slope
$\cosh(y) / \cosh(Y_1)$.
\end{lemma}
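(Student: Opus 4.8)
The plan is to reduce the statement to a one-dimensional computation in the hyperbolic plane, tracking how arc length along a hypercycle scales under the normal flow off its geodesic axis. First I would set up coordinates adapted to the common geodesic axis $h_0$ of the foliating hypercycles: in the upper half-plane model, place the geodesic axis as the imaginary axis (or a vertical geodesic), so that the hypercycles $h_y$ at signed distance $y$ are straight Euclidean rays through the origin making a fixed angle with the axis. Alternatively, and more cleanly, use Fermi coordinates $(t,y)$ along $h_0$, in which the hyperbolic metric takes the form $ds^2 = \cosh^2(y)\,dt^2 + dy^2$, where $t$ is arc length along $h_0$ and $y$ is signed distance. In these coordinates the vertical geodesics orthogonal to $h_0$ are exactly the curves $t = \text{const}$, and the hypercycle $h_y$ is the curve at constant $y$.

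The key computation is then immediate from the metric: the arc length element along $h_y$ (i.e.\ along a curve of constant $y$) is $\cosh(y)\,dt$, so a segment of $h_0$ corresponding to a $t$-interval of length $\Delta t$ has length $\cosh(Y_1)\,\Delta t$ on $h_{Y_1}$ and length $\cosh(y)\,\Delta t$ on $h_y$. Since the flow $\phi_y$ is by definition the time-$(y - Y_1)$ normal flow along the vertical geodesics $t = \text{const}$, it sends the point at parameter $t$ on $h_{Y_1}$ to the point at the same parameter $t$ on $h_y$. Therefore $\phi_y$ maps an arc of length $s$ on $h_{Y_1}$ (corresponding to $\Delta t = s/\cosh(Y_1)$) to an arc of length $\cosh(y)\,\Delta t = s\cdot \cosh(y)/\cosh(Y_1)$ on $h_y$. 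This shows $\phi_y$ is affine with respect to arc length with constant slope $\cosh(y)/\cosh(Y_1)$, as claimed. I would also remark that the case $Y_1 = 0$ (when the bottom side is the geodesic itself) recovers the familiar fact that a hypercycle at distance $y$ from a geodesic has length element $\cosh(y)$ times that of the geodesic, consistent with the statement in the basic hyperbolic geometry subsection that the hypercycle bounds a half-collar.

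There is essentially no obstacle here; the only point requiring a sentence of care is justifying the form of the metric in Fermi coordinates (equivalently, identifying the vertical leaves of $R$ with the orthogonal geodesics to $h_0$ and the horizontal leaves with equidistant curves). This follows from the definition of the parametrization $\Phi_R$ given just above the lemma: by construction $\Phi_R$ sends horizontal lines to hypercycles equidistant from a common axis and vertical lines isometrically to the orthogonal geodesic segments, which is exactly the coordinate system in which the metric is $\cosh^2(y)\,dt^2 + dy^2$. Once this is in place the slope computation is a direct substitution, and I would keep it to a couple of lines rather than belaboring the Jacobian.
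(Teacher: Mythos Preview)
Your proposal is correct and takes essentially the same approach as the paper: both reduce to the fact that the arc length of a hypercycle at signed distance $y$ from its geodesic axis scales as $\cosh(y)$ times arc length along the axis. The paper simply cites this formula from Buser, whereas you derive it from the Fermi-coordinate form of the metric $ds^2 = \cosh^2(y)\,dt^2 + dy^2$; either way the slope $\cosh(y)/\cosh(Y_1)$ drops out immediately.
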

\begin{proof}
All horizontal leaves of $R$ are hypercycles which share the same geodesic axis. 
The arc length of the hypercycle distance $y$ away from a geodesic of length $\ell$ is $\ell\cosh(y)$ \cite[Example 1.3.2 (5)]{Buser}, which is in particular linear in $\ell$.
\end{proof}

Now suppose $R'$ is another rectangle (with a choice of bottom side), parametrized by
\[\Phi_{R'}: [0,X'] \times [Y_1', Y_2'] \xrightarrow{\sim} R'.\]
Any homeomorphism $f: [0, X] \to [0, X']$
induces a standard map $R \to R'$ which, in our coordinate systems, simply has the form
\[\Phi_R(x,y) \mapsto \Phi_{R'}\left(f(x), Ly+D\right)\]
where $L = (Y_2' - Y_1')/(Y_2-Y_1)$ is the ratio of heights and $D \in \RR$ is uniquely determined (the actual value is trivial to compute but unimportant).

Given a bound on the quality of $f$ and the geometry of the rectangles, we can bound the quality of the induced map.

\begin{proposition}[Lipschitz maps between rectangles]\label{Prop:RectLip}
For all $K\ge1$ and $A,B,w > 0$, there is an $L_0$ such that for all $L \ge L_0$, the following holds.
Suppose that $R$ and $R'$ are two rectangles parametrized by $\Phi_R$ and $\Phi_{R'}$ as above. Suppose further that:
\begin{itemize}
    \item The horizontal sides of $R$ and of $R'$ all have length in the interval $[A,B]$. 
    \item We are given a $K$-Lipschitz map $f:[0,X] \to [0,X']$.
    \item The width of $R$ is at least $w$.
    \item The height of $R'$ is $L$ times that of $R$, i.e., $Y_2'-Y_1' = L(Y_2-Y_1)$.
\end{itemize}
Then the induced standard map $R \to R'$ is $L$-Lipschitz.
\end{proposition}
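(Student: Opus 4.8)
The plan is to introduce Fermi-type coordinates on $R$ and $R'$ adapted to the common geodesic axis of their horizontal foliations, observe that the hyperbolic metric becomes \emph{diagonal} in these coordinates, and thereby reduce the whole estimate to a comparison of lengths of horizontal leaves. Concretely, by Lemma~\ref{lem:flowup affine} the horizontal leaves of $R$ are hypercycles equidistant from a single geodesic axis $a_R$, and the leaf $h_y$ at signed distance $y$ from $a_R$ has length $\ell_0(R)\cosh(y)$, where $\ell_0(R)$ denotes the length of the orthogonal projection of $R$ to $a_R$; in particular the bottom side has length $X=|h_{Y_1}|=\ell_0(R)\cosh(Y_1)$. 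The parametrization $\Phi_R\colon[0,X]\times[Y_1,Y_2]\to R$ is then exactly the assignment sending $(x,y)$ to the point with $a_R$-Fermi coordinates $(x/\cosh(Y_1),\,y)$, so that
\[
\Phi_R^{\,*}g_{\mathbb H^2}\;=\;\frac{\cosh^2(y)}{\cosh^2(Y_1)}\,dx^2+dy^2,
\]
with no $dx\,dy$ term, and similarly on $R'$ with $X',Y_1',Y_2'$. In these coordinates the standard map $F\colon R\to R'$ is $(x,y)\mapsto\bigl(f(x),\,Ly+D\bigr)$, where $L=(Y_2'-Y_1')/(Y_2-Y_1)$ is the prescribed height ratio and $D$ is the unique constant sending $Y_1$ to $Y_1'$ (hence automatically $Y_2$ to $Y_2'$).

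Next I would reduce to a one-dimensional estimate. Since $f$ is $K$-Lipschitz it is absolutely continuous with $|f'|\le K$ almost everywhere, so $F$ is a Lipschitz map whose differential is defined a.e.; because both metrics are diagonal and $F$ carries the $\partial_x$-direction to the $\partial_{x'}$-direction and $\partial_y$ to $\partial_{y'}$, this differential is diagonal in the associated orthonormal frames, with entries
\[
f'(x)\cdot\frac{\cosh(y')\,\cosh(Y_1)}{\cosh(y)\,\cosh(Y_1')}
\qquad\text{and}\qquad L,
\]
where $y'=Ly+D$. As a Lipschitz map between length spaces whose a.e.\ differential has operator norm at most $L$ is $L$-Lipschitz, it suffices to bound the first (horizontal) entry by $L$; the vertical entry is exactly $L$, which incidentally shows the constant cannot be lowered.

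The crucial observation is that, rewriting the $\cosh$ ratios via $|h_y|=\ell_0(R)\cosh(y)$ and the analogue on $R'$, the horizontal entry equals
\[
f'(x)\cdot\frac{X}{X'}\cdot\frac{|h_{y'}|}{|h_y|}.
\]
I would then bound the four factors uniformly: $|f'|\le K$; $X\le B$ and $X'\ge A$ because these are horizontal sides of $R$ and $R'$; $|h_y|\ge \operatorname{width}(R)\ge w$ because $h_y$ is one of the horizontal leaves of $R$; and $|h_{y'}|\le B$ because by convexity of $\cosh$ the length of an interior horizontal leaf of $R'$ is at most the larger of the two \emph{side} lengths of $R'$, both of which are $\le B$. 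Hence the horizontal stretch is at most $KB^2/(Aw)$, so $L_0:=\max\bigl(1,\ KB^2/(Aw)\bigr)$ works.

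The step I expect to be the real content is the uniform control of $|h_{y'}|/|h_y|$; everything else is bookkeeping in the Fermi coordinates. The potential obstruction is that an interior horizontal leaf of $R$ can be far shorter than the bottom side $X$ — when $R$ straddles its axis the leaf through height $0$ has length $\ell_0(R)$, which the side-length bounds alone do not constrain, and $F$ would then stretch that leaf by an enormous factor. The width hypothesis is exactly what excludes this, and it is the one hypothesis whose role is not immediately visible. A final minor technical point is the passage from the a.e.\ differential bound to an honest Lipschitz bound, which is routine since $F$ is manifestly Lipschitz and, off a null set, the composition of smooth maps with the absolutely continuous $f$.
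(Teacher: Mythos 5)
Your proof is correct and follows essentially the same route as the paper's: both bound the vertical stretch by exactly $L$ and factor the horizontal stretch as
\[
f'(x)\cdot\frac{X}{|h_y|}\cdot\frac{|h_{y'}|}{X'}\ \le\ K\cdot\frac{B}{w}\cdot\frac{B}{A},
\]
where the paper phrases the two length ratios as $\Lip(\phi_y^{-1})$ and $\Lip(\phi_{Ly+D}')$ rather than as $\cosh$-ratios in Fermi coordinates. Your Fermi-coordinate formulation and the a.e.-differential remark are slightly more explicit bookkeeping, but the decomposition, the role of the width hypothesis, and the final threshold $L_0=KB^2/(Aw)$ are identical to the paper's argument.
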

\begin{proof}
To establish the proposition, we simply need to bound the partial derivatives of $f$ in the vertical and horizontal directions.
In the vertical direction, $f$ stretches distances uniformly by $L$ by construction.

In the horizontal direction, we observe that the induced map on each horizontal leaf 
\[f_y: h_y \to h_{Ly+D}'\]
can also be described by projecting to $h_{Y_1}$ via the comparison map $\phi_y^{-1}$ in $R$, applying $f$, then pushing back up to the appropriate height using the comparison map $\phi_{Ly+D}'$ in $R'$.
Thus, the total Lipschitz constant of $f_y$ is at most
\begin{equation}\label{eqn:rectbd Y1}
\Lip(f_y) \le 
\Lip(\phi_y^{-1}) \cdot K \cdot \Lip(\phi_{Ly+D}').
\end{equation}
Since we have a bound on the width of $R$, the first term is bounded by $B/w$. 
Lemma \ref{lem:flowup affine} together with our assumption on the lengths of the horizontal sides of $R'$ implies the last term is at most $B/A$.

All told, we see the horizontal direction is stretched by at most $KB^2/wA$ and the vertical is stretched by $L$, so as long as $L_0 \ge KB^2/wA$ the desired statement holds.
\end{proof}

\begin{remark}
Most of the dependencies of Proposition \ref{Prop:RectLip} are necessary, but it is conceivable that $L_0$ is independent of $w$.
Such a statement would require balancing of the growth of $D$ with how quickly the width of $R'$ shrinks.
This computation becomes quite intricate and is not needed for our application of the Proposition, hence we have contented ourselves with the statement above.
\end{remark}

It is even easier to build Lipschitz maps on spikes.
Any spike $V$ is foliated by parallel horocycles, and can be parametrized by its boundary horocycle $h$ and the distance into the spike.

\begin{lemma}[Lipschitz maps between spikes] \label{Lem:CuspLip}
Let $V$ and $V'$ be two spikes bounded by horocycles $h$ and $h'$, respectively.
Suppose $f:h \to h'$ is $K$-Lipschitz (with respect to arc length).
Then for all $L \ge \max\{K,1\}$, $f$ extends to a $L$-Lipschitz map $V \to V'$ with slope $L$ along the boundary geodesics of $V$.
\end{lemma}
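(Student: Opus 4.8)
The plan is to put everything in the upper half-plane model of $\HH^2$ with the ideal vertex of the spike $V$ at $\infty$, so that
\[
V = \{(x,y) : 0 \le x \le a,\ y \ge y_0\},
\]
with boundary horocycle $h = \{y = y_0\}$ and boundary geodesic rays $\{x = 0\}$ and $\{x = a\}$. The horocyclic foliation of $V$ is by the segments $\{y = \text{const}\}$, and the orthogonal leaves are the vertical rays $\{x = \text{const}\}$, which are geodesics running into the spike. I would introduce coordinates $(s,t) \in [0,\ell(h)] \times [0,\infty)$ on $V$ by taking $s$ to be arc length along $h$ and $t$ the distance into the spike along these vertical geodesics; explicitly $x = y_0 s$, $y = y_0 e^{t}$. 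A one-line computation then shows that the hyperbolic metric pulls back to $dt^2 + e^{-2t}\, ds^2$, which records the familiar fact that the horocycle at depth $t$ has length $e^{-t}\ell(h)$ and that flowing into the spike contracts horocycles affinely by the factor $e^{-t}$. Put the analogous coordinates $(s',t')$, $s' \in [0,\ell(h')]$, on $V'$.

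Next I would simply define $F : V \to V'$ in these coordinates by $F(s,t) = (f(s),\, Lt)$. By construction $F$ maps into $V'$, extends the given map $f$ on $h = \{t=0\}$, sends horocycles of $V$ to horocycles of $V'$, and restricts on each vertical geodesic $\{s = \text{const}\}$ to the affine map $t \mapsto Lt$, i.e.\ with slope exactly $L$ (and when $f$ carries the endpoints of $h$ to those of $h'$, the two sides $\{s=0\}$ and $\{s=\ell(h)\}$ map to the boundary geodesics of $V'$, as desired).

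It then remains to bound the Lipschitz constant of $F$. Since $f$ is $K$-Lipschitz it is differentiable a.e.\ with $|f'| \le K$ (Rademacher), so $F$ is differentiable a.e. At a point $(s,t)$ of differentiability, a tangent vector $(v_s,v_t)$ has squared norm $e^{-2t}v_s^2 + v_t^2$, while its image $dF(v) = (f'(s)v_s,\, L v_t)$ at $F(s,t)$ has squared norm $e^{-2Lt} f'(s)^2 v_s^2 + L^2 v_t^2$. Thus $\|dF(v)\|^2 \le L^2 \|v\|^2$ reduces to $e^{-2Lt} f'(s)^2 \le L^2 e^{-2t}$, that is, $f'(s)^2 \le L^2 e^{2(L-1)t}$; since $t \ge 0$ and $L \ge 1$ one has $e^{2(L-1)t} \ge 1$, so this follows from $|f'| \le K \le L$. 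Hence $\|dF\| \le L$ a.e. Because the spike $V$ is convex — it is the intersection of the convex horoball $\{y \ge y_0\}$ with the convex strip $\{0 \le x \le a\}$ — any two of its points are joined by a geodesic lying in $V$, and a map whose differential is bounded a.e.\ by $L$ along such geodesics is globally $L$-Lipschitz. This yields the lemma.

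There is no real obstacle here: the whole content is the inequality $e^{-2Lt} f'(s)^2 \le L^2 e^{-2t}$, which is precisely the statement that pushing depth $t \mapsto Lt$ into $V'$ shrinks horocycles fast enough to absorb the bounded horizontal expansion coming from $f$; everything else is bookkeeping. The only mild technical point is the passage from an a.e.\ differential bound to a genuine Lipschitz estimate, and convexity of the spike takes care of that.
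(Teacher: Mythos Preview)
Your proof is correct and is essentially the same as the paper's: writing $y_0=1$ so that $s=x$ and $t=\log y$, your map $(s,t)\mapsto(f(s),Lt)$ is exactly the paper's $x+iy\mapsto f(x)+iy^L$, and the verification of $\|dF\|\le L$ is the same computation. Your version is actually a bit more careful in spelling out the passage from the a.e.\ differential bound to the global Lipschitz estimate via convexity of the spike.
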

\begin{proof}
Identify $V$ and $V'$ in the upper half-plane as
\[ V=\{ x+iy: 0 \le x \le X,  y \ge 1\} \quad \text{and} \quad V' = \{ x+iy : 0 \le x \le X', y \ge 1\},\]
for some constants $X, X' >0$. In these coordinates, $h$ and $h'$ become horizontal segments with imaginary part $1$, and $f$ is just a $K$-Lipschitz map $[0,X] \to [0,X']$.
Extend $f$ to a map $V \to V'$ via
\[x+iy \mapsto f(x)+i y^L.\]
One computes explicitly that $\|Df \| \le L$, ensuring that $f$ is $L$--Lipschitz.
\end{proof}

\subsection{Gluing maps on pieces}\label{subsec:glue maps}
We can now put together the building blocks from Proposition \ref{Prop:HLip}, Proposition \ref{Prop:RectLip}, and Lemma \ref{Lem:CuspLip} to build a variety of boundary-tight maps.

\begin{corollary} \label{cor:Horo to Horo}
Let $P$ and $P'$ be two $n$-sided ideal polygons with cyclically labeled sides, each of which contains an embedded horogon.
Then for all sufficiently large $L$ (depending only on the geometry of the horogons), there is a boundary-tight $L$-Lipschitz map $P \to P'$.
\end{corollary}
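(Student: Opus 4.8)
\medskip

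The plan is to cut both polygons along their embedded horogons, map the resulting pieces using Proposition~\ref{Prop:HLip} and Lemma~\ref{Lem:CuspLip}, and glue. Label the horocycles of the embedded horogon in $P$ as $h_1^P, \ldots, h_n^P$, with $h_j^P$ based at the $j$-th ideal vertex $v_j$ and tangent to $h_{j\pm1}^P$ at points $p_{j-1}^P \in g_{j-1}$ and $p_j^P \in g_j$ lying on the sides of $P$. The arc $\eta_j^P := h_j^P \cap P$ runs from $p_{j-1}^P$ to $p_j^P$; because the horogon is embedded these arcs are pairwise disjoint except that $\eta_j^P$ and $\eta_{j+1}^P$ are tangent at $p_j^P$, so $\bigcup_j \eta_j^P$ bounds an embedded $n$-sided horogon $H_P$ (all sides horocyclic, all vertices of angle $0$) and
\[ P = H_P \cup V_1^P \cup \cdots \cup V_n^P, \]
where $V_j^P$ is the spike at $v_j$ cut off by $\eta_j^P$ (a genuine spike in the sense of Section~\ref{sec:decomp}: $g_{j-1}$ and $g_j$ are asymptotic at $v_j$, and $\eta_j^P$ is a horocyclic segment based there). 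Each side $g_j$ is the concatenation of a sub-ray lying in $V_j^P$, the point $p_j^P$, and a sub-ray lying in $V_{j+1}^P$; in particular $H_P$ meets $\partial P$ only at the finitely many points $p_j^P$. Perform the identical decomposition for $P'$.

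First I would fix truncation parameters $\varepsilon = \varepsilon' \le \varepsilon_I$ (the constant from Proposition~\ref{Prop:HLip}) and take $g = \mathrm{id}\colon [0,\varepsilon]\to[0,\varepsilon]$. The two-element family $\{H_P, H_{P'}\}$ of embedded $n$-sided h-gons trivially satisfies hypotheses (a)--(c) of Proposition~\ref{Prop:HLip}: side lengths are positive and finite, the corner divergence at each horogon vertex equals $4$ (the two horocyclic sides bulge to opposite sides of their common tangent), and non-consecutive sides are disjoint compact arcs. The Proposition thus produces an $L_{\mathrm{horo}}$-Lipschitz map $f_0\colon H_P \to H_{P'}$ sending the $j$-th side to the $j$-th side and restricting on each side to the standard map induced by $g$. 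Call this side map $\psi_j\colon \eta_j^P \to \eta_j^{P'}$; by Lemma~\ref{lem:radcontract} (constant slope with respect to arc length in the middle of the side, a composition of radial parametrizations near the vertices) each $\psi_j$ is biLipschitz with respect to arc length, with some finite constant $K_j$. One cannot shortcut this step by feeding the naive affine boundary map to Kirszbraun--Valentine (Theorem~\ref{Thm:KV}), since that map fails to be Lipschitz at the tangent vertices, cf.\ Example~\ref{example:nonLip}; Proposition~\ref{Prop:HLip} is precisely the tool that repairs this.

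Next, for each $j$ I would apply Lemma~\ref{Lem:CuspLip} to $V_j^P$ with boundary data $\psi_j$ on its bounding horocycle $\eta_j^P$: for every $L \ge \max\{K_j,1\}$ this yields an $L$-Lipschitz map $f_j\colon V_j^P \to V_j^{P'}$ that restricts to $\psi_j$ on $\eta_j^P$ and to an affine map of slope exactly $L$ on each of the two geodesic rays bounding $V_j^P$. Set $L_0 := \max\{L_{\mathrm{horo}},\, K_1,\ldots,K_n,\, 1\}$ --- which depends only on the geometry of the two horogons --- and for $L \ge L_0$ define $f\colon P\to P'$ by $f|_{H_P} = f_0$ and $f|_{V_j^P} = f_j$. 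These pieces agree on the overlaps $\eta_j^P$ (both restrict to $\psi_j$) and at the points $p_j^P$ (both send $p_j^P \mapsto p_j^{P'}$), so $f$ is well defined and continuous. Since $P$ is convex, any geodesic segment in $P$ meets each arc $\eta_j^P$ at most twice and hence decomposes into finitely many subsegments each lying in a single piece, so $f$ is $L$-Lipschitz ($f_0$ being $L_{\mathrm{horo}}\le L$-Lipschitz a fortiori). Finally $f$ carries $\partial P$ onto $\partial P'$, and on each side $g_j$ it is affine of slope $L$ on the two sub-rays and continuous at $p_j^P$, hence affine of slope $L$ on all of $g_j$; thus $f$ is boundary-tight.

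The only step requiring real care --- and the main obstacle --- is the interface between $f_0$ and the $f_j$. One must check that the side map $\psi_j$ produced by Proposition~\ref{Prop:HLip} genuinely has finite arc-length Lipschitz constant, so that Lemma~\ref{Lem:CuspLip} applies with a finite threshold, and that both constructions are being fed literally the same parametrized arc $\eta_j^P \to \eta_j^{P'}$ --- with a consistent choice of which endpoint of each side is ``first,'' so that $p_j^P \mapsto p_j^{P'}$ from both adjacent spikes and the affine pieces on $g_j$ patch continuously. Once this compatibility is in place, the rest is bookkeeping about how $H_P$ and its spikes fit together along $\partial P$.
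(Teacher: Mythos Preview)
Your proof is correct and follows essentially the same approach as the paper's: apply Proposition~\ref{Prop:HLip} to the pair of embedded horogons to get a Lipschitz map between them, then extend to each spike via Lemma~\ref{Lem:CuspLip} and glue. You are considerably more explicit than the paper about verifying hypotheses (a)--(c) for the two-element family (in particular the corner divergence computation), about extracting the arc-length Lipschitz constant of the side maps from Lemma~\ref{lem:radcontract}, and about why the glued map is globally $L$-Lipschitz; the paper compresses all of this into two sentences.
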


Observe that Corollary \ref{cor:Horo to Horo} also yields the sufficiency direction of Theorem \ref{mainthm:polygon}.(1)

\begin{proof}
Let $H \subset P$ and $H' \subset P'$ be embedded horogons and let $f: H \to H'$ be a $K$--Lipschitz map furnished by Proposition \ref{Prop:HLip}. Let $L \ge K$. Since the restriction of $f$ to any side of $H$ is $K$-Lipschitz with respect to arc length,
we can extend $f$ to a $L$-Lipschitz map to any spike of $P\setminus H$. This defines an $L$-Lipschitz map $f: P \to P'$. Since the extension has constant slope $L$ along vertical sides of spikes, $f$ is boundary-tight by construction.
\end{proof}

To glue together maps of h-gons and rectangles, we will need to make sure that the maps on their hypercyclic boundaries match.
To that end, we will first globally specify the truncating parameters used in our constructions.
Let $P$ be an ideal polygon and let $V(P)$ denote the vertices of its spine.
An {\bf h-gon truncation} is a function
\[tr: V(P) \to \RR_{>0}\]
such that for each $v \in V(P)$, the corresponding h-gon $H_v$ is $tr(v)$-truncatable.

Suppose that $v, w \in V(P)$ are adjacent over an edge $e$ of $\Sp(P)$, corresponding to a rectangle $R_e$.
We say that an h-gon truncation is {\bf compatible over $e$} if all vertical leaves through the $tr(v)$-truncation of the corresponding horizontal side of $R_e$ meet the $tr(w)$-truncation of the other side.
Equivalently, the comparison map $\phi_{Y_2}$ of Lemma \ref{lem:flowup affine} takes one truncation to the other.
An h-gon truncation is {\bf globally compatible} if it is compatible over all edges of the spine.

\begin{lemma}[Controlled truncations]\label{lem:exists_truncation}
For any $n \ge 3$, there is a $C_n$ such that the following holds. For any $\varepsilon$ sufficiently small ($\lesssim 1/3C_n$ suffices),
every ideal $n$-gon $P$ admits a globally compatible truncation with all parameters in $(\varepsilon/C_n, \varepsilon C_n)$.
\end{lemma}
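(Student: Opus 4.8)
The plan is to exploit that $\Sp(P)$ is a tree (it is a deformation retract of the disk $P$) and to \emph{propagate} a truncation parameter outward from an arbitrary root. First I would choose a vertex $v_0 \in V(P)$, declare $tr(v_0) := \varepsilon$, and for every other vertex $v$ define $tr(v)$ by walking along the unique path in $\Sp(P)$ from $v_0$ to $v$ and imposing, edge by edge, the compatibility relation over each compact edge traversed. Because $\Sp(P)$ has no cycles this assignment is unambiguous, and the resulting $tr$ is globally compatible by construction (no condition is imposed over the rays of the spine). What then remains is to check that the parameters so produced drift by at most a multiplicative factor depending only on $n$, and that each $H_v$ is genuinely $tr(v)$-truncatable once $\varepsilon$ is small.

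The key step is to make ``compatible over $e$'' quantitative. Fix a compact edge $e = [v,w]$ with rectangle $R_e$; by construction (Lemma \ref{Lem:HgonDecomp}) the two horizontal sides of $R_e$ are a full side $\gamma_v$ of $H_v$ and a full side $\gamma_w$ of $H_w$, and by Lemma \ref{lem:flowup affine} the comparison map $\phi_{Y_2}$ identifies $\gamma_w$ with $\gamma_v$ via an affine bijection of arc-length slope $\sigma_e = \ell(\gamma_v)/\ell(\gamma_w)$. From the explicit radial parametrization together with Lemma \ref{lem:radcontract}, truncating an h-gon by radius $\rho$ deletes from each of the two ends of a given side a sub-segment whose arc length is a fixed monotone function of $\rho$ — the same function at both ends, since it depends only on the curvature of that side — trapped between $\rho/M$ and $M\rho$ for $\rho \le \delta_0$. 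Hence the $\rho$-truncation of a side is centered in it, and $\phi_{Y_2}$ carries the truncation of $\gamma_w$ exactly onto that of $\gamma_v$ precisely when $tr(v)$ and $tr(w)$ satisfy the single scalar equation expressing that $\sigma_e$ times the arc length deleted on $\gamma_w$ equals the arc length deleted on $\gamma_v$. Solving this for $tr(w)$, and invoking the $M$-biLipschitz bounds together with $\ell(\gamma_v),\ell(\gamma_w)\in[1,2r_n]$ (Lemmas \ref{lem:Hgonbounds} and \ref{lem:Hgon rad bd}), I obtain the two-sided estimate
\[ \frac{tr(v)}{2 r_n M^2} \;\le\; tr(w) \;\le\; 2 r_n M^2\, tr(v). \]

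Finally I would tally the drift and check legitimacy. A valence count — $\Sp(P)$ is a tree with $n$ rays all of whose vertices have valence $\ge 3$, hence at most $n-2$ vertices — shows that every root-to-vertex path uses at most $n-3$ edges, so iterating the displayed inequality bounds each $tr(v)$ between $\varepsilon (2 r_n M^2)^{-(n-3)}$ and $\varepsilon (2 r_n M^2)^{n-3}$. Taking $C_n$ to be at least $(2 r_n M^2)^{n-3}$, and large enough that $\varepsilon \lesssim 1/(3C_n)$ forces $\varepsilon C_n$ below $\delta_0$ and below the uniform radius — furnished by Lemmas \ref{lem:Hgon rad bd} and \ref{lem:Hgonbounds} — past which an h-gon of an ideal $n$-gon ceases to be truncatable, simultaneously places all parameters in $(\varepsilon/C_n, \varepsilon C_n)$, makes every $H_v$ actually $tr(v)$-truncatable, and keeps each application of the inverse deletion function within its domain. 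The one genuinely delicate point — settled in the middle paragraph — is the observation that the $\rho$-truncation is centered in each side, so that compatibility over an edge reduces to a \emph{single} scalar equation relating $tr(v)$ and $tr(w)$ rather than an overdetermined pair (one per endpoint); granting that, the rest is bookkeeping with the uniform h-gon bounds.
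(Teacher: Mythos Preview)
Your proof is correct and follows essentially the same tree-propagation strategy as the paper: pick a root, set $tr(v_0)=\varepsilon$, and push outward using the affine comparison map on each rectangle, with the $[1,2r_n]$ side-length bounds controlling the multiplicative drift. You are in fact more careful than the paper on one point the paper glosses over --- namely, that the compatibility condition over an edge is a \emph{single} scalar equation rather than two (because the arc length removed at each end of a side depends only on that side's curvature, hence is the same at both ends) --- and you correctly pick up the extra factor of $M^2$ coming from Lemma~\ref{lem:radcontract}, which the paper silently absorbs.
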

\begin{proof}
By Lemmas \ref{lem:Hgon rad bd} and \ref{lem:Hgonbounds}, all boundary h-cycles of each decomposition piece have length in $[1,2r_n]$. 
Thus, by Lemma \ref{lem:flowup affine}, if $v$ and $w$ are adjacent over an edge $e$ of $\Sp(P)$ and $tr$ is compatible over $e$, we see that
\[\frac{1}{2r_n}
\le
\frac{tr(v)}{tr(w)}
\le
2r_n.\]

We now use the fact that $\Sp(P)$ is a tree. Pick a base vertex $v$ and assign $tr(v) = \varepsilon$. For each $w$ adjacent to $v$, we can then choose a compatible truncating parameter which is at most $2r_n$ times as large/small (for example, $tr(w)$ can be specified using the comparison map $\phi_{Y_2}$ coming from the rectangle corresponding to the edge connecting $v$ and $w$).
We can then iterate this procedure, propagating out from each such $w$ at the cost of another factor of $2r_n$.
Since there are at most $n-2$ vertices of $\Sp(P)$, this eventually terminates with a total multiplicative cost of $(2r_n)^{n-2} =: C_n$.
\end{proof}

We can now use this truncation to glue together maps on pieces, proving Theorem \ref{thm:scalespineup}.

\begin{proof}[Proof of Theorem \ref{thm:scalespineup}]
We begin by noting that by combining Lemma \ref{lem:Hgon rad bd} and \ref{lem:Hgonbounds}, we know that the h-gons appearing in the decomposition of any ideal $n$-gon have uniformly bounded corner divergence and injectivity radius, so we can apply Proposition \ref{Prop:HLip} with uniform quality.
The same Lemmas give us uniform bounds on the length of h-cycles bounding the rectangles of our decomposition, and we have a bound on their widths by hypothesis. This allows us to apply Proposition \ref{Prop:RectLip} with dependencies only on $n$ and $K$.

The spines of $P$ and $P_L$ have the same combinatorial structure, so throughout the proof we identify their vertex sets. 
Fix $\varepsilon$ sufficiently small such that $\varepsilon C_n$ is smaller than the constant $\varepsilon_I$ from Proposition \ref{Prop:HLip}, where $I$ is the family of h-gons inscribable in ideal $n$-gons.
Using Lemma \ref{lem:exists_truncation}, pick globally compatible h-gon truncations $tr$ and $tr_L$ for $P$ and $P_L$, respectively, with all parameters in $(\varepsilon/C_n, \varepsilon C_n)$.

Pick an arbitrary $v \in V(P)$. As in Lemma \ref{lem:exists_truncation}, we will build our map $P \to P_L$ by propagating out from $v$.
Consider the constant linear map
\[g_v: [0, tr(v)] \to [0, tr_L(v)].\]
By our choice of truncating parameters, $g_v$ has slope at most $C_n^2$.
By Proposition \ref{Prop:HLip}, $g_v$ induces a $K_0$-Lipschitz map between the h-gons $H_v$ and $H_{v,L}$ corresponding to $v$ which restricts to a standard map on each boundary h-cycle.
Note that the Lipschitz constant $K_0$ depends only on $n$ by the uniformity of Proposition \ref{Prop:HLip}.

Now consider a compact edge $e$ of $\Sp(P)$ adjacent to $v$ and the corresponding rectangles $R_e \subset P$ and $R_{e,L} \subset P_L$.
Thinking of the hypercycle common to $H_v$ and $R_e$ as the ``bottom'' edge of $R_e$, Proposition \ref{Prop:RectLip} ensures that the standard map induced by $g_v$ on that hypercycle extends to an 
$L$-Lipschitz standard map $R_e \to R_{e,L}$ so long as $L \ge L_0$, where $L_0$ depends only on $n$ and $K_0$.\footnote{To be very precise, $L_0$ depends on the Lipschitz constant with respect to arc length of the standard map induced by $g_v$, which in turn depends on $K_0$.}

Let $w$ denote the other endpoint of $e$. The restriction of the induced standard map $R_e \to R_{e,L}$ to the top edge is a standard map on that hypercycle with Lipschitz constant at most $2r_n$ times larger than on the bottom edge (Lemma \ref{lem:flowup affine}).
In particular, restricting to a neighborhood of the endpoints of this hypercycle induces a $2r_n K_0 M$-Lipschitz map
\[g_w: [0, tr(w)] \to [0, tr_L(w)]\]
between radial parametrizations, as described after Lemma \ref{lem:radcontract}.
We now repeat the argument from before, using Proposition \ref{Prop:HLip} to extend this to a $K_1$-Lipschitz map between the relevant horogons $H_w \subset P$ and $H_{w,L} \subset P_L$.
The Lipschitz constant $K_1$ now depends only on $n$ and $K_0$.
The restrictions of this map to the other h-cycles of $H_w$ are the standard maps induced by $g_w$, and so each extends to an $L$-Lipschitz map on each of the adjoining rectangles and spikes so long as $L \ge L_1$ for some $L_1$ depending on $n$ and $K_1$.

We continue in this fashion, exhausting $\Sp(P)$ by combinatorial distance to $v$, alternating between building maps on h-gons and on rectangles. 
All told, this results in 
\begin{itemize}
    \item A set of maps between the h-gons of $P$ and $P_L$, each of whose Lipschitz constants $K_i$ depends only on $n$ and $K_j$ for $j < i$, and 
    \item A set of maps between the rectangles of $P$ and $P_L$ which are each $L$-Lipschitz, so long as $L \ge L_i$, where $L_i$ depends only on $n$ and $K_j$ for $j \le i$.
\end{itemize}
These maps agree on their shared hypercycles.
Collapsing all of the dependencies, we see that the Lipschitz constants $K_i$ and the the thresholds $L_i$ all depend only on $n$.
Thus, if we take $L$ greater than the max of all of these and use Lemma \ref{Lem:CuspLip} to extend the maps between horocycles to $L$-Lipschitz maps between spikes, all of these pieces glue together to give a globally $L$-Lipschitz map $P \to P_L$ which stretches $\partial P$ affinely by construction.
\end{proof}

Reversing which direction we are scaling the spine, we also get the following:

\begin{corollary}\label{cor:scalespinedown}
For every ideal $n$-gon $P$, there is an $L_1(P)$ such that for any $L \ge L_1$, there is an $L$-Lipschitz, boundary-tight map $P_{1/L} \to P$.
\end{corollary}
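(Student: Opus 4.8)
The plan is to apply Theorem~\ref{thm:scalespineup} to the polygon $P_{1/L}$ with scaling factor $L$. The key observation is the identity $(P_{1/L})_L = P$: by definition $\Sp(P_{1/L}) = \tfrac1L\cdot\Sp(P)$, so $\Sp\big((P_{1/L})_L\big) = L\cdot\Sp(P_{1/L}) = \Sp(P)$, and $\Sp$ is injective by Theorem~\ref{thm:spinecoords}. Thus an $L$-Lipschitz, boundary-tight map $P_{1/L}\to(P_{1/L})_L$ is precisely a map of the form asserted by the Corollary. Theorem~\ref{thm:scalespineup} provides such a map whenever $P_{1/L}$ has waist at least some $w>0$ and $L\ge L_0(n,w)$, so the whole statement reduces to showing that $\mathrm{waist}(P_{1/L})$ is bounded below by a positive constant (depending on $P$) as $L\to\infty$.

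To establish this, I would first note that the rescaled metric ribbon graphs $\tfrac1L\cdot\Sp(P)$ converge in $\TRG(\Sigma)$, as $L\to\infty$, to the graph obtained from $\Sp(P)$ by contracting every compact edge to length $0$; since $\Sp(P)$ is a tree with $n$ infinite rays, this limit is the $n$-valent star $\ast_n$. As $\ast_n\in\TRG(\Sigma)$ and $\Sp^{-1}$ is a homeomorphism (Theorem~\ref{thm:spinecoords}), the polygons $P_{1/L}=\Sp^{-1}\big(\tfrac1L\cdot\Sp(P)\big)$ converge in $\cT(\Sigma)$ to $\Sp^{-1}(\ast_n)=P_{\mathrm{reg}}$. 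In particular $P_{1/L}$ eventually lies in a fixed compact subset of $\cT(\Sigma)$, and since the waist is a positive continuous function on $\cT(\Sigma)$ (continuity following from continuity of arc-length functions together with the usual finiteness principle for short arcs, exactly as for the systole), there are $w_0>0$ and $L_2$, depending only on $P$, with $\mathrm{waist}(P_{1/L})\ge w_0$ for all $L\ge L_2$.

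It then remains only to assemble the pieces: set $L_1:=\max\{L_2,\,L_0(n,w_0)\}$. For any $L\ge L_1$, the polygon $P_{1/L}$ has waist at least $w_0$ and satisfies $L\ge L_0(n,w_0)$, so Theorem~\ref{thm:scalespineup} yields an $L$-Lipschitz, boundary-tight map $P_{1/L}\to(P_{1/L})_L=P$, which is what we wanted. I expect the only non-formal step to be the uniform lower bound on $\mathrm{waist}(P_{1/L})$: this is precisely where the convergence $P_{1/L}\to P_{\mathrm{reg}}$, via continuity of $\Sp^{-1}$, together with continuity of the waist, does the work. (One could instead bound the widths of the rectangles in the decomposition of Lemma~\ref{Lem:HgonDecomp} for $P_{1/L}$ directly, but passing through the limiting regular polygon is cleaner.)
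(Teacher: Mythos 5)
Your argument is correct and is essentially the same as the paper's: the paper's proof of Corollary~\ref{cor:scalespinedown} also observes that $P_{1/L} \to P_{\text{reg}}$ as $L \to \infty$, that the waists of $P_{1/L}$ therefore converge to the (positive) waist of the regular polygon, and then applies Theorem~\ref{thm:scalespineup} with a uniform threshold. You have simply filled in the details the paper leaves implicit (verifying $(P_{1/L})_L = P$ via injectivity of $\Sp$, and justifying the convergence $P_{1/L}\to P_{\mathrm{reg}}$ through the homeomorphism of Theorem~\ref{thm:spinecoords} applied to $\tfrac1L\cdot\Sp(P)\to\ast_n$).
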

\begin{proof}
As $L \to \infty$, the polygons $P_{1/L}$ converge to the regular ideal $n$-gon. In particular, the waists of $P_{1/L}$ converge to some positive number. We may therefore apply Theorem \ref{thm:scalespineup} with a uniform threshold to the polygons $P_{1/L}$ once they are sufficiently close to regular.
\end{proof}

This allows us to deduce the sufficiency part of Theorem \ref{mainthm:polygon}.(2).

\begin{proof}[Proof of sufficiency in Theorem \ref{mainthm:polygon}.(2)]
Let $P$ be any ideal $n$-gon, with residue 0 if $n$ is even, and consider the polygon $P_{1/L}$.
For any $L$ sufficiently large, this is arbitrarily close to the regular $n$-gon $P_{\reg}$, hence contains an embedded horogon.
Corollary \ref{cor:Horo to Horo} builds a boundary-tight map $P_{\reg} \to P_{1/L}$, and Corollary \ref{cor:scalespinedown} builds a boundary-tight map $P_{1/L} \to P$.
\end{proof}

\section{Prescribed itineraries}\label{sec:itin}
We now prove a more detailed version of Theorem \ref{mainthm:itin} (Theorem \ref{thm:itineraries}). As was the case for our analysis of deflations, we will need to coarsen our statements, but we can still show that the obstructions found above essentially account for all possible constraints.

In Section \ref{subsec:glue_reg}, we describe a formalism for gluing together ideal polygons to get a closed surface. We then describe the geometry of the constructed surfaces in Section \ref{subsec:shearvsspine}; this section also contains a number of lemmas we need in the sequel. Theorem \ref{thm:itineraries} is proven in Section \ref{subsec:itin} by gluing together the maps from the previous Section \ref{sec:construct horogon}.

\subsection{Gluing and spinning}\label{subsec:glue_reg}
We first specify a particular method for constructing surfaces by gluing together regular polygons.
It is particularly easy to analyze the geometry of surfaces built from this construction, a fact we will regularly exploit in the sequel.

Let us first clarify what we mean by ``gluing.''
Suppose we are given two oriented surfaces $Y_1$ and $Y_2$ with crowned boundary. Pick two bi-infinite geodesics $g_i \subset \partial Y_i$ together with points $p_i \in g_i$, and equip the $g_i$ with their induced boundary orientations.\footnote{This notion of boundary orientation is the standard one (so $Y_i$ is to the left of $g_i$), not a choice of orientation on an even-spiked crown. We apologize for the overlap in terminology.}
There is a 1-parameter family of orientation-reversing isometric identifications of $g_1$ with $g_2$, parametrized by the signed distance from $p_1$ to $p_2$, which is called the {\bf shear} (measured with respect to the $p_i$). Here, we choose the convention that the signed distance is measured with the orientation of $g_2$, so that that positive shear means that if you start in $Y_1$, travel to $p_1$, then travel along $g_1 = g_2$ to $p_2$, then you have turned rightwards (see \cite[Remark 13.3]{shshI}).

The {\bf gluing} of $Y_1$ and $Y_2$ with a given shear $s$ is the surface obtained by quotienting by the specified identification of $g_1$ with $g_2$.
One can also glue together two bi-infinite geodesics of the same surface in much the same way; in this case, the resulting surface may not be complete (for example, if the glued leaf spirals around a closed boundary component), so we take its completion.
This notion of gluing clearly depends also on our choice of points $p_i$ (with respect to which we measure shears), but moving either $p_i$ just changes the shear by the (signed) distance the point has moved.

When a surface $Y$ is built out of iterated gluings, it comes with a finite set of disjoint, bi-infinite geodesics corresponding to the gluing locus.
Restricting to a specific class of gluings allows us to keep track of this data using tools from earlier in the paper.

Given an arc $\alpha$ on a surface with (possibly crowned) boundary $\Sigma$, not isotopic into a spike, we can {\bf spin} $\alpha$ by turning left from $\alpha$ when it encounters $\partial \Sigma$ and then continuing along $\partial \Sigma$.
Pulling tight yields a simple, bi-infinite leaf $\ell_\alpha$ each of whose ends either goes out a cusp of $\Sigma$ or spirals onto a closed component of $\partial\Sigma$.
We observe that if $\arc$ is a system of disjoint arcs, then the leaves obtained this way are also all disjoint.
Compare with \cite[Proof of Proposition 6.3]{Th_stretch} or the discussion in \cite[\S3.1]{PW_envelopes}.

\begin{construction}[Gluing along spun arcs]\label{constr:gluereg}
Let $\Sigma$ be a surface with (possibly crowned) boundary and fix a weighted, filling arc system 
\[\arcwt = \sum c_\alpha \alpha \in |\mathscr{A}_{\text{fill}}(\Sigma, \partial \Sigma)|_{\RR}.\] 
Spin the underlying arcs $\arc$ of $\arcwt$ to obtain a collection of disjoint bi-infinite leaves $\ell$ on $\Sigma$; observe that $\Sigma \setminus \ell$ is a union of ideal polygons, each of which corresponds to a complementary component of $\Sigma \setminus \arc$.
We now define a hyperbolic structure $Y(\arcwt)$ on $\Sigma$ by stipulating that each of these polygons is regular and gluing them together with shear $c_\alpha$ along the leaf $\ell_\alpha$, where shears are measured with respect to the inscribed regular horogons.
See Figure \ref{fig:gluereg}.
\end{construction}

It is important to note that the surface $Y(\arcwt)$ is usually {\em not} the same as the surface whose orthogeodesic spine is dual to $\arcwt$ (Theorem \ref{thm:spinecoords}), else Propositions \ref{prop:reg_glue_res} and \ref{prop:glue_reg_deflate} below would be trivial.
Along similar lines, we observe that $Y(\arcwt)$ varies continuously as $\arcwt$ varies among weighted arc systems with the same support, but the construction is {\em not} continuous as $\arcwt$ varies throughout all of $|\mathscr{A}_{\text{fill}}(\Sigma, \partial \Sigma)|_{\RR}$.

\begin{figure}[ht]
    \centering
    \includegraphics[width=\linewidth]{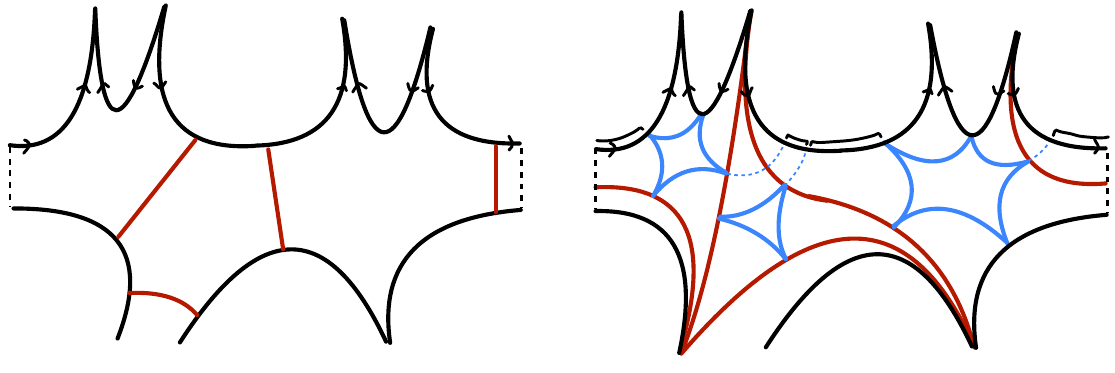}
    \caption{Building a surface out of regular polygons. On the left, a weighted arc system. On the right, the result of Construction \ref{constr:gluereg}.}
    \label{fig:gluereg}.
\end{figure}

\begin{remark}
There is further flexibility in Construction \ref{constr:gluereg}: one could spin arcs to the right (or both left and right, though one must take care to ensure simplicity), and one could also glue the polygons together with negative weights.
Versions of the geometric statements we prove below should also still hold, but one must take care in their formulation.
\end{remark}

One nice feature of Construction \ref{constr:gluereg} is that it allows us to easily compute metric residues in terms of shearing data.
Given $\arcwt \in |\mathscr{A}_{\text{fill}}(\Sigma, \partial \Sigma)|_{\RR}$, there is a combinatorial notion of residue defined as follows. 
Let $\vec C$ be either an oriented crown or closed boundary component of $\Sigma$. Then $\res_{\arcwt}(\vec C)$ is the sum of the weights of the arcs incident to edges of $\vec C$ that see $\Sigma$ on its left, minus the weights of the arcs incident to edges of $\vec C$ that see $\Sigma$ on its right (taken with multiplicity).
See \cite[\S6]{shshI}.

\begin{proposition}\label{prop:reg_glue_res}
Let $\Sigma$ be a surface with boundary and let $Y \in \cT(\Sigma)$ be the result of applying Construction \ref{constr:gluereg} to $\arcwt \in |\mathscr{A}_{\text{fill}}(\Sigma, \partial \Sigma)|_{\RR}$.
Then for any oriented crown or closed boundary component $\vec C$ of $\partial \Sigma$,
\[\res_{Y}(\vec C) = \res_{\arcwt}(\vec C).
\]
\end{proposition}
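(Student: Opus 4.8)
\textbf{Proof proposal for Proposition \ref{prop:reg_glue_res}.}

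The plan is to reduce the identity to a local computation around a single crown (or closed boundary component) $\vec C$, using the fact that in Construction \ref{constr:gluereg} the complementary polygons are \emph{regular}. First I would recall that the metric residue $\res_Y(\vec C)$ is computed by picking small truncating horocycles at each vertex of $\vec C$ and taking the signed sum $\sum_i \varepsilon_i \ell(\hat g_i)$ of the lengths of the truncated boundary segments, and that this quantity is independent of the choice of horocycles as long as they are chosen consistently (the sliding of one horocycle changes two adjacent segments by equal and opposite amounts; for a closed boundary component one instead uses the total length, which is intrinsic). The key point is to make a \emph{canonical} choice of truncating horocycles adapted to the construction: at each spike of $Y$, use the horocycle coming from the inscribed regular horogon of the regular polygon containing that spike. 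With this choice, each segment $\hat g_i$ of $\vec C$ is precisely a portion of a boundary geodesic of some regular polygon $P_j$, running between two consecutive tangency points of the regular horogon inscribed in $P_j$, shifted by the shears applied along the gluing leaves $\ell_\alpha$ that meet $\vec C$.

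The heart of the matter is then the following bookkeeping. Each boundary edge $g_i$ of $\vec C$ lies on a gluing leaf $\ell_\alpha$ for some arc $\alpha$ of $\arc$ (an edge of $\vec C$ that runs out a spike of $\Sigma$ contributes nothing, since the regular horogon's horocycles are tangent there, giving a degenerate segment), and the contribution of that edge to $\res_Y(\vec C)$ is $\pm$ the shear $c_\alpha$ \emph{plus} the length of the corresponding regular-horogon side. But the regular-horogon sides cancel in pairs as one goes around $\vec C$: consecutive edges of $\vec C$ are joined at a vertex inside a single regular polygon $P_j$, and the two horogon-side segments meeting at that vertex are traversed with opposite signs $\varepsilon_i$, so their \emph{horocyclic} contributions telescope to zero around the whole crown (this is exactly the statement that a regular polygon has zero residue, Lemma \ref{lem:whenhorogon} combined with the symmetry of $P_{\text{reg}}$, or more precisely that the regular horogon closes up). What survives is precisely $\sum \pm c_\alpha$ over arcs $\alpha$ incident to $\vec C$, with the sign $\varepsilon_i$ matching ``$\Sigma$ on the left'' versus ``$\Sigma$ on the right''; comparing to the definition of $\res_{\arcwt}(\vec C)$ from \cite[\S6]{shshI} gives the claimed equality. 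The sign convention needs to be checked against the shear convention fixed in Section \ref{subsec:glue_reg} (positive shear $=$ ``turning rightwards''), which is the one genuinely delicate point.

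I expect the main obstacle to be precisely this sign/convention matching, together with the care needed when an end of $\vec C$ spirals onto a closed boundary component rather than exiting a cusp (in that case $\vec C$ is a closed curve, the horocyclic truncations are unnecessary, and one must argue directly that the total length equals the sum of shears, using that the spun leaves wrap around the closed geodesic and that the regular polygons contribute lengths that again cancel telescopically along the closed loop). A secondary technical point is verifying that the canonical horocycles chosen at the spikes are in fact ``small enough'' to truncate $\vec C$ in the sense required by the definition of metric residue, i.e.\ that the inscribed regular horogons are embedded in their respective polygons and that their horocycles, once the polygons are glued, do not overrun the segments $\hat g_i$; this follows from the fact that the regular horogon is embedded (Lemma \ref{lem:whenhorogon}) provided $n \ge 3$, which always holds here. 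Once these conventions are pinned down the computation is a short telescoping sum and nothing more.
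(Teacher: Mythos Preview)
Your choice of truncating horocycles---those coming from the inscribed regular horogons---is exactly the paper's choice, and your intuition that regularity of the pieces is what makes the computation work is correct. But the bookkeeping in the middle paragraph is genuinely confused, and the argument as written does not go through. The boundary geodesics $g_i$ of $\vec C$ are edges of $\partial\Sigma$, not gluing leaves; a given $g_i$ may have \emph{many} arcs $\alpha_1,\dots,\alpha_k$ incident to it, and after spinning these all become leaves $\ell_{\alpha_j}$ asymptotic to the \emph{same} endpoint of $g_i$ (the ``forward'' one). So only one of the two polygons $P_{V}$, $P_{V'}$ at the spikes of $g_i$ actually contains $g_i$ as a side; the horogon horocycle at the other spike must be \emph{extended} across all the intervening spun leaves before it meets $g_i$. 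The length $\ell(\hat g_i)$ is therefore not ``one shear plus a horogon side'': it is the full sum $\sum_j c_{\alpha_j}$ over all arcs incident to $g_i$. Your telescoping picture, in which consecutive edges of $\vec C$ are both sides of a single regular polygon, is not what happens.

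The paper closes this gap by observing that the regular horocyclic foliations on the pieces glue (because shears are positive) to a global measured foliation $\mathcal F$ of $Y$, and that each extended truncating horocycle $h_V$ is a full leaf of $\mathcal F$. Projecting along $\mathcal F$ (equivalently, collapsing to the dual tree) then shows directly that the distance along $g_i$ between $h_V\cap g_i$ and $h_{V'}\cap g_i$ equals the sum of shears across the leaves separating $P_V$ from $P_{V'}$, which are exactly the arcs incident to $g_i$. This replaces your telescoping with a one-line transverse-measure computation, and also handles the closed-boundary case cleanly by passing to the universal cover and measuring translation length along a lift of $\vec C$ via the same foliation. Your sign worry is legitimate, but once you use $\mathcal F$ the signs are absorbed into the statement that projection along leaves is a local isometry onto the tree.
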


As an example, if an even-gon $Q$ is glued out of regular odd-gons $Q_1$ and $Q_2$, then $\res(Q)$ is equal, up to sign, to twice the shear between the pieces.\footnote{If an even-gon is glued out of two even-gons, then their residues add.}

\begin{proof}
The proof is essentially contained in Figures \ref{fig:gluereg} and \ref{fig:gluereg_res_closed}. To be more explicit, we describe the relevant features.

{\medskip \noindent \em Case 1: $\vec C$ is a crown.}
For each spike $V$ of $\vec C$, consider the component of $\Sigma \setminus \arc$ containing $V$. This component corresponds to one of the regular polygons $P_V$ of $Y(\arcwt) \setminus \ell$, and $V$ corresponds to a spike of $P_V$.
Observe that since $\ell$ came from spinning the arcs of $\arc$, one of the sides of $P_V$ is one of the geodesics of $V$.\footnote{If one stands in $Y$ and looks out the spike of $V$, it is the ``left'' geodesic.}
The horocyclic segment of the regular horogon in $P_V$ cutting off that spike extends to a horocyclic segment $h_V$ in $Y$ connecting the geodesics of $V$. 

Moreover, the horocyclic foliations of the polygons of $Y \setminus \ell$ glue to a global (measured) foliation $\mathcal F$ of $Y$, and we claim $h_V$ is a leaf. 
This follows because the only other polygons $Q$ of $Y \setminus \ell$ meeting $h_V$ are those which share an ideal vertex with $V$. We are always gluing polygons with positive shears, so $h_V \cap Q$ is closer to $V$ than the regular horogon of $Q$, hence is a horocyclic segment.
These segments piece together to give $h_V$.

We now use this choice of truncation to compute the metric residue of $\vec C$. For each geodesic $g$ of $\vec C$ connecting spikes $V_1$ and $V_2$, projection along the leaves of $\mathcal F$ shows that the distance between the horocycles $h_{V_1}$ and $h_{V_2}$ is exactly the sum of shears over the geodesics of $\ell$ separating $P_{V_1}$ and $P_{V_2}$.
By construction, this is the same as the total weight of the corresponding arcs of $\arc$.
Adding up with the appropriate signs yields the desired formula.

{\medskip \noindent \em Case 2: $\vec C$ is a closed geodesic.}
Now we must work in the universal cover and use $\pi_1$-equivariance. Pick a geodesic $g$ of $\widetilde Y$ covering $C$ and pick some (lift of a) polygon $P$ with a spike asymptotic to $g$. Equivalently, $P$ corresponds to some component of $\Sigma \setminus \arc$ abutting $g$.
Consider the spike of $P$ asymptotic to $g$. 
By the same argument as in the first case, the segment of the regular horogon in $P$ cutting off that spike extends on one side\footnote{Its ``right,'' if one stands in $P$ and looks out the spike.} to a horocyclic segment $h$ meeting $g$.
By the same positivity argument as above, $h$ is a partial leaf of the regular horocyclic foliation $\mathcal F$ of $Y$ with respect to $\partial Y \cup \ell$.
We note that if one tries to extend $h$ on the other side, then the horocycle defining $h$ does not match the foliation $\mathcal F$.

\begin{figure}[ht]
    \centering
    \includegraphics[width=\linewidth]{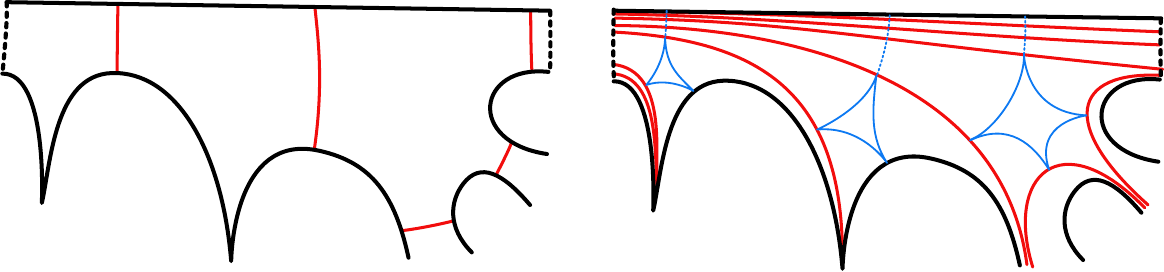}
    \caption{A filling arc system and the output of Construction \ref{constr:gluereg}. The top geodesic is the lift of a closed boundary component of $Y$.}
    \label{fig:gluereg_res_closed}
\end{figure}

The length of $C$ in $Y$ is now the distance (along $g$) between $h \cap g$ and $\gamma h \cap g$, where $\gamma$ is the stabilizer of $g$ in $\pi_1(Y)$.
By projecting along the leaves of $\mathcal F$, this is the same as the the sum of shears over geodesics separating $P$ and $\gamma P$, which by construction is the total weight of arcs incident to $C$.
\end{proof}

\subsection{Shears versus spines}\label{subsec:shearvsspine}
When one spins an arc system as described above, there is some extra topological control on the resulting leaves, a feature we have already exploited in the previous subsection. Here is another version:

\begin{lemma}[Spun arcs separation]\label{lem:spin_separation}
Let $\smash{\widetilde \Sigma}$ be a simply-connected surface with boundary and fix some system of disjoint arcs $\arc$ on $\smash{\widetilde \Sigma}$.
Spin $\arc$ to yield a set of bi-infinite leaves $\ell$.
Then for every arc $\alpha \in \arc$, the following holds.
Let $g_1, g_2 \subset \partial \smash{\widetilde \Sigma}$ denote the boundary components connected by $\alpha$.
Then the leaves of $\ell$ which separate $g_1$ from $g_2$ 
exactly correspond to those arcs with an endpoint on either $g_1$ or $g_2$.
\end{lemma}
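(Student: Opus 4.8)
The plan is to translate the statement into a purely topological one about the complementary pieces of $\widetilde{\Sigma}\setminus\ell$ and to argue using two inputs: the already-observed fact that spinning a disjoint arc system yields pairwise disjoint leaves, and a base case asserting that $\ell_\alpha$ itself separates $g_1$ from $g_2$. First I would record the two elementary features of the spinning operation that drive everything. With the convention that one always turns in the direction of the boundary orientation, a leaf $\ell_\beta$ becomes asymptotic to a boundary geodesic $g$ exactly when $\beta$ has an endpoint on $g$, and in that case $\ell_\beta$ limits to the \emph{same} ideal endpoint $g^{+}$ of $g$ (the ``forward'' end for the boundary orientation), independently of $\beta$. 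Second, of the two polygons of $\widetilde{\Sigma}\setminus\ell$ adjacent to $\ell_\alpha$, one touches $g_1$ and the other touches $g_2$: this is visible before spinning, since in each polygon abutting $\alpha$ the side $\alpha$ is flanked by a segment of $g_1$ and a segment of $g_2$, and spinning pushes the relevant endpoint of one of these two segments to infinity while the other grows out to all of $g_1$, resp.\ all of $g_2$. Since these two polygons sit on opposite sides of $\ell_\alpha$, this establishes the base case: $\ell_\alpha$ separates $g_1$ from $g_2$, and moreover $\ell_\alpha$ is asymptotic to $g_1$ at $g_1^{+}$ and to $g_2$ at $g_2^{+}$.

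With the base case in hand, write $\Omega_1\ni g_1$ and $\Omega_2\ni g_2$ for the two components of $\widetilde{\Sigma}\setminus\ell_\alpha$. For any $\beta\neq\alpha$, disjointness of leaves forces $\ell_\beta$ into $\overline{\Omega_1}$ or $\overline{\Omega_2}$; and because $\ell_\alpha$ already separates $g_1$ from $g_2$, the leaf $\ell_\beta$ separates $g_1$ from $g_2$ if and only if, say, $\ell_\beta\subseteq\overline{\Omega_1}$ and $\ell_\beta$ separates $g_1$ from $\ell_\alpha$ inside the disk $\overline{\Omega_1}$ (or the symmetric statement in $\overline{\Omega_2}$). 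I would then read off the ideal boundary circle of $\overline{\Omega_1}$: going around, it is the arc $\ell_\alpha$, then --- meeting $\ell_\alpha$ at their common ideal point $g_1^{+}$ --- the arc $g_1$, then the remaining arc back to $\ell_\alpha$. A properly embedded geodesic in this disk separates the $g_1$-arc from the $\ell_\alpha$-arc precisely when one of its ideal endpoints equals $g_1^{+}$ and the other lies in the interior of the remaining arc. By the first observation, $\ell_\beta$ has $g_1^{+}$ as an endpoint iff $\beta$ meets $g_1$; and when it does, its other endpoint is the forward ideal point of some boundary geodesic $h\subseteq\overline{\Omega_1}$ with $h\neq g_1,g_2$, hence automatically interior to the remaining arc --- the would-be boundary cases would force $\ell_\beta=g_1$ or $\ell_\beta=\ell_\alpha$, i.e.\ $\beta$ isotopic into a spike, which is excluded. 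Running the symmetric argument in $\overline{\Omega_2}$ for $g_2$ and combining yields exactly: $\ell_\beta$ separates $g_1$ from $g_2$ $\iff$ $\beta$ has an endpoint on $g_1$ or on $g_2$.

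The conceptual skeleton above is short, so the real work --- and the main obstacle --- is the convention bookkeeping: verifying that the spinning direction along a fixed boundary geodesic is the same for every incident arc, identifying which of the two polygons flanking $\ell_\alpha$ lies in $\Omega_1$, and describing the ideal boundary of $\overline{\Omega_1}$ accurately (in particular the adjacency of the $\ell_\alpha$- and $g_1$-arcs at $g_1^{+}$). Each of these is a local computation at a boundary point or inside a complementary polygon, best carried out with reference to a figure. I would single out the base case as the step deserving the most care, since it is the only place where one genuinely uses that $\alpha$ connects $g_1$ to $g_2$, rather than merely that it meets them.
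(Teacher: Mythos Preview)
Your argument is correct and rests on the same core observation as the paper's: the leaf $\ell_\alpha$ is asymptotic to $g_1$ at one end and to $g_2$ at the other, so any leaf separating $g_1$ from $g_2$ must share an ideal endpoint with one of them, hence come from an arc touching $g_1$ or $g_2$. The paper's proof is shorter and uses a path argument instead of your decomposition into half-disks: it simply notes that one can travel from $g_1$ to $g_2$ by first crossing from $g_1$ to $\ell_\alpha$ (encountering only leaves asymptotic to both), running along $\ell_\alpha$, and then crossing to $g_2$; any separating leaf must be crossed somewhere along this route and is therefore asymptotic to $\ell_\alpha$, hence to $g_1$ or $g_2$. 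This buys brevity but only establishes the ``only if'' direction of the biconditional explicitly. Your version, with the cut along $\ell_\alpha$ and the ideal-boundary analysis of $\overline{\Omega_i}$, is more elaborate but has the virtue of verifying the ``exactly'' in full, and of making the convention bookkeeping (that all spun leaves at $g_i$ share the ideal point $g_i^+$) visible rather than implicit.
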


In particular, the leaves separating $g_1$ from $g_2$ break into two packets, consisting of those asymptotic to $g_1$ and those asymptotic to $g_2$. These have only the leaf $\ell(\alpha)$ in common.

\begin{proof}
Since $\ell_\alpha$ is asymptotic to $g_1$, any leaf separating them must be asymptotic to both. The same holds for $\ell_\alpha$ and $g_2$.
But now we note that we can connect $g_1$ to $g_2$ by crossing from $g_1$ to $\ell_\alpha$, running along $\ell_\alpha$, then crossing to $g_2$.
Thus any leaf of $\ell$ separating $g_1$ from $g_2$ must be asymptotic to $\ell_\alpha$, which means it must be asymptotic to either $g_1$ or $g_2$, which means it must come from spinning an arc with an endpoint on $g_1$ or $g_2$.
\end{proof}

The above Lemma allows us to understand how to glue together deflation maps coming from the regular pieces.

\begin{proposition}\label{prop:glue_reg_deflate}
Let $\Sigma$ be a surface with crowned boundary, fix $\arcwt \in |\mathscr{A}_{\text{fill}}(\Sigma, \partial \Sigma)|_{\RR}$, and let $T(\arcwt) \in \TRG(\Sigma)$ be its dual metric ribbon graph.
Then there exists a deflation
\[Y(\arcwt) \to T(\arcwt)\]
where $Y(\arcwt)\in \cT(\Sigma)$ is the output of Construction \ref{constr:gluereg} applied to $\arcwt$.\end{proposition}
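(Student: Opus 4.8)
The plan is to build the deflation by collapsing a canonical partial foliation of $Y := Y(\arcwt)$, reusing the geometric analysis already carried out in the proof of Proposition \ref{prop:reg_glue_res}. Recall the setup: $Y$ is obtained from Construction \ref{constr:gluereg} by taking one regular ideal polygon $P_V$ for each complementary region of $\Sigma \setminus \arc$ and gluing the $P_V$ along the spun leaves $\ell = \{\ell_\alpha\}$ with the positive shears $c_\alpha$, measured with respect to the inscribed regular horogons; while $T := T(\arcwt)$ is the metric ribbon graph dual to $\arc$, with one vertex $v_V$ per polygon, one edge $e_\alpha$ of length $c_\alpha$ per arc $\alpha$, and one half-infinite ray per spike of $\Sigma$. (Note that $Y$ is typically not the surface whose orthogeodesic spine is $T$, so this is a genuine assertion.)

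The first step is to fix the relevant decomposition of $Y$. Inside each $P_V$ let $H_V$ be its inscribed regular horogon; then $P_V \setminus H_V$ is a union of spikes carrying their horocyclic foliations, and $\partial H_V$ meets each geodesic side of $P_V$ in exactly one tangency point. Because every shear $c_\alpha$ is positive, the horogons $H_V$ are pairwise disjoint in $Y$, and their complementary horocyclic foliations glue together — around each ideal vertex, consecutive horocyclic arcs of neighboring polygons fit together monotonically, so there are no crossings — into a single measured foliation $\mathcal F$ of $Y \setminus \bigcup_V \mathrm{int}(H_V)$. This is exactly the foliation appearing in the proof of Proposition \ref{prop:reg_glue_res}, and Lemma \ref{lem:spin_separation} records the combinatorics of which polygons abut along each $\ell_\alpha$. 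Two features of $\mathcal F$ matter: in each spike of $\Sigma$ the Busemann function to the cusp is a transversal of infinite length, and along each glued leaf $\ell_\alpha$ the geodesic segment joining the two tangency points where the horogons of the two polygons adjacent to $\ell_\alpha$ meet $\ell_\alpha$ is a transversal of length exactly $c_\alpha$ — this is precisely the shear, by the same computation used for Proposition \ref{prop:reg_glue_res}.

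Now define $\dfl : Y \to T$ by collapsing each $H_V$ to the vertex $v_V$ and sending every leaf of $\mathcal F$ to its point in the leaf space, parametrizing the edges $e_\alpha$ and the rays of $T$ by the transverse measure of $\mathcal F$ (concretely: nearest-point projection onto $\ell_\alpha$ near a glued leaf, and the Busemann function into a spike). By the previous paragraph the leaf space, with this metric, is canonically the marked metric ribbon graph $T(\arcwt)$: the edge lengths are the $c_\alpha$, the half-infinite rays match the spikes of $\Sigma$, and the cyclic orders at the vertices are those dual to $\arc$; by Theorem \ref{thm:spinecoords} this identifies the quotient with the point $T(\arcwt) \in \TRG(\Sigma)$, and since $\dfl$ is a collapse of contractible sets respecting the duality with $\arc$ it induces the correct marking. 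For the metric conditions in Definition \ref{def:deflation}: on each geodesic of $\partial Y$, $\dfl$ is a concatenation of unit-speed Busemann maps into the rays of $T$ meeting at the image of the unique tangency point on that geodesic, hence an isometric embedding; and $\dfl$ is $1$-Lipschitz because it is, piece by piece, either constant (on a horogon), a nearest-point projection onto a geodesic $\ell_\alpha$, or a Busemann function into a spike — so that for any $p,q \in Y$ a geodesic from $p$ to $q$ crosses these pieces in finitely many subsegments, $\dfl$ does not lengthen any subsegment, and therefore $d_T(\dfl(p), \dfl(q))$ is at most the total length $d_Y(p,q)$.

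The routine but genuinely delicate part — and the only place positivity of the weights is used — is verifying that the local pictures at the ideal vertices and along the glued leaves are mutually consistent: that the $H_V$ remain disjoint, that $\mathcal F$ is a well-defined embedded foliation, and that its leaf space really is $T(\arcwt)$ with the advertised lengths rather than some degenerate quotient. All of this is the monotonicity/positivity argument already established for Proposition \ref{prop:reg_glue_res}, so once that is in hand the remaining checks are formal.
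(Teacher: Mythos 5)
Your proposal follows essentially the same route as the paper: define the deflation by collapsing the inscribed regular horogons to vertices and the complementary horocyclic foliation $\mathcal F$ to its leaf space, then verify the metric properties via the $1$-Lipschitz horocyclic projection. The paper carries out the identification of the quotient with $T(\arcwt)$ in the universal cover, gluing local deflations $P\to\ast_n$ along the spun leaves and then using Lemma~\ref{lem:spin_separation} together with positivity of the shears to show that $\dfl^{-1}(e)$ of each open edge $e$ is precisely a band of $\mathcal F$-leaves isotopic to the dual arc (so no unexpected identifications occur), which is the precise content of your closing remark that the "delicate part" is the monotonicity/positivity argument from Proposition~\ref{prop:reg_glue_res}; your version addresses it at the surface level via the leaf space, which is equivalent.

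Two small imprecisions worth flagging. First, the citation of Theorem~\ref{thm:spinecoords} is misplaced: that theorem identifies $\cT(\Sigma)$ with $\TRG(\Sigma)$ via orthogeodesic spines, which is not what identifies the leaf space of $\mathcal F$ with $T(\arcwt)$; the relevant identification is simply the ribbon graph / weighted arc system duality from Section~\ref{subsec:RGs and arcs}. Second, near a glued leaf $\ell_\alpha$ the collapsing map is projection \emph{along the horocyclic leaves} (with the transverse measure giving the metric on $e_\alpha$), not nearest-point geodesic projection onto $\ell_\alpha$ — these are genuinely different maps, though both are $1$-Lipschitz, so nothing in your argument actually breaks. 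Neither point affects the correctness of the overall proof.
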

\begin{proof}
The deflation is defined by collapsing the leaves of the regular horocyclic foliation $\mathcal F$ on each polygon.
This results in a deflation $\dfl:Y \to T$ to {\em some} ribbon graph, and it remains to show that $T = T(\arcwt)$.
We will do this at the level of the universal cover ($\pi_1$ equivariance will follow because our construction will not require any choices).

Collapsing the regular horocyclic foliation on each $n$-gon of $\smash{\widetilde Y} \setminus \smash{\widetilde \ell}$ gives a map to the $n$-vertex star $\ast_n$.
If $P_1$ and $P_2$ are adjacent over a leaf $\ell_{12}$ of $\smash{\widetilde \ell}$, then the maps $f_1:P_1 \to \ast_{n_1}$ and $f_2: P_2 \to \ast_{n_2}$ can be glued together into a map
\[P_1 \cup P_2 \to (\ast_{n_1} \cup \ast_{n_2}) / (f_1(\ell_{12}) \sim f_2(\ell_{12})).\]
Since shears are measured with respect to the ideal horogons, which are sent to vertices under the deflations $f_i$, the target is a graph with two vertices, one of valence $n_1$ and the other of valence $n_2$, connected by an edge $e$ whose length is exactly the shear used to glue $P_1$ and $P_2$. See Figure \ref{fig:glue_deflate}.

\begin{figure}[ht]
    \centering
    \includegraphics[width=.7\linewidth]{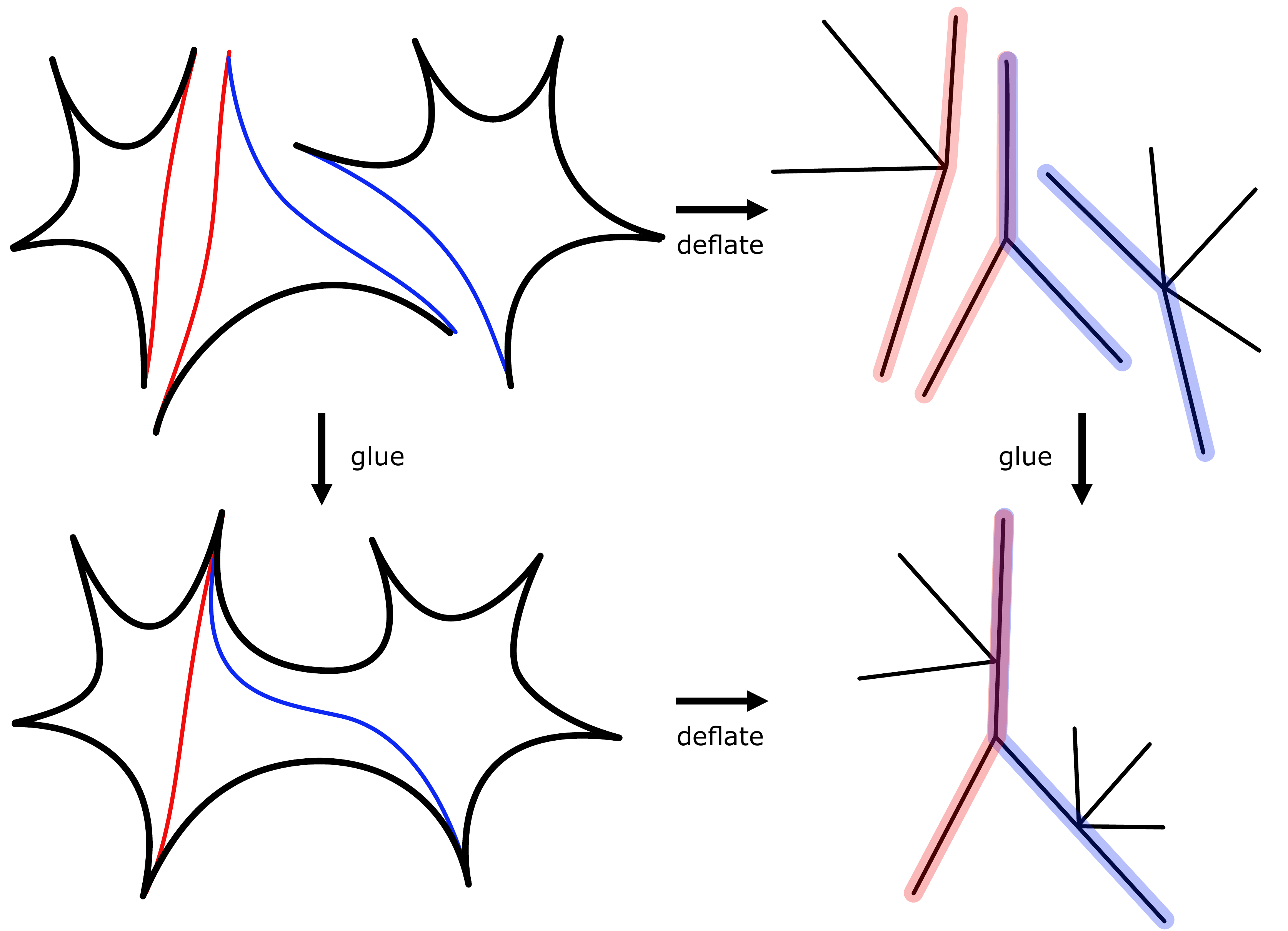}
    \caption{Gluing deflations of polygons.}
    \label{fig:glue_deflate}
\end{figure}

Globalizing, this means that (universal cover of) the metric ribbon graph $T$ resulting from collapsing the leaves of $\mathcal F$ may be obtained as 
\[\widetilde{T} = \bigsqcup_{P} \ast_{n(P)} / \sim, \]
where the index runs over all polygons $P$ of $\smash{\widetilde{Y}} \setminus \smash{\widetilde{\ell}}$, the number $n(P)$ is the number of sides of $P$, and two geodesic paths in different stars are equivalent if they are the image of a leaf of $\smash{\widetilde{\ell}}$ under two different deflation maps.

With this description, it is now easy to see that $T = T(\arcwt)$.
Indeed, suppose that $P_1$ and $P_2$ are polygons adjacent over a leaf $\ell$ as before.
We wish to understand all other points of $\bigsqcup_{P} \ast_{n(P)}$ identified with the (open) edge $e$. 
By Lemma \ref{lem:spin_separation}, together with the fact that all polygons are glued with positive shears, we see that $\dfl^{-1}(e)$ consists entirely of leaves of $\mathcal F$ isotopic to $\arc$ (compare the argument in Proposition \ref{prop:reg_glue_res}).
In particular, for any other polygon $Q$, the intersection $\dfl^{-1}(e) \cap Q$ is a union of horocycles based at a single spike of $Q$. 
Thus, we see that no other vertices are mapped to $e$ even after making all identifications, hence it persists in the final graph.
\end{proof}

In the following subsection, we will want to compare the spine of $Y(\arcwt)$ with the dual ribbon graph $T(\arcwt)$.
More generally, we will want a similar result for surfaces glued out of polygons which are almost, but not exactly, regular.

Given any hyperbolic surface $Y$ with crowned boundary, define its {\bf spine basepoints} to be the projection of the vertices of $\Sp(Y)$ to $\partial Y$; we have already had to consider these in Section \ref{subsec:generalspinedeflate}, and will need to refer to them repeatedly in what is to come.
Say that an ideal polygon $P$ is {\bf $\zeta$-nearly-regular}\footnote{Compare with the definition of ``$\zeta$-equidistant'' in \cite[Definition 6.5]{shshII}.}
if all of the finite edges of its spine have length at most $\zeta$.
In particular, there is some constant $C$ depending only on the number of sides of $P$ such that, for any geodesic side $g$ of $P$, the spine basepoints on $g$ are all within $C\zeta$ of each other.
Note that for any $\zeta$, the family of $\zeta$-nearly-regular polygons is compact.

The key estimates we will need in the sequel are the following:

\begin{lemma}[Big shears short arcs]\label{lem:bigshears_closegeos}
For every $\zeta, \varepsilon > 0$, there is an $S_0$ ($=O(\log(1/\varepsilon))$)
such that the following holds.
Let $\Sigma$ be any finite-type surface with (crowned) boundary and $\arc$ any filling arc system. Spin $\arc$ to get a union of leaves $\ell$.
Let $Y \in \T(\Sigma)$ be any surface obtained by gluing together $\zeta$-nearly-regular polygons $\{P_i\}$ along $\ell$ with shears at least $2S_0$ (as measured with respect to any spine basepoints).
Then:
\begin{enumerate}
    \item The orthogeodesic representatives of the arcs of $\arc$ each have length $O(\varepsilon)$.
    \item Each spine basepoint of $Y$ is within $O(\zeta + \varepsilon)$ of (the image of) a spine basepoint of one of the $P_i$, and vice versa.
\end{enumerate}
The implicit constants depend only on $\zeta$, the maximum number of arcs incident to any boundary geodesic of $Y$, the maximum number of the sides of polygons of $Y \setminus \ell$, and the minimum length of any closed boundary component of $\partial Y$.
\end{lemma}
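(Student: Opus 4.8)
The plan is to work entirely in the universal cover $\widetilde Y$, where the picture becomes a union of lifted regular-ish polygons glued along lifted spun leaves $\widetilde\ell$. The key geometric input is that gluing two $\zeta$-nearly-regular polygons along a spun leaf with large shear $s$ produces a configuration in which the two geodesic sides of the two polygons that flank the leaf (but are \emph{not} the glued side) come very close to one another: spinning means that one side of each polygon lies \emph{along} the glued leaf, so after gluing with shear $s$ these two sides become a single bi-infinite geodesic, and the ``next'' sides on either side are pushed together at a rate controlled by $e^{-s}$. First I would make this precise: for a single pair of adjacent polygons $P_1, P_2$ glued with shear $s$ along $\ell_\alpha$, I would show by explicit hyperbolic trigonometry (upper half-plane, placing $\ell_\alpha$ as a vertical line, the relevant sides of $P_i$ as semicircles tangent-at-infinity to it) that the orthogeodesic arc dual to $\alpha$ has length $O(e^{-s})$, and more generally that any orthogeodesic arc $\beta$ dual to an arc whose spun leaf separates two sides at ``combinatorial distance $k$'' across a chain of glued leaves with shears $s_1,\dots,s_k$ has length $O(e^{-(s_1+\cdots+s_k)})$, since lengths of arcs compound multiplicatively (this is the same estimate used in the proof of Lemma \ref{lem:rad unif bdd}). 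Choosing $S_0 = O(\log(1/\varepsilon))$ so that $e^{-2S_0} < \varepsilon$ gives part (1), with the implicit constant absorbing the $\zeta$-dependence (from the compactness of the family of $\zeta$-nearly-regular polygons) and the bound on the number of sides.

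For part (2), the point is that once all the orthogeodesic arcs of $\arc$ are $O(\varepsilon)$-short, the orthogeodesic foliation $\cO_{\partial Y}(Y)$ in a band around each such arc agrees with (or is $O(\varepsilon)$-close to) the foliation one reads off from the individual polygons. Concretely: a spine basepoint of $Y$ on a geodesic $g \subset \partial Y$ is a point where a singular leaf of $\cO_{\partial Y}(Y)$ meets $g$, i.e., a point equidistant from two sides of $\partial Y$. Each side $g$ of $\partial Y$ is covered (in $\widetilde Y$) by a side of some polygon $P_i$; the spine basepoints of that polygon sit on $g$ within $C\zeta$ of each other by hypothesis, and I claim that the spine basepoints of $Y$ lying on $g$ are within $O(\zeta+\varepsilon)$ of them. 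The reason is that the ``competitor'' sides that could make a point of $g$ equidistant-to-two-sides are exactly the sides of the neighboring polygons that have been pushed $O(\varepsilon)$-close by large shears, together with the sides of $P_i$ itself; since the $O(\varepsilon)$-close sides essentially coincide (up to $O(\varepsilon)$) with sides of $\partial Y$ already accounted for, the bisector structure on $g$ is an $O(\zeta+\varepsilon)$-perturbation of that coming from $P_i$ alone. Running this argument in reverse — a spine basepoint of $P_i$ cannot be moved far by the perturbation — gives the ``vice versa.'' I would organize this via the ``Share edge when close'' / nearness-of-bisectors type estimates already in the paper (Lemma \ref{lem:thickthin_deflation} and the trigonometry of Saccheri quadrilaterals in its proof), plus the observation that closest-point projection to $\partial Y$ distorts distances by at most $e^{\rad(Y)}$, which is bounded here since the complementary polygons have bounded radius (Lemma \ref{lem:Hgon rad bd}) and the short arcs cannot create large inscribed disks.

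The main obstacle I expect is bookkeeping the dependence of the implicit constants and, relatedly, handling the ends that spiral onto closed boundary geodesics rather than exiting cusps. For the spiraling ends, a spun leaf wraps around a closed component $C$ of $\partial Y$ infinitely often, so the relevant ``chain'' of glued leaves separating two sides can be infinite; here one needs the minimum-length-of-$C$ hypothesis to guarantee the geometric series $\sum e^{-(s_1+\cdots+s_k)}$ converges (each wrap around $C$ contributes at least $\ell(C)$ to the accumulated shear), exactly as in the proof of Lemma \ref{lem:rad unif bdd}. Keeping track that all constants depend only on $\zeta$, the max valence of a boundary geodesic in the arc system, the max number of sides of a complementary polygon, and $\min\ell(C)$ — and on nothing else — is routine but requires care to state the trigonometric estimates uniformly over the (compact) family of $\zeta$-nearly-regular polygons with a bounded number of sides. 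I would isolate the single-pair estimate as a sublemma to make the uniformity transparent, then globalize by summing over chains.
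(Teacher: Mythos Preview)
Your outline is essentially the same strategy as the paper's: work in the universal cover, exploit exponential decay in shears to get part (1), handle spiraling ends by summing a geometric series controlled by the boundary length, and deduce part (2) from the fact that sides of $\partial P_i$ are $O(\varepsilon)$-close to sides of $\partial Y$ near each spine vertex. Two points where your sketch is vaguer than the paper and would need sharpening:

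\textbf{Part (1).} The paper does not do a direct trigonometric ``chain'' estimate. Instead it proves a cleaner Claim: if $p\in\ell_\alpha$ is at distance $\ge S_0$ from the spine basepoints of $P_i$, then $p$ is within $O(\varepsilon)$ of $g_i$. The proof uses the horocycle $h$ through $p$ based at the \emph{common ideal endpoint} of $\ell_\alpha$ and $g_i$, and the crucial structural input is Lemma~\ref{lem:spin_separation}: every leaf of $\widetilde\ell$ separating $p$ from $g_i$ comes from an arc incident to $g_i$, hence is asymptotic to $g_i$ and meets $h$. Positivity of shears then forces $h\cap Q$ to be short in every intervening polygon $Q$, and one sums (finitely in the crown case, geometrically in the closed case). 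Your chain picture is correct in spirit, but you should invoke Lemma~\ref{lem:spin_separation} explicitly; without it, it is not obvious that the separating leaves line up along a single horocycle, and your compound estimate $O(e^{-(s_1+\cdots+s_k)})$ lacks justification.

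\textbf{Part (2).} The paper's argument is: the Claim shows that, on a ball of controlled radius around any vertex $v$ of $\Sp(P_1)$, the geodesic tuple $\partial P_1$ is $\varepsilon$-Hausdorff close to a tuple of geodesics of $\partial Y$; it then cites \cite[Lemma 6.2, Corollary 6.3]{shshII} to conclude that centers of circles tangent to three such geodesics (hence spine vertices, hence spine basepoints) move by $O(\varepsilon)$. Your proposed tool---Saccheri quadrilaterals from Lemma~\ref{lem:thickthin_deflation}---controls distances between pairs of geodesics, but does not by itself give stability of the inscribed-circle center under perturbation of a triple of tangent lines. You need an estimate of exactly this form; either prove it directly or cite it as the paper does.
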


Statement (1) will allow us to apply our results on deflation maps to these glued surfaces, while (2) will allow us to iteratively apply certain arguments.

\begin{proof}
Take the universal cover $\widetilde Y$ of $Y$, pick a lift of $\alpha$ (which we will give the same name), and let $g_1, g_2 \subset \partial Y$ denote the boundary components connected by $\alpha$. Let $P_1$ and $P_2$ denote the (lifts of) the polygons of $Y \setminus \ell$ adjacent over $\ell_\alpha$.
For concreteness, let us suppose that $P_1$ and $g_1$ are on the same side of $\ell_\alpha$, and similarly for $P_2$ and $g_2$.

\begin{claim}\label{claim:bigshear}
There is some $S_0 = O(\log(1/\varepsilon))$ such that if all shears are positive and the distance of $p \in \ell_\alpha$ to the spine basepoints of $P_i$ is at least $S_0$, then $p$ is within $O(\varepsilon)$ of $g_i$.
\end{claim}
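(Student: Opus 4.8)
The plan is to work entirely inside the polygon $P_i$ and use the fact that it is $\zeta$-nearly-regular, hence lies in a fixed compact family. Focus on $P_1$ (the argument for $P_2$ is symmetric). The leaf $\ell_\alpha$ shares the geodesic side of $P_1$ that one sees on the ``left'' when looking out the relevant spike of $P_1$ (this is exactly the description of spun arcs used in Construction \ref{constr:gluereg} and Proposition \ref{prop:reg_glue_res}); call that side $g_1$. Since $P_1$ is $\zeta$-nearly-regular, all its spine basepoints lie within $C\zeta$ of one another on each side, and in particular the spine basepoint on the side $g_1$ is within $C\zeta$ of the ideal vertex structure in a way controlled only by the combinatorial type. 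Because the family of $\zeta$-nearly-regular $n$-gons is compact, there is a uniform bound: the horocycle based at the spike-vertex of $P_1$ (the ideal endpoint shared by $\ell_\alpha$ and $g_1$) that passes through a spine basepoint on $g_1$ has length at most some constant $c_1 = c_1(\zeta, n)$.

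First I would set up coordinates in the upper half-plane so that the common ideal endpoint of $\ell_\alpha$ and $g_1$ is at $\infty$, with $g_1$ the vertical line $x=0$. A point $p \in \ell_\alpha$ at signed distance $d$ (measured upward, i.e., into the spike) from the spine basepoint $b_1 \in g_1 \cap \ell_\alpha$ then sits on a horocycle $\{y = \text{const}\}$ whose length, intersected with $P_1$, has shrunk from the value near $b_1$ by a factor $e^{-d}$ (horocyclic length decays exactly exponentially in distance along the orthogonal geodesics emanating from $\infty$). Since the horocyclic segment through $b_1$ has length $\le c_1$, the horocyclic segment through $p$ inside the relevant region has length $\le c_1 e^{-d}$. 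The hypothesis ``the distance of $p$ to the spine basepoints of $P_1$ is at least $S_0$'' gives $d \ge S_0$ (up to an additive $O(\zeta)$ error, absorbed into the constants), so this horocyclic length is $\le c_1 e^{-S_0}$. Choosing $S_0 = \log(c_1/\varepsilon) = O(\log(1/\varepsilon))$ makes this at most $\varepsilon$. But this horocyclic segment runs from $p$ to a point of $g_1$; its length bounds the distance from $p$ to $g_1$ from above. Hence $d(p, g_1) \le \varepsilon$, which is the claim.

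The one point requiring care—and the main (mild) obstacle—is the ``positivity of shears'' hypothesis: I must make sure that when I follow the horocyclic segment from $p$ toward $g_1$, I do not exit $P_1$ before reaching $g_1$, and that the ``spine basepoint'' against which the shear is measured really is on the correct side. Both follow from the same positivity observation used in the proof of Proposition \ref{prop:reg_glue_res}: because all polygons are glued with positive shears, the spine basepoint on $g_1 = g_2$ coming from $P_1$ lies on the side of $\ell_\alpha$ toward $P_1$, so the horocyclic segment through $p$ stays inside $P_1$ (equivalently, inside the spike of $P_1$) all the way to $g_1$. The compactness of the family of $\zeta$-nearly-regular $n$-gons, together with the bound on the number of sides and the lower bound on closed-boundary length, then makes all the constants ($c_1$, the $O(\zeta)$ errors converting ``distance to spine basepoints'' into the height coordinate $d$) uniform, giving the stated dependence of the implicit constants. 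Summing over the finitely many polygons adjacent to $g_1$ and over the two packets of leaves (those asymptotic to $g_1$ and those asymptotic to $g_2$, per Lemma \ref{lem:spin_separation}) upgrades this pointwise estimate to statements (1) and (2) of the Lemma.
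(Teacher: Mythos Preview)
Your argument has a genuine gap. You work entirely inside $P_1$ and identify $g_1$ with the side of $P_1$ adjacent to $\ell_\alpha$ at the ideal vertex $\xi_1$. But in the paper's set-up, $g_1$ is a boundary geodesic of $\widetilde Y$, not a side of $P_1$ in general. Between $\ell_\alpha$ and $g_1$ there can be other leaves of $\widetilde\ell$, namely those obtained by spinning the \emph{other} arcs incident to $g_1$ (this is exactly the content of Lemma~\ref{lem:spin_separation}). So the horocyclic segment you build from $p$ hits the next side of $P_1$'s spike, which may be some $\ell_\beta$, not $g_1$; your computation therefore only proves $d(p,\ell_\beta)\le\varepsilon$, not $d(p,g_1)\le O(\varepsilon)$.

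The paper's proof handles precisely this point. One considers the full horocycle $h$ through $p$ based at $\xi_1$ and observes that it meets several intervening polygons $Q$ before reaching $g_1$. Positivity of all shears forces $h$ to be at least as deep (up to $O(\zeta)$) in each $Q$'s spike as it is in $P_1$'s, so every piece $h\cap Q$ also has length at most $O(\varepsilon)$. If $g_1$ lies in a crown there are finitely many such $Q$ and one sums; if $g_1$ covers a closed boundary component there are infinitely many, related by powers of the deck transformation $\gamma$ stabilizing $g_1$, and positivity of shears makes the lengths $\ell(h\cap\gamma^n Q)$ decay geometrically (at rate $\approx e^{-\ell(g_1)}$), giving a convergent series. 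Your final sentence (``summing over the finitely many polygons adjacent to $g_1$'') is on the right track, but this summation is required for the Claim itself, not merely to pass from the Claim to the Lemma, and the closed-boundary case needs the geometric-series argument rather than a finite sum.
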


Assuming the Claim, let us prove the Lemma. For (1), we see that if we glue $P_1$ and $P_2$ with shears larger than $2S_0$,
then there is a point $p$ of $\ell_\alpha$ which is $\ge S_0$ away from the spine basepoints of both $P_1$ and $P_2$, hence, $p$ is $O(\varepsilon)$ close to both $g_1$ and $g_2$.
Since the length of the orthogeodesic representative of $\alpha$ is the minimum distance between points of $g_1$ and $g_2$, we see it must also be of order $O(\varepsilon)$.

For (2), we observe that our bound on shears implies that if the distance between $p$ and any spine basepoint of $P_1$ is at most $S_0$, then $p$ must be at least $S_0$ away from the spine basepoints of $P_2$, hence is $O(\varepsilon)$ close to $g_2$. The same holds for all other polygons adjacent to $P_1$.
Taken together, this implies that for any vertex $v$ of the spine of $P_1$, the geodesic tuple $\partial P_1$ is $\varepsilon$-Hausdorff close to a tuple of geodesics of $\partial Y$ on a ball of radius $O(\log(1/\zeta))$ around $v$.
Moreover, the implicit constant depends only on the inradius of $P_1$ (which in turn depends only on $\zeta$ and the number of sides of $P_1$).
Lemma 6.2 and Corollary 6.3 of \cite{shshII} now imply that the center of any circle tangent to three geodesics of $\partial P_1$ is $O(\varepsilon)$ close to the center of the circle tangent to the corresponding geodesics of $\partial Y$, hence the collection of spine basepoints on these tuples must have diameter $O(\zeta + \varepsilon)$.
\end{proof}

\begin{proof}[Proof of Claim \ref{claim:bigshear}]
We begin by observing that an analogous statement is obvious in the non-glued setting: if one goes $\log(1/\varepsilon)$ far out from the spine basepoints in any $\zeta$-nearly-regular polygon, then any horocycle based at any spike has length at most $O(\varepsilon)$. This follows from compactness of the space of $\zeta$-nearly-regular polygons.

Consider the set of geodesics $\Lambda$ of $\ell$ separating $p$ from $g_1$; by Lemma \ref{lem:spin_separation}, these come from spinning lifts of arcs of $\arc$ incident to $g_1$. Consider the horocycle $h$ based at the common endpoint of $\ell_\alpha$ and $g_1$.
Since all shears are positive, we see that for each polygon $Q$ of $\widetilde Y \setminus \widetilde \ell$ meeting $h$, the distance from $h$ to the spine basepoints of $P_1$ is greater than the distance to the spine basepoints of $Q$ (up to an error of $O(\zeta)$).
In particular, so long as $p$ is taken far enough away from the basepoints of $P_1$, then $h \cap P_1$ and $h \cap Q$ both have length at most $\varepsilon$.

In the case where $g_1$ projects down to a geodesic in a crown, we are done. The geodesic $g_1$ is not stabilized by any element of $\pi_1 Y$, so there are finitely many leaves we need to consider separating $p$ from $g_1$. Adding up the length of the horocycles in the complementary regions yields a multiple of $\varepsilon$.

Otherwise, $g_1$ is the lift of a closed boundary component and there are infinitely many leaves in $\Lambda$, but only finitely many up to the action of the deck group. Let $\gamma$ be a generator of the stabilizer of $g_1$; then by our positivity assumption, we know the shear from $P_1$ to $\gamma P_1$ is positive.
In particular, this means that $h \cap \gamma P_1$ is deeper in the spike than $h \cap P_1$.
Lengths of horocycles decrease exponentially as one goes deeper into a cusp, so the total length of $h$ is a sum of geometric series, each of whose first terms is on the order of $\varepsilon$. Thus we still get the desired bound.

We note that the exponential decay rate of these series in the closed case is uniformly fast, as it is approximately $e^s$, where $s$ is the length of the boundary component covered by $g_1$. Since we are gluing with large positive shears, Proposition \ref{prop:reg_glue_res} ensures that $s$ is large.
\end{proof}

\subsection{All itineraries occur}\label{subsec:itin}
With these geometric preliminaries established, we can now prove that, subject to the constraints of Theorems \ref{thm:imageofcut} and \ref{thm:lifecycle}, every possible sequence of arc systems is realized by boundary-tight maps.

Let $\lambda$ be a lamination on $S$ admitting a measure of full support. Say that an unweighted arc system $\arc = \bigcup \alpha$ on $\Sigma = S \setminus \lambda$ is {\bf $(S \setminus \lambda)$-compatible} if there exists a system of nonzero weights $(c_\alpha)$ such that 
\[\sum_{\alpha \in \arc} c_\alpha \alpha \in \Base,\]
where $\Base$ is the set of weighted, filling arc systems satisfying the gluing condition of Theorem \ref{thm:imageofcut}, with residues replaced with the combinatorial notion $\res_{\arcwt}$.
Equivalently (by Theorem \ref{thm:imageofcut}), there is some $X \in \T(S)$ such that $\arc(X \setminus \lambda) = \arc.$ 

The following is a more detailed version of Theorem \ref{mainthm:itin}.

\begin{theorem}\label{thm:itineraries}
Let $\Sigma$ be a finite-type surface with crowned boundary. For any sequence of nested, filling arc systems
\[\arc_0 \subset \arc_1 \subset \ldots \subset \arc_n,\]
there exist $Y_0, Y_1, \ldots, Y_n \in \cT(\Sigma)$ together with boundary-tight maps $Y_i \to Y_{i+1}$ such that:
\begin{enumerate}
    \item $\arc_i \subset \arc(Y_i)$.
    \item All arcs of $\arc(Y_i) \setminus \arc_i$ have uniformly bounded weight.
\end{enumerate}
Moreover, if $\lambda$ is a geodesic lamination on a closed surface $S$ that admits a measure of full support and $\Sigma = S \setminus \lambda$, and if the $\arc_i$ are all $(S \setminus \lambda)$-compatible, then the $Y_i$ can all be taken in $\cT(S \setminus \lambda)$.
In this case, there exist $X_i \in \cT(S)$ such that:
\begin{enumerate}
    \item[(3)] $X_i \setminus \lambda = Y_i$.
    \item[(4)] There are tight maps $X_i \to X_{i+1}$ that affinely stretch $\lambda$ by the Lipschitz constant.
\end{enumerate}
\end{theorem}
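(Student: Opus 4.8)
The plan is to realize each $Y_i$ as a surface glued from (nearly-)regular ideal polygons via Construction \ref{constr:gluereg} with large shears, and to assemble each boundary-tight map $Y_i\to Y_{i+1}$ out of the polygon maps of Section \ref{sec:construct horogon}. First I would fix, for each $i$, a positive weight system $\arcwt_i=\sum_{\alpha\in\arc_i}w^{(i)}_\alpha\alpha$ with all weights large, chosen recursively so that for the old arcs $\alpha\in\arc_i$ the weight $w^{(i+1)}_\alpha$ equals $L\,w^{(i)}_\alpha$ up to a bounded error, while the new arcs $\alpha\in\arc_{i+1}\setminus\arc_i$ receive weights of a controlled size; in the closed case one keeps each $\arcwt_i$ in $\Base$, which is possible precisely because every $\arc_i$ is $(S\setminus\lambda)$-compatible. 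Put $Y_i:=Y(\arcwt_i)$. Because the shears are large, Lemma \ref{lem:bigshears_closegeos}(1) forces the arcs of $\arc_i$ to be short in $Y_i$, so $\arc_i\subseteq\arc(Y_i)$, and it also keeps $\sys(Y_i)$ bounded below (curves crossing $\arc_i$ become long, and Proposition \ref{prop:reg_glue_res} controls the closed boundary lengths). The deflation $Y_i\to T(\arcwt_i)$ of Proposition \ref{prop:glue_reg_deflate} together with Theorem \ref{mainthm:deflate} then gives $\Sp(Y_i)=T(\arcwt_i)+O(1)$, and reading off the $O(1)$ yields conclusion (2). Conclusion (1) is immediate, and in the closed case Proposition \ref{prop:reg_glue_res} and Theorem \ref{thm:imageofcut} place $Y_i$ in $\cT(S\setminus\lambda)$, so $Y_i=X_i\setminus\lambda$ for some $X_i\in\cT(S)$, which is (3).

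To build the maps, cut $Y_i$ along the spun system $\ell_i$ into regular ideal polygons $\{P_j\}$, one per component $\Sigma_j$ of $\Sigma\setminus\arc_i$. Since $\arc_i\subseteq\arc_{i+1}$, cutting $Y_{i+1}$ along $\ell_i$ refines this only inside each $P_j$, giving a surface $U_j\cong\Sigma_j$ that is itself glued from regular sub-polygons along the arcs of $\arc_{i+1}$ interior to $\Sigma_j$. On each piece I would construct an $L$-Lipschitz boundary-tight map $P_j\to U_j$ by adapting the decompose--build--reglue strategy of Section \ref{sec:construct horogon}: one passes from $P_j$ through an auxiliary nearly-regular polygon whose spine has the combinatorial type dual to $\arc_{i+1}|_{\Sigma_j}$ with short finite edges (carrying an embedded horogon, by Lemma \ref{lem:whenhorogon}), via Corollary \ref{cor:Horo to Horo}, and then stretches up the finite part of its spine using the construction underlying Theorem \ref{thm:scalespineup}; since the first stage has Lipschitz constant controlled by $n_j$ alone and the second can be given any large Lipschitz constant, the composite is exactly $L$-Lipschitz once $L$ is large. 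Each $P_j\to U_j$ restricts to an $L$-homothety on the leaves of $\ell_i$, and — exploiting the freedom in the Section \ref{sec:construct horogon} constructions to prescribe the map on each boundary leaf — I would synchronize these so that adjacent pieces agree on their common leaf; regluing (by the reasoning behind Theorem \ref{thm:glueoptimal}) produces the boundary-tight map $Y_i\to Y_{i+1}$. In the closed case, Theorem \ref{thm:glueoptimal} then glues the maps $X_i\setminus\lambda\to X_{i+1}\setminus\lambda$ along $\lambda$ into tight maps $X_i\to X_{i+1}$ whose tension laminations contain $\lambda$ and which stretch $\lambda$ affinely by $L$, which is (4).

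The step I expect to be the main obstacle is keeping all constants uniform as the recursion proceeds. Concretely one must: (i) confine the auxiliary polygons to a fixed compact family so that Corollary \ref{cor:Horo to Horo} and Theorem \ref{thm:scalespineup} apply with constants independent of $i$ — here one uses Lemma \ref{lem:bigshears_closegeos}(2) and the fact that the bounds of Section \ref{sec:construct horogon} depend only on the finitely many combinatorial types appearing; (ii) run the piece-by-piece construction coherently over the gluing $\ell_i$, matching boundary parametrizations and the resulting shears consistently around the (non-simply-connected) dual graph of $\Sigma\setminus\arc_i$; and (iii) choose the weight systems $\arcwt_i$ so that the bounded errors in the bookkeeping are compatible with the exact scaling of residues forced by Proposition \ref{prop:residues} — a point requiring particular care for even-sided pieces, where an embedded horogon exists only when the residue vanishes (Lemma \ref{lem:whenhorogon}). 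Each of these is manageable because the constructions of Section \ref{sec:construct horogon} are explicit, but organizing the estimates so that the Lipschitz constants do not degrade along the chain $Y_0\to\cdots\to Y_n$ is the technical heart of the argument.
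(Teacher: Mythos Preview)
Your overall architecture is close to the paper's, but there is a genuine gap: the residue obstruction of Proposition~\ref{prop:residues} blocks the map $P_j\to U_j$ exactly in the even-sided case you flag as point (iii), and it cannot be repaired within your scheme.

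Concretely: you set $Y_i:=Y(\arcwt_i)$ via Construction~\ref{constr:gluereg}, so cutting $Y_i$ along $\ell_i$ yields \emph{regular} polygons $P_j$, each with $\res(P_j)=0$. Cutting $Y_{i+1}$ along $\ell_i$ yields $U_j$, which is $\Sigma_j$ glued from regular sub-polygons along the arcs of $\arc_{i+1}\setminus\arc_i$ interior to $\Sigma_j$ with positive shears. By Proposition~\ref{prop:reg_glue_res}, $\res(U_j)$ equals the combinatorial residue of those interior arcs, a signed sum of strictly positive weights that is generically nonzero. But Proposition~\ref{prop:residues} forces $\res(U_j)=L\cdot\res(P_j)=0$ for any boundary-tight $P_j\to U_j$. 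So when $\Sigma_j$ is even-sided and $\res(U_j)\neq 0$, no such map exists, regardless of how you factor through auxiliary polygons. Your intermediate polygon $P'$ inherits the same problem: it must have $\res(P')\neq 0$ to map onward to $U_j$, hence by Lemma~\ref{lem:whenhorogon} it carries no horogon at all, and Corollary~\ref{cor:Horo to Horo} does not apply to produce $P_j\to P'$.

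The paper avoids this by running the induction in the opposite direction. It fixes only $Y_n$ as the output of Construction~\ref{constr:gluereg} (on regular polygons), then works backwards: at each stage it glues, then invokes Corollary~\ref{cor:scalespinedown} to produce a boundary-tight map \emph{from} a nearly-regular polygon $Q_{i}(i)$ \emph{to} the glued piece $Q_{i+1}(i)$. Because $\Sp(Q_i(i))=\tfrac{1}{L_i}\Sp(Q_{i+1}(i))$, the residues match automatically. Thus the $Y_i$ for $i<n$ are glued from \emph{nearly}-regular polygons with exactly the right residues, not from regular ones. The paper's closing Remark makes this explicit: the $Y_i$ cannot all come from Construction~\ref{constr:gluereg}, precisely because the intermediate pieces $Q_i(j)$ may have nonzero residue. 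Reorganizing your argument to build from $Y_n$ downward, using Corollary~\ref{cor:scalespinedown} in place of Corollary~\ref{cor:Horo to Horo}, is the missing move.
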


In fact, we will prove much more, specifying not only the combinatorics of the arc system of $Y_i$ but also their weights.
To set some language for this, we recall a notion from the Introduction.
Given two weighted arc systems (filling or otherwise) $\arcwt$ and $\arcwtb$ on $\Sigma$, we say 
\[\arcwt = \arcwtb + O(1)\]
if there is a constant $C$ (depending only on the topological type of $\Sigma$) such that
\begin{enumerate}
    \item All arcs of $\arcwt$ that are not in $\arcwtb$ have weight at most $C$, and vice versa.
    \item The weights of each arc they have in common differs by at most $C$.
\end{enumerate}

\begin{proof}
We first fix a number of constants that will be relevant throughout the proof. Set $\varepsilon_0$ to be the cutoff from Lemma \ref{lem:thickthin_deflation}.
Shrink this to some $\varepsilon_1$ such that weights of arcs of length at most $\varepsilon_1$ are always at least $C_0$, the cutoff from Corollary \ref{cor:thin bands long edges} (compare the discussion after the Corollary).
Fix some small $\zeta$, let $S_0$ be the shear threshold from Lemma \ref{lem:bigshears_closegeos}, and increase the threshold to some $S_1 > S_0 + O(\zeta + \varepsilon_1)$, where the implicit constant depends on that from the Lemma and the topological type of $\Sigma$.
In particular, taking $\varepsilon_1$ and $\zeta$ sufficiently small, we may as well set $S_1 = S_0+1$.

Choose weight systems on the $\arc_i$
\[\arcwt_0, \arcwt_1, \ldots, \arcwt_n \in |\mathscr{A}_{\text{fill}}(\Sigma, \partial \Sigma)|_{\RR}\]
whose coefficients are larger than $S_1$ but uniformly bounded, say by $2S_1$.
If the $\arc_i$ are $(S \setminus \lambda)$-compatible, choose $\arcwt_i \in \Base$.

The idea of our proof is to construct the $Y_i$ inductively by gluing together (almost)-regular ideal polygons along geodesics coming from the arcs of $\arc_i$ with shears coming from the coefficients of $\arcwt_i$ (see \eqref{eqn:identarc} below). The boundary-tight maps $Y_i \to Y_{i+1}$ will be constructed by gluing together maps guaranteed by Corollary \ref{cor:scalespinedown}.
\bigskip

Let us first prove the theorem in the case when $n=1$: this will demonstrate all of the main ideas but with drastically simplified bookkeeping.
Spin $\arc_0$ and $\arc_1$ (as described before Construction \ref{constr:gluereg}) to get two collections of bi-infinite leaves $\ell_0 \subset \ell_1$ that cut $\Sigma$ into ideal polygons.
We fix all polygons of $\Sigma \setminus \ell_1$ to be regular, and then define a hyperbolic structure $Q$ on $\Sigma \setminus \ell_0$ by gluing together regular pieces along the leaves of $\ell_1 \setminus \ell_0$.
For each arc $\alpha \in \arc_1 \setminus \arc_0$, we glue along $\ell_\alpha$ with shear $c_\alpha$, measured with respect to the inscribed regular horogons.
Each side of $Q$ comes from a single polygon of $\Sigma \setminus \ell_1$, so the regular horogons in pieces pick out a family of basepoints $q^\ast \subset \partial Q$, one on each side.

By Corollary \ref{cor:scalespinedown}, for any $\zeta>0$ and any $L$ sufficiently large there is a union of $\zeta$-nearly-regular polygons $P$ with $\Sp(P) = 1/L \cdot \Sp(Q)$ together with an $L$-Lipschitz, boundary-tight map $f:P \to Q$ that takes spine basepoints to spine basepoints.
Let $p^\ast = f^{-1}(q^\ast) \in \partial P$. Glue together the nearly-regular polygons $P$ along $\ell_0$ with shears corresponding to the weights of $\arcwt_0$ (as measured with respect to $p^
\ast$) to get a hyperbolic structure $Y_0 \in \cT(\Sigma)$.
Similarly, glue $Q$ with shears given by $L \arcwt_0$ (measured with respect to $q^
\ast$) to get a structure $Y_1$. 
Since the map $f:P \to Q$ was boundary-affine, and $Q$ is glued with exactly $L$ times the shears of $P$ (measured with respect to points which are sent to each other), we see that we can also glue $f$ along $\ell_0$ to yield an $L$-Lipschitz, boundary-tight map $Y_0 \to Y_1$.

We now show that the spines of the $Y_i$ are as specified.
By tracing through the argument, we see that the surface $Y_1$ can equivalently be specified by applying Construction \ref{constr:gluereg} to the weighted arc system
\[\underline{C}_1 := L \arcwt_0 + (\arcwt_1 - \arcwt_0)
= (L - 1) \arcwt_0 + \arcwt_1.\]
Proposition \ref{prop:glue_reg_deflate} produces a deflation map $Y_1 \to T(\underline{C}_1)$. 
So long as we take $L \ge 2$, all of the coefficients of $\underline{C}_1$ are at least $S_1 > S_0$,
and so we can apply Lemma \ref{lem:bigshears_closegeos} to deduce that the boundary geodesics of $Y_1$ connected by any arc of $\arc_1$ are at most $\varepsilon_1$ far apart.
Lemma \ref{lem:thickthin_deflation} then ensures the spine of $Y_1$ contains edges dual to the arcs of $\arc_1$. 
Applying Corollary \ref{cor:thin bands long edges}, we see that that for any arc $\alpha \in \arc_1$,
\[| c_\alpha(Y_1) - c_\alpha(\arcwt_1)| \le 
\max \{C_0, 4 \rad(Y_1)\}.\]
where $c_\alpha(\cdot)$ denotes the coefficient of $\alpha$ in each weighted arc system.
Moreover, $\arcwt(Y_1)$ cannot have any other arcs of large ($\gtrsim \max \{C_0, 4 \rad(Y_1)\}$) weight, else Corollary \ref{cor:thin bands long edges} would ensure that $T$ would also have to have a corresponding edge of comparable weight.
Thus, since $\rad(Y_1)$ is uniformly bounded (by $\varepsilon$ plus the max embedding radius of $\zeta$-almost-regular polygons),
\[\arcwt(Y_1) = \underline{C}_1 + O(1).\]

We now turn to $Y_0$. By Lemma \ref{lem:composite_deflation}, there is a deflation map
\[Y_0 \to 1/L \cdot T(\underline{C}_1) = T(\underline{C}_1/ L).\]
So long as $L$ is taken sufficiently large, the uniform upper bound on the coefficients of $\arcwt_1$ ensures that 
$\underline{C}_1/ L = \arcwt_0 + O(1)$.
We now wish to run through the same argument as before, but first we must check that we can apply Lemma \ref{lem:bigshears_closegeos} to our gluing of $P$.

The issue is that we have defined our gluing with respect to the $p^\ast$, not the spine basepoints.
What saves us is the second statement of that Lemma, applied to $Q$.
In particular, since we have glued with large enough shears, Lemma \ref{lem:bigshears_closegeos} tells us that the  $q^\ast$ are $O(\zeta + \varepsilon)$ close to the spine basepoints of $Q$. 
Since our boundary-tight map sends spine basepoints to spine basepoints, we have that the $p^\ast$ are 
$O\left((\zeta + \varepsilon)/L \right)$ close to the spine basepoints of $P$, which are $O(\zeta)$ close to each other.
Hence, large shears with respect to the $p^\ast$ translate to large shears with respect to spine basepoints of $P$, allowing us to apply Lemma \ref{lem:bigshears_closegeos}.

We now apply the same battery of statements as for $Y_1$: Lemma \ref{lem:thickthin_deflation} and Corollary \ref{cor:thin bands long edges} imply the spine of $Y_0$ have edges corresponding to $\arcwt_0$ with comparable weight, and another application of Corollary \ref{cor:thin bands long edges} ensures there cannot be any other long edges.
Thus,
\[\arcwt(Y_0) = \underline{C}_1/ L + O(1) 
= \arcwt_0 + O(1).\]

Finally, suppose that $\arcwt_0$ and $\arcwt_1$ were chosen in $\Base$. Then $\underline{C}_1 \in \Base$ as well, hence Proposition \ref{prop:reg_glue_res} implies that $Y_1$ must satisfy the total residue condition of Theorem \ref{thm:imageofcut}.
Residues are multiplicative under boundary-tight maps (Proposition \ref{prop:residues}) and the total residue condition is linear, so $Y_0$ also satisfies it.
Choosing any $X_0$ with $X_0 \setminus \lambda = Y_0$, Theorem \ref{thm:glueoptimal} then specifies a unique $X_1$ together with a tight map $X_0 \to X_1$ stretching $\lambda$ affinely.
This completes our proof when $n=1$.
\bigskip

For the general case, recall that we have picked systems of weights $\arcwt_j$ on $\arc_j$. 
Set $\arcwtb_j := \arcwt_j - \arcwt_{j-1}$ and let $\arcb_j$ denote the underlying (not necessarily filling!) arc system.
Define $\ell_j$ to be the geodesics on $\Sigma$ obtained by spinning the arcs of $\underline{\smash{\beta}}_j$, and set
\[\widehat\ell_j= \ell_0 \cup \ldots \cup \ell_j.\]
Observe that since $\arc_0$ is filling, then $\Sigma \setminus \widehat\ell_j$ is a union of ideal polygons for all $i = 0, \ldots, n$.
For each $j$, let $P(j)$ denote the disjoint union of these ideal polygons (considered as a topological surface), and note that $P(j) = P(j-1) \setminus \ell_j$. 
More generally, whenever $j' \ge j$, each component of $P(j)$ is a union of components of $P(j')$. Since we will not need to refer to exactly how the polygons are assembled out of each other, we will not introduce notation.

We now inductively define a family of hyperbolic structures $Q_i(j)$ on the $P(j)$ together with boundary-tight maps between them.
We then glue these polygons together along the $\smash{\widehat \ell_j}$ to yield our desired $Y_i$ (together with the maps between them).
Reference to the schematic in Figure \ref{fig:itin_tower} will be very useful throughout this proof; the reader should climb the staircase from bottom-right to top-left and fill in columns as she goes.

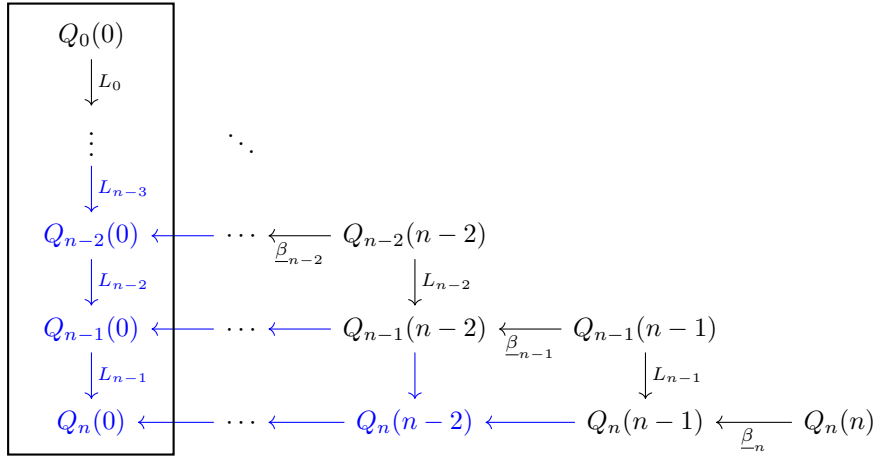
\begin{figure}[ht]
    \centering
$\begin{tikzcd}
[execute at end picture={
\draw[thick] (-3.8,-3) -- (-3.8,3) -- (-6,3) -- (-6,-3) -- cycle;
}]
Q_0(0) \ar{d}{L_0} &&&&
\\
\vdots \ar[blue]{d}{L_{n-3}} & \ddots
\\
{\color{blue} Q_{n-2}(0)}\ar[blue]{d}{L_{n-2}}
& \cdots \ar[blue]{l}
& Q_{n-2}(n-2)\arrow{l}{\underline{\beta}_{n-2}} \ar{d}{L_{n-2}}
\\
{\color{blue} Q_{n-1}(0)}\ar[blue]{d}{L_{n-1}}
& \cdots\ar[blue]{l}
& Q_{n-1}(n-2) \ar[blue]{d} \ar[blue]{l}
&Q_{n-1}(n-1) \arrow{l}{\underline{\beta}_{n-1}} \ar{d}{L_{n-1}}
\\
{\color{blue} Q_n(0)}
& \cdots \ar[blue]{l}
& {\color{blue} Q_n(n-2)} \ar[blue]{l}
& Q_n(n-1) \arrow[blue]{l}
& Q_n(n) \arrow{l}{\underline{\beta}_n}
\end{tikzcd}$
\caption{Constructing surfaces and maps for Theorem \ref{thm:itineraries}.
Each $Q_i(i)$ is a union of (nearly-)regular polygons. All horizontal arrows are gluings, labeled by the arc system corresponding to the glued leaves, while all vertical arrows are boundary-tight maps, labeled by their Lipschitz constant.}
\label{fig:itin_tower}
\end{figure}

First, we set $Q_n(n)$ to be the union of regular ideal polygons of type $P(n)$.
Glue together the polygons of $Q_n(n)$ along the leaves of $\ell_n$ with shears $\arcwtb_n$ (measured with respect to the inscribed regular horogons), yielding a hyperbolic structure $Q_n(n-1)$ on the union of polygons $P(n-1)$.
As in the $n=1$ case, let $q_n^\ast(n-1) \subset \partial Q_n(n-1)$ denote the collection of basepoints coming from the regular horogons of $Q_n(n)$.
By Corollary \ref{cor:scalespinedown}, for large enough $L_{n-1}$ there is a union of $\zeta$-nearly-regular polygons $Q_{n-1}(n-1)$ together with an $L_{n-1}$-Lipschitz boundary-tight map $Q_{n-1}(n-1) \to Q_{n}(n-1).$
Set $q_{n-1}^\ast(n-1) \subset \partial Q_{n-1}(n-1)$ to be the preimages of the basepoints $q_n^\ast(n-1)$ under this map.

Now we iterate: glue together the polygons of $Q_{n-1}(n-1)$ along $\ell_{n-1}$ using shears of size $\arcwtb_{n-1}$ (measured with respect to $q_{n-1}^\ast(n-1)$)
to define a hyperbolic structure $Q_{n-1}(n-2)$ on the polygons of $P(n-2)$. Let $q_{n-1}^\ast(n-2)$ denote the remaining basepoints on the boundary.
Corollary \ref{cor:scalespinedown} again ensures that there is a boundary-tight map from a union of nearly-regular polygons $Q_{n-2}(n-2)$ to this structure, so long as the Lipschitz constant $L_{n-2}$ is sufficiently large, and we define $q_{n-2}^\ast(n-2)$ by taking the preimage of basepoints.

Now we observe that we can also glue together the $Q_n(n-1)$ along $\ell_{n-1}$ with shears $L_{n-1} \arcwtb_{n-1}$ 
(measured with respect to $q_{n}^\ast(n-1)$)
to yield yet another hyperbolic structure $Q_{n}(n-2)$ on $P(n-2)$, and set $q_{n}^\ast(n-2)$ to be those basepoints which are still on the boundary (i.e., those which we did not use to measure shears).
Since the shears of $Q_n(n-2)$ along $\ell_{n-1}$ are exactly $L_{n-1}$ times the shears of $Q_{n-1}(n-2)$ along those same geodesics, measured with respect to points which are taken to each other, the maps $Q_{n-1}(n-1) \to Q_{n}(n-1)$ also glue to yield an $L_{n-1}$-Lipschitz boundary-tight map
\[Q_{n-1}(n-2) \to Q_{n}(n-2).\]

We continue this process, for each $i \ge j$ gluing the $Q_i(j)$ together along $\ell_j$ with shears
\begin{equation}
\label{eqn:shears}\text{shears} =\left\{
\begin{array}{cl}
\arcwtb_j & i = j \\
L_{i-1} \ldots L_{j} \cdot \arcwtb_j & i > j
\end{array}\right.
\end{equation}
(as measured with respect to basepoints $q_i^\ast(j)$).
The resulting hyperbolic structures $Q_{i}(j-1)$ on $P(j-1)$ 
come with a new collection of basepoints $q_i(j-1)$ as well as $L_{i-1}$-Lipschitz boundary-tight maps
$Q_{i-1}(j-1) \to Q_{i}(j-1)$, allowing us to move one column leftwards in Figure \ref{fig:itin_tower}.
Corollary \ref{cor:scalespinedown} allows us to move one row up, constructing an $L_{j-1}$-Lipschitz boundary-tight map from nearly-regular polygons $Q_{j-1}(j-1)$ to the glued $Q_j(j-1)$, and defining basepoints on its boundary by taking preimages.

This procedure terminates when $j=0$, yielding a sequence of hyperbolic structures $Q_i(0)$ on $P(0)$ together with boundary-tight maps
\[Q_0(0) \to Q_1(0) \to Q_2(0) \to \ldots \to Q_n(0),\]
which are boxed in Figure \ref{fig:itin_tower}.
Finally, we glue together each of the $Q_i(0)$ along the leaves of $\ell_0$ with shears as specified in \eqref{eqn:shears} (for $j=0$, measured with respect to the basepoints $q_i^\ast(0)$), resulting in a sequence of hyperbolic structures $Y_i$ on $\Sigma$ together with boundary-tight maps
\[Y_0 \to Y_1 \to Y_2 \to \ldots \to Y_n.\]
Note that the Lipschitz constant of the map $Y_i \to Y_{i+1}$ is achieved not only on the boundary, but also on all leaves of $\widehat \ell_i$.

With our construction complete, the verification of the conclusions of the Theorem is just as in the $n=1$ case.
Set $\underline{C}_0 := \arcwt_0$ and for $i \ge 1$, set
\begin{equation}\label{eqn:identarc}
 \underline{C}_i := 
\arcwtb_i
+ L_{i-1} \left( \arcwtb_{i-1}
+ L_{i-2}\left(\arcwtb_{i-2}
+ \cdots 
+ L_1 \bigg(\arcwtb_1
+ L_0 \arcwt_0 \bigg)
\cdots \right) \right).
\end{equation}
Observe that $\underline{C}_i$ is supported on $\arc_i$. Our goal is to show $\arcwt(Y_i) = \underline{C}_i + O(1).$

By design, the final surface $Y_n$ is the result of applying Construction \ref{constr:gluereg} to $\underline{C}_n$. Thus, Proposition \ref{prop:glue_reg_deflate} yields a deflation to the dual tree $Y_n \to T(\underline{C}_n)$, and applying Lemma \ref{lem:composite_deflation}, we get deflations
\[Y_i \to 1/(L_{n-1} \cdots L_i) \cdot  T( \underline{C}_n) = T(\underline{C}_n / (L_{n-1} \cdots L_i)).\]
Observe 
$\underline{C}_n / (L_{n-1} \cdots L_i) = \underline{C}_i + O(1).$

\begin{claim}
The arcs of $\arc_i$ are all $\varepsilon_1$ short on $Y_i$.
\end{claim}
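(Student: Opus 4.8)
The plan is to recognise $Y_i$ as (a $\zeta$-nearly-regular variant of) the output of Construction \ref{constr:gluereg} and then quote part (1) of Lemma \ref{lem:bigshears_closegeos}. Unwinding the construction as in the case $n=1$ treated above: since $\arc_0$ is filling and $\arc_0$ together with $\arcb_1, \ldots, \arcb_i$ exhaust $\arc_i$, the leaves $\widehat\ell_i$ are precisely the spinning of $\arc_i$, and tracing the gluings \eqref{eqn:shears} through the row $Q_i(i), Q_i(i-1), \ldots, Q_i(0)$ of Figure \ref{fig:itin_tower} and then the final gluing along $\ell_0$ shows that $Y_i$ is obtained by gluing the polygons $Q_i(i)$ along $\widehat\ell_i$ with the shear along $\ell_\alpha$ equal to the coefficient $c_\alpha(\underline{C}_i)$ from \eqref{eqn:identarc}. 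For $i = n$ the polygons $Q_n(n)$ are genuinely regular, so $Y_n = Y(\underline{C}_n)$ and the Lemma applies on the nose; for $i < n$ they are only $\zeta$-nearly-regular and, more to the point, the shears were recorded against the pulled-back basepoints $q_i^\ast$ rather than the spine basepoints of $Q_i(i)$.

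First I would check the shear hypothesis. By \eqref{eqn:identarc} every coefficient of $\underline{C}_i$ is a single product $L_{i-1}\cdots L_j \ge 1$ times a coefficient of some $\arcwtb_j$; on $\arcb_j = \arc_j \setminus \arc_{j-1}$ the structure $\arcwt_{j-1}$ vanishes, so that coefficient equals a coefficient of $\arcwt_j$ and hence lies in $(S_1, 2S_1)$. Thus all shears of $Y_i$ are positive and at least $S_1$. Since $S_1$ was chosen to exceed the shear threshold of Lemma \ref{lem:bigshears_closegeos} by an $O(\zeta + \varepsilon_1)$ margin, this will survive the basepoint correction below.

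The hard part is the basepoint bookkeeping, which I would run inductively down the staircase of Figure \ref{fig:itin_tower} with part (2) of Lemma \ref{lem:bigshears_closegeos}, exactly as the $n=1$ argument passes from $Y_1$ to $Y_0$. The structure $Q_n(n-1)$ is glued from the regular $Q_n(n)$ with shears $\ge S_1$, so part (2) places its spine basepoints within $O(\zeta + \varepsilon_1)$ of the images $q_n^\ast(n-1)$ of the inscribed horogons; pulling back along the boundary-tight $L_{n-1}$-Lipschitz map $Q_{n-1}(n-1) \to Q_n(n-1)$, which sends spine basepoints to spine basepoints and is affine of stretch $L_{n-1}$ on the boundary, carries the discrepancy to $O((\zeta + \varepsilon_1)/L_{n-1})$. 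Iterating — re-gluing along $\ell_{n-1}, \ell_{n-2}, \dots$, re-applying part (2), and pulling back by the next boundary-tight map — the basepoints $q_i^\ast(j)$ stay within $O(\zeta + \varepsilon_1)$ of the relevant spine basepoints at every stage, the number of stages being bounded by the topology. Since changing a basepoint alters a shear only additively by the distance moved, the shears of $Y_i$ measured against the spine basepoints of $Q_i(i)$ still exceed the threshold of Lemma \ref{lem:bigshears_closegeos}, so part (1) of that Lemma — applied with the displacement parameter (a small enough multiple of $\varepsilon_1$) that was used to fix $S_0$ — gives that the orthogeodesic representative of every arc of $\arc_i$ on $Y_i$ has length at most $\varepsilon_1$, which is the claim. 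All implicit constants are uniform, depending only on $\zeta$, the topological type of $\Sigma$, and the fixed bound $2S_1$ on the weights.
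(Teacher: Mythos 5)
Your proof is correct and is essentially the same as the paper's: both first verify that the shears of $Y_i$ (the coefficients of $\underline{C}_i$) exceed the threshold, then run the same induction down the staircase of Figure~\ref{fig:itin_tower}, alternating part~(2) of Lemma~\ref{lem:bigshears_closegeos} with pullbacks along the boundary-tight maps $Q_{j-1}(j-1) \to Q_j(j-1)$, to conclude that the $q^\ast$-basepoints lie within $O(\zeta+\varepsilon_1)$ of the spine basepoints and hence part~(1) of that Lemma applies. Your explicit tracking of the $1/L_j$ contraction of the basepoint discrepancy under each pullback is a mild refinement of the paper's ``even closer,'' but the logical structure is identical.
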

\begin{proof}
Each $Y_i$ is constructed by gluing together $\zeta$-almost-regular polygons $Q_i(i)$ with large shears, so this should follow from Lemma \ref{lem:bigshears_closegeos}. 
As in the $n=1$ case, the only thing to check is that the points $q_i^\ast(i)$, which we used to define our gluing, are actually close to the spine basepoints of $Q_i(i)$, which appear in the statement of the Lemma.

For $i=n$ the points of the regular horogon are the same as the spine basepoints of $Q_n(n)$, by symmetry.
Thus, since $Q_{n-1}(n)$ is glued out of the $Q_n(n)$ with large shears, the second statement of Lemma \ref{lem:bigshears_closegeos} ensures that the points $q_{n}^\ast(n-1)$ are within $O(\zeta + \varepsilon)$ of the spine basepoints of $Q_{n}(n-1)$.
Now since the boundary-optimal map $Q_{n-1}(n-1)$ takes spine basepoints to spine basepoints, we conclude that the points $q_{n-1}^\ast(n-1)$ are even closer to the spine basepoints of $Q_{n-1}(n-1)$.

Iterating, we have that $Q_{n-1}(n-2)$ is glued out of $Q_{n-1}(n-1)$ with big shears with respect to the $q_{n-1}^\ast(n-1)$, and so by the paragraph above, we can say that the shears are still large when measured with respect to the spine basepoints.
Hence we can apply the second statement of Lemma \ref{lem:bigshears_closegeos} again to conclude that the points $q_{n-1}^\ast(n-2)$ are close to spine basepoints of $Q_{n-1}(n-2)$.
Pulling back by the Lipschitz map $Q_{n-2}(n-2) \to Q_{n-1}(n-2)$ we get that $q_{n-2}^\ast(n-2)$ are close to the spine basepoints of $Q_{n-2}(n-2)$, and the process repeats.

Thus, we conclude that for all $i$ the points we are using to define our shears are within $O(\zeta + \varepsilon)$ of the spine basepoints, hence with our choice of cutoff $S_1$ the shears when measured with respect to spine basepoints are still large enough to apply Lemma \ref{lem:bigshears_closegeos}.
\end{proof}

Since the arcs of $\arc_i$ are all short, Lemma \ref{lem:thickthin_deflation} ensures $\arc(Y_i)$ contains $\arc_i$, and Corollary \ref{cor:thin bands long edges} tells us their weight in $\arcwt(Y_i)$ is comparable to their weights in $\underline{C}_i$.
Moreover, there can be no other arcs of large weight, else they would appear in the target $T(\underline{C}_n / (L_{n-1} \cdots L_i))$. Thus, $\arcwt(Y_i) = \underline{C}_i + O(1).$

To verify the final statements of the Theorem, we note that $\underline{C}_n$ can be rewritten as a linear combination of the $\arcwt_i$. 
If each $\arcwt_i \in \Base$, so is $\underline{C}_n$, thus $Y_n$ satisfies the total residue condition of Theorem \ref{thm:imageofcut} (Proposition \ref{prop:reg_glue_res}).
We can then propagate up to all other $Y_i$ via multiplicativity of metric residues (Proposition \ref{prop:residues}).
Using Theorem \ref{thm:imageofcut}, we can choose some $X_0$ with $X_0 \setminus \lambda = Y_0$, and Theorem \ref{thm:glueoptimal} builds the desired surfaces $X_i$ together with tight maps $X_i \to X_{i+1}$ stretching $\lambda$ affinely by $L_i$. This completes the proof.
\end{proof}

\begin{remark}
The surfaces $Y_i$ cannot all be obtained via Construction \ref{constr:gluereg} applied to $\underline{C}_i$, as the polygons $Q_i(j)$ may have nonzero residues. This justifies the intricate induction used in the proof.
\end{remark}

\bibliographystyle{amsalpha}
\bibliography{references}

\end{document}